\documentclass[10pt]{amsart}
\usepackage[margin=0.8in]{geometry} 
\usepackage{amsthm, amsrefs, amsmath,amsfonts,amssymb,euscript,hyperref,graphics,color,slashed,mathrsfs}
\usepackage{enumerate}
\usepackage{graphicx}
\usepackage[toc]{appendix}
\usepackage{comment}
\usepackage{import}
\usepackage{tikz}
\usepackage{latexsym}

\usepackage{pgfplots}  
\pgfplotsset{compat=1.18} 
\usepgfplotslibrary{patchplots} 
 
\usepackage[utf8]{inputenc}
\usepackage{enumitem}

\def\Xh{{\widehat{X}}}
\def\Th{{\widehat{T}}}

\def\Ntop{{N_{_{\rm top}}}}
\def\Ninf{N_{_{\infty}}}

\def\Xr{\mathring{X}}
\def\Tr{\mathring{T}}

\def\Trh{\widehat{\mathring{T}}}

\def\epsilonr{\mathring{\epsilon}}
\def\Mr{\mathring{M}}
\def\Tb{\underline{T}}
\def\Xb{\underline{X}}
\def\Omegab{\underline{\Omega}}

\def\Ur{\mathring{U}}

\def\Zr{{\mathring{Z}}}
\def\Lr{{\mathring{L}}}

\def\Lbr{{\mathring{\underline{L}}}}

\def\zr{{\mathring{z}}}
\def\yr{{\mathring{y}}}

\def\kappar{{\mathring{\kappa}}}
\def\mur{{\mathring{\mu}}}
\def\etar{{\mathring{\eta}}}
\def\deltar{{\mathring{\delta}}}
\def\deltasr{{\mathring{\slashed{\delta}}}}

\def\zetar{{\mathring{\zeta}}}
\def\chir{{\mathring{\chi}}}
\def\chibr{{\mathring{\underline{\chi}}}}

\def\ub{\underline{u}}
\def\wb{\underline{w}}

\def\chib{\underline{\chi}}

\def\Lb{\underline{L}}

\def\kappab{\underline{\kappa}}

\def\Cb{{\underline{C}}}

\def\cb{\underline{c}}
\def\vb{\underline{v}}
\def\psib{\underline{\psi}}
\def\thetab{\underline{\theta}}

\def\Hb{\underline{H}}

\newtheorem{theorem}{Theorem}[section]
\newtheorem{lemma}[theorem]{Lemma}
\newtheorem{proposition}[theorem]{Proposition}
\newtheorem{corollary}[theorem]{Corollary}
\newtheorem{definition}[theorem]{Definition}
\newtheorem{remark}[theorem]{Remark}

\numberwithin{equation}{section}

\newcommand{\missingfigurebox}[1]{%
\fbox{\parbox[c][1.8in][c]{0.78\linewidth}{\centering\small Figure not found\\\texttt{\detokenize{#1}}}}%
}
\let\Oldincludegraphics\includegraphics
\renewcommand{\includegraphics}[2][]{%
\IfFileExists{#2}{\Oldincludegraphics[#1]{#2}}{\missingfigurebox{#2}}%
}

\begin{document}
\title[Multi-Dimensional Structural Stability of Mixed Riemann Configurations]{Multi-Dimensional Structural Stability of Mixed Riemann Configurations Containing Centered Rarefaction Waves and Surfaces of Discontinuities of Gas Dynamics}

\author{Jin JIA and Tao LUO}

\address{School of Mathematics, Hunan University\\ Changsha, China}
\email{jiajin2023@hnu.edu.cn}

\address{Department of Mathematics, City University of Hong Kong\\ Hong Kong, China}
\email{taoluo@cityu.edu.hk}

\begin{abstract}
For 2D compressible Euler equations of isentropic gas, we prove the structural stability of mixed Riemann configurations containing centered rarefaction waves and surfaces of discontinuities
(such as  shock waves or vortex sheets), by deriving simultaneous energy estimates for acoustic and vorticity waves within the rarefaction wave region \emph{without loss of derivatives} and examinations of the nonlinear superpositions of rarefaction waves with other waves such as shock waves or vortex sheets. The nonlinear superpositions of \emph{shock wave-rarefaction wave} and \emph{rarefaction wave-vortex sheet-rarefaction wave} are achieved by reducing the problems in corner regions to the Cauchy problems with the data prescribed on the plane $\Sigma_0=\{(t, x_1, x_2): t=0, (x_1, x_2)\in \mathbb{R}\times\mathbb{R}/2\pi\mathbb{Z}\}$ with discontinuities at $\mathbf{S}_*:=\{(t,x_{1},x_{2})\mid t=0,\ x_{1}=0, x_2\in\mathbb{R}/2\pi\mathbb{Z} \}$.


\end{abstract}
\maketitle
\tableofcontents

\section{Introduction}

Rarefaction waves, shock fronts and vortex sheets are the elementary  wave patterns in gas dynamics. In one space dimension, the Riemann problem can be solved in terms of shocks and rarefaction waves, and the structure of solutions is by now classical. In several space dimensions, however, the structural stability of elementary waves becomes much challenging. For shock fronts, the local existence and stability theory was established by Majda \cite{MajdaShock2, MajdaShock3}. For vortex sheets, the corresponding local stability theory was established by Coulombel and Secchi \cite{Coulombel-Secchi1, Coulombel-Secchi2}. For rarefaction waves, the pioneering works of Alinhac \cite{AlinhacWaveRare1, AlinhacWaveRare2} gave the first multidimensional existence and uniqueness theory for general quasilinear hyperbolic systems.

Centered rarefaction waves in several space dimensions are analytically singular in a manner different from both shock fronts and vortex sheets. In the model one-dimensional fan, the solution itself stays continuous while the transversal derivative of a Riemann invariant behaves like $t^{-1}$ near the tip of the fan. In several dimensions, all outgoing rarefaction fronts emanate from the space time codimension-two set
\[
\mathbf{S}_*:=\{(t,x_{1},x_{2})\mid t=0,\ x_{1}=0, x_{2}\in \mathbb{R}/2\pi\mathbb{Z}\},
\]
the fronts themselves are unknown characteristic hypersurfaces, and the dependence on the transverse variable may usually generate vorticity. Therefore the multidimensional rarefaction problem is simultaneously singular and characteristic: it is neither a standard smooth Cauchy problem nor a standard free-boundary problem. Moreover, a naive differentiation of the eikonal equation for the rarefaction fronts leads to a loss of derivatives at top order.

The present paper is motivated by the structural stability of the mixed Riemann configurations \emph{shock wave-rarefaction wave} and \emph{rarefaction wave-vortex sheet-rarefaction wave}. In both configurations, a rarefaction region must first be constructed and then matched with either a shock front or a vortex sheet. The \(S\!-\!R\) configuration makes the main difficulty especially transparent: in several space dimensions, a shock front typically generates vorticity, so once one tries to carry out a nonlinear superposition of a rarefaction wave with a shock, one is naturally led beyond the irrotational setting. Therefore it is necessary to study rarefaction waves with vorticity, and any theorem intended for the \(S\!-\!R\) problem must allow the rarefaction wave to be coupled with rotational flows produced on the shock side. To be sure, Alinhac's theory \cite{AlinhacWaveRare1, AlinhacWaveRare2} already provides a multidimensional existence and uniqueness theory for single rarefaction waves. However, for the nonlinear superposition problems considered here, one needs a substantially more precise rarefaction theorem. Besides existence in the rarefaction region, one must recover the rarefaction region itself together with the associated inner characteristic boundaries and sufficiently accurate boundary jets along those boundaries, in a form compatible with the shock- and vortex-sheet theories of Majda \cite{MajdaShock2, MajdaShock3} and Coulombel--Secchi \cite{Coulombel-Secchi1, Coulombel-Secchi2}. Alinhac's construction is not directly suited to this purpose, because it loses derivatives, degenerates near the rarefaction fronts, and for Riemann-type data yields only an asymptotic description of the fronts. In this sense, one essential missing ingredient for the present program is a theory of a \emph{single family of rarefaction waves with vorticity} strong enough to be coupled with shock fronts and vortex sheets.

More broadly, the single-rarefaction wave theorem should be viewed as the universal front-end for perturbed Riemann problems containing rarefaction pieces. Once the rarefaction regions are constructed, the remaining interaction zone is naturally formulated as a smooth Goursat problem in the \(R\!-\!R\) regime, as a Goursat problem coupled to a single shock front in the \(S\!-\!R\) regime, and as a Goursat problem coupled to a single vortex sheet in the \(R\!-\!V\!-\!R\) regime. In this sense, the rarefaction theorem must do more than produce a solution inside the fan: it must also determine the nonlinear characteristic foliation, the inner boundaries inherited from the rarefaction waves, and the boundary jets required to solve the remaining interior problem. This viewpoint is also consistent with the compatibility theory for multidimensional Riemann problems developed by Chen--Li \cite{ChenLi}.

The structural stability of the \(S\!-\!R\) and \(R\!-\!V\!-\!R\) patterns is therefore not a routine application of the single-rarefaction theory. The natural problems left after opening the rarefaction fans have data prescribed partly on characteristic hypersurfaces \(H\) and \(\Hb\), and in the shock or vortex-sheet cases these data must also satisfy the nonlinear Rankine--Hugoniot relations and the corresponding high-order compatibility conditions. This gives nonstandard Goursat-type free-boundary problems for a shock front or a vortex sheet, outside the direct form of the classical Cauchy theories of Majda and Coulombel--Secchi. A main point of the present paper is to overcome this difficulty by a reduction from these Goursat problems to suitable Cauchy problems. More precisely, using the extremal structure of the rarefaction boundaries, the propagation of normal jets from the singular set \(\mathbf{S}_{*}\), and the characteristic well-posedness theory, we construct controlled auxiliary Cauchy data so that Majda's shock theory and Coulombel--Secchi's vortex-sheet theory can be applied. This reduction is one of the essential mechanisms by which the structural stability of the mixed \(S\!-\!R\) and \(R\!-\!V\!-\!R\) configurations is obtained.

Our work should also be compared with the recent irrotational theory of Luo and Yu \cite{Luo-YuRare1, Luo-YuRare2}. Their first paper established top-order energy estimates for multi-dimensional centered rarefaction waves without loss of derivatives, and their second paper constructed solutions and applied the theory to the Riemann problem with two families of rarefaction waves. The present paper removes the irrotational assumption and treats the  two-dimensional Euler system with non-zero vorticity. 

At the technical level, the analysis also draws on the acoustical-geometric approach to compressible Euler equations and on the wave-transport formulation for vorticity developed in the shock-formation works of Christodoulou--Miao and Luk--Speck \cite{ChristodoulouMiao, LukSpeck2D}. The present problem is, however, of a different geometric type: the singularity is not a codimension-one shock hypersurface, but the codimension-two tip of a centered rarefaction fan. This distinction is crucial for the analysis. It leads to a regime in which the strongest singularity is carried by the normal derivatives, while the tangential regularity remains comparatively better and can be exploited through a suitable choice of commutators and bootstrap weights. In the wave-transpot formulation, the wave equations are for Riemann invariants, and the transport equation is for the specific vorticity $\Omega$ as defined in \eqref {eq: specific vorticity}. The intrinsic reformulation \eqref{eq: reformulation of vorticity in the first null frame} shows that $\partial_{i}\Omega$ can be rewritten using only the rarefaction front-adapted derivatives. This propagates the good control of $\Omega$ from the right boundary of the rarefaction wave which is completely determined by the initial data into the rarefaction region.  The observation of the non-characteristic nature of the specific velocity  with respect to the rarefaction fronts is crucial. 

Compared with Alinhac's Nash-Moser framework, our theory is less general at the level of systems, since we work specifically with the compressible Euler equations. However, for Euler equations the theory obtained here is substantially sharper and more geometric. As emphasized already in \cite{Luo-YuRare1, Luo-YuRare2}, Alinhac's estimates lose derivatives and degenerate near the rarefaction fronts, while the description of the fronts for Riemann-type data is only asymptotic. In contrast, the present paper, like \cite{Luo-YuRare1, Luo-YuRare2}, works in standard Sobolev spaces and uses the acoustical geometry of Euler equations to obtain a nondegenerate description of the rarefaction region. The novelty here is that this stronger framework survives in the presence of vorticity and becomes usable for mixed-wave problems.

Throughout the paper, we work away from vacuum. This is a natural first step, since even in one space dimension rarefaction waves are the only elementary waves that may generate vacuum from non-vacuum initial states, and the up-to-vacuum regime introduces an additional degeneracy through the sound speed in the acoustical geometry. The multidimensional free expansion of gas into vacuum via rarefaction waves is therefore a natural next problem, but it lies beyond the scope of the present paper. This issue will be addressed in the forthcoming paper. 

\subsection{Review of Riemann problem for 2-D isentropic Euler system with 1-D symmetry} The $2$-D Euler equations are given by:
\begin{equation}\label{eq: Euler equations}
\begin{cases}
\partial_{t}\rho+\nabla_{x}\cdot(\rho v)=0,\\
\partial_{t}(\rho v)+\nabla_{x}\cdot (\rho v\otimes v+p)=0.
\end{cases}
\end{equation}
where $\rho, p, v=(v^{1}, v^{2})^{T}$ are the density, pressure, and velocity of the gas, respectively. The equation of the sate is given by $p(\rho)=k_{0}\rho^{\gamma}$ with constants $k_{0}>0$ and $\gamma\in (1, 3)$. The sound speed $c$ is defined as 
\[
c=\sqrt{\frac{dp}{d\rho}}=k_{0}^{\frac{1}{2}}\gamma^{\frac{1}{2}}\rho^{\frac{\gamma-1}{2}}.
\]

Under $1-D$ symmetry assumption, \eqref{eq: Euler equations} is expressed as:
\begin{equation}\label{eq: Euler equation in the form of conservation law}
\partial_{t}U+\partial_{x_1}F(U)=0,
\end{equation}
where
\[
U=\begin{pmatrix}
\rho\\
\rho v^{1}\\
\rho v^{2}
\end{pmatrix}, F(U)=\begin{pmatrix}
U^{2}\\ 
p(U^{1})+\frac{(U^{2})^2}{U^{1}}\\
\frac{U^{2}U^{3}}{U^1}
\end{pmatrix}.
\]
For a given solution $U$, the matrix $A(U):=DF(U)$ has three eigenvalues $\lambda_{1}(U)=v^{1}-c, \lambda_{2}(U)=v^{1}$, and $\lambda_{3}(U)=v^{1}+c$, which are distinct providing that $c>0$. The Riemann problem is the Cauchy problem with specific data $U(0, x_{1})$ posed on $t=0$:
\begin{equation}\label{eq: constant initial data}
U(0, x_1)=\begin{cases}
U_{l},\ \ x\leq 0,\\
U_{r},\ \ x\geq 0.
\end{cases}
\end{equation}
where $U_{l}=(\rho_{l}, v_{l}^{1}, v_{l}^{2})^{T}$ and $U_{r}=(\rho_{r}, v_{r}^{1}, v_{r}^{2})^{T}$ are two constant states. There are three fundamental solution patterns for the Riemann problem, which connect constant states through shocks, rarefaction waves, or vortex sheet:

\begin{itemize}
\item {\bf Shock Waves}

There are two families of shock waves  associated with the first eigenvalue $\lambda_{1}(U)$ and the third eigenvalue $\lambda_{3}(U)$, respectively. We will focus on the shock wave associated with the first eigenvalue $\lambda_{1}(U)$. A single shock wave associated with the first eigenvalue $\lambda_{1}(U)$ is a piecewise constant solution, consisting of two pieces separated by hyperplane $x_{1}=st$, where $s$ is the shock speed satisfying the Rankine-Hugoniot condition,
\[
s(U_{r}-U_{l})=F(U_{r})-F(U_{l})
\]
and the Lax-shock condition (entropy condition), yielding that
\begin{equation}\label{data: a single shock}
\begin{cases}
v_{r}^{1}-v_{l}^{1}=-\sqrt{\frac{\rho_{l}-\rho_{r}}{\rho_{l}\rho_{r}}\big(p(\rho_{l})-p(\rho_{r})\big)},\\
v_{r}^{2}-v_{l}^{2}=0,\\
 \rho_{r}>\rho_{l}.
\end{cases}
\end{equation}
It can be depicted as follows:
\begin{center}
\begin{tikzpicture}[scale=1]
\draw (-3,1) node {$(\rho_{r}, v^{1}_{r}, v^{2}_{r})$};   
\draw (2,1) node {$(\rho_{l}, v^{1}_{l}, v^{2}_{l})$};     
\draw[thick, color=blue] (0,0)--(-2,2)--(-1.5, 3.5)--(0.5, 1.5);
\draw[thick] (0,0)--(0.5, 1.5);
\draw[thin] (-6,0)--(6,0);
\draw[thin] (-6,2)--(6,2);
\draw[thin] (-5.5, 1.5)--(6, 1.5);
\draw[thin] (-5.5, 3.5)--(6, 3.5);
\draw[below] node {\text{Fig. a single shock wave associated with the first eigenvalue $\lambda_{1}(U)$.}};
 \end{tikzpicture}
 \end{center}

\item {\bf Rarefaction Waves}

There are two families of rarefaction waves associated with the first eigenvalue $\lambda_{1}(U)$ and the third eigenvalue $\lambda_{3}(U)$, respectively. We will focus on that associated with the third eigenvalue $\lambda_{3}(U)$. A single rarefaction wave associated with the third eigenvalue $\lambda_{3}(U)$ is a continuous piecewise smooth self similar solution in variable $\frac{x_1}{t}$, consisting of three pieces separated by hyperplanes $x_{1}=(v_{l}^{1}+c)t$ and $x_{1}=(v_{r}^{1}+c)t$. The middle piece is called the rarefaction wave region, the solution on which takes the form,
\begin{equation}\label{eq: 1-D rarefaction wave}
\begin{cases}
c=\frac{\gamma-1}{\gamma+1}\frac{x}{t}-\frac{\gamma-1}{\gamma+1}v^{1}_{r}+\frac{2}{\gamma+1}c_{r},\\
v^{1}=\frac{2}{\gamma+1}\frac{x}{t}+\frac{\gamma-1}{\gamma+1}v^{1}_{r}-\frac{2}{\gamma+1}c_{r},\\
v^{2}=v^{2}_{r}
\end{cases}
\end{equation}
which implies that 
\[
\begin{cases}
\frac{c_{l}}{\gamma-1}-\frac{1}{2}v_{l}^1=\frac{c_{r}}{\gamma-1}-\frac{1}{2}v_{r}^1,\\
v_{l}^{2}=v_{r}^{2},\\
 \frac{c_{l}}{\gamma-1}+\frac{1}{2}v_{l}^1<\frac{c_{r}}{\gamma-1}+\frac{1}{2}v_{r}^1.
\end{cases}
\]
It can be depicted as follows:
\begin{center}
\begin{tikzpicture}[scale=1]
\draw[thick] (0,0)--(0.5, 1.5);
\draw (4,1) node {$(c_{r}, v^{1}_{r}, v^{2}_{r})$};   
\draw (-1,1) node {$(c_{l}, v^{1}_{l}, v^{2}_{l})$};     
\draw[thin] (-3,0)--(8.5,0);
\draw[thin] (-3,2)--(8.5,2);
\draw[thin] (-2.5, 1.5)--(9, 1.5);
\draw[thin] (-2.5, 3.5)--(9, 3.5);
\draw[color=green, thick, dotted] (0, 0)--(2, 2)--(2.5, 3.5)--(0.5,1.5);
\draw[color=blue, thick, dotted] (0, 0)--(2.5, 2)--(3, 3.5)--(0.5,1.5);
\draw[color=blue, dotted, thick] (0, 0)--(3, 2)--(3.5, 3.5)--(0.5,1.5);
\draw[color=blue, dotted, thick] (0, 0)--(3.5, 2)--(4, 3.5)--(0.5,1.5);
\draw[color=blue, dotted, thick] (0, 0)--(4, 2)--(4.5, 3.5)--(0.5,1.5);
\draw[color=blue, dotted, thick] (0, 0)--(4.5, 2)--(5, 3.5)--(0.5,1.5);
\draw[color=red, dotted, thick] (0, 0)--(5, 2)--(5.5, 3.5)--(0.5,1.5);
\draw[below] (3,0) node {\text{Fig. a single family rarefaction waves associated with the third eigenvalue $\lambda_{3}(U)$.}};
\end{tikzpicture}
\end{center}

\item Vortex sheet

There is a single family of vortex sheet associated with the second eigenvalue $\lambda_{2}(U)$, which is a piecewise constant solution, consisting of two pieces separated by hyperplane $x_{1}=st$, where $s$ is the shock speed. It satisfies the Rankine-Hugoniot condition,
\[
s(U_{r}-U_{l})=F(U_{r})-F(U_{l})
\]
The \emph{linear degeneration} of $\lambda_{2}(U)$ implies that
\begin{equation}\label{eq: data for a single vortex sheet}
\begin{cases}
c_{l}=c_{r},\\
v_{l}^{1}=v^{1}_{r},\\
v_{l}^{2}\not=v_{r}^{2}.
\end{cases}
\end{equation}
It can be depicted as follows:
\begin{center}
\begin{tikzpicture}
\draw (3,1) node {$(c_{r}, v^{1}_{r}, v^{2}_{r})$};   
\draw (-2,1) node {$(c_{l}, v^{1}_{l}, v^{2}_{l})$};     
\draw[thick, color=blue, dashed] (0,0)--(2,2)--(2.5, 3.5)--(0.5, 1.5);
\draw[thick] (0,0)--(0.5, 1.5);
\draw[thin] (-6,0)--(8,0);
\draw[thin] (-6,2)--(8,2);
\draw[thin] (-5.5, 1.5)--(8, 1.5);
\draw[thin] (-5.5, 3.5)--(8, 3.5);
\draw[below] (1,0) node {\text{Fig. a single family rarefaction waves associated with the third eigenvalue $\lambda_{2}(U)$.}};
 \end{tikzpicture}
\end{center}

\end{itemize}
Based on the analysis in phase space, the Riemann Problem for \eqref{eq: Euler equation in the form of conservation law} with general data of form \eqref{eq: constant initial data} can be resolved by one of the following \emph{non-degenerate patterns},
\begin{itemize}
\item shock wave-vortex sheet-shock wave, $S-V-S$, 
\item shock wave-vortex sheet-rarefaction wave, $S-V-S$, 
\item rarefaction wave-vortex sheet-shock wave, $R-V-S$, 
\item rarefaction wave-vortex sheet-rarefaction wave, $R-V-R$, 
\item rarefaction wave-vacuum state-rarefaction wave, $R-V-R$. 
\end{itemize}
The Riemann Problem for \eqref{eq: Euler equation in the form of conservation law} with data of form \eqref{eq: constant initial data} with restriction $v_{l}^{2}=v_{r}^{2}$ can be resolved by one of the following \emph{degenerate patterns},
\begin{itemize}
\item shock wave-shock wave, $S-S$, 
\item shock wave-rarefaction wave, $S-R$, 
\item rarefaction wave-shock wave, $R-S$, 
\item rarefaction wave-rarefaction wave, $R-R$, 
\item rarefaction wave-vacuum state-rarefaction wave, $R-V-R$. 
\end{itemize}
In this work, we focus on the configuration \emph{shock wave-rarefaction wave}\footnote{Due to symmetry, the configuration \emph{rarefaction wave-shock wave} can be studied with the same method.} from \emph{degenerate patterns}, and the configuration \emph{rarefaction wave-vortex sheet-rarefaction wave} from \emph{non-degenerate patterns} from \emph{non-degenerate patterns}. We will demonstrate that local existence of these configurations with $O(\varepsilon)$-perturbation of data  \eqref{eq: constant initial data} under \emph{necessary} compatibility conditions.

\begin{remark}
The relative entropy method, developed by DiPerna \cite{DiPerna79} and Dafermos \cite{Dafermos1}, is a classical tool for studying the uniqueness and stability of entropy solutions to hyperbolic conservation laws in the one-dimensional setting.

For Riemann solutions consisting solely of rarefaction waves, uniqueness in the class of $L^\infty$-entropy weak solutions can be rigorously established via this relative entropy framework; we refer to the work of Chen and Chen \cite{Chen-Chen} for a complete treatment.

For the wave pattern of shock-rarefaction,  it is not difficult to prove the uniqueness in the class of piecewise smooth entropy solutions with the same wave pattern, by the uniform stability estimates for shocks in \cite{MajdaShock2, MajdaShock3} and the uniform estimates without loss of derivatives for rarefaction waves obtained in this paper.  The uniqueness in the less regular class of $L^{\infty}$ admissible entropy solutions of Riemann solutions  involving shock waves or vortex sheets may not hold in stark contrast ( \cite{ChiodaroliDeLellisKreml2015} and  \cite{KrupaSzékelyhidi}).  For multi-dimensional scalar conservation laws, we refer to the foundational work of Kružkov \cite{Kruzkov1970}.
\end{remark}

\subsection{The background solution}

We provide a detailed calculation of the Riemann problem with configuration \emph{shock wave-rarefaction wave} and \emph{rarefaction wave-vortex sheet-rarefaction wave}. These will serve as background solutions for our subsequent analysis of the multi-dimensional case.

In accordance with Reimann's notation, we define the Riemann invariants $(\wb, w)$ relative to the solution $(v^{1}, v^{2}, c)$ with direction $e_{1}=(1, 0)$,
\[
\begin{cases}
\underline{w}:=\frac{c}{\gamma-1}+\frac{1}{2}v^{1},\\
w:=\frac{c}{\gamma-1}-\frac{1}{2}v^{1}.
\end{cases}
\]

\subsubsection{The Riemann problem with configuration shock wave-rarefaction wave}\label{subsubsec: S-R} 
In this case, it is required that for the initial data \eqref{eq: constant initial data} there exists a constant middle state $U_{m}$ such that the $(U_{l}, U_{m})$ can be connected with a single shock wave associated with the first eigenvalue $\lambda_{1}(U)$, $(U_{m}, U_{r})$ can be connected with a single family of rarefaction waves associated with the third eigenvalue $\lambda_{3}(U)$. To be more precise, we require that the nonlinear equation
\[
\begin{cases}
v_{m}^{1}-v_{l}^{1}=\sqrt{\frac{\rho_{l}-\rho_{m}}{\rho_{l}\rho_{m}}\big(p(\rho_{l})-p(\rho_{m})\big)},\\
w_{m}=w_{r},\\
v_{l}^{2}=v^{2}_{m}=v_{r}^{2}.
\end{cases}
\]
has a unique solution $U_{m}$ with $\rho_{m}>0$. The configuration can be depicted as follows:
\begin{center}
\begin{tikzpicture}[scale=1.5]
\coordinate (S) at (0,0);
\draw[thick] (-3,0) -- (3,0); 
\node[right] at (3,0) {$\Sigma_{0}$};
\node[below] at (0,0) {$x_{1}=0$};
\node[above] at (-1,2) {$S$};
\draw[thick] (0,0)--(0,0);
\draw[thick] (0,0)--(-1,2);
\node at (-2, 1.2) {$U_{l}$};
\node at (0.3, 1.2) {$U_{m}$};
 \node at (2.5, 1.2) {$U_{r}$};
\draw[dashed] (0,0)--(2.5,2);
\draw[dashed] (0,0)--(1.8,2);
\draw[dashed] (0,0)--(2.1,2);
\draw[dashed] (0,0)--(1.5,2);
\node[above] at (2.5, 2) {$C_{0}$};
\node[above] at (1.5, 2) {$H$};
\node at (1.25, 1.2) {$U_{R}$};
\end{tikzpicture}
\end{center}

together with its multi-dimensional version:

\begin{center}
\begin{tikzpicture}[scale=1]
  \begin{axis}[axis lines=none]
    
    


     \addplot [domain=0:10, smooth, color=blue, thick, dotted] {0.2*x};
    \addplot [domain=0.5:10.5, smooth, color=blue, thick, dotted] {0.2*(x-0.5)+1.5};
    \addplot [domain=10:10.5, smooth, color=blue, thick, dotted] {3*(x-10)+2};

    \addplot [domain=0:-4, smooth, thick, color=blue] {-0.5*x};
    \addplot [domain=0.5:-3.5, smooth, thick, color=blue] {-0.5*(x-0.5)+1.5};
    \draw [thick] (0, 0)--(0.5,1.5);
    \addplot [domain=-4:-3.5, smooth, thick, color=blue] {3*(x+4)+2};
    
    \draw[thin] (-6,0)--(10.5,0);
    \draw[thin] (-6,2)--(10.5,2);
     \draw[thin] (-5.5, 1.5)--(11, 1.5);
    \draw[thin] (-5.5, 3.5)--(11, 3.5);
    
    \draw[color=blue, thick, dotted] (0, 0)--(6, 2)--(6.5, 3.5)--(0.5,1.5);
    \draw[color=blue, dotted, thick] (0, 0)--(7, 2)--(7.5, 3.5)--(0.5,1.5);
    \draw[color=blue, dotted, thick] (0, 0)--(8, 2)--(8.5, 3.5)--(0.5,1.5);
    \draw[color=blue, dotted, thick] (0, 0)--(9, 2)--(9.5, 3.5)--(0.5,1.5);

  \end{axis}
\end{tikzpicture}
\end{center}

We outline the basic construction of the configuration above, which will be generalized to multi-dimensional case in section \ref{sec: Application to the Riemann problem 2}.
\begin{itemize}
\item {\bf Step 1} The outer boundary $C_{0}$ of the right rarefaction wave region is given by $x_{1}=(v_{r}^{1}+c_{r})t$, which is completely determined by $U_{r}$. 
\item {\bf Step 2} The solution in rarefaction wave region can be constructed as follows
\[
\begin{cases}
c_{R}=\frac{\gamma-1}{\gamma+1}\frac{x_1}{t}-\frac{\gamma-1}{\gamma+1}v^{1}_{r}+\frac{2}{\gamma+1}c_{r},\\
v^{1}_{R}=\frac{2}{\gamma+1}\frac{x_1}{t}+\frac{\gamma-1}{\gamma+1}v^{1}_{r}-\frac{2}{\gamma+1}c_{r},\\
v^{2}_{R}=v^{2}_{r}
\end{cases}
\]
\item {\bf Step 3} The inner boundary $H$ of the rarefaction wave region is given by $x_{1}=kt$, where $k$ is determined by solving the nonlinear equation
\[
v_{R}^{1}-v_{l}^{1}=\sqrt{\frac{\rho_{l}-\rho_{R}}{\rho_{l}\rho_{R}}\big(p(\rho_{l})-p(\rho_{R})\big)},
\]
\item {\bf Step 4} The middle state $U_{m}$ is determined by $U_{m}=U_{R}$ on $H$. 
\item {\bf Step 5} The shock front $S$ is given by $x_{1}=st$, where $s$ is determined by Rankine-Hugoniot condition,
\[
s(U_{l}-U_{m})=F(U_{l})-F(U_{m}).
\]
\end{itemize}
and the Lax-shock condition (the entropy condition). 
\subsubsection{The Riemann problem with configuration rarefaction wave-vortex sheet-rarefaction wave}\label{subsubsec: R-V-R}
In this case, it is required that the initial data \eqref{eq: constant initial data} satisfies
\begin{equation}
\begin{cases}
\wb_{l}<\wb_{r},\\
w_{l}>w_{r},\\
v^{2}_{l}\not=v^{2}_{r}, w_{r}+\wb_{l}>0.
\end{cases}
\end{equation}
The configuration can be depicted as follows:
\begin{center}
\begin{tikzpicture}[scale=1.5]
\coordinate (S) at (0,0);
\draw[thick] (-3,0) -- (3,0); 
\node[right] at (3,0) {$\Sigma_{0}$};
\node[below] at (0,0) {$x_{1}=0$};
\draw[thick] (0,0)--(0,0);
\node at (2.5, 1.2) {$U_{r}$};
\node at (-2.5, 1.2) {$U_{l}$};
\draw[dashed] (0,0)--(2.5,2);
\draw[dashed] (0,0)--(2.2,2);
\draw[dashed] (0,0)--(1.9,2);
\draw[dashed] (0,0)--(1.6,2);
\draw[dashed] (0,0)--(1.3,2);
\draw[dashed] (0,0)--(1,2);
\node at (0.9,1.2) {$U_{R}$};
\node at (0.3,1.2) {$U_{m, r}$};
\draw[dashed] (0,0)--(-2,2);
\draw[dashed] (0,0)--(-1.7,2);
\draw[dashed] (0,0)--(-1.4,2);
\draw[dashed] (0,0)--(-1.1,2);
\draw[dashed] (0,0)--(-0.8,2);
\draw[dotted, thick] (0,0)--(0.1,2);
\node[above] at (0.1, 2) {$V$};
\node at (-0.7,1.2) {$U_{L}$};
\node at (-0.2,1.2) {$U_{m, l}$};
\node[above] at (2.5, 2) {$C_{0}$};
\node[above] at (1, 2) {$H$};
\node[above] at (-2, 2) {$\Cb_{0}$};
\node[above] at (-0.8, 2) {$\Hb$};
\end{tikzpicture}
\end{center}
together with its multi-dimensional version:
\begin{center}
\begin{tikzpicture}[scale=1]
  \begin{axis}[axis lines=none]
    
    \addplot [domain=0:2.5, smooth, thick, dashed, color=blue] {0.8*x };
    \addplot [domain=0.5:3, smooth, thick, dashed, color=blue] {0.8*(x-0.5)+1.5};
    \addplot [domain=3:2.5, smooth, thick, dashed, color=blue] {3*(x-2.5)+2};
    
     
    \addplot [domain=0:-2, smooth, color=blue, dotted, thick] {-x};
    \addplot [domain=0.5:-1.5, smooth, color=blue, dotted, thick] {-x+2};
    \addplot [domain=-2:-1.5, smooth, color=blue, dotted, thick] {3*(x+2)+2};


     \addplot [domain=0:10, smooth, color=blue, dotted, thick] {0.2*x};
    \addplot [domain=0.5:10.5, smooth, color=blue, dotted, thick] {0.2*(x-0.5)+1.5};
    \addplot [domain=10:10.5, smooth, color=blue, dotted, thick] {3*(x-10)+2};

    \addplot [domain=0:-4, smooth, color=blue, dotted, thick] {-0.5*x};
    \addplot [domain=0.5:-3.5, smooth, color=blue, dotted, thick] {-0.5*(x-0.5)+1.5};
    \draw [thick] (0, 0)--(0.5,1.5);
    \addplot [domain=-4:-3.5, smooth, color=blue, dotted, thick] {3*(x+4)+2};
    
    \draw[thin] (-6,0)--(10.5,0);
    \draw[thin] (-6,2)--(10.5,2);
     \draw[thin] (-5.5, 1.5)--(11, 1.5);
    \draw[thin] (-5.5, 3.5)--(11, 3.5);
    
    \draw[color=blue, thick, dotted] (0, 0)--(6, 2)--(6.5, 3.5)--(0.5,1.5);
    \draw[color=blue, dotted, thick] (0, 0)--(7, 2)--(7.5, 3.5)--(0.5,1.5);
    \draw[color=blue, dotted, thick] (0, 0)--(8, 2)--(8.5, 3.5)--(0.5,1.5);
    \draw[color=blue, dotted, thick] (0, 0)--(9, 2)--(9.5, 3.5)--(0.5,1.5);
    
     \draw[color=blue, thick, dotted] (0, 0)--(-5, 2)--(-4.5, 3.5)--(0.5,1.5);
    \draw[color=blue, dotted, thick] (0, 0)--(-3, 2)--(-2.5, 3.5)--(0.5,1.5);
    \draw[color=blue, dotted, thick] (0, 0)--(8, 2)--(8.5, 3.5)--(0.5,1.5);
    \draw[color=blue, dotted, thick] (0, 0)--(9, 2)--(9.5, 3.5)--(0.5,1.5);

  \end{axis}
\end{tikzpicture}
\end{center}

We outline the basic construction of the configuration above, which will be generalized to multi-dimensional case in section \ref{sec: Application to the Riemann problem 3}.
\begin{itemize}
\item {\bf Step 1} The outer boundary $C_{0}$ of the rarefaction wave region is given by $x_{1}=(v_{r}^{1}+c_{r})t$, which is completely determined by $U_{r}$. The outer boundary $\Cb_{0}$ of the left rarefaction wave region is given by $x_{1}=(v_{l}^{1}-c_{l})t$, which is completely determined by $U_{r}$.
\item {\bf Step 2} The solution in the right rarefaction wave region can be constructed as follows
\[
\begin{cases}
c_{R}=\frac{\gamma-1}{\gamma+1}\frac{x_1}{t}-\frac{\gamma-1}{\gamma+1}v^{1}_{r}+\frac{2}{\gamma+1}c_{r},\\
v^{1}_{R}=\frac{2}{\gamma+1}\frac{x_1}{t}+\frac{\gamma-1}{\gamma+1}v^{1}_{r}-\frac{2}{\gamma+1}c_{r},\\
v^{2}_{R}=v^{2}_{r}
\end{cases}
\]
The solution in the left rarefaction wave region can be constructed as follows
\[
\begin{cases}
c_{L}=-\frac{\gamma-1}{\gamma+1}\frac{x_1}{t}+\frac{\gamma-1}{\gamma+1}v^{1}_{l}+\frac{2}{\gamma+1}c_{l},\\
v^{1}_{L}=\frac{2}{\gamma+1}\frac{x_1}{t}+\frac{\gamma-1}{\gamma+1}v^{1}_{l}+\frac{2}{\gamma+1}c_{l},\\
v^{2}_{L}=v^{2}_{l}
\end{cases}
\]

\item {\bf Step 3} The inner boundary $H$ of the right rarefaction wave region is given by $x_{1}=k_{r}t$, where $k_{r}$ is determined by solving the linear equation $\wb_{R}=\wb_{l}$. The inner boundary $\Hb$ of the left rarefaction wave region is given by $x_{1}=k_{l}t$, where $k_{l}$ is determined by solving the linear equation $w_{L}=w_{r}$. 
\item {\bf Step 4} The middle-right state $U_{m, r}$ is determined by $U_{m, r}=U_{R}$ on $H$. The middle-left state $U_{m, l}$ is determined by $U_{m, l}=U_{L}$ on $\Hb$. Moreover, we have $v_{m, l}^{1}=v_{m, r}^{1}$ and we can write $v_{m}^{1}=v_{m, l}^{1}=v_{m, r}^{1}$ without causing any confusion.
\item {\bf Step 5} The vortex sheet $V$ is given by $x_{1}=v_{m}^{1}t$.
\end{itemize}

\subsection{Assumptions on the initial data} For the rest of the work, we will use the three fixed background solution from configurations of \emph{rarefaction wave-rarefaction wave},  \emph{shock wave-rarefaction wave} and \emph{rarefaction wave-vortex sheet-rarefaction wave}.

\subsubsection{The configuration of rarefaction wave-rarefaction wave} In this case, we fix two constant states $\Ur_{l}$ and $\Ur_{r}$, admitting the \emph{rarefaction wave-rarefaction wave configuration} with additional condition $v_{l}^{2}=v_{r}^{2}$. We study the two dimensional isentropic Euler equations \eqref{eq: Euler equations}. We assume the solution is defined on
\[
\Sigma_{0}:=\mathbb{R}\times \mathbb{R}/2\pi\mathbb{Z}=\{(t, x_{1}, x_{2}), t=0, x_{1}\in \mathbb{R}, 0\leq x_{2}\leq 2\pi\}.
\]
The initial data of the equations \eqref{eq: Euler equations} are posed on $\Sigma_{0}$:
\[
U|_{t=0}=\begin{cases}
U_{l}(0, x_{1}, x_{2}), x_{1}\leq 0;\\
U_{r}(0, x_{1}, x_{2}), x_{2}\geq 0.
\end{cases}
\]
We use $\varepsilon$ to quantify the size of the perturbation and we assume the initial data $U|_{t=0}$ is an $\varepsilon-$ perturbated Riemann data, i.e. it satisfies the following definition:

\begin{definition}\label{def:data for R-R} For a given set of initial data $(v,c)\big|_{t=0}$, we say that it is an {\bf $\varepsilon$-perturbed Riemann data} for the Euler equations \eqref{eq: Euler equations} in the sense of admitting the \emph{rarefaction wave-rarefaction wave configuration}, if it satisfies the following assumptions:
	\begin{itemize}
		\item[1)]{\bf (smoothness)} $U_{l}(0, x_{1}, x_{2})$ is smooth on $x_1 \leqslant 0$ (up the boundary); $U_{r}(0, x_{1}, x_{2})$ are smooth on $x_1 \geqslant 0$ (up the boundary).
		\item[2)]{\bf (smallness and localization)} $U|_{t=0}$ is an $O(\varepsilon)$-purterbation of data $\Ur_{l}$ and $\Ur_{r}$ defined as above. In other words, there exists a small constant $\varepsilon>0$ whose size will be determined in the course of the proof and a sufficiently large integer $N_{0}$ so that
		\[\begin{cases}
			\|v^1_l(0,x)-\mathring{v}_l\|_{H^{N_0}(\Sigma_-)}+\|v^2_l(0,x)\|_{H^{N_0}(\Sigma_-)}+\|c_l(0,x)-\mathring{c}_l\|_{H^{N_0}(\Sigma_-)}<\varepsilon,\\
			\|v^1_r(0,x)-\mathring{v}_r\|_{H^{N_0}(\Sigma_+)}+\|v^2_r(0,x)\|_{H^{N_0}(\Sigma_+)}+\|c_r(0,x)-\mathring{c}_r\|_{H^{N_0}(\Sigma_+)}<\varepsilon,
		\end{cases}
		\]
		where $\Sigma_-=\mathbb{R}_{\leqslant 0}\times [0,2\pi]$ and $\Sigma_+=\mathbb{R}_{\geqslant 0}\times [0,2\pi]$.
		\item[3)]{\bf (Compatibility conditions up to order $N_{0}$)} The initial data $U|_{t=0}$ satisfies higher order compatibility condition on $x_{1}=0$,
		\begin{equation}\label{condition: R-R}
\begin{cases}
v_{l}^{2}(0, x_{2})=v_{r}^{2}(0, x_{2}),\\
\frac{1}{\rho_{l}^{k}}\partial_{1}^{k}v_{l}^{2}(0, x_{2})=\frac{1}{\rho_{r}^{k}}\partial_{1}^{k}v_{r}^{2}(0, x_{2})+F_{k}(U_{l}, U_{r}), 1\leq k\leq N_{0}.
\end{cases}
\end{equation}
                where $F_{k}(\cdots)$ represent some smooth function in variables $\partial_{x_1}^{a}\partial_{x_2}^{b}(U_{l}, U_{r})(0, x_{2})$ of size $O(\varepsilon)$ with $a\leq k-1$ and $a+b\leq k$, whose exact form will not be used.
	\end{itemize}
\end{definition}

The main results of our analysis for the Riemann problem with configuration of \emph{rarefaction wave-rarefaction wave} can be summarized as follows:
\begin{center}
\includegraphics[width=3in]{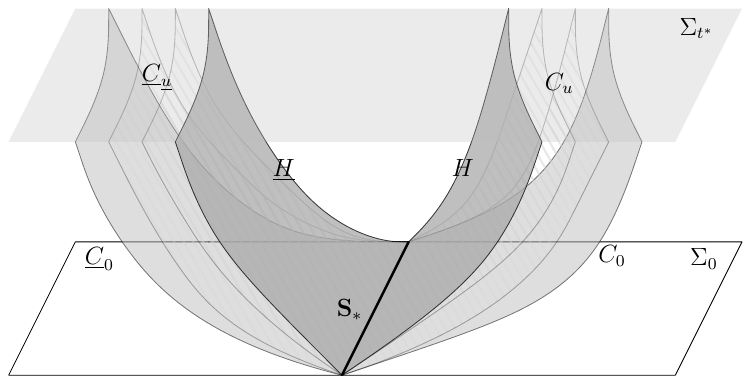}
\end{center}

\medskip

{\bf (Rough statement of the main result for the $R$-$R$ configuration)} \ \ For any positive constant $t^{*}$, the Cauchy problem for Euler equations \eqref{eq: Euler equations} with $\varepsilon-$ perturbated Riemann data $U|_{t=0}$ satisfying definition \ref{def:data for R-R}
admits an entropy solution $U$ on $[0, t^{*}]\times \mathbb{R}\times \mathbb{R}/2\pi\mathbb{Z}$ which is $O(\varepsilon)$ close to the \emph{rarefaction wave-rarefaction wave} configuration with initial data $\Ur=(\Ur_{l}, \Ur_{r})$ providing that $\varepsilon$ is sufficiently small with respect to $\Ur, N_{0}, t^{*}$. Moreover, the solution $U$ connects $U_{l}$ and $U_{r}$ through a smooth superposition of two genuinely nonlinear rarefaction waves, and no other discontinuity appears, which can be depicted as follows:

\subsubsection{The configuration of shock wave-rarefaction wave} In this case, we fix two constant states $\Ur_{l}$ and $\Ur_{r}$, admitting the \emph{shock wave-rarefaction wave configuration} in the sense of \ref{subsubsec: S-R}. We study the two dimensional isentropic Euler equations \eqref{eq: Euler equations}. We assume the solution is defined on
\[
\Sigma_{0}:=\mathbb{R}\times \mathbb{R}/2\pi\mathbb{Z}=\{(t, x_{1}, x_{2}), t=0, x_{1}\in \mathbb{R}, 0\leq x_{2}\leq 2\pi\}.
\]
The initial data of the equations \eqref{eq: Euler equations} are posed on $\Sigma_{0}$:
\[
U|_{t=0}=\begin{cases}
U_{l}(0, x_{1}, x_{2}), x_{1}\leq 0;\\
U_{r}(0, x_{1}, x_{2}), x_{2}\geq 0.
\end{cases}
\]
We use $\varepsilon$ to quantify the size of the perturbation and we assume the initial data $U|_{t=0}$ is an $\varepsilon-$ perturbated Riemann data, i.e. it satisfies the following definition:

\begin{definition}\label{def:data for S-R} For a given set of initial data $(v,c)\big|_{t=0}$, we say that it is an {\bf $\varepsilon$-perturbed Riemann data} for the Euler equations \eqref{eq: Euler equations} in the sense of admitting the \emph{shock wave-rarefaction wave configuration}, if it satisfies the following assumptions:
	\begin{itemize}
		\item[1)]{\bf (smoothness)} $U_{l}(0, x_{1}, x_{2})$ is smooth on $x_1 \leqslant 0$ (up the boundary); $U_{r}(0, x_{1}, x_{2})$ are smooth on $x_1 \geqslant 0$ (up the boundary).
		\item[2)]{\bf (smallness and localization)} $U|_{t=0}$ is an $O(\varepsilon)$-purterbation of data $\Ur_{l}$ and $\Ur_{r}$ defined as above. In other words, there exists a small constant $\varepsilon>0$ whose size will be determined in the course of the proof and a sufficiently large integer $N_{0}$ so that
		\[\begin{cases}
			\|v^1_l(0,x)-\mathring{v}_l\|_{H^{N_0}(\Sigma_-)}+\|v^2_l(0,x)\|_{H^{N_0}(\Sigma_-)}+\|c_l(0,x)-\mathring{c}_l\|_{H^{N_0}(\Sigma_-)}<\varepsilon,\\
			\|v^1_r(0,x)-\mathring{v}_r\|_{H^{N_0}(\Sigma_+)}+\|v^2_r(0,x)\|_{H^{N_0}(\Sigma_+)}+\|c_r(0,x)-\mathring{c}_r\|_{H^{N_0}(\Sigma_+)}<\varepsilon,
		\end{cases}
		\]
		where $\Sigma_-=\mathbb{R}_{\leqslant 0}\times [0,2\pi]$ and $\Sigma_+=\mathbb{R}_{\geqslant 0}\times [0,2\pi]$.
		\item[3)]{\bf (Compatibility conditions up to order $N_{0}$)} The initial data $U|_{t=0}$ satisfies higher order compatibility condition on $x_{1}=0$,
		\begin{equation}\label{condition: S-R}
		\begin{cases}
		v_{l}^{2}(0, x_{2})=v_{r}^{2}(0, x_{2}),\\
                (\frac{v^{1}_{l}-\sigma}{v_{H}^{1}-\sigma})^{k}\partial_{x_1}^{k}v_{l}^{2}(0, x_2)=\frac{\rho_{H}^{k}}{\rho_{r}^{k}}(0, x_{2})\partial_{x_1}^{k}v_{r}^{2}(0, x_{2})+F_{k}(U_{l}, U_{r})(0, x_{2}), 1\leq k\leq N_{0}.
                \end{cases}
                \end{equation}
                where $\rho_{H}(0, x_{2}), \sigma(0, x_{2}), v_{H}^{1}(0, x_{2})$ are determined by $(U_{l}, U_{r})(0, x_{2})$, and $F_{k}(\cdots)$ represent some smooth function in variables $\partial_{x_1}^{a}\partial_{x_2}^{b}(U_{l}, U_{r})(0, x_{2})$ of size $O(\varepsilon)$ with           $a\leq k-1$ and $a+b\leq k$, whose exact form will not be used.

	\end{itemize}
\end{definition}

The main results of our analysis for the Riemann problem with configuration of \emph{shock wave-rarefaction wave} can be summarized as follows:

\medskip

{\bf (Rough statement of the first part of the main result)} \ \ For any positive constant $t^{*}$, the Cauchy problem for Euler equations \eqref{eq: Euler equations} with $\varepsilon-$ perturbated Riemann data $U|_{t=0}$ satisfying definition \ref{def:data for S-R}
admits entropy solution $U$ on $[0, t^{*}]\times \mathbb{R}\times \mathbb{R}/2\pi\mathbb{Z}$ which is $O(\varepsilon)$ close to the \emph{shock wave-rarefaction wave} configuration in \ref{subsubsec: S-R} with initial data $\Ur=(\Ur_{l}, \Ur_{r})$ providing that $\varepsilon$ is sufficiently small with respect to $\Ur, N_{0}, t^{*}$. Moreover, the solution $U$ connects $U_{l}$ and $U_{r}$ through a smooth superposition of a genuinely nonlinear shock wave and a genuinely nonlinear rarefaction wave, and no other discontinuity appears, which can be depicted as follows:
\begin{center}
\begin{tikzpicture}
  \begin{axis}[axis lines=none]
  \addplot [domain=0:5, smooth, color=blue, dotted, thick] {0.4*x+0.002*sin(540*x)*x*(x-5)+0.05*x*(x-5)};
    \addplot [domain=0.5:5.5, smooth, color=blue, dotted, thick] {0.4*(x-0.5)+1.5+0.002*sin(540*x)*(x-0.5)*(x-5.5)+0.04*(x-0.5)*(x-5.5)};
    \draw [thick] (0, 0)--(0.5,1.5);
    \addplot [domain=5:5.5, smooth, color=blue, dotted, thick] {3*(x-5)+2+2*sin(1080*x-120)*(x-5)*(x-5.5)};
   \draw (5, 2.7) node {$H$};
     \addplot [domain=0:10, smooth, color=blue, dotted, thick] {0.2*x+0.0005*sin(360*x)*x*(x-10)+0.01*x*(x-10)};
    \addplot [domain=0.5:10.5, smooth, color=blue, dotted, thick] {0.2*(x-0.5)+1.5+0.0005*sin(360*x)*(x-0.5)*(x-10.5)+0.01*(x-0.5)*(x-10.5)};
    \draw [thick] (0, 0)--(0.5,1.5);
    \addplot [domain=10:10.5, smooth, color=blue, dotted, thick] {3*(x-10)+2+2*sin(1080*x-60)*(x-10)*(x-10.5)};
    \draw (10, 2.7) node {$C_{0}$};

     \addplot [domain=0:-4, smooth, thick, color=blue] {-0.5*x+0.05*x*(x+4)+0.01*sin(1080*x*(x+4))};
    \addplot [domain=0.5:-3.5, smooth, thick, color=blue] {-0.5*(x-0.5)+1.5+0.05*(x-0.5)*(x+3.5)+0.01*sin(1080*(x-0.5)*(x+3.5))};
    \draw [thick] (0, 0)--(0.5,1.5);
    \addplot [domain=-4:-3.5, smooth, thick, color=blue] {3*(x+4)+2+2*sin(1080*x-30)*(x+4)*(x+3.5)};
    \draw (-4, 2.7) node {$S$};
    \draw[thin] (-6,0)--(10.5,0);
    \draw[thin] (-6,2)--(10.5,2);
    
    \draw[thin] (-5.5, 1.5)--(11, 1.5);
    \draw[thin] (-5.5, 3.5)--(11, 3.5);
  \end{axis}
\end{tikzpicture}
\end{center}

\medskip

\subsubsection{The configuration of rarefaction wave-vortex sheet-rarefaction wave} In this case, we fix two constant states $\Ur_{l}$ and $\Ur_{r}$\footnote{It won't cause any confusion since we will treat two cases separately.}, admitting the \emph{rarefaction wave-vortex sheet-rarefaction wave configuration} in the sense of \ref{subsubsec: R-V-R}. Moreover, we assume that the data $(\Ur_{l}, \Ur_{r})$ satisfies the {\bf Super sonic condition}:
\begin{equation}\label{eq: super sonic condition}
|\mathring{v}_{l}^{2}-\mathring{v}_{r}^{2}|>2\sqrt{2}\cdot \frac{\gamma-1}{2}(\mathring{w}_{r}+\mathring{\wb}_{l}),
\end{equation}
We study the two dimensional isentropic Euler equations \eqref{eq: Euler equations}. We assume the solution is defined on
\[
\Sigma_{0}:=\mathbb{R}\times \mathbb{R}/2\pi\mathbb{Z}=\{(t, x_{1}, x_{2}), t=0, x_{1}\in \mathbb{R}, 0\leq x_{2}\leq 2\pi\}.
\]
The initial data of the equations \eqref{eq: Euler equations} are posed on $\Sigma_{0}$:
\[
U|_{t=0}=\begin{cases}
U_{l}(0, x_{1}, x_{2}), x_{1}\leq 0;\\
U_{r}(0, x_{1}, x_{2}), x_{2}\geq 0.
\end{cases}
\]
We use $\varepsilon$ to quantify the size of the perturbation and we assume the initial data $U|_{t=0}$ is an $\varepsilon-$ perturbated Riemann data, i.e. it satisfies the following definition:

\begin{definition}\label{def:data for R-V-R} For a given set of initial data $(v,c)\big|_{t=0}$, we say that it is an {\bf $\varepsilon$-perturbed Riemann data} for the Euler equations \eqref{eq: Euler equations} in the sense of admitting the \emph{rarefaction wave-vortex sheet-rarefaction wave configuration}, if it satisfies the following assumptions:
	\begin{itemize}
		\item[1)]{\bf (smoothness)} $U_{l}(0, x_{1}, x_{2})$ is smooth on $x_1 \leqslant 0$ (up the boundary); $U_{r}(0, x_{1}, x_{2})$ are smooth on $x_1 \geqslant 0$ (up the boundary).
		\item[2)]{\bf (smallness and localization)} $U|_{t=0}$ is an $O(\varepsilon)$-purterbation of data $\Ur_{l}$ and $\Ur_{r}$ defined as above. In other words, there exists a small constant $\varepsilon>0$ whose size will be determined in the course of the proof and a sufficiently large integer $N_{0}$ so that
		\[\begin{cases}
			\|v^1_l(0,x)-\mathring{v}_l\|_{H^{N_0}(\Sigma_-)}+\|v^2_l(0,x)\|_{H^{N_0}(\Sigma_-)}+\|c_l(0,x)-\mathring{c}_l\|_{H^{N_0}(\Sigma_-)}<\varepsilon,\\
			\|v^1_r(0,x)-\mathring{v}_r\|_{H^{N_0}(\Sigma_+)}+\|v^2_r(0,x)\|_{H^{N_0}(\Sigma_+)}+\|c_r(0,x)-\mathring{c}_r\|_{H^{N_0}(\Sigma_+)}<\varepsilon,
		\end{cases}
		\]
		where $\Sigma_-=\mathbb{R}_{\leqslant 0}\times [0,2\pi]$ and $\Sigma_+=\mathbb{R}_{\geqslant 0}\times [0,2\pi]$.
	\end{itemize}
\end{definition}

The main results of our analysis for the Riemann problem with configuration of \emph{rarefaction wave-vortex sheet-rarefaction wave} can be summarized as follows:

\medskip

{\bf (Rough statement of the second part of the main result)} \ \ For any positive constant $t^{*}$, the Cauchy problem for Euler equations \eqref{eq: Euler equations} with $\varepsilon-$ perturbated Riemann data $U|_{t=0}$ satisfying definition \ref{def:data for R-V-R}
admits an entropy solution $U$ on $[0, t^{*}]\times \mathbb{R}\times \mathbb{R}/2\pi\mathbb{Z}$ which is $O(\varepsilon)$ close to the \emph{rarefaction wave-vortex sheet-rarefaction wave} configuration in \ref{subsubsec: R-V-R} with initial data $\Ur=(\Ur_{l}, \Ur_{r})$ providing that $\varepsilon$ is sufficiently small with respect to $\Ur, N_{0}, t^{*}$. Moreover, the solution $U$ connects $U_{l}$ and $U_{r}$ through a smooth superposition of two genuinely nonlinear rarefaction waves and one linearly degenerate vortex sheet, and no other discontinuity appears, which can be depicted as follows:
\begin{center}
\begin{tikzpicture}
  \begin{axis}[axis lines=none]
    
    \addplot [domain=0:2.5, smooth, thick, dashed, color=blue] {0.8*x+0.02*sin(720*x)*x*(x-2.5)+0.05*x*(x-2.5) };
    \addplot [domain=0.5:3, smooth, thick, dashed, color=blue] {0.8*(x-0.5)+1.5+0.02*sin(720*x)*(x-0.5)*(x-3)+0.05*(x-0.5)*(x-3)};
    \draw [thick] (0, 0)--(0.5,1.5);
    \addplot [domain=3:2.5, smooth, thick, dashed, color=blue] {3*(x-2.5)+2+2*sin(1080*x-45)*(x-3)*(x-2.5)};

     \addplot [domain=0:-2, smooth, color=blue, dotted, thick] {-x+0.01*sin(540*x)*x*(x+2)+0.1*x*(x+2)};
    \addplot [domain=0.5:-1.5, smooth, color=blue, dotted, thick] {-x+2+0.01*sin(1080*x)*(x-0.5)*(x+1.5)+0.1*(x-0.5)*(x+1.5)};
    \draw [thick] (0, 0)--(0.5,1.5);
    \addplot [domain=-2:-1.5, smooth, color=blue, dotted, thick] {3*(x+2)+2+2*sin(1080*x)*(x+2)*(x+1.5)};
    
     \draw (-2, 2.7) node {$\Hb_{0}$};

     \addplot [domain=0:5, smooth, color=blue, dotted, thick] {0.4*x+0.002*sin(540*x)*x*(x-5)+0.05*x*(x-5)};
    \addplot [domain=0.5:5.5, smooth, color=blue, dotted, thick] {0.4*(x-0.5)+1.5+0.002*sin(540*x)*(x-0.5)*(x-5.5)+0.08*(x-0.5)*(x-5.5)};
    \draw [thick] (0, 0)--(0.5,1.5);
    \addplot [domain=5:5.5, smooth, color=blue, dotted, thick] {3*(x-5)+2+2*sin(1080*x-120)*(x-5)*(x-5.5)};
    
     \draw (5, 2.7) node {$H_{0}$};

     \addplot [domain=0:10, smooth, color=blue, dotted, thick] {0.2*x+0.0005*sin(360*x)*x*(x-10)+0.01*x*(x-10)};
    \addplot [domain=0.5:10.5, smooth, color=blue, dotted, thick] {0.2*(x-0.5)+1.5+0.0005*sin(360*x)*(x-0.5)*(x-10.5)+0.01*(x-0.5)*(x-10.5)};
    \draw [thick] (0, 0)--(0.5,1.5);
    \addplot [domain=10:10.5, smooth, color=blue, dotted, thick] {3*(x-10)+2+2*sin(1080*x-60)*(x-10)*(x-10.5)};
    
     \draw (10, 2.7) node {$C_{0}$};

     \addplot [domain=0:-4, smooth, color=blue, dotted, thick] {-0.5*x+0.05*x*(x+4)+0.01*sin(1080*x*(x+4))};
    \addplot [domain=0.5:-3.5, smooth, color=blue, dotted, thick] {-0.5*(x-0.5)+1.5+0.05*(x-0.5)*(x+3.5)+0.01*sin(1080*(x-0.5)*(x+3.5))};
    \draw [thick] (0, 0)--(0.5,1.5);
    \addplot [domain=-4:-3.5, smooth, color=blue, dotted, thick] {3*(x+4)+2+2*sin(1080*x-30)*(x+4)*(x+3.5)};

     \draw (-4, 2.7) node {$\Cb_{0}$};

    \draw[thin] (-6,0)--(10.5,0);
    \draw[thin] (-6,2)--(10.5,2);
    
    \draw[thin] (-5.5, 1.5)--(11, 1.5);
    \draw[thin] (-5.5, 3.5)--(11, 3.5);
  \end{axis}
\end{tikzpicture}
\end{center}

\medskip

\subsection{Previous Works}
The compressible Euler equations, the fundamental system governing inviscid compressible fluid motion, lie at the core of hyperbolic conservation laws and continuum mechanics. The Riemann problem—solving the Cauchy problem with piecewise constant initial data—is the cornerstone for understanding nonlinear wave phenomena in hyperbolic systems, with three elementary wave patterns: shock waves, rarefaction waves, and contact discontinuities.

In the one-dimensional setting, the theory of hyperbolic conservation laws is fully mature. Riemann \cite{Riemann} first formulated and solved the Riemann problem for 1D isentropic Euler equations, revealing rarefaction and shock wave formation in compressible flows. Lax \cite{Lax1957,PL} extended this to $n\times n$ strictly hyperbolic conservation laws, establishing the general Riemann problem theory, a complete characterization of elementary wave solutions and their admissibility criteria, and a full solution via the three elementary wave types. A landmark breakthrough came from Glimm \cite{Glimm1965}, who established wave interaction estimates based on the Riemann problem, proved the existence of BV weak solutions, and developed the celebrated random choice method. This method constructs global weak entropy solutions with Riemann solutions as building blocks, yielding not only global existence of BV solutions for strictly hyperbolic systems with small BV initial data, but also a detailed description of the local and large-time behavior of weak solutions \cite{Diperna3,Diperna4,glimmlax,L2,liutp2}. For the 1D perturbed Riemann problem (piecewise continuous initial data with a single discontinuity), Li \& Yu \cite{LiTT} constructed local-in-time piecewise continuous solutions via the free boundary approach. For a comprehensive account of the full 1D theory, we refer to the monographs of Dafermos \cite{Dafermos} and  Smoller \cite{Smoller},  and Liu's recent textbook \cite{Liu2021book}.

In sharp contrast to the well-developed 1D theory, the multi-dimensional setting faces fundamental obstacles, most notably the breakdown of the BV space framework \cite{Rauch}. Following the Riemann-Lax-Glimm paradigm, a core problem is to understand multi-dimensional elementary waves and the Riemann problem, which generally reduce to free boundary problems subject to potential instabilities and coupled initial singularities.

\textbf{Stability of Single Shock Fronts.} The pioneering work of Majda \cite{MajdaShock2,MajdaShock3} established the stability of planar shock fronts in gas dynamics, initiating the systematic study of multi-dimensional elementary waves. For subsequent developments and extensions, we refer to the monographs \cite{Metivier2001book, Benzoni-Gavage-Serre2007book} and references therein.

\textbf{Stability of Rarefaction Waves.} Alinhac \cite{AlinhacWaveRare0, AlinhacWaveRare1, AlinhacWaveRare2} first proved the nonlinear stability of single-family rarefaction waves via Nash-Moser iteration. Luo \& Yu \cite{Luo-YuRare1, Luo-YuRare2} further studied this problem by combining the last slice method from \cite{CK} with the acoustical geometry framework for shock formation developed in \cite{ChristodoulouMiao}, deriving derivative-loss-free energy estimates in the irrotational rarefaction wave region, constructing rarefaction fronts away from vacuum, and applying these results to the stability of $R-R$ type Riemann problems.

\textbf{Stability of Vortex Sheets.} Beyond shocks and rarefaction waves, vortex sheets are another fundamental elementary wave in multi-dimensional gas dynamics, defined as fluid interfaces with discontinuous tangential velocity, and are characteristic discontinuities for the compressible Euler equations. The incompressible Euler vortex sheet problem is ill-posed due to the Kelvin–Helmholtz instability. For the compressible case, Miles \cite{17} showed via modal analysis that 2D compressible vortex sheets are linearly unstable unless Mach-number-dependent stability conditions hold; under these conditions, Coulombel \& Secchi \cite{Coulombel-Secchi1, Coulombel-Secchi2} proved short-time stability of 2D isentropic flat vortex sheets under small initial perturbations. Fejer \& Miles \cite{16} extended this analysis to 3D, proving that 3D vortex sheets are always linearly unstable. Stevens \cite{18} established local well-posedness of the fully non-isentropic compressible vortex sheet problem with surface tension, demonstrating its stabilizing effect. We omit discussion of magnetic field stabilization to remain focused on the compressible Euler equations.

\textbf{Results for Coexisting Multiple Nonlinear Waves.} Alinhac's framework has been adapted by Wang \& Yin \cite{WangYin} to analyze Prandtl-Meyer expansion waves and Li \cite{Li1991} and Chen \& Li \cite{ChenLi} to study combined elementary wave patterns. Qu \& Xiang \cite{QuXiang} also studied 3D steady supersonic Euler flow past a concave cornered wedge with low downstream pressure.

\textbf{Global-in-Time Results.} A core challenge in the global-in-time dynamics of nonlinear waves for multi-dimensional compressible Euler equations is the uniform long-time control of vorticity. Owing to the difficulty of establishing robust global-in-time a priori bounds for vorticity, nearly all existing global stability results are established under the irrotational (potential flow) assumption. Within this framework: Chen, Xin \& Yin \cite{ChenXinYin} proved global stability of stationary supersonic flow past an infinite curved symmetric cone; Ginsberg \& Rodnianski \cite{ginsberg2024stabilityirrotationalshockslandau} established global stability of Landau's two-shock profiles for potential flow; Zhang \cite{ZhangRuotong} obtained local and global-in-time results for 3D rarefaction waves from initial singularities; for flows with vacuum formation at infinity, Wang \cite{Wang} constructed global exterior solutions in $\mathbb{R}^3$ with rarefaction at null infinity, and Xu \& Yin \cite{GangXuYinHuicheng} proved global existence and stability of smooth supersonic flows in a divergent nozzle with vacuum at infinity.

There are also important works on the multi-dimensional stability of nonlinear waves in other hyperbolic systems, such as the relativistic Euler equations and compressible MHD, among others. However, these results are not directly relevant to the present work, and a detailed review is omitted here due to space limitations.

\subsection{The acoustical geometry of Euler equations} We set up the acoustical geometry for the rarefaction wave problem. We refer to \cite{Luo-YuRare1, Luo-YuRare2} for detailed account.

\subsubsection{Riemann invariants, specific vorticity and the wave-transport equations} There are two kinds of infinitesimal waves in Euler equations \eqref{eq: Euler equations},
\begin{itemize}
\item {\bf (1) acoustical wave} The acoustical wave is determined by acoustical metric $g$, which is defined by
\[
g=-c^{2}dt^{2}+\sum_{i=1}^{2}(dx^{i}-v^{i}dt)^{2}.
\]
\item {\bf (2) vorticity wave} The vorticity wave is transported by material vector field $B$, which is defined by
\[
B=\frac{\partial}{\partial t}+\sum_{i=1}^{2}v^{i}\frac{\partial}{\partial x_{i}}
\]
\end{itemize}

We also define
\[
\psi_{1}=-v^{1}, \psi_{2}=-v^{2}
\]
Following Riemann, we define the Riemann invariants with direction $e_{1}=(1, 0)$ as follows,
\begin{equation}\label{eq: Riemann invariants}
w=\frac{1}{\gamma-1}c+\frac{1}{2}\psi_{1}, \wb=\frac{2}{\gamma-1}c-\frac{1}{2}\psi_{1},
\end{equation}
We define the specific vorticity to be
\begin{equation}\label{eq: specific vorticity}
\Omega:=\frac{\partial_{1}v^{2}-\partial_{2}v^{1}}{\rho}.
\end{equation}
Then the Riemann invariants satisfy the following wave-transport equations:

\begin{equation}\label{eq: wave-transport system}
\begin{cases}
\begin{aligned}
\Box_{g}(\underline{w})={}&-c^{-1}\left[\frac{7-\gamma}{4}g(D\underline{w}, D\underline{w})+\frac{\gamma+1}{4}g(Dw,Dw)+\frac{1}{2}g(Dv^{2}, Dv^{2})\right]\\
&+\frac{1}{2}c^{-1}\rho^{2}\Omega^{2}-\frac{1}{2}\rho\partial_{2}\Omega-\frac{2}{\gamma-1}\rho c^{-1}\Omega\partial_{2}c,
\end{aligned}\\[1.5ex]
\begin{aligned}
\Box_{g}(w)={}&-c^{-1}\left[\frac{\gamma+1}{4}g(D\underline{w}, D\underline{w})+\frac{7-\gamma}{4}g(Dw,Dw)+\frac{1}{2}g(Dv^{2}, Dv^{2})\right]\\
&+\frac{1}{2}c^{-1}\rho^{2}\Omega^{2}+\frac{1}{2}\rho\partial_{2}\Omega+\frac{2}{\gamma-1}\rho c^{-1}\Omega\partial_{2}c,
\end{aligned}\\[1.5ex]
\begin{aligned}
\Box_{g}(\psi_{2})={}&-c^{-1}\left[\frac{3-\gamma}{2}g(D\underline{w}, \psi_{2})+\frac{3-\gamma}{2}g(Dw, \psi_{2})\right]\\
&-\rho\partial_{1}\Omega-\frac{4}{\gamma-1}\rho c^{-1}\Omega\partial_{1}c,
\end{aligned}\\[1.5ex]
B\Omega=0.
\end{cases}
\end{equation}

\subsubsection{The acoustical geometry and the construction of the first null frame}\label{section:acoustical geometry}
We use $\mathcal{D}_0$ to denote the future domain of dependence determined by the data $U_{r}|_{t=0}$ on the right. Its boundary $C_0$ is a null hypersurface with respect to the acoustical metric $g$. We also use $U_{r}(t, x_{1}, x_{2})$ to denote the solution in $\mathcal{D}_0$ since it is completely determined by the data $U_{r}|_{t=0}$ on the right.
\begin{center}
\includegraphics[width=2.5in]{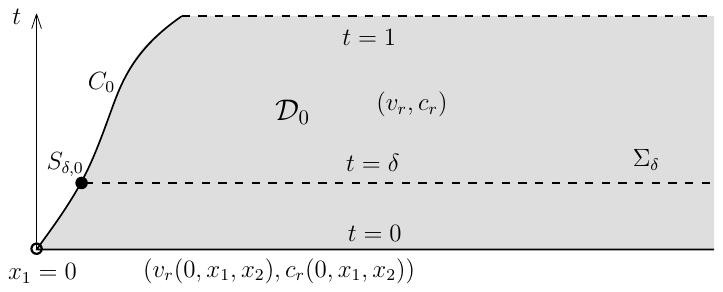}
\end{center}
Since $\varepsilon$ is small, in view of the continuous dependence of the solution on the initial data, the domain $\mathcal{D}_0$ covers up to $t=t^*=1$. For $t_0\in [0,t^*]$, we define $\Sigma_{t_0}=\{(t,x_1,x_2) |t=t_0\}$. For a small parameter $\delta$ (which will go to $0$ in a limiting process), the spatial hypersurface $\mathcal{D}_0\cap \Sigma_\delta$ is depicted as follows:
\begin{center}
\includegraphics[width=2.5in]{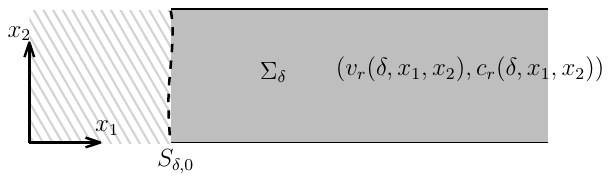}
\end{center}
We define $S_{\delta,0}=\Sigma_\delta\cap C_0$. The restriction of the solution $U_{r}$ on $\Sigma_\delta$ gives data on the right-hand side of $S_{\delta,0}$. The data for the rarefaction waves will be given on the left-hand side of $S_{\delta,0}$ on $\Sigma_\delta$. 

We will choose a specific smooth function $u$ on $\Sigma_\delta$ so that $S_{\delta,0}$ is given by $u=0$ and the lefthand side of $S_{\delta,0}$ on $\Sigma_\delta$ is given by $u>0$, see section 3.1 of \cite{Luo-YuRare2} for the exact construction of $u$ on $\Sigma_{\delta}^{u^*}$.  The data for the front rarefaction waves will be specified for $u \in [0,u^*]$ and the parameter $u^*$, which represents the width of the rarefaction waves, satisfying $u^{*}<\frac{\gamma+1}{\gamma-1}\mathring{c}_{r}$ to stay away from vacuum. Together with the data on $C_0$, the data on $u \in [0,u^*]$ evolves to the development $\mathcal{D}(\delta)$ (the shaded region on the left of the following figure).

\begin{center}
\includegraphics[width=3.3in]{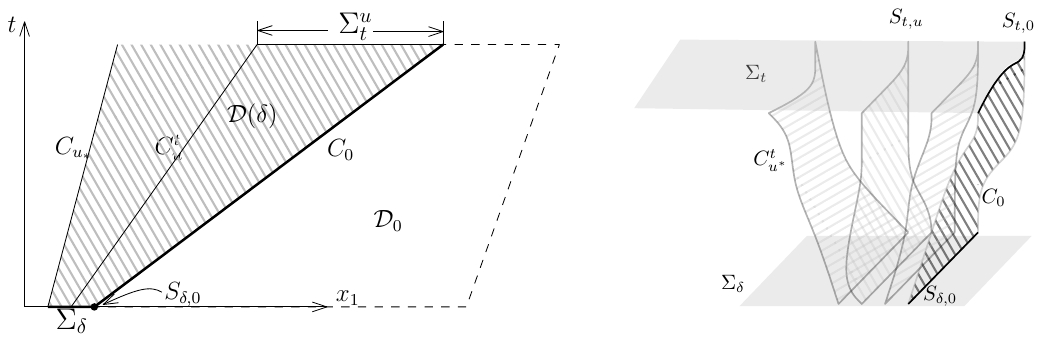}
\end{center}

We recall the definition of the acoustical coordinates $(t,u,\vartheta)$ on $\mathcal{D}(\delta)$, and we refer to \cite{Luo-YuRare1} for the detailed account. The function $t$ is defined as $x_0$ restricted to $\mathcal{D}(\delta)$.  We define $C_{u_0}$ to be the null hypersurfaces consisting of null (future right-going) geodesics emanating from the level set of $u=u_0$ on $\Sigma_\delta$. We define $u$ on $\mathcal{D}(\delta)$ in such a way that $C_u$'s are the level sets of $u$. Let $L$ be the generator of the null geodesics on $C_u$ subject to the normalization that $L(t) = 1$. We define $S_{t,u} = \Sigma_t \cap C_u$. Let $T$ be the vector field  tangential to $\Sigma_t$, orthogonal to $S_{t,u}$ with respect to $g$ and subject to the normalization that $Tu = 1$. We define $\kappa^2=g(T,T)$.

We also have the following notations:
\[\mathcal{D}(\delta)(t^*,u^*) =\!\!\!\!\!\!\bigcup_{(t,u) \in [\delta,t^*]\times [0,u^*]}\!\!\!\!\!\!S_{t,u}, \ \mathcal{D}(\delta)(t,u) =\!\!\!\!\!\!\bigcup_{(t',u') \in [\delta,t]\times [0,u]}\!\!\!\!\!\!S_{t',u'},\ \ \Sigma_t^u=\!\!\!\bigcup_{u' \in [0,u]}\!\!\!S_{t,u'}, \ \ C_u^{t}=\!\!\!\bigcup_{t' \in [\delta,t]}\!\!\!S_{t',u}.\]
For the sake of simplicity, we also use $\Sigma_t$ to denote $\Sigma_t^{u^*}$.

We define $\vartheta$ on $C_0$ through the following ordinary differential equation:
\begin{equation}\label{def: vartheta in acoustical}
L( \vartheta)=0, \ \ \  \vartheta\big|_{S_{0,0}}=x_2\big|_{S_{0,0}}.
\end{equation}
We then extend $\vartheta$ to $\Sigma_\delta$ by solving $T( \vartheta)=0$ with initial data on $S_{\delta,0}$ given by the solution of \eqref{def: vartheta in acoustical}. In particular, this means that  $T\big|_{\Sigma_\delta} = \frac{\partial}{\partial u}$ on $\Sigma_\delta$. Finally, we solve $L(\vartheta)=0$ to extend $\vartheta$ from $\Sigma_\delta$ to the entire spacetime $\mathcal{D}(\delta)$.

In the acoustical coordinates $(t,u,\vartheta)$, we have
\begin{equation}\label{eq: L T in terms of coordinates}
L = \frac{\partial}{\partial t}, \quad T = \frac{\partial}{\partial u} - \Xi \frac{\partial}{\partial \vartheta},
\end{equation}
where $\Xi$ is a smooth function.  We also define $X=\frac{\partial}{\partial \vartheta}$, $\slashed{g}=g(X,X)$ and the unit vector $\widehat{X}=\slashed{g}^{-\frac{1}{2}}X$. Let $\mu=c\kappa$. We then have
\[g(L,T) = -{\mu}, \quad g(L,L) = g(L,\Xh) = g(T,\Xh)=0, \quad g(\Xh,\Xh) = 1.
\]
where $\Th^i$ is the $i$-th component of $\Th$ in the Cartesian coordinates, i.e., $\Th=\sum_{i=1}^2\Th^i \frac{\partial}{\partial x_i}$. Similarly, in Cartesian coordinates, we have $\Xh=\sum_{i=1}^2\Xh^i \partial_i$ and $L=\partial_0+\sum_{i=1}^2L^i \partial_i$. We also remark that $\Th^1=-\Xh^2$ and $\Th^2=\Xh^1$. Let $B=\frac{\partial}{\partial t}+\sum_{i=1}^{2}v^{i}\frac{\partial}{\partial x_{i}}$ be the material vector field. We have $B(t)=1$ and $B$ is $g$-perpendicular to $\Sigma_t$.  We also define the unit vector $\widehat{T} = \kappa^{-1}T$. Thus, $L$ can be expressed as $L =B-c\widehat{T}$.

We define the left-going null vector field $\underline{L} = c^{-1} \kappa L + 2T$. Therefore, we obtain {\bf the first null frame} $(L,\Lb,\Xh)$.

For the following three isometric embeddings $\Sigma_t\hookrightarrow \mathcal{D}$,  $S_{t,u} \hookrightarrow C_u $ and $S_{t,u} \hookrightarrow \Sigma_t$, the corresponding second fundamental forms are denoted as follows:
\[2 c k = \overline{\mathcal{L}}_B g, \ \ 2 \chi = \slashed{\mathcal{L}}_L g, \ \ 2 \kappa \theta = \slashed{\mathcal{L}}_T g.\]
where $\overline{\mathcal{L}}$ and $\slashed{\mathcal{L}}$ denote the projections of Lie derivatives to $\Sigma_t$ and $S_{t,u}$ respectively.
We define the torsion 1-forms $\zeta$ and $\eta$ on $S_{t,u}$ as
\[\zeta(Y) =  g(D_{Y} L, T), \ \ \eta(Y) =  -g(D_{Y} T, L),\]
where $Y$ is any vector field tangent to $S_{t,u}$. We also define the $1$-form $\slashed{\varepsilon}$ as $\kappa \slashed{\varepsilon}(Y)=k(Y,T)$.

Since the $S_{t,u}$'s are 1-dimensional circles, we can represent the tensors by functions. For the sake of simplicity, we use the same symbols to denote the following scalar functions:
\[\chi = \chi(\Xh,\Xh), \  \theta =  \theta(\Xh,\Xh), \ \slashed{k} = k(\Xh,\Xh), \ \zeta = \zeta(\Xh),\ \eta = \eta(\Xh), \ \slashed{\varepsilon}=\slashed{\varepsilon}(\Xh).\]
Since $\slashed{g}=g(X,X)$, we have $L(\slashed{g})=2 \slashed{g} \cdot \chi$. These quantities are related by
\[\chi =  c(\slashed{k} - \theta), \ \ \eta = \zeta + \Xh(\mu), \ \ \zeta=\kappa\big(c\slashed{\varepsilon}-\Xh(c)\big).\]

By using $\Lb$, we can introduce  another second fundamental form $\underline{\chi}$ which is defined by 
\[ 2 \underline{\chi} = \slashed{\mathcal{L}}_{\underline{L}} g. \]
We will also work with its scalar version $\underline{\chi} =\underline{\chi}(\Xh,\Xh)$. It can be represented by $\slashed{k}$ and  $\theta$ through $\underline{\chi} =  \kappa(\slashed{k} + \theta)$.

The wave operator $\Box_g$ can  be decomposed with respect to the null frame $(L,\Lb,\Xh)$:
\begin{equation}\label{eq:wave operator in null frame}
 \Box_{g} (f) = \Xh^2 (f) - \mu^{-1}L\big(\underline{L}(f)\big) - \mu^{-1}\big(\frac{1}{2}\chi\cdot \underline{L}(f) +\frac{1}{2}\chib\cdot L(f)\big)- 2 \mu^{-1}\zeta \cdot \Xh(f).
\end{equation}

The change of $\kappa$ along the characteristic direction $L$ is recorded in the following equation:
\begin{equation}\label{structure eq 1: L kappa}
L \kappa = -T(c)-\hat{T}^{i}T(\psi_{i}),
\end{equation}

\begin{remark}[Einstein summation convention]
The repeated Latin letter indices (say $i,j,k$) indicate the summation over $1,2$. The repeated Greek letter indices (say $\mu,\nu$) indicate the summation over $0,1,2$.  
\end{remark}

The Jacobi matrix of the coordinates transformation $(t,u,\theta)\mapsto (t,x_{1},x_{2})$ is given by
\begin{equation}\label{eq: Jacobi of (t, u, vartheta) to (t, x_1, x_2)}
\begin{pmatrix}
\frac{\partial t}{\partial t}&\frac{\partial t}{\partial u}&\frac{\partial t}{\partial \vartheta}\\
\frac{\partial x_{1}}{\partial t}&\frac{\partial x_{1}}{\partial u}&\frac{\partial x_{1}}{\partial \vartheta}\\
\frac{\partial x_{2}}{\partial t}&\frac{\partial x_{2}}{\partial u}&\frac{\partial x_{2}}{\partial \vartheta}
\end{pmatrix}=
\begin{pmatrix}
1&0&0\\
L^{1}&\kappa\hat{T}^{1}+\Xi\sqrt{\slashed{g}}\hat{X}^{1}&\sqrt{\slashed{g}}\hat{X}^{1}\\
L^{2}&\kappa\hat{T}^{2}+\Xi\sqrt{\slashed{g}}\hat{X}^{2}&\sqrt{\slashed{g}}\hat{X}^{2}\\
\end{pmatrix}
\end{equation}
with determinant $\Delta=-\kappa\sqrt{\slashed{g}}$.


We collect {\bf structure equations of the rarefaction fronts} which will be used to control $\{\hat{T}^{1}, \hat{T}^{2}, \kappa, \slashed{g}\}$,
\begin{equation}\label{eq: formulas to control the geometry}
\begin{cases}
\theta=-\hat{X}(\hat{X}^{2})\hat{X}^{1}+\hat{X}(\hat{X}^{1})\hat{X}^{2},\\
\chi=-\hat{X}^{i}\hat{X}(\psi_{i})-c\theta,\\
\zeta=-\frac{1}{2}\kappa \hat{T}^{j}\hat{X}(\psi_{j})-\frac{1}{2}\hat{X}^{i}T(\psi_{i})-\kappa\hat{X}(c),\\
\eta=-\frac{1}{2}\kappa \hat{T}^{j}\hat{X}(\psi_{j})-\frac{1}{2}\hat{X}^{i}T(\psi_{i})+c\hat{X}(\kappa),\\
[T, \hat{X}]=-\kappa\theta\cdot \hat{X},\\
[L, \hat{X}]=-\chi\cdot \hat{X},\\
[L, T]=-(\zeta+\eta)\hat{X},\\
L(\hat{T}^{i})=\big(\hat{T}^{j}\hat{X}(\psi_{j})+\hat{X}(c) \big),\\
L(\kappa)=-T(c)-\hat{T}^{j}T(\psi_{j}),\\
L(\chi)=2\chi\cdot \slashed{g},\\
L(\Xi)=\frac{1}{\sqrt{\slashed{g}}}(\zeta+\eta).
\end{cases}
\end{equation}

\begin{remark}
The formulas for $\zeta$ and $\eta$ are slightly different from the irrotational case \cite{Luo-YuRare1} due to the non-vanishing of vorticity.
\end{remark}

\subsubsection{Euler equations in the diagonal form}

In terms of the frame $(L,T,\Xh)$,  the Euler equations can be rewritten in terms of Riemann invariants:
\begin{equation}\label{Euler equations:form 2}
\begin{cases}
L (\wb) &= -c \widehat{T}(\wb)(\widehat{T}^1+1)+\frac{1}{2}c \widehat{T}(\psi_2)\widehat{T}^2 +\frac{1}{2}c \Xh(\psi_2)\Xh^2-c\Xh(\wb)\Xh^1,\\
L (w) &= 	c \widehat{T}(w)(\widehat{T}^1-1)	+\frac{1}{2}c \widehat{T}(\psi_2)\widehat{T}^2 +c\Xh(w)\Xh^1+\frac{1}{2}c\Xh(\psi_2)\Xh^2,\\
L (\psi_2) &= -c \widehat{T}(\psi_2)+c\widehat{T}( w+\wb)\widehat{T}^2+c\Xh( w+\wb)
\Xh^2.
\end{cases}
\end{equation}

Let $\{\wb_{g}, w_{g}, \psi^{(\hat{X})}\}$ be the {\bf geometric Riemann invariants} with respect to direction $\hat{T}=(\hat{T}^{1}, \hat{T}^{2})$,
\[
\begin{cases}
\wb_{g}:=\frac{c}{\gamma-1}+\frac{1}{2}\psi^{(\hat{T})},\\
w_{g}:=\frac{c}{\gamma-1}-\frac{1}{2}\psi^{(\hat{T})}.
\end{cases}
\]
\begin{remark}
When $\hat{T}=\hat{\Tr}$, the {geometric Riemann invariants} reduce to the ordinary Riemann invariants \eqref{eq: Riemann invariants}.
\end{remark}
  
 We write Euler equations for Riemann invariants along $C_{0}$.
\begin{equation}\label{Euler equations:form 1}
\begin{cases}
\begin{aligned}
L(\wb_{g})={}&\frac{1}{2}c\hat{X}(\psi^{\hat{X}})+\frac{1}{2}c\theta\cdot \psi^{(\hat{T})}\\
&+\frac{1}{2}\psi^{(\hat{X})}\hat{X}(\psi^{(\hat{T})}+c)-\frac{1}{2}\theta\psi^{(\hat{X})}\psi^{(\hat{X})},
\end{aligned}\\[1.5ex]
\begin{aligned}
L(\psi^{(\hat{X})})={}&-c\hat{T}(\psi^{(\hat{X})})+c\hat{X}_{r}\left(\frac{2c}{\gamma-1}\right)+c\frac{\hat{X}(\kappa)}{\kappa}\psi^{(\hat{T})}\\
&-\psi^{(\hat{T})}\hat{X}(\psi^{(\hat{T})}+c)+\theta\psi^{(\hat{X})}\psi^{(\hat{T})},
\end{aligned}\\[1.5ex]
\begin{aligned}
L(w_{g})={}&-2c\hat{T}(w_{g})+\frac{1}{2}c\hat{X}(\psi^{\hat{X}})+c\frac{\hat{X}(\kappa)}{\kappa}\psi^{(\hat{X})}\\
&+\frac{1}{2}c\theta\cdot \psi^{(\hat{T})}-\frac{1}{2}\psi^{(\hat{X})}\hat{X}(\psi^{(\hat{T})}+c)+\frac{1}{2}\theta\psi^{(\hat{X})}\psi^{(\hat{X})},
\end{aligned}\\[1.5ex]
\textbf{$\hat{T}(\wb_{g})|_{S_{\delta, 0}}$ can be freely prescribed},\\[1ex]
\hat{T}(w_{g})|_{S_{\delta, 0}}\ \text{is determined by data on $C_{0}^{t^{*}}$},\\[1ex]
\hat{T}(\psi^{(\hat{X})})|_{S_{\delta, 0}}\ \text{is determined by data on $C_{0}^{t^{*}}$}.
\end{cases}
\end{equation}
We remark that $\hat{T}(\wb_{g})$ can be freely prescribed, which reflects the fundamental nature of characteristic surface $C_{0}$ and will play an important role in the construction of configurations $R-R$ in section \ref{sec: Application to the Riemann problem 1}, S-R in section \ref{sec: Application to the Riemann problem 2}, and R-V-R in section \ref{sec: Application to the Riemann problem 3}.

\subsubsection{The geometry of the second null frame}
Following \cite{Luo-YuRare1}, in order to derive energy estimates, we introduce
\[\Xr=\partial_2, \ \Trh=-\partial_1,  \ \Lr=\partial_t+v-c\Trh=\partial_t+(v^1+c)\partial_1+v^2\partial_2,\]
and
\[\kappar=t, \ \ \Tr=\kappar \Trh, \ \ \mur = c\kappar.\]
The vector fields satisfy the following metric relations:
\[g(\Lr,\Tr)=-\mur, \ g(\Lr,\Lr)=g(\Lr,\Xr)=0,\  g(\Xr,\Xr)=1, \ g(\Tr,\Tr)=\kappar^2, \ g(\Tr,\Xr)=0.\]
Let $\Lbr= c^{-1} \kappar \Lr + 2\Tr$. We then obtain {\bf the second null frame}
$(\Lr, \Lbr, \Xr)$. One can check that
\[g(\Lr,\Lbr)=-2\mur, \ g(\Lr,\Lr)=g(\Lbr,\Lbr)=g(\Lbr,\Xr)=g(\Lr,\Xr)=0,\  g(\Xr,\Xr)=1.\]
We introduce functions $y$, $\yr$, $z$ and $\zr$ as follows:
\begin{equation}\label{def: yring zring}
y=\Xr(v^1+c), \ \ \yr = \frac{y}{\kappar},  \ \ z=1+\Tr(v^1+c), \ \ \zr = \frac{z}{\kappar}.
\end{equation}
We list the definitions and formulas for the connection coefficients in the second null frame as follows:
\begin{equation*}
	\begin{cases}
		&\chir:=g(D_{\Xr} \Lr,\Xr)=-\Xr(\psi_2), \ \ \chibr :=  g(D_{\Xr} \Lbr, \Xr)=c^{-1}\kappar \chir=-c^{-1}\kappar\Xr(\psi_2), \\ 
		&\zetar:=  g(D_{\Xr} \Lr, \Tr)=-\kappar y, \ \ \etar:=-\Tr(\psi_2), \\
		&\deltasr:=g(D_{\Lr} \Lr,\Xr)=cy,\ \ \deltar:=g(D_{\Lr} \Lr,\Tr)=-\Lr(\mur)+cz.
	\end{cases}
\end{equation*}
The commutators for the new vector fields are collected as follows:
\begin{equation*}
\begin{cases}
&[\Tr, \Xr]=0, \ \ [\Lr, \Xr]=\yr\cdot \Tr-\chir\cdot\Xr, \ \ [\Lr, \Tr]=\zr\cdot \Tr-\etar\Xr,\\
&[\Lbr, \Xr]=-\left(\frac{1}{2}c^{-2}\kappar y+\Xr(c^{-1}\kappar)\right)\Lr-\chibr\cdot\Xr+\frac{1}{2}c^{-1}y\cdot\Lbr, \\
& [\Lr, \Lbr]=\left(\Xr(c^{-1}\kappar)-c^{-1}z\right)\Lr-2\etar\cdot \Xr+\zr\cdot \Lbr.
\end{cases}
\end{equation*}
\begin{remark}
In irrotational case, $ck(\Tr,\Xr)=\Tr(v^{2})$, $\epsilonr$ is taken to be $ck(\Tr,\Xr)=\Tr(v^{2})$. In rotational case, $ck(\Tr,\Xr)=\Tr(v^{2})+\frac{1}{2}\kappar\epsilon_{12}\rho\Omega$, $\epsilonr$ is taken to be $\Tr(v^{2})$ directly. 
\end{remark}

Similar to \eqref{Euler equations:form 1}, we can rewrite the Euler equations in the following form:
\begin{equation}\label{Euler equations:form 1 ringed}
\begin{cases}
\Lr (\frac{2}{\gamma-1}c) &= -c \Trh(\frac{2}{\gamma-1}c)-c \Trh(\psi_1) +c\Xr(\psi_2),\\
\Lr (\psi_1) &= -c \Trh(\psi_1)-c \Trh\left(\frac{2}{\gamma-1}c\right),\\
\Lr (\psi_2) &= -c \Trh(\psi_2)+c\Xr\left(\frac{2}{\gamma-1}c\right).
\end{cases}
\end{equation}
In terms of the Riemann invariants, \eqref{Euler equations:form 1 ringed} reduces to a simple form
\begin{equation}\label{Euler equations:form 3}
\begin{cases}
\Lr (\wb) &= \frac{1}{2}c \Xr(\psi_2),\\
\Lr (w) &= 	-2c \Trh(w)+\frac{1}{2}c\Xr(\psi_2),\\
\Lr (\psi_2) &= -c \Trh(\psi_2)+c\Xr( w+\wb).
\end{cases}
\end{equation}

We also recall the following notations:
A \emph{multi-index} $\alpha$ is a string of numbers $\alpha=(i_1,i_2,\cdots,i_n)$ with {\color{black}$i_j=0$ or $1$} for $1\leqslant j\leqslant n$. The \emph{length} of the multi-index $\alpha$ is defined as $|\alpha|=n$. Given a multi-index $\alpha$ and a smooth function $\psi$, the shorthand notations $Z^\alpha(\psi)$ and  $\Zr^\alpha(\psi)$ denote the following functions:
\[Z^\alpha(\psi)=Z_{(i_N)}\left(Z_{(i_{N-1})}\left(\cdots \left(Z_{(i_1)}(\psi)\right)\cdots\right)\right), \ \Zr^\alpha(\psi)=\Zr_{(i_N)}\big(\cdots \big(\Zr_{(i_1)}(\psi)\big)\cdots \big),\]
where $Z_{(0)}=\Xh$, $Z_{(1)}=T$, $\Zr_{(0)}=\Xr$ and $\Zr_{(1)}=\Tr$. 

\subsection{Reformulation of the Specific Vorticity in the First Null Frame}

In \cite{Luo-YuRare2}, the authors use the spacetime vorticity 1-form
\[
\beta = \big(h+\frac{1}{2}|v|^2\big)dt - v^idx^i,
\]
first defined in Christodoulou‘s work \cite{ChristodoulouShockDevelopment} on shock development, to show that if the flow is irrotational in the region $\mathcal{D}_{0}$ determined by the initial data $U_{r}$ on the right, then the vorticity must vanish across rarefaction fronts $C_{u}, u\geq 0$. In fact, the specific vorticity is non-characteristic with respect to $C_{u}$. The following figure is from Courant and Friedrichs's book \cite{CourantFriedrichs}:
\begin{figure}[ht]
\centering
\includegraphics[width=3in]{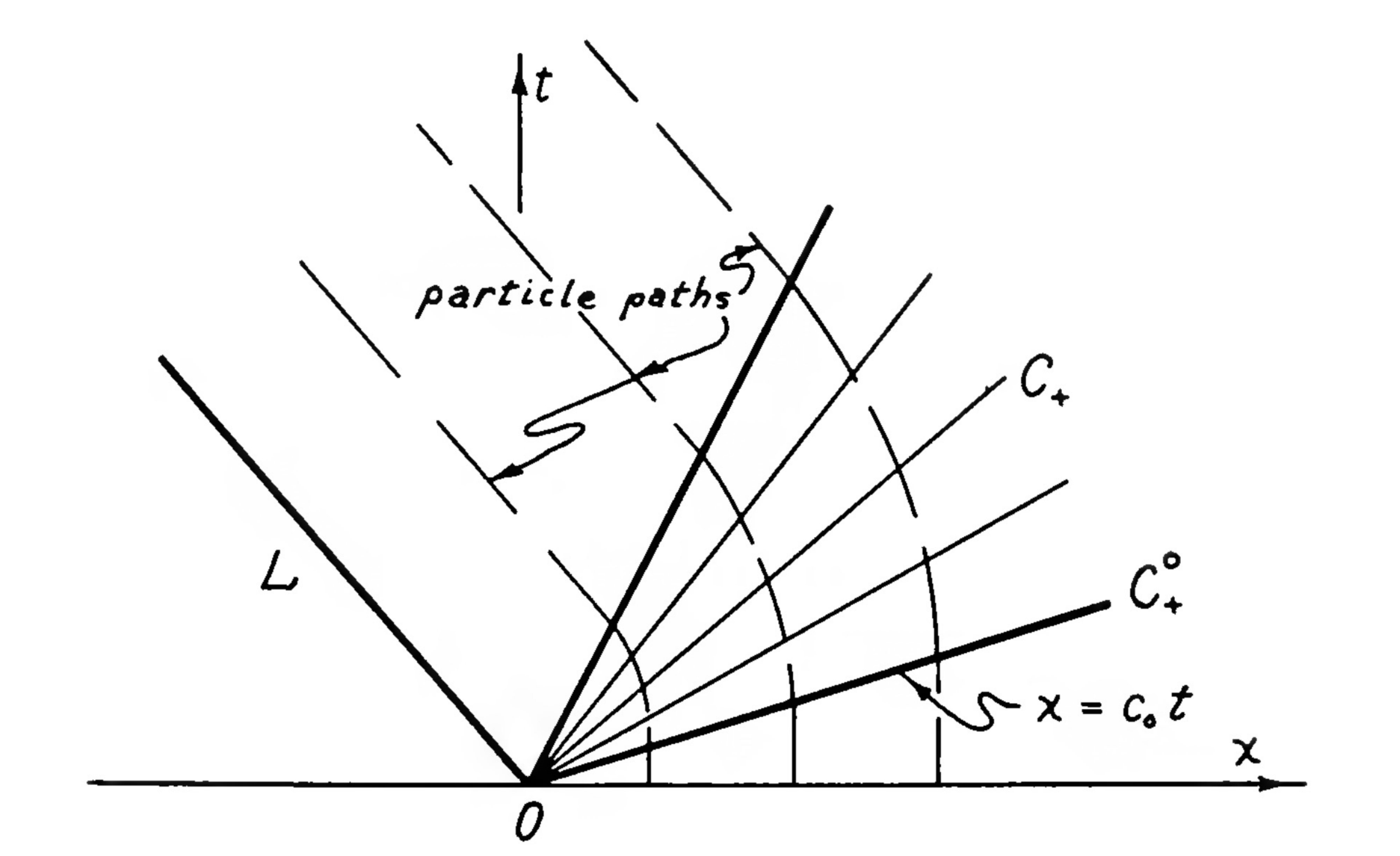}
\caption{Particle paths (integral curves of the material vector field $B$) transverse to rarefaction fronts away from vacuum.}
\end{figure}
Thus, both the specific vorticity $\Omega:=\frac{\omega}{\rho}$ and its derivatives can be expressed in terms of derivatives tangent to $C_{u}$. More precisely,
\begin{equation}\label{eq: reformulation of vorticity in the first null frame}
\begin{cases}
\omega&=\hat{X}^{1}\hat{X}(v^{2})-\hat{X}^{2}\hat{X}(v^{1})+\frac{2}{\gamma-1}\hat{X}(c)+c^{-1}\hat{X}^{1}L(v^{1})+c^{-1}\hat{X}^{2}L(v^{2}),\\
\partial_{1}\Omega&=\hat{X}^{1}\hat{X}(\Omega)+c^{-1}\hat{X}^{2}L(\Omega),\\
\partial_{2}\Omega&=\hat{X}^{2}\hat{X}(\Omega)-c^{-1}\hat{X}^{1}L(\Omega).
\end{cases}
\end{equation}

\begin{proof}
The idea of the proof is inspired by Speck's work \cite{Speck} on null structure of compressible Euler equations. The basic idea is to express the spatial derivatives $\partial_{1}, \partial_{2}$ in terms of the first null frame $\{L, \hat{X}, T\}$. We use
\[
\begin{cases}
\partial_{1}=\hat{X}^{1}\hat{X}-\hat{X}^{2}\hat{T},\\
\partial_{2}=\hat{X}^{2}\hat{X}+\hat{X}^{1}\hat{T}
\end{cases}
\]
to represent $\omega:=\partial_{1}v^{2}-\partial_{2}v^{1}$:
\[
\omega=\hat{X}^{1}\hat{X}(v^{2})-\hat{X}^{2}\hat{T}(v^{2})-\hat{X}^{2}\hat{X}(v^{1})-\hat{X}^{1}\hat{T}(v^{1}).
\]
In view of $Bv^{i}=-\partial_{i}h$ for $i=1,2$,
\[
\begin{cases}
L(v^{1})+c\hat{T}(v^{1})=-\hat{X}^{1}\hat{X}(h)+\hat{X}^{2}\hat{T}(h),\\
L(v^{2})+c\hat{T}(v^{2})=-\hat{X}^{2}\hat{X}(h)-\hat{X}^{1}\hat{T}(h),
\end{cases}
\]
which implies
\[
\begin{cases}
\hat{T}(v^{1})=c^{-1}[-\hat{X}^{1}\hat{X}(h)+\hat{X}^{2}\hat{T}(h)-L(v^{1})],\\
\hat{T}(v^{2})=c^{-1}[-\hat{X}^{2}\hat{X}(h)-\hat{X}^{1}\hat{T}(h)-L(v^{2})].
\end{cases}
\]
Therefore, we have
\[
\begin{split}
\omega&=\hat{X}^{1}\hat{X}(v^{2})-\hat{X}^{2}\hat{X}(v^{1})-\hat{X}^{2}\hat{T}(v^{2})-\hat{X}^{1}\hat{T}(v^{1})\\[0.5ex]
&=\hat{X}^{1}\hat{X}(v^{2})-\hat{X}^{2}\hat{X}(v^{1})
  -\hat{X}^{2}c^{-1}\left[-\hat{X}^{2}\hat{X}(h)-\hat{X}^{1}\hat{T}(h)-L(v^{2})\right]\\
&\quad-\hat{X}^{1}c^{-1}\left[-\hat{X}^{1}\hat{X}(h)+\hat{X}^{2}\hat{T}(h)-L(v^{1})\right]\\[0.5ex]
&=\hat{X}^{1}\hat{X}(v^{2})-\hat{X}^{2}\hat{X}(v^{1})+\frac{2}{\gamma-1}\hat{X}(c)
  +c^{-1}\hat{X}^{1}L(v^{1})+c^{-1}\hat{X}^{2}L(v^{2}).
\end{split}
\]

This shows that $\Omega=\frac{\omega}{\rho}$ is determined by $(c, v^{1}, v^{2})|_{C_{0}}=(c_{r}, v_{r}^{1}, v_{r}^{2})|_{C_{0}}$. Hence,
\[
\Omega|_{C_{0}}=\Omega_{r}|_{C_{0}}.
\]
In view of $B\Omega=0$, we have $L(\Omega)+c\hat{T}(\Omega)=0$. It follows that
\[
\begin{cases}
\partial_{1}\Omega=\hat{X}^{1}\hat{X}(\Omega)-\hat{X}^{2}\hat{T}(\Omega)=\hat{X}^{1}\hat{X}(\Omega)+c^{-1}\hat{X}^{2}L(\Omega),\\
\partial_{2}\Omega=\hat{X}^{2}\hat{X}(\Omega)+\hat{X}^{1}\hat{T}(\Omega)=\hat{X}^{2}\hat{X}(\Omega)-c^{-1}\hat{X}^{1}L(\Omega),
\end{cases}
\]
which implies
\[
\partial_{1}\Omega|_{C_{0}}=\partial_{1}\Omega_{r}|_{C_{0}}, \quad \partial_{2}\Omega|_{C_{0}}=\partial_{2}\Omega_{r}|_{C_{0}}.
\]
\end{proof}

\section{Main theorems}

\subsection{Energy identities for linear waves}

\subsubsection{Energy identities for linear acoustical waves} Let $\varrho$ be a source function, we derive energy identities for the linear wave equation $\Box_{g}\psi=\varrho$. Following \cite{Luo-YuRare1}, the energy momentum tensor associated to $\psi$ is defined as 
\[
T_{\mu\nu}=\partial_{\mu}\psi\partial_{\nu}\psi-g^{\alpha\beta}\partial_{\alpha}\psi\partial_{\beta}\psi g_{\mu\nu},
\]
In the first null frame $\{L, \Lb, \hat{X}\}$, the components $T_{\mu\nu}$ are list as follows
\begin{equation}
\begin{split}
T_{LL}&=(L\psi)^{2},T_{\underline{L}\underline{L}}=(\underline{L}\psi)^{2},T_{L\underline{L}}=\mu\hat{X}(\psi)^{2},T_{L\hat{X}}=L(\psi)\hat{X}(\psi),T_{\underline{L}\hat{X}}=\underline{L}(\psi)\hat{X}(\psi),\\
T_{\hat{X}\hat{X}}&=\frac{1}{2}\hat{X}(\psi)^{2}+\frac{1}{2\mu}L(\psi)\underline{L}(\psi),
\end{split}
\end{equation}
For a vector field $J$, its energy current field is defined $P^{\mu}:=-T^{\mu}{}_{\nu}J^{\nu}$. Therefore, 
\[
D_{\mu}P^{\mu}=-\Box_{g}\psi\cdot J(\psi)-\frac{1}{2}T^{\mu\nu}\cdot{}^{(J)}\pi_{\mu\nu}.
\]

For $(t, u)\in [\delta, t^{*}]\times [0, u^{*}]$ and a smooth function $f$ defined on $\mathcal{D}(\delta)(t, u)$, we use the following notations to denote the integrals:
\begin{equation}
\begin{split}
\int_{\Sigma_{t}^{u}} f:&=\int_{\mathbb{T}}\int_{0}^{u} f(t,u^{'},\theta)\sqrt{\slashed{g}}du^{'} d\theta,\ \ \int_{C_{u}^{t}} f:=\int_{\mathbb{T}}\int_{\delta}^{t} f(t^{'},u,\theta)\sqrt{\slashed{g}}dt^{'} d\theta,\\
\int_{D(\delta)(t,u)} f:&=\int_{\mathbb{T}}\int_{0}^{u}\int_{\delta}^{t} f(t^{'},u^{'},\theta)\sqrt{\slashed{g}}dt^{'}du^{'} d\theta.
\end{split}
\end{equation}

We have two choices for $J$. This leads to the following two energy identities:

\begin{itemize}
\item {\bf (1)$ J=\hat{L}:=c^{-1}\kappa L$.} We will use $\mathcal{E}_{w}(\psi)$ to denote the associated energy and $\mathcal{F}_{w}(\psi)$ to denote the corresponding flux,
\begin{equation}
\begin{cases}
\mathcal{E}_{w}(\psi)(t,u):&=\int_{\Sigma_{t}^{u}}\frac{1}{2}\kappa^{2}(|\slashed{\nabla}\psi|^{2}+c^{-2}|L\psi|^{2}),\\
\mathcal{F}_{w}(\psi)(t,u):&=\int_{C_{u}^{t}} c^{-1}\kappa|L\psi|^{2}.
\end{cases}
\end{equation}
Integrate $D_{\mu}P^{\mu}$ over $\mathcal{D}(\delta)(t, u)$, we have
\begin{align}\label{vor: energy estimate for wave equation with multiplier hat(L)}
\begin{split}
\mathcal{E}_{w}(\psi)(t,u)+\mathcal{F}_{w}(\psi)(t,u)&=\mathcal{E}_{w}(\psi)(\delta,u)+\mathcal{F}_{w}(\psi)(t,0)+ \underbrace{\int_{D(t,u)} -\mu \varrho \hat{L}\psi}_{Q_{0}}\\
&+\underbrace{\int_{\mathcal{D}(\delta)(t,u)} \frac{1}{2}L(\kappa^{2})\hat{X}(\psi)^{2}}_{Q_{1}}+\underbrace{\int_{D(t,u)} T(c^{-1}\kappa)|L\psi|^{2}}_{Q_{2}}\\
&+\underbrace{\int_{\mathcal{D}(\delta)(t,u)} ((\eta+\zeta)c^{-1}\kappa-\mu \hat{X}(c^{-1}\kappa))L(\psi)\hat{X}(\psi)}_{Q_{3}}\\
&\underbrace{-\int_{\mathcal{D}(\delta)(t,u)} c^{-1}\kappa[\frac{1}{2}\mu \chi\hat{X}(\psi)^{2}+\frac{1}{2}\chi L(\psi)\underline{L}(\psi)]}_{Q_{4}}
\end{split}
\end{align}

\item {\bf (2)$ J=\Lb$.} We will use $\underline{\mathcal{E}}_{w}(\psi)$ to denote the associated energy and $\underline{\mathcal{F}}_{w}(\psi)$ to denote the corresponding flux,
\begin{equation}
\begin{cases}
\underline{\mathcal{E}}_{w}(\psi)(t,u):&= \int_{\Sigma_{t}^{u}}\frac{1}{2}(|\underline{L}\psi|^{2}+\kappa^{2}|\slashed{\nabla}\psi|^{2}),\\
\underline{\mathcal{F}}_{w}(\psi)(t,u):&=\int_{C_{u}^{t}} \mu |\slashed{\nabla}\psi|^{2}.
\end{cases}
\end{equation}
Integrate $D_{\mu}P^{\mu}$ over $\mathcal{D}(\delta)(t, u)$, we have
\begin{align}\label{vor: energy estimate for wave equation with multiplier underline(L)}
\begin{split}
\underline{\mathcal{E}}_{w}(\psi)(t,u)+\underline{\mathcal{F}}_{w}(\psi)(t,u)&=\underline{\mathcal{E}}_{w}(\psi)(\delta,u)+\underline{\mathcal{F}}_{w}(\psi)(t,0)+\underbrace{\int_{D(t,u)} -\mu \varrho \underline{L}\psi}_{\underline{Q}_{0}}\\
&+\underbrace{\int_{\mathcal{D}(\delta)(t,u)}\frac{1}{2}(\underline{L}\mu+\mu L(c^{-1}\kappa))\hat{X}(\psi)^{2}}_{\underline{Q}_{1}}\\
&\underbrace{-\int_{\mathcal{D}(\delta)(t,u)}(\zeta+\eta)\underline{L}(\psi) \hat{X}(\psi)}_{\underline{Q}_{2}}\underbrace{-\int_{D(t,u)}\mu \hat{X}(c^{-1}\kappa)\hat{X}(\psi)L(\psi)}_{\underline{Q}_{3}}\\
&\underbrace{-\int_{\mathcal{D}(\delta)(t,u)} (\frac{1}{2}\mu\underline{\chi}\hat{X}(\psi)^{2}+\frac{1}{2}L(\psi)\underline{L}(\psi)\underline{\chi})}_{\underline{Q}_{4}}
\end{split}
\end{align}
\end{itemize}

\subsubsection{Energy identities for linear vorticity wave} Let $F$ be a source term, we derive energy identities for the linear transport equation for the $B\Omega=F$. Following \cite{LukSpeck2D}, we write $P^{\mu}=\Omega^{2}B^{\mu}$, then 
\[
D_{\mu}B^{\mu}=\kappa^{-1}(L\kappa+\underline{L}c)+\frac{1}{2}\chi+\frac{1}{2}c\kappa^{-1}\underline{\chi},
\]
Integrate $D_{\mu}P^{\mu}$ over $\mathcal{D}(\delta)(t, u)$, we have
\begin{equation}\label{vor: energy estimate for transport equation}
\begin{split}
\mathcal{E}_{t}(\Omega)(t,u)+ \mathcal{E}_{t}(\Omega)(t,u)&=\ \mathcal{E}_{t}(\Omega)(\delta,u)+ \mathcal{E}_{t}(\Omega)(t,0)\\
&+\underbrace{\int_{\mathcal{D}(\delta)(t,u)} [c(L\kappa+\underline{L}c)+\frac{1}{2}\mu \chi+\frac{c^{2}}{2}\underline{\chi}]\Omega^{2}}_{V_{1}}+\underbrace{\int_{\mathcal{D}(\delta)(t,u)} 2\mu F\Omega}_{V_{0}}.
\end{split}
\end{equation}

where we use $\mathcal{E}_{transport}(\Omega)(t,u)$ to denote the energy and $\mathcal{F}_{transport}(\Omega)(t,u)$ to denote the flux
\begin{equation}
\begin{cases}
\mathcal{E}_{transport}(\Omega)(t,u)=\int_{\Sigma_{t}^{u}}\mu \Omega^{2},\\
\mathcal{F}_{transport}(\Omega)(t,u)=\int_{C_{u}^{t}}c^{2} \Omega^{2}.
\end{cases}
\end{equation}

\subsection{The existence of the initial data on $\Sigma_{\delta}$ and $C_{0}$}

\subsubsection{Formulation for the data on $\Sigma_\delta$}

The initial data for $U$ or equivalently $(\wb,w,\psi_2)$ is already prescribed on the acoustical null hypersurface $C_0^{t^{*}}$. This is because $C_0^{t^{*}}$ is the future boundary of the domain of dependence $\mathcal{D}_0$ of the solution $U_{r}$ associated to the data $U_{r}|_{t=0}$ given on $x_1\geqslant 0$. The trace of $U_{r}$ on $C_0$ is well-defined. It suffices to prescribe initial data on $\Sigma_\delta^{u^{*}}$. This is depicted in the following picture:
\begin{center}
\includegraphics[width=4.5in]{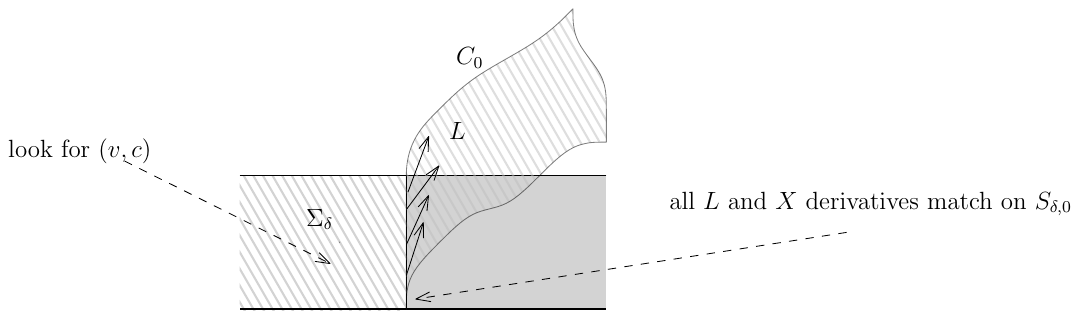}
\end{center}
In particular, since $L$ and $X$ are tangential to $C_0$, for all $m$ and $n$, $L^mX^n (U)$ are already determined by the solution on $\mathcal{D}_0$. We remark that $X=\frac{\partial}{\partial \vartheta}$ and $[L,X]=0$. We also remark that on $S_{\delta,0}$, the vector field $L=B-c\widehat{T}$ is completely determined by the solution on $\mathcal{D}_0$. This is because {\color{black}$\widehat{T}$ is the unit normal of $S_{\delta,0}$ in $\Sigma_{\delta}^{u^{*}}$.}

To summarize, we will look for the data $U$ on $\Sigma_\delta^{u^{*}}$ so that the following compatibility conditions are satisfied:
\begin{itemize}
\item[C1)](Smoothness in acoustical coordinates) We require that $U$ are smooth functions in $(u, \vartheta)$ for $(u,\vartheta)\in [0,u^*]\times [0,2\pi]$. 
\item[C2)](Continuity across $S_{\delta,0}$) We require that  $U|_{S_{\delta,0}}=U_{r}|_{S_{\delta,0}}$. 
\item[C3)](Compatibility in higher order derivatives) We require that higher jets of $U$ along $C_0$ are compatible with those of $U_{r}$ at $S_{\delta,0}$, i.e., for a fixed positive integer $N$, for all nonnegative integers $m$ and $n$ with $m+n\leqslant N$, we have
\begin{equation}\label{compatibility condition 3}L^mX^n(U)|_{S_{\delta,0}}=L^mX^n(U)|_{S_{\delta,0}}.
  \end{equation}
\end{itemize}

\subsubsection{The assumptions on the initial data in the rarefaction wave region} Given $\psi\in \{\wb, w, \psi_{2}\}$ on $\mathcal{D}(t, u)$, for a multi-index $\alpha$, for all $(t, u)\in [\delta, t^*]\times [0, u^*]$, we define the total wave energy and total wave flux associated to $\Zr^{\alpha}(\psi)$ as follows:
\[
\begin{cases}
\mathscr{E}_{w}(\Zr^{\alpha}(\psi))(t, u):=\mathcal{E}_{w}(\Zr^{\alpha}(\psi))(t, u)+\underline{\mathcal{E}}_{w}(\Zr^{\alpha}(\psi))(t, u),\\
\mathscr{F}_{w}(\Zr^{\alpha}(\psi))(t, u):=\mathcal{F}_{w}(\Zr^{\alpha}(\psi))(t, u)+\underline{\mathcal{F}}_{w}(\Zr^{\alpha}(\psi))(t, u).
\end{cases}
\]
For all $n\leq \Ntop$ and $\psi\in \{w, \psi_{2}\}$, we define
\[
\mathscr{E}_{w, \leq n}(\psi)(t, u)=\sum_{|\alpha|\leq n}\mathscr{E}_{w}(\Zr^{\alpha}(\psi)),  \mathscr{F}_{w, \leq n}(\psi)(t, u)=\sum_{|\alpha|\leq n}\mathscr{F}_{w}(\Zr^{\alpha}(\psi)), 
\]
while for $\psi=\wb$, we define
\[
\mathscr{E}_{w, \leq n}(\wb)(t, u)=\mathring{\mathscr{E}}_{w}(\wb)(t, u)+\sum_{1\leq |\alpha|\leq n}\mathscr{E}_{w}(\Zr^{\alpha}(\psi)),  \mathscr{F}_{w, \leq n}(\psi)(t, u)=\mathring{\mathscr{F}}_{w}(\wb)(t, u)+\sum_{1\leq |\alpha|\leq n}\mathscr{F}_{w}(\Zr^{\alpha}(\psi)), 
\]
where
\begin{equation}
\mathring{\mathscr{E}}_{w}(\wb)(t, u)=\frac{1}{2}\int_{\Sigma_{t}^{u}} c^{-2}\kappa^{2}(L\wb)^{2}+\kappa^{2}(\Xr\underline{w})^{2},  \mathring{\mathscr{F}}_{w}(\psi)(t, u)=\int_{C_{u}^{t}} c^{-1}\kappa (L\wb)^{2}+c\kappa (\Xr\wb)^{2}.
\end{equation}

Given $\Omega$ on $\mathcal{D}(t, u)$, for a multi-index $\alpha$, for all $(t, u)\in [\delta, t*]\times [0, u*]$, the energy and flux associated to $\Zr^{\alpha}(\Omega)$ is given by
\[
\begin{cases}
\mathcal{E}_{t}(\Zr^{\alpha}(\Omega))(t, u),\\
\mathcal{E}_{t}(\Zr^{\alpha}(\Omega))(t, u).
\end{cases}
\]
For all $n\leq \Ntop+1$, we define
\[
\mathscr{E}_{t, \leq n}(\psi):=\sum_{|\alpha|\leq n}\mathcal{E}_{t}(\Zr^{\alpha}(\Omega)), \mathscr{F}_{t, \leq n}(\psi):=\sum_{|\alpha|\leq n}\mathcal{F}_{t}(\Zr^{\alpha}(\Omega)).
\]
In order to state a prior energy estimates of the paper, we need precise estimates on the initial data posed on $\Sigma_{\delta}^{u*}$ and $C_{0}^{t^{*}}$. It consists of three sets of assumptions $(\mathbf{I}_{0}), (\mathbf{I}_{2})$ and $(\mathbf{I}_{\infty})$. The assumption are listed as follows:

\begin{equation}
(\mathbf{I}_{0}) \ \ u^{*}=r_{0}\frac{\gamma+1}{\gamma-1}\mathring{c}_{0}, r_{0}\in (0, 1).
\end{equation}

\begin{equation}{(\mathbf{I}_{2})}
\begin{cases}
\sum_{|\alpha|=n}\mathscr{E}_{w}(\Zr^{\alpha}(\psi))(\delta,u^{*})\leq C_{0}\varepsilon^{2}\delta^{2}, \psi\in \{\underline{w},w,\psi_{2}\}, 1\leq n\leq N_{top},\\
\sum_{|\alpha|=n}\mathscr{F}_{w}(\Zr^{\alpha}(\psi))(t,0)\leq C_{0}\varepsilon^{2}t^{2}, \psi\in \{\underline{w},w,\psi_{2}\}, 1\leq n\leq N_{top}, \delta\leq t\leq t^{*},\\
\mathcal{E}_{w}(\delta,u^{*})+\underline{\mathcal{E}}_{w}(\psi)(\delta,u^{*})\leq C_{0}\varepsilon^{2}\delta^{2}, \psi\in\{w,\psi_{2}\},\\
\mathcal{F}_{w}(\psi)(t,0)+\underline{\mathcal{F}}_{w}(\psi)(t,0)\leq C_{0}\varepsilon^{2}t^{2}, \psi\in\{w,\psi_{2}\}, \delta\leq t\leq t^{*}\\
\mathscr{E}_{t, \leq \Ntop+1}(\Omega)(\delta, 0)\leq C_{0}\varepsilon^{2}\delta,\\
\mathscr{F}_{t, \leq \Ntop+1}(\Omega)(t,0)\leq C_{0}\varepsilon^{2}t, \delta\leq t\leq t^{*}
\end{cases}
\end{equation}

\begin{equation}{(\mathbf{I}_{\infty})}
\begin{cases}
|LZ^{\alpha}(\underline{w})|+|LZ^{\alpha}(w)|+|LZ^{\alpha}(\psi_{2})|\leq C\varepsilon,\\
|\hat{X}Z^{\alpha}(\psi)|\leq C\varepsilon, \psi\in \{\wb, w, \psi_{2}\},\\
|T(w)|+|T(\psi_{2})|+|T(\underline{w})+\frac{2}{\gamma+1}|\leq C\varepsilon \delta,\\
|TZ^{\alpha+1}(\psi)|\leq C\varepsilon \delta,\\
|Z^{\alpha}(\hat{T}^{1}+1)|\leq C\varepsilon^{2}\delta^{2}, |Z^{\alpha}(\hat{T}^{2})|\leq C\varepsilon \delta,\\
|Z^{\alpha}(\kappa-\delta)|\leq C\varepsilon \delta^{2}, |Z^{\alpha}(\slashed{g})|\leq C\varepsilon \delta.
\end{cases}, |\alpha|\leq \Ntop+10.
\end{equation}

\begin{remark}\label{rem: away from vacuum on Sigma_delta}
In view of $(\mathbf{I}_{\infty})$ implies that $Tc+\frac{\gamma-1}{\gamma+1}\lesssim \varepsilon\delta$ on $\Sigma_{\delta}^{u*}$. In view of $(\mathbf{I}_{0})$ and $T|_{\Sigma_{\delta}^{u*}}=\frac{\partial}{\partial u}$, we may assume on $\Sigma_{\delta}^{u^{*}}$
\[
\frac{1}{2}r_{0}\mathring{c}_{r}\leq c\leq 2\mathring{c}_{r}.
\]
providing that $\varepsilon$ is sufficiently small.
\end{remark}

\begin{definition}\label{def:rare data}
	Given $(v,c)$ on $\Sigma_\delta$ which satisfies the above condition C1),C2) and C3), we say that $(v,c)$ is a {\bf $C^N$ data}. We say that it is a \textbf{$C^N$ data for rarefaction waves}, if it satisfies in addition the rarefaction ansatz $\mathbf{(I_0)}, \mathbf{(I_2)}$ and $\mathbf{(I_\infty)}$. 
\end{definition}

\begin{proposition}
Given $\Ur_{r}$ and sufficiently small $\varepsilon$. For any smooth $U_{r}|_{t=0}$ defined on the right $x_{1}\geq 0$ up to the boundary with 
\[
\|\Ur_{r}-U_{r}\|_{H^{N_{0}}([0, +\infty)\times [0, 2\pi]}\leq \varepsilon.
\]
and any $\delta\in (0, \frac{1}{2})$, there exists \textbf{$C^N$ data for rarefaction waves}.
\end{proposition}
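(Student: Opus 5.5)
The plan is to construct the data on $\Sigma_\delta^{u^*}$ explicitly in the acoustical coordinates $(u,\vartheta)$ by Taylor expansion at $S_{\delta,0}$, adapting the irrotational construction of section 3 of \cite{Luo-YuRare2}. We then verify C1)--C3) and the ansatz $(\mathbf{I}_0)$, $(\mathbf{I}_2)$, $(\mathbf{I}_\infty)$.

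First, the solution $U_r$ on $\mathcal{D}_0$ exists up to $t^*$ by classical local well-posedness and continuous dependence, since $\varepsilon$ is small. It is $O(\varepsilon)$-close in $C^{N_0-3}$ to the constant state $\Ur_r$. Hence $C_0$, $S_{\delta,0}$, the function $\vartheta$ on $C_0$ from \eqref{def: vartheta in acoustical}, and all tangential jets $L^mX^n(U)|_{S_{\delta,0}}$ are determined and are $O(\varepsilon)$ perturbations of the planar ones.

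Second, we prescribe the transversal jets on $S_{\delta,0}$. We use \eqref{Euler equations:form 1}: $\hat T(\wb_g)|_{S_{\delta,0}}$ is free, and we choose it equal to the background rarefaction value $-\frac{2}{\gamma+1}\delta^{-1}$ (suitably normalized) plus $0$. The transversal derivatives $\hat T(w_g)$ and $\hat T(\psi^{(\hat X)})$ are forced by the equations along $C_0$, obtained by solving the $L$-equations as linear relations on $S_{\delta,0}$. Differentiating \eqref{Euler equations:form 1} with $L^mX^n$ determines recursively all higher mixed jets $T^kL^mX^n$ of $w_g,\psi^{(\hat X)}$ in terms of freely chosen $T^k\wb_g$, which we take to be the background values. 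This enforces C3) up to order $N$.

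Third, we define $U$ on $\Sigma_\delta^{u^*}$ as the background 1-D rarefaction profile (the self-similar formulas with $x_1/t$ replaced via $u$ at $t=\delta$) plus a correction. The correction is a polynomial in $u$ of degree $N$ whose coefficients are the differences between the jets computed above and the background jets, multiplied by a cutoff in $u$. Simultaneously we solve $T\vartheta=0$ and the geometric quantities $\kappa,\hat T^i,\slashed g$ on $\Sigma_\delta$ from $u$. Here we use the construction of $u$ in \cite{Luo-YuRare2} with $\kappa|_{\Sigma_\delta}$ close to $\delta$. C1) and C2) are then immediate.

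Finally, we check the ansatz. $(\mathbf{I}_0)$ is a choice of $u^*$ compatible with staying away from vacuum, as in Remark \ref{rem: away from vacuum on Sigma_delta}. For $(\mathbf{I}_\infty)$, the key point is scaling: on $\Sigma_\delta$ one has $T=\kappa\hat T$ with $\kappa\approx\delta$, so $T$-derivatives of $O(\varepsilon)$ perturbations carry a factor $\delta$. Integrating over a $u$-interval of fixed length with weight $\kappa^2\sim\delta^2$ then gives the $\varepsilon^2\delta^2$ bounds in $(\mathbf{I}_2)$ on $\Sigma_\delta$. The flux bounds on $C_0$ follow because $\kappa\sim t$ along $C_0$ and $U_r$ is smooth there. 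The vorticity bounds with a single power of $\delta$ or $t$ come from the weight $\mu\sim\delta$ in $\mathcal{E}_t$, together with $\Omega|_{C_0}=\Omega_r|_{C_0}$ from \eqref{eq: reformulation of vorticity in the first null frame}.

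\textbf{Main obstacle.} The hardest step is the sharp estimate $|Z^\alpha(\hat T^1+1)|\lesssim\varepsilon^2\delta^2$ and the $\delta$-uniformity of the jets. The first attempt would be to exploit $\hat T^1+1=O((\hat T^2)^2)$, since $\hat T$ is a unit vector, combined with $\hat T^2=O(\varepsilon\delta)$ obtained from integrating $L(\hat T^i)$ along $C_0$ over time $\delta$.
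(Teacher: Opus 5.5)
Your construction on $\Sigma_\delta^{u^*}$ matches the paper's steps (1)--(3): free choice of $T^k\wb_g$ at $S_{\delta,0}$ equal to the background values, the other jets forced by \eqref{Euler equations:form 1}, Taylor expansion in $u$, and the geometry of $u$ taken from Luo--Yu. The gap is on $C_0$.

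You assert that the flux bounds in $(\mathbf{I}_2)$ on $C_0^{t^*}$ follow from $\kappa\sim t$ and the smoothness of $U_r$. But whenever $\Zr^\alpha$ contains $\Tr$, the fluxes $\mathscr{F}_w(\Zr^\alpha\psi)(t,0)$ and $\mathscr{F}_{t,\leq\Ntop+1}(\Omega)(t,0)$ involve transversal derivatives taken on the \emph{rarefaction} side of $C_0$. These are not those of $U_r$. Already $T\wb\approx-\frac{2}{\gamma+1}$ on the left, versus $O(\varepsilon t)$ on the right. Because $T(c)$ jumps, the higher normal jets $T^k\psi$, $k\geq 2$, of all variables jump in general (apply $T$ to $L(w_g)=-2c\hat T(w_g)+\cdots$). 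Even $\kappa\sim t$ along $C_0$ is not free, since $L\kappa=-T(v^1+c)+\cdots$ uses the left-side value of $T(v^1+c)$.

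The missing step is the paper's step (4). You write the wave equations in the null frame as transport equations along $C_0$, $L(T\psi)=\kappa\cdot(\text{tangential})+T\psi\cdot(\text{tangential})$, and couple them with the equations for $L\kappa$ and $L\hat T^i$. You then commute with $T^k$ and integrate this ODE system in $t$ from the jets you prescribed on $S_{\delta,0}$. This gives $\|T\wb+\frac{2}{\gamma+1}\|_{L^\infty(S_{t,0})}+\|T\Zr^\beta\psi\|_{L^\infty(S_{t,0})}\lesssim\varepsilon t$ uniformly in $\delta$ (Proposition \ref{appendix: initial flux bound from LuoYu2}), and the flux bounds follow from this.

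This is also the one place where vorticity needs a genuine check, which your proposal omits. The wave equations now contain $\rho\partial_i\Omega$, $\rho c^{-1}\Omega\partial_i c$ and $\Omega^2$, and $\zeta,\eta$ acquire the term $\hat X^iT(\psi_i)$. So you must verify that no unknown transversal derivative enters the ODE system. None does: by \eqref{eq: reformulation of vorticity in the first null frame}, $\Omega$, $\partial_1\Omega$, $\partial_2\Omega$ on $C_0$ are $C_0$-tangential derivatives of $(c,v)$, hence known forcing, and the system keeps the irrotational structure and data.

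Likewise, $\Omega|_{C_0}=\Omega_r|_{C_0}$ together with the weight $\mu$ only controls $\Omega$ and its tangential derivatives. The bounds $\|\Zr^\gamma\Omega\|_{L^\infty}\lesssim\varepsilon$ needed for $\mathscr{E}_{t,\leq\Ntop+1}$ and $\mathscr{F}_{t,\leq\Ntop+1}$ come from $\Omega=\rho^{-1}\bigl(\kappar^{-1}\Tr\psi_2+\Xr\psi_1\bigr)$. This uses $\Tr\Zr^\beta\psi_2=O(\varepsilon\delta)$ on $\Sigma_\delta$, respectively $O(\varepsilon t)$ on $S_{t,0}$, the latter again from the ODE step.

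The estimate you single out as the main obstacle, $|Z^\alpha(\hat T^1+1)|\lesssim\varepsilon^2\delta^2$, is in fact inherited unchanged from Luo--Yu.

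Finally, do not multiply the zeroth-order mismatch $U|_{S_{\delta,0}}-\Ur_r$ by a cutoff that is nonconstant on $[0,u^*]$. That mismatch is $O(\varepsilon)$, not $O(\varepsilon\delta)$. The cutoff's derivative would make $Tw$, $T\psi_2$ and $T\wb+\frac{2}{\gamma+1}$ of size $\varepsilon$, breaking $(\mathbf{I}_\infty)$ and the $\varepsilon^2\delta^2$ bound on $\underline{\mathcal{E}}_w$. The plain Taylor polynomial on the compact interval $[0,u^*]$ suffices.
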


Under irrotational condition, the existence of \textbf{$C^N$ data for rarefaction waves} has been constructed in \cite{Luo-YuRare2}. Moreover, the construction in \cite{Luo-YuRare2} is essentially independent of the irrotational condition and can be modified slightly to handle the case {\bf without irrotational condition}, see section \ref{sec: existence of initial data} for the proof.

\subsection{A prior energy estimate of the paper}

\subsubsection{Main energy estimate} We now state the first theorem of the paper:
\begin{theorem}{(A priori Energy estimate)}\label{theorem: A priori Energy estimate}
Assume the initial data posed on $\Sigma_{\delta}^{u^*}$ and $C_{0}^{t^{*}}$ satisfying $(\mathbf{I}_{0}), (\mathbf{I}_{2})$ and $(\mathbf{I}_{\infty})$. For $N_{top}$ sufficiently large, there exists a constant $\varepsilon_{0}>0$, so that for all $0< \delta< \frac{1}{2}$, for all $\varepsilon\leq \varepsilon_{0}, \mathcal{D}(\delta)(t, u^*)\subset \mathcal{D}(\delta)(t^* , u^*)$, there exists a constant $C>0$ such that for all $t\in [\delta, 1]$, we have
\begin{equation}\label{eq: main estimates}
\begin{cases}
\mathcal{E}_{w}(\psi)(t, u^{*})+\underline{\mathcal{E}}_{w}(\psi)(t, u^{*})\leq C\varepsilon^{2}t^{2}, \psi\in \{w, \psi_{2}\},\\
\sum_{|\alpha|=n}\mathscr{E}_{w}(\Zr^{\alpha}(\psi))(t, u^{*})\leq C\varepsilon^{2}t^{2}, \psi\in \{\wb, w, \psi_{2}\}, 1\leq n\leq \Ntop,\\
\mathscr{E}_{t, \leq \Ntop+1}(\Omega)(t, u^{*})\leq C\varepsilon^{2}t.
\end{cases}
\end{equation}
\end{theorem}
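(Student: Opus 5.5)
The plan is a bootstrap (continuity) argument on $[\delta,t^*]$, following the scheme of \cite{Luo-YuRare1} but with the transport energy for $\Omega$ estimated simultaneously with the wave energies.

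\textbf{Bootstrap setup.} Fix $t_0\in[\delta,t^*]$ and assume on $\mathcal{D}(\delta)(t_0,u^*)$ that the three inequalities of \eqref{eq: main estimates} hold with $C$ replaced by $2M$, for a large $M$ to be chosen. In addition, assume $L^\infty$ bounds mirroring $(\mathbf{I}_\infty)$, with $\delta$ replaced by $t$ (in particular $\kappa\sim t$, $|\hat T^1+1|\lesssim\varepsilon t$, $|\hat T^2|\lesssim \varepsilon t$) for derivatives up to order $\Ntop/2+C$.

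\textbf{Step 1: geometry and pointwise bounds.} Integrate the structure equations \eqref{eq: formulas to control the geometry} along $L$ from $\Sigma_\delta$, using the bootstrap energies and Sobolev inequalities on $S_{t,u}$ and $\Sigma_t^u$. This recovers the $L^\infty$ assumptions with better constants.
\begin{itemize}
\item The key identity is $L\kappa=-T(c)-\hat T^jT(\psi_j)$. Since $T(\underline w)\approx-\frac{2}{\gamma+1}$ up to $O(\varepsilon t)$, it gives $L\kappa=1+O(\varepsilon t)$, hence $\kappa=t(1+O(\varepsilon))$. This is what converts the $\kappa^2$ weights in the energies into powers of $t$.
\item Use the relation between the two frames, $\hat T=\Trh+O(\varepsilon t)$, $\hat X=\Xr+O(\varepsilon t)$, to transfer bounds between $Z^\alpha$ and $\Zr^\alpha$.
\item Vorticity enters through $\zeta$, $\eta$ and $L\hat T^i$. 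Its size is controlled by the reformulation \eqref{eq: reformulation of vorticity in the first null frame}, which expresses $\partial\Omega$ through the $C_u$-tangential derivatives $\hat X\Omega$ and $L\Omega$. Those are bounded by the transport energy through Sobolev on $C_u$, together with $B\Omega=0$.
\end{itemize}

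\textbf{Step 2: wave energies.} Commute $\Zr^\alpha$, $|\alpha|\le\Ntop$, through $\Box_g$ in \eqref{eq: wave-transport system}, using the second-frame commutators. Then apply \eqref{vor: energy estimate for wave equation with multiplier hat(L)} and \eqref{vor: energy estimate for wave equation with multiplier underline(L)}.
\begin{itemize}
\item \emph{Deformation terms.} $Q_1$–$Q_4$ and $\underline Q_1$–$\underline Q_4$ are bounded by $\int_\delta^t \frac{C\varepsilon}{t'}(\dots)$ or better. The dangerous coefficient $\underline L\mu$ in $\underline Q_1$ carries a favorable sign: $\underline L\mu\approx 2T(c\kappa)<0$ because $Tc\approx-\frac{\gamma-1}{\gamma+1}$. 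So that term is either dropped or absorbed, exactly as in the irrotational case.
\item \emph{Nonlinear sources.} The quadratic source terms in $\varrho$ are handled by Gronwall with the $t^2$ weights.
\item \emph{Vorticity sources.} The new sources in $\Zr^\alpha\varrho$ are $\rho\Zr^\alpha\partial\Omega$, $\Omega\,\Zr^\alpha\partial c$ and $\Omega^2$ terms. In $\underline Q_0=-\int\mu\varrho\underline L\psi$, Cauchy--Schwarz gives a bound
$$\Big(\int_\delta^t \mu\,\mathscr E_{t,\le\Ntop+1}(\Omega)\,dt'\Big)^{1/2}\Big(\int_\delta^t \underline{\mathcal E}\,dt'/t'\Big)^{1/2}\lesssim \varepsilon^2 t^2.$$
This is exactly why the transport energy is taken one derivative higher ($\Ntop+1$) with weight only $t$ (since $\mu\sim t$).
\item \emph{Lower order.} The zeroth-order energies of $w,\psi_2$ and the special $\mathring{\mathscr E}_w(\underline w)$ are treated as in \cite{Luo-YuRare1}, using \eqref{Euler equations:form 3}.
\end{itemize}

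\textbf{Step 3: transport energy.} Commute $\Zr^\alpha$, $|\alpha|\le\Ntop+1$, with $B$. The commutator $[B,\Zr^\alpha]\Omega$ produces $\Zr^{\beta}\partial v\cdot\Zr^{\gamma}\partial\Omega$. The worst term, $\Zr^{\Ntop+1}\partial v\cdot\partial\Omega$, requires $\Ntop+2$ derivatives of $v$.
\begin{itemize}
\item \emph{Avoiding derivative loss.} Rewrite $\partial v$ via $\partial_1=\hat X^1\hat X-\hat X^2\hat T$ and the Euler equations \eqref{Euler equations:form 1 ringed}. The top-order pieces then become $\Zr^{\Ntop+1}$ of the Riemann invariants, which the $\Ntop$-level wave energies control through the $\Xr$- and $L$-components of $\mathscr E_w$ and the flux terms. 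One derivative on $v$ costs a factor $1/t$, which is compensated by the extra factor $\mu\sim t$ in $V_0=\int 2\mu F\Omega$.
\item \emph{The term $V_1$.} Its coefficient contains $c(L\kappa+\underline Lc)\approx c(1-2\cdot\frac{\gamma-1}{\gamma+1}\cdot t)$, which is harmless. Gronwall then gives $\mathscr E_{t,\le\Ntop+1}(\Omega)\le C_0\varepsilon^2t+C\varepsilon\cdot(\text{bootstrap})$.
\end{itemize}

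\textbf{Step 4: closing.} Choose $M\gg C_0$ and $\varepsilon_0$ small, independent of $\delta$. The recovered bounds are strictly better than the bootstrap bounds, so continuity gives the result on $[\delta,t^*]$.

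\textbf{Main obstacle.} I expect the main obstacle to be the top-order coupling between Step 2 and Step 3. The wave equation for $\psi_2$ contains $\rho\partial_1\Omega$ at order $\Ntop$, requiring $\Ntop+1$ derivatives of $\Omega$. The transport estimate at order $\Ntop+1$ in turn needs $\Ntop+2$ derivatives of $v$. The loop closes only if the $\hat T$-derivative of $v$ hidden in $\partial v$ is converted, via the null-frame identity for $\omega$, into $L$- and $\hat X$-derivatives, which cost no extra power of $t^{-1}$. My first attempt would be to apply \eqref{eq: reformulation of vorticity in the first null frame} directly to the commuted quantities, treating $\Zr^\alpha\Omega$ as tangentially differentiated along $C_u$ and exploiting the fact that $B$ is transverse to $C_u$.
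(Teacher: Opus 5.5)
Your architecture matches the paper's: a bootstrap, geometry obtained by integrating the structure equations along $L$, commuting with $\Zr$, wave energies as in Luo--Yu, a transport energy for $\Omega$ at order $\Ntop+1$ with weight $t$, and Cauchy--Schwarz on the $\partial\Omega$ sources. The genuine gap is in how you dispose of the linear bulk terms: they are neither $O(\varepsilon)$ nor of favorable sign, and the mechanism that actually controls them is missing from your sketch.

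\textbf{The sign of $\Lb\mu$.} Since $L\kappa\approx 1$, $T\kappa\approx 0$ and $Tc\approx-\frac{\gamma-1}{\gamma+1}$, you dropped the term $c^{-1}\kappa L\mu\approx\kappa$, which is of the same order as $2T\mu$. In fact $\Lb\mu\approx\frac{3-\gamma}{\gamma+1}\kappa>0$ and $\Lb\mu+\mu L(c^{-1}\kappa)\approx\frac{4}{\gamma+1}\kappa>0$. So $\underline{Q}_1\approx\frac{2}{\gamma+1}\int\kappa(\hat{X}\psi)^2$ has the bad sign. Likewise $Q_1\approx\int\kappa(\hat{X}\psi)^2$ and $Q_2\approx\frac{\gamma-1}{\gamma+1}\int c^{-2}\kappa(L\psi)^2$ have $O(1)$ coefficients.

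\textbf{Why this is fatal for the vorticity.} Take $\Omega_n=\Tr^n\Omega$. Then $V_1\approx\frac{3-\gamma}{\gamma+1}\int c\,\Omega_n^2$. The commutator term $n(\Tr(v^1)+1)\Tr\Omega_{n-1}$, with $\Tr(v^1)+1\approx\frac{\gamma-1}{\gamma+1}$, adds $\frac{2n(\gamma-1)}{\gamma+1}\int c\,\Omega_n^2$. Together these give $C_n\int_\delta^t\mathcal{E}_t(\Omega_n)(t')\,\frac{dt'}{t'}$ with $C_n=\frac{3-\gamma+2n(\gamma-1)}{\gamma+1}$. Thus $C_1=1$ and $C_n>1$ for $n\geq 2$. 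A Gronwall argument in $t$ can then give at best $t\log(t/\delta)$ or $\delta^{1-C_n}t^{C_n}$ instead of $t$, which destroys exactly the uniformity in $\delta$ that the theorem asserts.

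\textbf{What the paper does instead.} Each such integrand carries the weight of a flux through $C_{u'}$: $\mu|\slashed{\nabla}\psi|^2$, $c^{-1}\kappa|L\psi|^2$, or $c^2\Omega^2$. The paper therefore bounds these terms by $\int_0^u\mathscr{F}(t,u')\,du'$ and absorbs them by Gronwall in $u$, i.e. the $\int_0^u F$ term of Lemma~\ref{lem: refined Gronwall}. The $\int E/t'$ term is reserved for contributions with small coefficient, namely the $a_0$-terms. This is where the transversality of $B$ to $C_u$ is really used: it makes $\int_{C_u}c^2\Omega^2$ coercive, so growth is measured in $u\in[0,u^*]$ rather than in $t/\delta$.

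\textbf{The top-order loop does not exist.} For $|\alpha|=\Ntop+1$, $[\kappar B,\Zr^\alpha]\Omega$ is a sum of terms $\Zr^{\alpha_1}\big([\kappar B,\Zr]\Zr^{\alpha_2}\Omega\big)$ with $|\alpha_1|+|\alpha_2|=\Ntop$. The coefficients of $[\kappar B,\Zr]$ are $\Xr(v^1)$, $\kappar\Xr(v^2)$, $\Tr(v^1)+1$ and $\kappar\Tr(v^2)$. So $v$ receives at most $\Ntop+1$ derivatives, which is exactly what $\mathscr{E}_w(\Zr^\beta\psi)$, $|\beta|\leq\Ntop$, controls. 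Since $\|\Zr\Omega\|_{L^\infty}\lesssim\Mr\varepsilon$, all these terms except the linear one above are cubic, $\lesssim\Mr\varepsilon^3t$. The paper therefore closes the vorticity energy first, with $M$-independent constants. Only then does it insert that bound linearly into the wave estimates, via $\frac{C}{a_0}\varepsilon^2t^2+a_0\int_\delta^t\mathscr{E}_w(t')\,\frac{dt'}{t'}$.

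\textbf{Your remedy would not have worked anyway.} Had there been a loss, \eqref{eq: reformulation of vorticity in the first null frame} would not help. It makes only the curl $\omega$ tangential to $C_u$; the rest of $\partial v$ (e.g.\ $\Tr(v^1)\approx-\frac{2}{\gamma+1}$) is genuinely transversal. Similarly, your Step~1 route to $\partial\Omega$ through $B\Omega=0$ only gives $L\Omega=-c\kappa^{-1}T\Omega=O(\varepsilon/t)$, because $T\Omega=O(\varepsilon t)$ cannot be part of the ansatz (Remark~\ref{rem:additional_vanishing_not_ansatz}). That route is also unnecessary. The new geometric term is $\hat{X}^iT(\psi_i)=O(\varepsilon t)$, which follows from $(\mathbf{B}_{\infty})$. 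In the wave sources, $\partial\Omega$ enters only through $\mu\partial\Omega\sim\Zr\Omega$.
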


\subsubsection{The bootstrap argument and the ansatz} We use the method of continuity to prove the main estimates \eqref{eq: main estimates}. We propose a set of energy ansatz $(\mathbf{B}_{2})$, $L^{\infty}$ ansatz $(\mathbf{B}_{\infty})$ and geometry ansatz $(\mathbf{B}_{\text{diffeomorphism}})$. Then we will run a bootstrap argument to close them on $\mathcal{D}(\delta)(t^{*}, u^{*})$ to close them in section \ref{section4}, \ref{section5} and \ref{section6}.

The ansatz $(\mathbf{B}_{2})$ is as follows: we assume there exists a constant $M>0$, so that for all $(t, u)\in [\delta, t^{*}]\times [0, u^{*}]$, the following inequalities hold:
\begin{equation}{(\mathbf{B}_{2})}\label{assumption: B2}
\begin{cases}
\sum_{|\alpha|=n}\Big(\mathscr{E}_{w}(\Zr^{\alpha}(\psi))(t,u)+\mathscr{F}_{w}(\Zr^{\alpha}(\psi))(t,u)\Big)\leq M\varepsilon^{2}t^{2},\psi\in\{\underline{w},w,\psi_{2}\},1\leq n\leq N_{top},\\
\mathcal{E}_{w}(\psi)(t,u)+\underline{\mathcal{E}}_{w}(\psi)(t,u)+\mathcal{F}_{w}(\psi)(t,u)+\underline{\mathcal{F}}_{w}(\psi)(t,u)\leq M\varepsilon^{2}t^{2}, \psi\in\{w,\psi_{2}\},\\
\mathscr{E}_{t, \leq \Ntop+1}(\Omega)(t, u)+\mathscr{F}_{t, \leq \Ntop+1}(\Omega)(t, u)\leq M\varepsilon t^{2}.
\end{cases}
\end{equation}

The ansatz $(\mathbf{B}_{\infty})$ is as follows: we assume there exists a constant $M>0$, so that for all $(t, u)\in [\delta, t^{*}]\times [0, u^{*}]$ and $\psi\in \{\wb, w, \psi_{2}\}$, the following inequalities hold:
\begin{equation}{(\mathbf{B}_{\infty})}\label{assumption: B_infty}
\begin{cases}
\|L(\wb)\|_{L^{\infty}(\Sigma_{t}^{u})}+\|L(w)\|_{L^{\infty}(\Sigma_{t}^{u})}+\|L(\psi_{2})\|_{L^{\infty}(\Sigma_{t}^{u})}\leq M\varepsilon,\\
\|LZ^{\alpha}(\psi)\|_{L^{\infty}(\Sigma_{t}^{u})}+\|\hat{X}Z^{\alpha}(\psi)\|_{L^{\infty}(\Sigma_{t}^{u})}\leq M\varepsilon, |\alpha|\leq \Ninf-2,\\
\|T(w)\|_{L^{\infty}(\Sigma_{t}^{u})}+\|T(\psi_{2})\|_{L^{\infty}(\Sigma_{t}^{u})}\leq M\varepsilon t,\\
\|TZ^{\alpha}(\psi)\|_{L^{\infty}(\Sigma_{t}^{u})}\leq M\varepsilon t,  1\leq|\beta|\leq \Ninf-2.
\end{cases}
\end{equation}

The geometry ansatz $(\mathbf{B}_{\text{diffeomorphism}})$ is as follows: for all for all $(t, u)\in [\delta, t^{*}]\times [0, u^{*}]$, the following inequalities hold:
\begin{equation}{(\mathbf{B}_{\text{diffeomorphism}})}\label{assump: B_geoemtry}
\begin{cases}
|\hat{T}^{1}+1|\leq \frac{1}{2026},\\
|\frac{\partial u}{\partial x_{1}}+\frac{1}{t}|\leq \frac{1}{2026}, |\frac{\partial u}{\partial x_{2}}|\leq \frac{1}{2026},\\
|\frac{\partial \vartheta}{\partial x_{1}}|\leq \frac{1}{2026}t, |\frac{\partial \vartheta}{\partial x_{2}}-1|\leq \frac{1}{2026}t.
\end{cases}
\end{equation}

\subsection{Existence of rarefaction waves connected to the data on the right}
For a constant state on the right in one-dimensional case,  it can be connected by a front rarefaction wave. We show that  there is an analogue in multi-dimensional cases. We let $\mathring{u}^{*}$ be the minimal slope of background rarefaction fronts opened up to $u^{*}$ with initial data $\Ur_{r}$ given on $x_{1}\geq 0$,
\[
\mathring{u}^{*}= (\mathring{v}_{r}^{1}+\mathring{c})-u^*
\]
and define the following region:
\[\mathcal{W}=\big\{(t,x_1,x_2)\in \mathbb{R}^3-\mathcal{D}_0\big| 0<t\leqslant t^*,  \frac{x_1}{t}\geqslant \mathring{u}^{*}+\varepsilon_0\big\}.\]
\begin{center}
\includegraphics[width=3.2in]{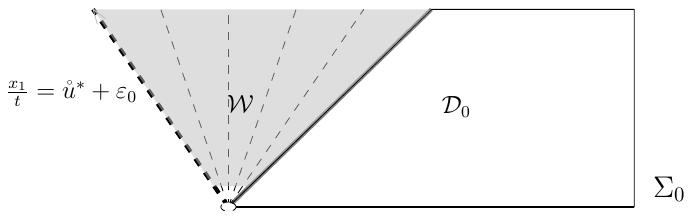}
\end{center}
where $\varepsilon_{0}$ is account for the $O(\varepsilon)$ perturbation and we refer to \eqref{def: varepsilon0} for definition.
\begin{theorem}[Existence of rarefaction waves  connected to the data on the right]\label{theorem: existence-rarefaction}
	There exists an solution $U$ to the Euler equations \eqref{eq: Euler equations} defined on $\mathcal{W}$ so that 
\begin{itemize}
\item[1)] $U|_{C_0^{t^{*}}}=U_{r}|_{C_0}$;
\item[2)] $U\in C^{0} \big((0,t^*]; C^k(\Sigma_t\cap \overline{\mathcal{W}}) \big)$ for all $k\leqslant N_{3}$;
\item[3)] For all $t\in (0,t^*]$, we have $\big|\frac{\partial\wb}{\partial x_1}-\frac{2}{\gamma+1}\frac{1}{t}\big|\lesssim \varepsilon$. Therefore,  the continuous solution to the Euler equations defined by $U$ on $\mathcal{D}_0$ and by $U_{r}$ on $\mathcal{W}$ is not a $C^1$ solution.
\item[4)] Take $\ub=u^{*}+\varepsilon_{0}$ and define\footnote{We remark that $\overline{\mathcal{W}}$ is a subset of $\mathcal{W}$ but not the closure of $\mathcal{W}$.}
\[
\overline{\mathcal{W}}=\cup_{u\in [0, \ub]}C_{u}^{t^{*}}.
\]
Then the initial singularity $\mathbf{S}_{*}$ 
\[\mathbf{S}_*:=\big\{(t,x_1,x_2)\big|t=0,x_1=0\big\},\]
can be resolved by the acoustical coordinate $(t, u, \vartheta)\in [0, t^{*}]\times [0, \ub]\times \mathbb{R}/2\pi\mathbb{Z}$ and $U$ can be extended to be a smooth solution up to $t=0$ (in acoustical coordinate),
\[
U\in C^{N_{4}}([0, t^{*}]\times [0, \ub]\times [0, 2\pi]).
\]
\end{itemize}
\end{theorem}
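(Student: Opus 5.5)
The plan is to follow the limiting scheme of \cite{Luo-YuRare2}, now with vorticity included. Fix a sequence $\delta_n\to 0$. For each $\delta=\delta_n$, the Proposition on existence of initial data gives a $C^N$ data for rarefaction waves on $\Sigma_{\delta}^{u^*}$. I extend the parameter $u^*$ to $\ub=u^*+\varepsilon_0$; this is still admissible under $(\mathbf{I}_0)$ if $r_0<1$ and $\varepsilon_0$ is small. The data are compatible with $U_r|_{C_0}$ through C1)--C3).

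Local well-posedness of the characteristic/Cauchy problem, for the wave--transport system \eqref{eq: wave-transport system}, gives a smooth solution $U_\delta$ on $\mathcal{D}(\delta)(t,\ub)$. Theorem \ref{theorem: A priori Energy estimate}, together with the bootstrap ansatz $(\mathbf{B}_2),(\mathbf{B}_\infty),(\mathbf{B}_{\text{diffeomorphism}})$, lets me continue this solution up to $t^*$. The bounds are uniform in $\delta$ and are expressed in the acoustical coordinates $(t,u,\vartheta)$. The key output is that all quantities $Z^\alpha$ of $(\wb,w,\psi_2,\Omega)$, and of the geometric quantities $\kappa/t$, $\hat T^i$, $\slashed g$, $\Xi$, are bounded in $L^\infty$ for $|\alpha|\le \Ninf$. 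This follows from Sobolev embedding on the circles $S_{t,u}$ and the $T$-weighted energies. From $L\kappa=-T(c)-\hat T^jT(\psi_j)$ and $T\wb\approx -\frac{2}{\gamma+1}\kappa$, I also get $\kappa = t(1+O(\varepsilon))$.

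Next I pass to the limit $\delta\to0$. Regard $U_\delta$, $x_1$ and $x_2$ as functions of $(t,u,\vartheta)\in[\delta,t^*]\times[0,\ub]\times\mathbb{T}$. The uniform $C^{N_4+1}$ bounds and Arzelà--Ascoli give a subsequence converging in $C^{N_4}$ on every $[\tau,t^*]\times[0,\ub]\times\mathbb{T}$, and by diagonalization on $(0,t^*]$. I then show the limit extends up to $t=0$. The $L$-derivatives are bounded uniformly, with no $t^{-1}$ weight, because the energies for $L\Zr^\alpha\psi$ are controlled by $\varepsilon t$. Hence $U$ and its coordinate derivatives are Lipschitz in $t$ and extend continuously to $t=0$, which gives item 4). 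The map $(t,u,\vartheta)\mapsto(t,x_1,x_2)$ has Jacobian $-\kappa\sqrt{\slashed g}\sim -t$. By $(\mathbf{B}_{\text{diffeomorphism}})$ it is a diffeomorphism for $t>0$, and it collapses $\{t=0\}$ onto $\mathbf S_*$. This is exactly the resolution of the singularity.

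Pushing forward to Cartesian coordinates gives $U$ on $\overline{\mathcal W}\supset\mathcal W$. Here I use that $\partial u/\partial x_1=-1/t+O(1)$ and $\partial\vartheta/\partial x_i=O(1)$, so $C^k$ regularity in $x$ holds on each $\Sigma_t$ with $t>0$, giving item 2). Covering all of $\mathcal W$ amounts to showing $C_{\ub}$ lies to the left of $x_1/t=\mathring u^*+\varepsilon_0$. This follows from the $O(\varepsilon)$ closeness of $L^1$ to its background value $\mathring v^1+\mathring c-u$, which is the reason $\varepsilon_0$ is taken proportional to $\varepsilon$.

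Item 1) holds because the limit data agree with $U_r$ on $C_0$ for every $\delta$. The Euler equations pass to the limit by $C^1$ convergence. For item 3), write $\partial_1\wb = -\hat T^1\kappa^{-1}T\wb+\hat X^1\hat X\wb$. Using $T\wb=-\frac{2}{\gamma+1}\kappa+O(\varepsilon t)$, $\kappa=t(1+O(\varepsilon))$ and $\hat T^1=-1+O(\varepsilon)$, this gives $\partial_1\wb=\frac{2}{\gamma+1}t^{-1}+O(\varepsilon)$. The bound on $T\wb+\frac{2}{\gamma+1}\kappa$ must itself be propagated from $(\mathbf I_\infty)$ via the $L$-transport equation for $T\wb$.

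The main obstacle I expect is the uniform-in-$\delta$ control of the vorticity and of the top-order geometry. $\Omega$ must be bounded in $L^\infty$ with $\Ntop+1$ derivatives in order to feed $\rho\partial_i\Omega$ into the wave equations. I would handle this with \eqref{eq: reformulation of vorticity in the first null frame}: it expresses $\partial_i\Omega$ through $L,\hat X$ derivatives, and the $t$-weighted transport energies are bounded by Theorem \ref{theorem: A priori Energy estimate}. Uniqueness of the limit is also needed to avoid subsequence dependence. I would first get it through an $L^2$ difference estimate in acoustical coordinates; if that fails, I would simply fix one subsequential limit, which suffices for the existence statement.
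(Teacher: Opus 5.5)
Your overall architecture matches the paper's: uniform-in-$\delta$ bounds from the a priori theorem, an Arzel\`a--Ascoli/diagonal limit over dyadic $\delta$, and a region of convergence obtained by comparing $L,u$ with $L^{\rm cst},u^{\rm cst}$. Items 1) and 2) go through essentially as you describe. The genuine gap is item 4).

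\textbf{The gap in item 4).} You assert uniform $C^{N_4+1}$ bounds in $(t,u,\vartheta)$, but nothing you have supplies them.
\begin{itemize}
\item The a priori estimate and the closed $(\mathbf{B}_\infty)$ control $Z^\alpha$ with $Z\in\{T,\hat{X}\}$ and at most \emph{one} $L$-derivative, namely $LZ^\alpha\psi$.
\item This gives Lipschitz continuity in $t$, hence a $C^0$ extension of $T^aX^bU$ to $t=0$.
\item It gives nothing about $L^kZ^\alpha\psi$ or $L^kZ^\alpha\Omega$ for $k\ge 2$, nor about $L^k$ of the Jacobian entries of $(t,u,\vartheta)\mapsto(t,x_1,x_2)$. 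These are exactly what joint $C^{N_4}$ regularity up to $t=0$ requires, as does the later use of the jets $L^k\psi$, $L^{k-1}\Omega$ on $\mathbf{S}_*$.
\end{itemize}
You cannot recover these by trading $L$ for $\kappa^{-1}T$ through the first-order Euler equations. Already boundedness of $L^2w$ reduces to boundedness of $L(\hat{T}w)=\kappa^{-1}\bigl(LTw-\hat{T}(w)L\kappa\bigr)$. That requires $LTw-\hat{T}(w)L\kappa=O(\varepsilon t)$, a second-order-in-$t$ structure the energy estimates do not provide. The paper flags precisely this step as the one that is not a verbatim repetition of the irrotational case.

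\textbf{The missing idea: retrieving $L$-derivatives by transport from $C_0$.}
\begin{itemize}
\item Writing $\Box_g\psi=\varrho$ in the null frame and commuting with $L^k$ gives a transport equation along $\Lb$:
$\Lb(L^{k+1}\psi)=\mathbf{P}_{1,1}L^{k+1}\psi+\mathbf{P}_{1,1}\sum_{|\beta|\le 1}Z^\beta L^k\Omega+\kappa c\hat{X}^2L^k\psi+\mathbf{P}_{k+1,k}$.
\item The $Z^{\le 1}L^k\Omega$ term comes from the source $\rho\partial\Omega$. So this equation must be coupled with $c^{-1}\kappa B(L^k\Omega)=-k\,L(c^{-1}\kappa)L^k\Omega+\mathbf{P}_{k,k}$, obtained by commuting $L^k$ with $c^{-1}\kappa B\Omega=0$, together with the $Z^\alpha$-commuted versions of both.
\item Both equations are integrated from $C_0$, where all $L$-jets are known from $U_r$. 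This works on the effective region $\mathcal{D}^+(\delta)$ swept out by integral curves of $c^{-1}\kappa B$ starting on $C_0$.
\item $\mathcal{D}^+(\delta)$ lies inside the region swept by $\Lb$-curves from $C_0$, because $\Lb(u)=2$ and $c^{-1}\kappa B(u)=1$ with the same $t$-component $c^{-1}\kappa$.
\item Along these curves $du/dt\approx c/t$, so the complement of $\mathcal{D}^+(\delta)$ has $t\le C\delta$, and the limit still covers $\mathcal{W}$.
\item The term $\kappa c\hat{X}^2L^k\psi$ costs two tangential derivatives per extra $L$. The induction therefore closes only for $2k+|\alpha|\le N_3$, which is why the regularity drops to $N_4\approx N_3/2$.
\end{itemize}
Without this step, neither $U$ nor $x_1,x_2$ is shown to be $C^{N_4}$ up to $t=0$.

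\textbf{Smaller errors in item 3).}
\begin{itemize}
\item Since $Tu=1$ and $|T|=\kappa$, the correct normalization is $T\wb=-\frac{2}{\gamma+1}+O(\varepsilon t)$, not $-\frac{2}{\gamma+1}\kappa$. With your normalization $\hat{T}\wb$ would be bounded and you would get $\partial_1\wb=O(1)$.
\item The decomposition is $\partial_1=\hat{T}^1\hat{T}+\hat{X}^1\hat{X}$, with no minus sign.
\item The precision $\kappa=t(1+O(\varepsilon))$ and $\hat{T}^1=-1+O(\varepsilon)$ only yields an $O(\varepsilon/t)$ error. You need $\kappa-t=O(\varepsilon t^2)$ and $\hat{T}^1+1=O(\varepsilon^2t^2)$. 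Both follow by integrating $L\kappa$ and $L\hat{T}^i$ as in the frame estimates.
\end{itemize}
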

The proof the theorem \ref{theorem: existence-rarefaction} is essentially the same as the irrotational case \cite{Luo-YuRare2} except for the step of {\bf Retrieving uniform $L^{k}$ bounds of solution} since we need to use wave-transport system to deriving ODE systems for $\psi$ and $\Omega$ simultaneously. We refer to section 7.2 \ref{section 7.2} for details.
\begin{remark}
In \cite{Luo-YuRare2}, the {\bf canonical canonical foliation} of characteristic hypersurfaces in the rarefaction wave region $\mathcal{W}$ has been constructed, we refer to section 5 of \cite{Luo-YuRare2} for detailed account.
\end{remark}

\subsection{Applications to Riemann problem}

\begin{theorem}\label{thm: R-R}
We use $\mathbf{S}_*$ to denote the singularity:
\[\mathbf{S}_*:=\big\{(t,x_1,x_2)\big|t=0,x_1=0\big\}.\]
There exists a constant $\varepsilon_*>0$, for all $\varepsilon<\varepsilon_*$ and for any given data $(v,c)\big|_{t=0}$ in Definition \ref{def:data for R-R}, there exists a continuous solution to the Euler equations \eqref{eq: Euler equations} on $[0,t^*]\times \mathbb{R}^2-\mathbf{S}_*$.

\begin{center}
\includegraphics[width=3in]{II-Pic19.pdf}
\end{center}

The solution is piecewisely $C^{N_7}$ in the following sense:
\begin{itemize}
\item[1)] There are four characteristic hypersurfaces emanating from the singular set $\mathbf{S}_*$ and we denote them as $\Cb_0, \Hb, H$ and $C_0$ from left to right (as $x_1$ increases). 
\begin{itemize}
\item The hypersurfaces $\Cb_0$  and $C_0$ are the characteristic boundaries of the domain of dependence of the data $U_{l}|_{t=0}$ posed on $x_1\leqslant 0$ and  $U_{r}|_{t=0}$ posed on $x_1\geqslant 0$ respectively;
\item The hypersurfaces $\Hb$ and $H$ are uniquely determined by the data $U|_{t=0}$.
\end{itemize}
\item[2)] The solution is of class $C^N$ for all points $(t,x_1,x_2)$ with $t\geqslant 0$ on the left of $\Cb_0$ or on the right of $C_0$.
\item[3)] The solution is of class $C^N$ for all points $(t,x_1,x_2)$ with $t>0$ between $\Cb_0$ and $\Hb$ or between  $H$ and $C_0$. These two regions are the back and front rarefaction wave regions respectively. Moreover, the solution is not of class $C^1$ on $C_0,\Cb_0,H$ and $\Hb$.
\item[4)] The solution is of class $C^N$ for all points $(t,x_1,x_2)$ with $t\geqslant 0$ (including the singularity) between $\Hb$ and $H$.
\end{itemize}
\end{theorem}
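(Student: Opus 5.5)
We build the solution region by region, following the five-step one-dimensional construction of Section~\ref{subsubsec: R-V-R}, but with the vortex sheet removed (since $v_l^2=v_r^2$ in Definition~\ref{def:data for R-R}).

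\emph{Outer regions.} Solve the smooth Cauchy problems for $U_{r}|_{t=0}$ on $x_1\ge 0$ and for $U_{l}|_{t=0}$ on $x_1\le 0$. For $\varepsilon$ small these solutions exist on their domains of dependence $\mathcal{D}_0$ and $\underline{\mathcal{D}}_0$ up to $t^*$. Their future boundaries are the null hypersurfaces $C_0$ and $\Cb_0$, and this gives item 2).

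\emph{Front and back fans.} Apply Theorem~\ref{theorem: existence-rarefaction} to the right data. This produces the front rarefaction region $\overline{\mathcal{W}}=\cup_{u\in[0,\ub]}C_u^{t^*}$, resolved by the acoustical coordinates $(t,u,\vartheta)$ with $U\in C^{N_4}$ up to $t=0$. The mirror-image statement, with $L\leftrightarrow\Lb$ and $w\leftrightarrow\wb$, gives the back region foliated by $\Cb_{\ub'}$. The singularity $|\partial_{x_1}\wb-\tfrac{2}{\gamma+1}t^{-1}|\lesssim\varepsilon$ from Theorem~\ref{theorem: existence-rarefaction} shows the solution is not $C^1$ across $C_0$ and $\Cb_0$.

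The key remaining freedom is the transversal jet $\hat T(\wb_g)|_{S_{\delta,0}}$ in \eqref{Euler equations:form 1}, together with its analogue on the left. We must select the leaf $H=C_{u_H(\vartheta)}$, and likewise $\Hb$, so that the data they induce are compatible. In the background state this is Step 3: choose $H$ where $\wb_R=\wb_l$ and $\Hb$ where $w_L=w_r$. In the perturbed problem, $u_H$ is defined by requiring $\wb$ on $H$ to match the value of $\wb$ propagated from the left state. This value is transported along $\Lb$ from $\Cb_0$ into the middle region.

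I would set this up as a fixed point on the function pair $(u_H(\vartheta),\underline{u}_{\Hb}(\vartheta))$. The map is a contraction because the background slopes are transversal and the leaves depend on $u$ only through $O(\varepsilon)$ corrections. Each leaf is null, and the fan solution is smooth up to it, which gives 3). The jump of $T\wb$ (respectively $\Lb w$) across $H$ (respectively $\Hb$) is $\tfrac{2}{\gamma+1}+O(\varepsilon)$, so the solution is not $C^1$ across these surfaces either.

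\emph{Middle region.} Between $\Hb$ and $H$ we solve a characteristic Goursat problem for the wave-transport system \eqref{eq: wave-transport system}. The data are given on the two null hypersurfaces $H$ and $\Hb$, which intersect only at $\mathbf{S}_*$. Vorticity data are not needed on $H$ or $\Hb$: since $B$ is transversal to both, $\Omega$ is transported from the inner edge, and the reformulation \eqref{eq: reformulation of vorticity in the first null frame} shows that its boundary values are determined by tangential data.

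To get smoothness up to $t=0$ (item 4), I would reduce to a Cauchy problem as the introduction suggests. The jets of $U$ along $H$ and $\Hb$ at $\mathbf{S}_*$ are computed from the acoustical smoothness of Theorem~\ref{theorem: existence-rarefaction}. These yield Taylor coefficients at $t=0,x_1=0$. The compatibility conditions \eqref{condition: R-R} on $\partial_1^k v^2$ are exactly what makes the jets coming from the two fans agree at the corner; $v^2$ is the only quantity not fixed by the choice of $H$ and $\Hb$. One then builds auxiliary Cauchy data on $\Sigma_0$ that realise these jets, solves the smooth Cauchy problem, and uses uniqueness for characteristic problems to identify the result with the Goursat solution inside the wedge. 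Energy estimates with the multipliers $J=\hat L$ and $J=\Lb$, exactly as in \eqref{vor: energy estimate for wave equation with multiplier hat(L)}--\eqref{vor: energy estimate for transport equation}, give the $C^{N_7}$ bounds.

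\emph{Main obstacle.} I expect the main obstacle to be the joint determination of $H,\Hb$ with the corner compatibility. Two things must be shown. First, the fixed point for the inner boundaries closes in a class where the induced jets at $\mathbf{S}_*$ are $C^{N_7}$. Second, the matching of all normal jets of $v^2$, which propagate from $\mathbf{S}_*$, reduces precisely to \eqref{condition: R-R}. My first attempt would be to compute these jets order by order from \eqref{Euler equations:form 3}, using $B\Omega=0$, and check by induction on $k$ that the $k$-th condition is what matches them at that order.
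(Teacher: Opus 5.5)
Your outer-region and fan steps agree with the paper, and so does your identification of $v^2$ as the quantity constrained by \eqref{condition: R-R}. The rest has three problems: the way you fix $H,\Hb$ is misconceived, the mechanism behind corner compatibility is missing, and the middle-region step fails as written.

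\textbf{Fixing $H$ and $\Hb$.} The fixed point for $(u_H,\ub_{\Hb})$ is unnecessary, and as formulated it is circular. For $t>0$ there is nothing to match on $H$: the trace $U_R|_H$ (and $U_L|_{\Hb}$) is simply the Goursat data of the middle region. Moreover, $\wb$ obeys no transport law in 2D, since $\Lr\wb=\frac12 c\,\Xr(\psi_2)$; even in 1D it is constant along $L$, not $\Lb$. So a ``value propagated from $\Cb_0$'' exists only after the middle region has been solved, which already requires $H$ and $\Hb$.

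The only genuine condition is continuity on $\mathbf{S}_*$. There Proposition~\ref{prop:data at the singularity} makes it explicit and decoupled: $\wb(0,u,\vartheta)=\wb_r(0,\vartheta)-\frac{2}{\gamma+1}u$ and $w=w_r$. Hence $H_0=\{u=\frac{\gamma+1}{2}(\wb_r-\wb_l)(0,\vartheta)\}$, and symmetrically $\Hb_0=\{\ub=\frac{\gamma+1}{2}(w_l-w_r)(0,\vartheta)\}$. No contraction is needed. Also, $T^k(\wb_g)|_{S_{\delta,0}}$ is not a remaining freedom: it was already fixed ($-\frac{2}{\gamma+1}$, then $0$) when the fan was built.

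\textbf{Corner compatibility.} The choice of $H,\Hb$ matches $c$ and $v^1$ only at order zero. Higher-order matching comes from the characteristic nature of $H,\Hb$. From the middle side, $\hat T^n(\wb)$ along $H$ and $\hat{\Tb}^n(w)$ along $\Hb$ can be prescribed freely at $\mathbf{S}_*$ (see \eqref{eq: characteristic system along H}). Setting $\hat T^n(\wb):=\hat{\Tb}^n(\wb)$ and $\hat{\Tb}^n(w):=\hat T^n(w)$ matches the jets of $c$ and $v^1$.

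The tangential velocity is non-characteristic, so its jets are forced. Matching them is equivalent to $\hat{\Tb}^{n-1}\Omegab=\hat T^{n-1}\Omega$ by \eqref{eq: compatibility of Omega}. The equation $B\Omega=0$ converts this into $\Lb^{n-1}\Omegab=(-1)^{n-1}L^{n-1}\Omega+\cdots$. Proposition~\ref{prop:top_order_dependence_Omega}, whose coefficient $c^{\frac{\gamma+1}{\gamma-1}(n-1)}\rho_r^{-n}$ on $\partial_1^n v_r^2$ is exact, then turns this into \eqref{condition: R-R}. Your order-by-order computation would have to uncover exactly this top-order structure.

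\textbf{Middle region.} With the corner conditions in hand, the paper closes the middle region directly with the $H^{N_4}$ Goursat theory of Speck--Yu. Your Cauchy detour fails as written. Data on $\Sigma_0$ that only realise the jets at $\mathbf{S}_*$ give a solution whose trace on $H$ and $\Hb$ for $t>0$ has no reason to equal $U_R|_H$ and $U_L|_{\Hb}$. Uniqueness therefore cannot identify it with the Goursat solution.

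In the $S$-$R$ and $R$-$V$-$R$ sections, the correct auxiliary Cauchy data come from backward Goursat problems with data on $H$ (respectively $\Hb$) and Taylor-expanded data on $\Sigma_{t^*}$. Goursat theory is needed there anyway. For $R$-$R$ the detour is pointless; the introduction already treats this zone as a plain Goursat problem.
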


\begin{theorem}
We use $\mathbf{S}_*$ to denote the singularity:
\[\mathbf{S}_*:=\big\{(t,x_1,x_2)\big|t=0,x_1=0\big\}.\]
There exists a constant $\varepsilon_*>0$, for all $\varepsilon<\varepsilon_*$ and for any given data $(v,c)\big|_{t=0}$ in Definition \ref{def:data for S-R}, there exists a continuous solution to the Euler equations \eqref{eq: Euler equations} on $[0,t^*]\times \mathbb{R}^2-\mathbf{S}_*$.  

\begin{center}
\begin{tikzpicture}
  \begin{axis}[axis lines=none]
  \addplot [domain=0:5, smooth, color=blue, dotted, thick] {0.4*x+0.002*sin(540*x)*x*(x-5)+0.05*x*(x-5)};
    \addplot [domain=0.5:5.5, smooth, color=blue, dotted, thick] {0.4*(x-0.5)+1.5+0.002*sin(540*x)*(x-0.5)*(x-5.5)+0.04*(x-0.5)*(x-5.5)};
    \draw [thick] (0, 0)--(0.5,1.5);
    \addplot [domain=5:5.5, smooth, color=blue, dotted, thick] {3*(x-5)+2+2*sin(1080*x-120)*(x-5)*(x-5.5)};
   \draw (5, 2.7) node {$H$};
     \addplot [domain=0:10, smooth, color=blue, dotted, thick] {0.2*x+0.0005*sin(360*x)*x*(x-10)+0.01*x*(x-10)};
    \addplot [domain=0.5:10.5, smooth, color=blue, dotted, thick] {0.2*(x-0.5)+1.5+0.0005*sin(360*x)*(x-0.5)*(x-10.5)+0.01*(x-0.5)*(x-10.5)};
    \draw [thick] (0, 0)--(0.5,1.5);
    \addplot [domain=10:10.5, smooth, color=blue, dotted, thick] {3*(x-10)+2+2*sin(1080*x-60)*(x-10)*(x-10.5)};
    \draw (10, 2.7) node {$C_{0}$};

     \addplot [domain=0:-4, smooth, thick, color=blue] {-0.5*x+0.05*x*(x+4)+0.01*sin(1080*x*(x+4))};
    \addplot [domain=0.5:-3.5, smooth, thick, color=blue] {-0.5*(x-0.5)+1.5+0.05*(x-0.5)*(x+3.5)+0.01*sin(1080*(x-0.5)*(x+3.5))};
    \draw [thick] (0, 0)--(0.5,1.5);
    \addplot [domain=-4:-3.5, smooth, thick, color=blue] {3*(x+4)+2+2*sin(1080*x-30)*(x+4)*(x+3.5)};
    \draw (-4, 2.7) node {$S$};
    \draw[thin] (-6,0)--(10.5,0);
    \draw[thin] (-6,2)--(10.5,2);
    
    \draw[thin] (-5.5, 1.5)--(11, 1.5);
    \draw[thin] (-5.5, 3.5)--(11, 3.5);
  \end{axis}
\end{tikzpicture}
\end{center}

The solution is piecewisely $C^{N_{7}}$ in the following sense:
\begin{itemize}
\item[1)] There are one non-characteristic hypersurface and two characteristic hypersurfaces emanated from the singular set $\mathbf{S}_*$ and we denote them as $S, H$ and $C_0$ from left to right (as $x_1$ increases). 
\begin{itemize}
\item The hypersurfaces $C_0$ is the characteristic boundary of the domain of dependence of the data $U_{r}|_{t=0}$ posed on $x_1\leqslant 0$;
\item The hypersurfaces $S$ and $H$ are determined by the data $U\big|_{t=0}$.
\end{itemize}
\item[2)] The solution is of class $C^N$ for all points $(t,x_1,x_2)$ with $t\geqslant 0$ on the right of $C_0$.
\item[2)] The solution is of class $C^N$ for all points $(t,x_1,x_2)$ with $t\geqslant 0$ on the left of $S$ or between $S$ and $H$. Moreover, they satisfy the {\bf Rankine-Hugoniot jump condition for shock} and {\bf entropy condition} crossing $S$.
\item[3)] The solution is of class $C^N$ for all points $(t,x_1,x_2)$ with $t>0$ between  $H$ and $C_0$. This is front rarefaction wave region. Moreover, the solution is not of class $C^1$ crossing $C_0$ and $H$.
\end{itemize}
\end{theorem}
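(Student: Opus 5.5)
The plan is to build the solution region by region, from right to left, and to reduce the free-boundary problem near the shock to a Cauchy problem. Majda's shock theory \cite{MajdaShock2, MajdaShock3} can then be applied directly.

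\emph{Rarefaction region and its inner boundary.}
On the right of $C_0$ the solution $U_r$ is the smooth Cauchy development of $U_r|_{t=0}$; its trace on $C_0^{t^*}$ is determined. By Theorem \ref{theorem: existence-rarefaction}, for every admissible choice of the free datum $\hat{T}(\wb_g)|_{S_{\delta,0}}$ (cf. \eqref{Euler equations:form 1}) we obtain a rarefaction solution on $\overline{\mathcal{W}}=\cup_{u\in[0,\ub]}C_u^{t^*}$. This solution is smooth in the acoustical coordinates $(t,u,\vartheta)$ up to $t=0$, with the uniform bounds of Theorem \ref{theorem: A priori Energy estimate}.

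The inner front $H$ will be sought as a level set $C_{u_H(\vartheta)}$, with $u_H$ a perturbation of the background value $k$ of Step 3 in \S\ref{subsubsec: S-R}. It is fixed by the requirement that the trace of the rarefaction solution on $H$ be connectable to $U_l$ through a $1$-shock. At $t=0$, i.e. on $\mathbf{S}_*$ in acoustical coordinates, this is the one-dimensional Rankine--Hugoniot relation of Step 3 applied pointwise in $x_2$. It is solved by the implicit function theorem, using the nondegeneracy of the background $S$-$R$ solution; this yields $u_H(\vartheta)$ and the normal speed $\sigma(x_2)$ of the shock at the tip. These are precisely the quantities $\rho_H,\sigma,v^1_H$ appearing in \eqref{condition: S-R}.

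\emph{Reduction of the middle region to a Cauchy problem.}
Between $S$ and $H$ we must solve a Goursat problem with data on the characteristic surface $H$, coupled to the free shock $S$. Instead, I would extend the data on $H$ to auxiliary Cauchy data $\tilde U_m$ on $\{t=0,x_1\ge0\}$, constructed as follows.
\begin{itemize}
\item The normal jets $\partial_{x_1}^k\tilde U_m(0,x_2)$ are chosen to match the jets propagated from $H$ down to $\mathbf{S}_*$. This uses the extremal structure of $H$: it is the leftmost characteristic of the fan, so the domain of dependence of any point just right of $S$ meets $\{t=0\}$ only at $x_1\le 0$ together with $H$ itself.
\item $\tilde U_m$ is then extended smoothly and $O(\varepsilon)$-small in $H^{N_0}$ by a Borel/Seeley-type construction.
\end{itemize}
Next solve the Cauchy problem for $\tilde U_m$ to the right. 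By uniqueness for the characteristic initial value problem, its development coincides with the rarefaction solution's trace data along $H$. Concretely, both solve Euler in the wedge between $S$ and $H$ with the same data on $H$ and the same jets at $\mathbf{S}_*$, so an energy estimate in that wedge shows they agree.

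We now have a Riemann-type data pair $(U_l,\tilde U_m)$ at $x_1=0$ for a single $1$-shock. The compatibility conditions \eqref{condition: S-R} are exactly what is needed to guarantee that this pair satisfies Majda's compatibility conditions up to order $N_0$. The $k$-th condition arises from differentiating the Rankine--Hugoniot relations $k$ times in $t$ at the tip and converting $\partial_t^k$ into $\partial_{x_1}^k$ via the equations on each side. The factors $\big(\frac{v^1_l-\sigma}{v^1_H-\sigma}\big)^k$ and $\rho_H^k/\rho_r^k$ come respectively from the transport of $v^2$ along particle paths across the shock and across the fan. Majda's theorem then gives the shock $S$ and smooth solutions on both sides, with the Lax entropy condition inherited from the background by smallness.

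\emph{Gluing and the main obstacle.}
The final solution is assembled as follows:
\begin{itemize}
\item $U_l$-development to the left of $S$;
\item the development of $\tilde U_m$ between $S$ and $H$;
\item the rarefaction solution between $H$ and $C_0$;
\item $U_r$ to the right of $C_0$.
\end{itemize}
Continuity across $H$ and $C_0$, and the failure of $C^1$ there, follow from part 3) of Theorem \ref{theorem: existence-rarefaction}. The Rankine--Hugoniot condition on $S$ holds by construction.

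I expect the main obstacle to be the construction of $\tilde U_m$. One must show that the jets propagated from $H$ to $\mathbf{S}_*$ depend smoothly and boundedly on the data without derivative loss, and that the resulting Cauchy development really coincides with the rarefaction solution on $H$ despite the singular fan. I would first try computing these jets in acoustical coordinates, where Theorem \ref{theorem: existence-rarefaction}(4) gives $C^{N_4}$ regularity up to $t=0$. The propagation would use the transport equations \eqref{Euler equations:form 1} together with $B\Omega=0$ and the non-characteristic reformulation \eqref{eq: reformulation of vorticity in the first null frame}, which controls the vorticity jets along $H$.
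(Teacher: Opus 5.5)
Your overall architecture matches the paper's: the rarefaction fan from Theorem~\ref{theorem: existence-rarefaction}, $H$ fixed by the pointwise Rankine--Hugoniot relation on $\mathbf{S}_*$, a reduction to a Cauchy problem for Majda's theorem, and gluing. The step that makes the reduction work, however, fails as you state it. Choosing $\tilde U_m$ so that finitely many $x_1$-jets at $x_1=0$ match those of $U_H$ at $\mathbf{S}_*$, and then extending arbitrarily, only forces the forward development to agree with $U_H$ to finite order at $t=0$. For $t>0$, the trace of that development along its own outermost $3$-characteristic $H'$ depends on $\tilde U_m$ on a whole interval $0\le x_1\lesssim t$. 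So in general $H'\neq H$, and the trace is not $U_H$. Your uniqueness argument assumes ``the same data on $H$'', which is exactly what must be proved. Moreover, the wedge between $S$ and $H$ is not determined by data on $\{x_1\ge 0\}$ at all, since backward $3$-characteristics from it cross $S$. No energy estimate there can compare the development of $\tilde U_m$ with the rarefaction solution, and a Borel/Seeley extension of jets cannot replace this step.

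The missing idea is to manufacture the Cauchy data by solving a characteristic problem with $U_H$ as data, \emph{backward in time}. The paper proceeds in four steps.

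(i) The transversal jets $\partial_{x_1}^k(\frac{c_H}{\gamma-1}+\frac12 v_H^1)$ on $\mathbf{S}_*$ are free because $H$ is characteristic, cf.\ \eqref{eq: characteristic system along H}. They are chosen to satisfy the first line of Majda's conditions \eqref{eq: compatibility condition for a single shock front}. The hypothesis \eqref{condition: S-R} supplies only the $v^2$ line. So your claim that \eqref{condition: S-R} alone yields Majda's compatibility is incomplete, and your jets ``propagated from $H$'' leave this component undetermined.

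(ii) These jets are propagated \emph{up} along all of $H$ by transport ODEs \cite{wang2025constructingcharacteristicinitialdata}, with a loss of derivatives.

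(iii) Auxiliary data are placed on $\Sigma_{t^*}\cap\{x_1\ge\phi_H(t^*,x_2)\}$, compatible with those jets at $H\cap\Sigma_{t^*}$.

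(iv) The characteristic initial value problem with data on $H$ and $\Sigma_{t^*}$ is solved down to $t=0$ in the region right of $H$ \cite{SpeckYu}. Its trace $U_{au,r}(0,\cdot)$ on $\{x_1\ge 0\}$ is the Cauchy datum.

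Because $H$ is a $3$-characteristic of this auxiliary solution, the region right of $H$ is the domain of dependence of $\{t=0,x_1\ge 0\}$. Majda's $U_+$ for the data $(U_l,U_{au,r})$ therefore coincides with the auxiliary solution there. This gives $U_+=U_H$ on $H$, which is precisely what the gluing requires.
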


\begin{theorem}
We use $\mathbf{S}_*$ to denote the singularity:
\[\mathbf{S}_*:=\big\{(t,x_1,x_2)\big|t=0,x_1=0\big\}.\]
There exists a constant $\varepsilon_*>0$, for all $\varepsilon<\varepsilon_*$ and for any given data $(v,c)\big|_{t=0}$ in Definition \ref{def:data for R-V-R}, there exists a continuous solution to the Euler equations \eqref{eq: Euler equations} on $[0,t^*]\times \mathbb{R}^2-\mathbf{S}_*$.

\begin{center}
\begin{tikzpicture}
  \begin{axis}[axis lines=none]
    
    \addplot [domain=0:2.5, smooth, thick, dashed, color=blue] {0.8*x+0.02*sin(720*x)*x*(x-2.5)+0.05*x*(x-2.5) };
    \addplot [domain=0.5:3, smooth, thick, dashed, color=blue] {0.8*(x-0.5)+1.5+0.02*sin(720*x)*(x-0.5)*(x-3)+0.05*(x-0.5)*(x-3)};
    \draw [thick] (0, 0)--(0.5,1.5);
    \addplot [domain=3:2.5, smooth, thick, dashed, color=blue] {3*(x-2.5)+2+2*sin(1080*x-45)*(x-3)*(x-2.5)};

     \addplot [domain=0:-2, smooth, color=blue, dotted, thick] {-x+0.01*sin(540*x)*x*(x+2)+0.1*x*(x+2)};
    \addplot [domain=0.5:-1.5, smooth, color=blue, dotted, thick] {-x+2+0.01*sin(1080*x)*(x-0.5)*(x+1.5)+0.1*(x-0.5)*(x+1.5)};
    \draw [thick] (0, 0)--(0.5,1.5);
    \addplot [domain=-2:-1.5, smooth, color=blue, dotted, thick] {3*(x+2)+2+2*sin(1080*x)*(x+2)*(x+1.5)};
    
     \draw (-2, 2.7) node {$\Hb_{0}$};

     \addplot [domain=0:5, smooth, color=blue, dotted, thick] {0.4*x+0.002*sin(540*x)*x*(x-5)+0.05*x*(x-5)};
    \addplot [domain=0.5:5.5, smooth, color=blue, dotted, thick] {0.4*(x-0.5)+1.5+0.002*sin(540*x)*(x-0.5)*(x-5.5)+0.08*(x-0.5)*(x-5.5)};
    \draw [thick] (0, 0)--(0.5,1.5);
    \addplot [domain=5:5.5, smooth, color=blue, dotted, thick] {3*(x-5)+2+2*sin(1080*x-120)*(x-5)*(x-5.5)};
    
     \draw (5, 2.7) node {$H_{0}$};
     \draw (2.8, 2.7) node {$V$};

     \addplot [domain=0:10, smooth, color=blue, dotted, thick] {0.2*x+0.0005*sin(360*x)*x*(x-10)+0.01*x*(x-10)};
    \addplot [domain=0.5:10.5, smooth, color=blue, dotted, thick] {0.2*(x-0.5)+1.5+0.0005*sin(360*x)*(x-0.5)*(x-10.5)+0.01*(x-0.5)*(x-10.5)};
    \draw [thick] (0, 0)--(0.5,1.5);
    \addplot [domain=10:10.5, smooth, color=blue, dotted, thick] {3*(x-10)+2+2*sin(1080*x-60)*(x-10)*(x-10.5)};
    
     \draw (10, 2.7) node {$C_{0}$};

     \addplot [domain=0:-4, smooth, color=blue, dotted, thick] {-0.5*x+0.05*x*(x+4)+0.01*sin(1080*x*(x+4))};
    \addplot [domain=0.5:-3.5, smooth, color=blue, dotted, thick] {-0.5*(x-0.5)+1.5+0.05*(x-0.5)*(x+3.5)+0.01*sin(1080*(x-0.5)*(x+3.5))};
    \draw [thick] (0, 0)--(0.5,1.5);
    \addplot [domain=-4:-3.5, smooth, color=blue, dotted, thick] {3*(x+4)+2+2*sin(1080*x-30)*(x+4)*(x+3.5)};

     \draw (-4, 2.7) node {$\Cb_{0}$};

    \draw[thin] (-6,0)--(10.5,0);
    \draw[thin] (-6,2)--(10.5,2);
    
    \draw[thin] (-5.5, 1.5)--(11, 1.5);
    \draw[thin] (-5.5, 3.5)--(11, 3.5);
  \end{axis}
\end{tikzpicture}
\end{center}

The solution is piecewisely $C^{N_{7}}$ in the following sense:
\begin{itemize}
\item[1)] There are five characteristic hypersurfaces all emanated from the singular set $\mathbf{S}_*$ and we denote them as $\Cb_0, \Hb, H, V$ and $C_0$ from left to right (as $x_1$ increases). 
\begin{itemize}
\item The hypersurfaces $\Cb_0$  and $C_0$ are the characteristic boundaries of the domain of dependence of the data $U_{l}|_{t=0}$ posed on $x_1\leqslant 0$ and  $U_{r}|_{t=0}$ posed on $x_1\geqslant 0$ respectively;
\item The hypersurfaces $\Hb$ and $H$ are uniquely determined by the data $U|_{t=0}$.
\end{itemize}
\item[2)] The solution is of class $C^N$ for all points $(t,x_1,x_2)$ with $t\geqslant 0$ on the left of $\Cb_0$ or on the right of $C_0$.
\item[3)] The solution is of class $C^N$ for all points $(t,x_1,x_2)$ with $t>0$ between $\Cb_0$ and $\Hb$ or between  $H$ and $C_0$. These two regions are the back and front rarefaction wave regions respectively. Moreover, the solution is not of class $C^1$ on $C_0,\Cb_0,H$ and $\Hb$.
\item[4)] The solution is of class $C^N$ for all points $(t,x_1,x_2)$ with $t\geqslant 0$ (including the singularity) between $\Hb$ and $V$ or between $H$ and $V$. Moreover, they satisfy the {\bf Rankine-Hugoniot jump condition for vortex sheet}.
\end{itemize}
\end{theorem}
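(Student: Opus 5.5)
We construct the solution region by region, starting from the two outer domains of dependence and working inward toward the vortex sheet; Theorem \ref{theorem: existence-rarefaction} supplies the two rarefaction fans.

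\textbf{Outer regions.} On $x_1\geq 0$ the data $U_r|_{t=0}$ give, by the classical smooth Cauchy theory, a $C^N$ solution on the domain of dependence $\mathcal{D}_0$. Its left boundary is the characteristic hypersurface $C_0$. The same is done for $U_l|_{t=0}$ on $x_1\le 0$, giving $\underline{\mathcal{D}}_0$ with boundary $\Cb_0$. This proves item 2).

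\textbf{Rarefaction fans.} Next apply Theorem \ref{theorem: existence-rarefaction} on each side: to the right data with the front family, and, after the reflection $x_1\mapsto -x_1$, to the left data with the back family. The crucial point is how the free datum is chosen. In \eqref{Euler equations:form 1}, $\hat T(\wb_g)|_{S_{\delta,0}}$ can be freely prescribed, and similarly $\hat T(w_g)$ on $\Cb_0$. We will choose these free transversal data so that the fans open up to widths $\ub$, $\underline{\ub}$. These widths are chosen slightly larger than the background values $k_r,k_l$, which makes the inner fronts $H=C_{u_H}$ and $\Hb=\Cb_{\underline u_{\Hb}}$ lie strictly inside the constructed fans. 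The fronts themselves are fixed by the nonlinear matching: $\wb$ along $H$ must equal the state entering from the left, and $w$ along $\Hb$ must equal the state entering from the right. By the monotonicity $T\wb\approx -\frac{2}{\gamma+1}$ from $(\mathbf{I}_\infty)$ and item 3) of Theorem \ref{theorem: existence-rarefaction}, this matching is an implicit-function problem in $u$ for each $\vartheta$. The acoustical-coordinate smoothness of item 4) gives the jets of $U$ along $H$ and $\Hb$ up to $t=0$, including their normal jets on $\mathbf S_*$. This proves item 3); the failure of $C^1$ across the fronts follows from item 3) of Theorem \ref{theorem: existence-rarefaction}.

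\textbf{Middle region with the vortex sheet.} The remaining region is bounded by $\Hb$ and $H$. Between $\Hb$ and $V$, and between $V$ and $H$, we must solve a Goursat problem whose data sit on the characteristic hypersurfaces $\Hb$ and $H$. This problem is coupled to a free vortex sheet $V$ carrying the Rankine--Hugoniot conditions $[p]=0$ and $[v\cdot n]=0$. We reduce it to a Cauchy problem:
\begin{itemize}
\item Using the normal jets of $U$ on $\mathbf S_*$ inherited from the fans, we construct auxiliary smooth Cauchy data $\tilde U_l,\tilde U_r$ on $\Sigma_0$, one on each side of $x_1=0$, by a Borel-type extension.
\item These data satisfy, to order $N$, the compatibility conditions for the vortex-sheet problem at $x_1=0$.
\item The supersonic condition \eqref{eq: super sonic condition} persists under $O(\varepsilon)$ perturbation, so Coulombel--Secchi applies to $(\tilde U_l,\tilde U_r)$ and yields a vortex sheet $V$ with piecewise smooth states on $[0,t^*]$.
\end{itemize}
Next we must show that this Cauchy solution, restricted to the wedge between $\Hb$ and $H$, agrees with the Goursat solution. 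We use the characteristic well-posedness theory on each side. On the side between $V$ and $H$, $H$ is the extremal (fastest) characteristic from $\mathbf S_*$, so the region is determined by the data on $H$ together with the transmission condition across $V$. If the auxiliary data are chosen so that the traces on $H$ coincide, uniqueness for this mixed Goursat--free-boundary problem gives agreement. We will achieve coincidence of the traces by making the free part of the auxiliary data absorb the difference, propagating jets along $H$ from $\mathbf S_*$. This proves item 4), including smoothness up to the singularity.

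\textbf{Main obstacle.} The hardest step is the last one: constructing auxiliary Cauchy data whose Coulombel--Secchi evolution reproduces exactly the prescribed traces on $H$ and $\Hb$. The traces on $H$ and $\Hb$ are not arbitrary, since they are constrained by the transport equations along them. We would first match the full Taylor jets at $\mathbf S_*$ to all orders up to $N$. The residual mismatch, which then vanishes to high order at $t=0$, would be removed by an iteration. The iteration relies on the tame, loss-of-derivative estimates of Coulombel--Secchi, so the regularity index must be reduced at each stage; this is why the conclusion holds only for $C^{N_7}$ with $N_7\ll N_0$. Care is needed because the Kreiss--Lopatinskii condition is only weakly satisfied for vortex sheets.
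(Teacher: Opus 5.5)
There is a genuine gap, at exactly the step you flag as the main obstacle, and a related error in how you set up compatibility at $\mathbf S_*$.

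\textbf{Compatibility at $\mathbf S_*$.} The middle solution is not $C^1$ across $H$ and $\Hb$, so it does not inherit the fans' transversal jets. In the fan, $\partial_1\wb\sim\frac{2}{\gamma+1}\frac1t$, while in the middle region it is $O(\varepsilon)$. What the middle region inherits is only the trace on $H$ and $\Hb$, i.e.\ the tangential jets $L^kX^s\psi$, whose values at $t=0$ are given by Proposition~\ref{prop:jet_dependence_singularity}. Along $H$, the characteristic system \eqref{eq: characteristic system along H} leaves $\hat T^k(\wb)$ at $\mathbf S_*$ free, while $\hat T^k(w)$ and $\hat T^k(\psi^{(\hat X)})$ are forced; along $\Hb$ the roles of $w$ and $\wb$ are exchanged. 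The paper spends exactly this freedom. At each order $k$, the vortex-sheet compatibility conditions become a $2\times2$ linear system for $(\partial_1^k\wb_H,\partial_1^k w_{\Hb})$ with determinant $-2(\gamma-1)\neq0$. You assert that your Borel-extended data ``satisfy the compatibility conditions'' but give no mechanism. Moreover, the freedom you do invoke, the datum $\hat T(\wb_g)$ on $S_{\delta,0}$ used to open the fans, is not the relevant one.

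\textbf{Trace matching.} The missing idea is that the trace on $H$ of the Cauchy evolution does not involve the vortex sheet at all. $H$ is the fastest characteristic issuing from $\mathbf S_*$ and $V$ lies to its left, so $\{x_1\ge\phi_H\}$ is the domain of dependence of $\Sigma_0^{\geqslant0}$. One can therefore produce the right Cauchy data by solving \emph{backward in time}:
\begin{itemize}
\item propagate the chosen normal jets along $H$ by the transport ODEs (Wang--Yu--Yu);
\item Taylor-extend them to small, compactly supported data on $\Sigma_{t^*}\cap\{x_1\ge\phi_H(t^*,x_2)\}$ compatible with $U_H$;
\item solve the Goursat problem with data on $H$ and $\Sigma_{t^*}$ (Speck--Yu) down to $t=0$, and restrict to $\Sigma_0^{\geqslant0}$ to get $U_{au,r}$; do the same to the left of $\Hb$.
\end{itemize}
After Coulombel--Secchi, uniqueness in the domain of dependence gives $U_+=U_{au,r}$ to the right of $H$. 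Hence $U_+|_H=U_H$ exactly, likewise $U_-|_{\Hb}=U_{\Hb}$, and the gluing is immediate.

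Your substitute does not amount to a proof. You match jets at $\mathbf S_*$, then remove a residual by an unspecified iteration through Coulombel--Secchi's tame estimates, and close with uniqueness for a mixed Goursat/free-boundary problem. Finite-order jets at $\mathbf S_*$ fix the trace on $H$ only to finite order at $t=0$. No iteration map or contraction is given. The uniqueness theorem for the coupled Goursat--vortex-sheet problem is not available; it is precisely what the reduction to a Cauchy problem is designed to avoid. Finally, the loss $N_0\to N_7$ in the paper does not come from an iteration. It comes from propagating normal jets along $H$ (where $L(T\psi)=\mu\hat X^2\psi+\cdots$ costs two tangential derivatives per normal one), plus the Goursat and Coulombel--Secchi steps.
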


\subsection{Ideas of the Proof}

\begin{itemize}
    \item \textbf{Exploitation of the extremal property of acoustical wave eigenvalues (Luo–Yu’s framework)}
    
    The construction of rarefaction fronts is inherently a free boundary problem, which generically suffers from a loss of derivatives. In their seminal work \cite{Luo-YuRare1, Luo-YuRare2}, Luo and Yu exploit the extremal property of acoustical wave eigenvalues to construct all rarefaction waves $C_u$ connected to the right state $U_r$ (away from vacuum), as well as all rarefaction waves $C_{\ub}$ connected to the left state $U_l$ (away from vacuum). We emphasize that these two constructions can be carried out independently.

    More precisely, they adopt the last-slice method to construct approximate initial data on $\Sigma_{\delta}$, and use the structure of the acoustical wave equation to propagate all normal jets from $S_{\delta, 0}:=\Sigma_{\delta}\cap C_{0}$. The rarefaction wave region is then constructed via classical energy estimates for the acoustical wave equation, and the rarefaction fronts are no longer a free boundary problem: they are directly determined by the initial conditions on $\Sigma_{\delta}$.

    Finally, they again exploit the extremal property of acoustical wave eigenvalues to couple the rarefaction waves $C_u$ (connected to $U_r$) and $C_{\ub}$ (connected to $U_l$). The inner boundaries $H$ and $\Hb$ are determined via simple algebraic relations, which reduces the Riemann problem in the R-R regime to a Goursat problem for the acoustical wave equations with data prescribed on $H$ and $\Hb$.

    \item \textbf{Non-characteristic nature of $\Omega$ with respect to rarefaction fronts $C_u$ (lifting the irrotationality assumption)}
    
    To lift the irrotationality assumption in Luo–Yu’s work \cite{Luo-YuRare1, Luo-YuRare2}, we employ the wave-transport system derived in Luk–Speck’s work \cite{LukSpeck2D} on stable shock formation with vorticity. This system allows us to perform classical energy estimates for the acoustical waves and the vorticity simultaneously. However, the initial data constructed in \cite{Luo-YuRare2} do not have sufficient regularity for this purpose: in view of \eqref{eq: reformulation of vorticity in the first null frame}, $\hat{T}(\Omega)$ can be expressed as $C_u$-tangential derivatives of $(c, v^1, v^2)$, and in particular $T(\Omega)$ should satisfy the additional vanishing property $|T(\Omega)|=O(\varepsilon \delta)$ on $S_{\delta, 0}$. The construction in \cite{Luo-YuRare2} only ensures $|T(\Omega)|=O(\varepsilon)$ on $\Sigma_{\delta}$, which forces us to close our energy estimates under the weaker bootstrap assumption $|T(\Omega)|=O(\varepsilon)$ on $\Sigma_{\delta}$.

    Nevertheless, the structure of the wave-transport system and a refined Gronwall inequality enable us to close the energy estimates even under this weaker assumption. Once the energy estimates are closed, we recover the sharp bound for $\Omega$ away from $C_0$ by exploiting the non-characteristic nature of $\Omega$ with respect to the rarefaction fronts $C_u$ (away from vacuum).

    We remark on a key comparison between our setting and the stable shock formation with vorticity in \cite{LukSpeck2D}: in the latter, the authors design a descent scheme to control the blow-up of higher derivatives of $\Omega$, while in our work we obtain uniform bounds for all higher derivatives of $\Omega$. We attribute this difference to the fact that the pre-shock in \cite{LukSpeck2D} is of spacetime codimension 1, while the initial singularity of rarefaction waves is of spacetime codimension 2, as explained in the canonical foliation construction for rarefaction waves in \cite{Luo-YuRare2}.

    \item \textbf{Coupling of rarefaction waves with other nonlinear waves}
    
    In general, two distinct nonlinear waves cannot be directly coupled, due to their intrinsic nonlinear interactions. For the 2D isentropic Euler equations for a polytropic gas, we follow Luo–Yu’s approach \cite{Luo-YuRare1, Luo-YuRare2} to independently construct the rarefaction waves $C_u$ (connected to $U_r$) and $C_{\ub}$ (connected to $U_l$), both away from vacuum. This allows us to reduce the Riemann problem in the S-R regime to a Goursat problem admitting a single shock front, as illustrated below:
    \begin{center}
    \begin{tikzpicture}[scale=0.8]
        \begin{axis}[axis lines=none]
            \addplot [domain=0:5, smooth, color=blue, thick] {0.4*x+0.002*sin(540*x)*x*(x-5)+0.05*x*(x-5)};
            \addplot [domain=0.5:5.5, smooth, color=blue, thick] {0.4*(x-0.5)+1.5+0.002*sin(540*x)*(x-0.5)*(x-5.5)+0.04*(x-0.5)*(x-5.5)};
            \draw [thick] (0, 0)--(0.5,1.5);
            \addplot [domain=5:5.5, smooth, color=blue, thick] {3*(x-5)+2+2*sin(1080*x-120)*(x-5)*(x-5.5)};
            \addplot [domain=-5:0, smooth, color=blue, thick] {0};
            \draw (5, 2.7) node {$H$};
            \draw(-4, 1) node{$x_{1}\leq 0$};
            \draw[thin] (-6,0)--(10.5,0);
            \draw[thin] (-6,2)--(10.5,2);
            \draw[thin] (-5.5, 1.5)--(11, 1.5);
            \draw[thin] (-5.5, 3.5)--(11, 3.5);
        \end{axis}
    \end{tikzpicture}
    \end{center}
    with data prescribed on $x_1\leq 0$ and $H$. It would be of interest to investigate how to adapt Majda’s method \cite{MajdaShock2, MajdaShock3} for the Cauchy problem to this Goursat setting.

    Similarly, the Riemann problem in the R-V-R regime reduces to a Goursat problem admitting a single vortex sheet, as illustrated below:
    \begin{center}
    \begin{tikzpicture}
        \begin{axis}[axis lines=none]
            \addplot [domain=0:-2, smooth, color=blue, thick] {-x+0.01*sin(540*x)*x*(x+2)+0.1*x*(x+2)};
            \addplot [domain=0.5:-1.5, smooth, color=blue, thick] {-x+2+0.01*sin(1080*x)*(x-0.5)*(x+1.5)+0.1*(x-0.5)*(x+1.5)};
            \draw [thick] (0, 0)--(0.5,1.5);
            \addplot [domain=-2:-1.5, smooth, color=blue, thick] {3*(x+2)+2+2*sin(1080*x)*(x+2)*(x+1.5)};
            \draw (-2, 2.7) node {$\Hb$};
            \addplot [domain=0:5, smooth, color=blue, thick] {0.4*x+0.002*sin(540*x)*x*(x-5)+0.05*x*(x-5)};
            \addplot [domain=0.5:5.5, smooth, color=blue, thick] {0.4*(x-0.5)+1.5+0.002*sin(540*x)*(x-0.5)*(x-5.5)+0.04*(x-0.5)*(x-5.5)};
            \draw [thick] (0, 0)--(0.5,1.5);
            \addplot [domain=5:5.5, smooth, color=blue, thick] {3*(x-5)+2+2*sin(1080*x-120)*(x-5)*(x-5.5)};
            \draw (5, 2.7) node {$H$};
            \draw[thin] (-6,0)--(10.5,0);
            \draw[thin] (-6,2)--(10.5,2);
            \draw[thin] (-5.5, 1.5)--(11, 1.5);
            \draw[thin] (-5.5, 3.5)--(11, 3.5);
        \end{axis}
    \end{tikzpicture}
    \end{center}
    with data prescribed on $H$ and $\Hb$. It would similarly be of interest to adapt Coulombel–Secchi’s method \cite{Coulombel-Secchi1, Coulombel-Secchi2} for the Cauchy problem to this Goursat setting.

    Instead, we proceed via a different strategy to show that \textbf{the Goursat problem for a single shock wave or a single vortex sheet can be reduced to corresponding Cauchy problem}. In view of the extremal nature of $H$ and $\Hb$, we can smoothly extend the solutions on $H$ and $\Hb$ in a controlled manner, via three key observations:
    \begin{enumerate}
        \item The characteristic nature of $H$ and $\Hb$ allows us to freely prescribe certain normal jets;
        \item By the work of Wang–Yu–Yu \cite{wang2025constructingcharacteristicinitialdata}, the special structure of the wave-transport system allows us to propagate the normal jets from $\mathbf{S}_{*}$ along $H$ and $\Hb$;
        \item The classical Goursat problem is well-posed, by the work of Speck and Yu \cite{SpeckYu}.
    \end{enumerate}

    We note that the loss of derivatives in this work does not occur in the energy estimates, but rather in the propagation of normal jets: this arises from the structure of the acoustical wave equation $L(T(\psi))=\frac{1}{2}\mu\hat{X}^{2}(\psi)+\cdots$, where two tangential derivatives are required to estimate a single normal derivative.
\end{itemize}

\subsection{Remark on numerical constants}
We fix an arbitrary positive constant $t^{*}$ since our analysis is restricted to local-in-time existence. Let $\mathring{c}_{r}$ denote the sound speed associated with the background solution, and let $u^{*}$ be an arbitrary constant satisfying $0<u^{*}<\frac{\gamma+1}{\gamma-1}\mathring{c}_{r}$. The parameter $\varepsilon$ is a sufficiently small positive constant that measures the magnitude of the perturbation.

We adopt the standard convention that $C$ denotes a generic positive constant greater than or equal to 1, whose value may change from one occurrence to another. The sequence of regularity exponents $N_{0}>N_{1}>\cdots>N_{7}$ satisfies either $N_{i+1}=N_{i}-C$ (resulting from Sobolev embeddings or limiting procedures) or $N_{i+1}=\frac{N_{i}}{C}$ (resulting from propagation estimates of normal derivatives via ODE systems obtained from the acoustical wave equation).

We also stress that there is \textbf{no loss of derivates} in our \textbf{main energy estimate} in rarefaction wave region.

\section{The existence of initial data on $\Sigma_{\delta}$ and $C_{0}$}\label{sec: existence of initial data}
In this section, we explain how to modify the construction in section 3 of \cite{Luo-YuRare2} to obtain the existence of \textbf{$C^N$ data for rarefaction waves} \emph{without irrotational condition}, see definition \ref{def:rare data}.

\begin{itemize}
\item {\bf (1)} The construction of acoustical function $u$ and $\vartheta$ on $\Sigma_{\delta}^{u*}$ and the verification of $(\mathbf{I}_{\infty})$ for $\hat{T}^{1}, \hat{T}^{2}, \kappa, \slashed{g}$.  In particular, it is proved \cite{Luo-YuRare2} that
\begin{proposition}\label{appendix: initial geometry}
For $|\alpha|\leq N_{1}$ and $Z\in \{T, \hat{X}\}$, we have
\begin{equation}
\begin{cases}
\|Z^{\alpha}(\hat{T}^{1}+1)\|_{L^{\infty}(\Sigma_{\delta}^{u*})}\lesssim \varepsilon^{2}\delta^{2}, \|Z^{\alpha}(\hat{T}^{2})\|_{L^{\infty}(\Sigma_{\delta}^{u*})}\lesssim \varepsilon\delta,\\
\|Z^{\alpha}(\frac{\kappa}{\delta})\|_{L^{\infty}(\Sigma_{\delta}^{u*})}+\|Z^{\alpha}(\slashed{g}-1)\|_{L^{\infty}(\Sigma_{\delta}^{u*})}\lesssim \varepsilon\delta.
\end{cases}
\end{equation}
\end{proposition}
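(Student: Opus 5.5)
The plan is to recover these bounds from the explicit construction of $u$ and $\vartheta$ on $\Sigma_{\delta}^{u^*}$, as in section 3 of \cite{Luo-YuRare2}. The point is that this geometric part of the construction never uses irrotationality: $u$ and $\vartheta$ on $\Sigma_\delta$ are defined purely through the metric $g$ restricted to $\Sigma_\delta$ and the data of $U_r$ on $S_{\delta,0}$.

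\emph{Step 1: reduce to the geometry of $S_{\delta,0}$.} Since $\mathcal{D}_0$ is determined by $U_r$, which is $\varepsilon$-close to $\Ur_r$ in $H^{N_0}$, the null hypersurface $C_0$ is an $O(\varepsilon)$ perturbation of $x_1=(\mathring v^1_r+\mathring c_r)t$. By Sobolev embedding and standard energy estimates in $\mathcal{D}_0$, $S_{\delta,0}$ is the curve
\[
x_1=\delta\big(\mathring v^1_r+\mathring c_r\big)+\delta\, f_\delta(x_2),
\]
where $f_\delta$ has all $x_2$-derivatives up to order $N_1$ bounded by $C\varepsilon$. Here we use that $L^i-\mathring L^i=O(\varepsilon)$ integrated over time $\delta$.

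\emph{Step 2: choose $u$ and read off the frame.} On $\Sigma_\delta$ we take
\[
u=\frac{\delta(\mathring v^1_r+\mathring c_r)+\delta f_\delta(x_2)-x_1}{\delta},
\]
up to a correction making $T=\partial_u$ with $Tu=1$. Then $\nabla u=-\delta^{-1}(1,-\delta f_\delta')$, so the $g|_{\Sigma_\delta}$-unit normal satisfies
\[
\hat T^1+1=O\big((\delta f_\delta')^2\big)=O(\varepsilon^2\delta^2),\qquad \hat T^2=O(\varepsilon\delta).
\]
Since $g|_{\Sigma_\delta}$ is Euclidean, $\kappa=|\nabla u|^{-1}$ is $\delta$ times $1+O(\varepsilon^2\delta^2)$. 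Next, $\vartheta$ solves $T\vartheta=0$ with data $x_2|_{S_{\delta,0}}$ transported along $C_0$ by $L\vartheta=0$. Transporting along $L$ for time $\delta$ gives $\vartheta|_{S_{\delta,0}}=x_2+O(\varepsilon\delta)$. Because the integral curves of $\hat T$ deviate from horizontal lines by $O(\varepsilon\delta)$ in slope, $\slashed g-1=O(\varepsilon\delta)$ on all of $\Sigma^{u^*}_\delta$.

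\emph{Step 3: higher derivatives.} To get the estimates for $Z^\alpha$, $Z\in\{T,\hat X\}$, we differentiate these explicit formulas. In $(u,\vartheta)$ coordinates, $T=\partial_u$ and $\hat X=\slashed g^{-1/2}\partial_\vartheta$. The $u$-dependence is affine at leading order, so $T$-derivatives do not degrade the smallness. The $\vartheta$-derivatives fall on $f_\delta$ and on the transported $\vartheta$, which carry the factor $\varepsilon\delta$ or $\varepsilon^2\delta^2$ uniformly. An induction on $|\alpha|$, using the commutator $[T,\hat X]=-\kappa\theta\hat X$ with $\kappa\theta=O(\varepsilon\delta^2)$, closes the bounds. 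The quadratic structure of $\hat T^1+1$ is preserved, since each of its derivatives is a sum of products of at least two factors of size $O(\varepsilon\delta)$.

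\textbf{Main obstacle.} The delicate point is the uniformity in $\delta$ of the $O(\varepsilon\delta)$ bound for $\vartheta$, i.e. for $\slashed g$, near $S_{\delta,0}$. It requires that the ODE $L\vartheta=0$ on $C_0$ over $[0,\delta]$ produce a deviation linear in $\delta$, with all $\partial_{x_2}$ derivatives controlled. I would handle this by Taylor expanding the trajectories of $L$ from $S_{0,0}$. Smoothness of $U_r$ up to $C_0$ in $\mathcal{D}_0$, which holds since $\mathcal{D}_0$ is the regular domain of dependence of smooth data, gives this linear bound with constants depending only on $\|U_r-\Ur_r\|_{H^{N_0}}$. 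The choice $N_1\le N_0-C$ accounts for the loss from Sobolev embedding.
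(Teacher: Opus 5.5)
Your proposal is correct and follows the paper's route. The paper proves this only by invoking the explicit construction of $u$ and $\vartheta$ on $\Sigma_{\delta}^{u^*}$ from Section 3.1 of \cite{Luo-YuRare2}: the level sets of $u$ are $x_1$-translates of $S_{\delta,0}$, and $\vartheta$ is extended by $T\vartheta=0$. It then observes that this purely geometric step never uses irrotationality, and your Steps 1--3 unpack exactly that computation. Two small points: the bound on $\kappa$ should be read, as you implicitly do, as a bound on $\kappa/\delta-1$ rather than on $\kappa/\delta$; and $T=\partial_u$ comes from the choice of $\vartheta$, not from any correction to $u$.
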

We remark that this step is {\bf the same as the irrotational case}.
\item {\bf (2)} Since it is required that $(c, v^{1}, v^{2})=(c_{r}, v_{r}^{1}, v^{2}_{r})$ along $C_{0}^{t*}$, all the normal jets of $(c, v^{1}, v^{2})$ on $S_{\delta, 0}:=\Sigma_{\delta}^{u*}\cap C_{0}^{t*}$ can be determined inductively by characteristic system of Euler equation along $C_{0}$. In view of \eqref{Euler equations:form 1}, for each $1\leq k\leq N_{1}$, we have freedom to prescribe $T^{k}(\frac{1}{\gamma-1}c+\frac{1}{2}\psi^{(\hat{T})})$, while $T^{k}(\frac{1}{\gamma-1}c-\frac{1}{2}\psi^{(\hat{T})})$ and $T^{k}(\psi^{(\hat{X})})$ are automatically determined. In particular, the choice for $T^{k}(\frac{1}{\gamma-1}c+\frac{1}{2}\psi^{(\hat{T})})$ on $S_{\delta, 0}$ is given by
\[
\begin{cases}
T(\frac{1}{\gamma-1}c+\frac{1}{2}\psi^{(\hat{T})})=-\frac{2}{\gamma+1},\\
T^{k}(\frac{1}{\gamma-1}c+\frac{1}{2}\psi^{(\hat{T})})=0, \ 2\leq k\leq N_{1}.
\end{cases}
\]
{\bf This step is exactly the same with the irrotational case.}
\item {\bf (3)} $(\frac{1}{\gamma-1}c+\frac{1}{2}\psi^{(\hat{T})}), (\frac{1}{\gamma-1}c-\frac{1}{2}\psi^{(\hat{T})})$ and $\psi^{(\hat{X})}$ are constructed on whole $\Sigma_{\delta}^{u*}$ by Taylor expansion in $u$. In particular, it is proved \cite{Luo-YuRare2} that
\begin{proposition}\label{appendix: initial data bound from LuoYu2}
For $|\alpha|\leq N_{1}, 1\leq |\beta|\leq N_{1}, \psi\in \{\wb, w, \psi_{2}\}, Z\in \{T, \hat{X}\}$ and $\Zr\in \{\Tr, \Xr\}$, we have
\begin{equation}
\begin{cases}
\|T(w)\|_{L^{\infty}(\Sigma_{\delta}^{u*})}+\|T(\psi_{2})\|_{L^{\infty}(\Sigma_{\delta}^{u*})}+\|T(\wb)+\frac{2}{\gamma+1}\|_{L^{\infty}(\Sigma_{\delta}^{u*})}\lesssim \varepsilon \delta,\\
\|LZ^{\alpha}(\psi)\|_{L^{\infty}(\Sigma_{\delta}^{u*})}+\|LZ^{\alpha}(\psi)\|_{L^{\infty}(\Sigma_{\delta}^{u*})}\lesssim \varepsilon,\\
\|TZ^{\beta}(\psi)\|_{L^{\infty}(\Sigma_{\delta}^{u*})}\lesssim \varepsilon\delta,\\
\|\Tr(w)\|_{L^{\infty}(\Sigma_{\delta}^{u*})}+\|\Tr(\psi_{2})\|_{L^{\infty}(\Sigma_{\delta}^{u*})}+\|\Tr(\wb)+\frac{2}{\gamma+1}\|_{L^{\infty}(\Sigma_{\delta}^{u*})}\lesssim \varepsilon \delta,\\
\|L\Zr^{\alpha}(\psi)\|_{L^{\infty}(\Sigma_{\delta}^{u*})}+\|\hat{X}\Zr^{\alpha}(\psi)\|_{L^{\infty}(\Sigma_{\delta}^{u*})}+\|\Xr\Zr^{\alpha}(\psi)\|_{L^{\infty}(\Sigma_{\delta}^{u*})}\lesssim \varepsilon,\\
\|T\Zr^{\beta}(\psi)\|_{L^{\infty}(\Sigma_{\delta}^{u*})}+\|\Tr\Zr^{\beta}(\psi)\|_{L^{\infty}(\Sigma_{\delta}^{u*})}\lesssim \varepsilon\delta.
\end{cases}
\end{equation}
\end{proposition}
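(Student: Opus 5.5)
The estimates follow from the Taylor-expansion construction of step (3) together with the jet information of step (2) and the geometric bounds of Proposition \ref{appendix: initial geometry}.

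\emph{Setup on $\Sigma_\delta^{u^*}$.} Write each $\phi\in\{\wb_g,w_g,\psi^{(\hat X)}\}$ as a polynomial in $u$ of degree $N_1$ with coefficients $T^k\phi|_{S_{\delta,0}}/k!$. Here $T|_{\Sigma_\delta}=\partial_u$ and $u\in[0,u^*]$ with $u^*$ fixed by $(\mathbf{I}_0)$.

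\emph{Step 1: bounds on the jets.} Evaluate the normal jets $T^k\phi$ on $S_{\delta,0}$ by induction on $k$. Use the characteristic system \eqref{Euler equations:form 1} along $C_0$.
\begin{itemize}
\item For $\wb_g$ the jets are prescribed: $T\wb_g=-\tfrac{2}{\gamma+1}$ and $T^k\wb_g=0$ for $k\ge2$.
\item For $w_g$ and $\psi^{(\hat X)}$, the equations $L(w_g)=-2c\hat T(w_g)+\cdots$ and $L(\psi^{(\hat X)})=-c\hat T(\psi^{(\hat X)})+\cdots$ are solved for the $T$-derivatives, using $\hat T=\kappa^{-1}T$ and $\kappa\approx\delta$. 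This gives $T^k\phi=-\tfrac{\kappa}{2c}L T^{k-1}\phi+\cdots$, up to commutator terms from \eqref{eq: formulas to control the geometry}.
\end{itemize}
The remaining terms involve only $L$, $\hat X$ and lower jets. On $C_0$ these are $O(\varepsilon)$, since the data there are an $\varepsilon$-perturbation of a constant state. Each factor $\kappa$ therefore gives $T^k\phi=O(\varepsilon\delta)$ for $k\ge1$.

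The vorticity enters only through $\zeta,\eta$ and the $\Omega$ terms. By \eqref{eq: reformulation of vorticity in the first null frame}, these are $C_0$-tangential expressions in $U_r$ and hence $O(\varepsilon)$. The inductive structure is therefore unchanged from the irrotational case.

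\emph{Step 2: from jets to $\Sigma_\delta^{u^*}$.} Differentiate the Taylor polynomials. $T$-derivatives act as $\partial_u$, and $\hat X$-derivatives act as $\slashed g^{-1/2}\partial_\vartheta$ on the coefficients. Proposition \ref{appendix: initial geometry} controls $\slashed g$, $\Xi$ and $\kappa$.

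This yields $|T(\wb)+\tfrac{2}{\gamma+1}|$, $|T w|$, $|T\psi_2|$ and $|TZ^\beta\psi|$ of size $\lesssim\varepsilon\delta$, together with $|\hat XZ^\alpha\psi|\lesssim\varepsilon$. To pass from geometric to Cartesian Riemann invariants, use $\hat T^1+1=O(\varepsilon^2\delta^2)$ and $\hat T^2=O(\varepsilon\delta)$.

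\emph{Step 3: the $L$-derivatives.} Use \eqref{Euler equations:form 2}. Then $L\psi$ equals $c\hat T(\cdot)$ times small coefficients ($\hat T^1+1$, $\hat T^2$, or $w$-type quantities with $O(\varepsilon\delta)$ $T$-derivatives), plus $\hat X$-terms. Since $\hat T=\kappa^{-1}T$ and $\kappa\sim\delta$, each $\hat T$-term is $O(\varepsilon)$.

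The one dangerous term is $c\hat T(\wb)(\hat T^1+1)$, where $\hat T\wb\sim\delta^{-1}$. It is saved by $\hat T^1+1=O(\varepsilon^2\delta^2)$. Commuting $Z^\alpha$ through produces commutators from \eqref{eq: formulas to control the geometry} with the same structure.

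\emph{Step 4: ringed frame.} Express $\Tr=\delta\Trh$ and $\Xr$ in terms of $T,\hat X,L$ using the Jacobian \eqref{eq: Jacobi of (t, u, vartheta) to (t, x_1, x_2)}. The coefficients satisfy $\Tr=\tfrac{\delta}{\kappa}(-\hat T^1)T+O(\varepsilon\delta)\hat X$. Note that $\Tr$ carries no $L$-component on $\Sigma_\delta$, because it is tangent to $\Sigma_\delta$.

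Since $\delta/\kappa$ has derivatives $O(\varepsilon)$, the ringed bounds follow from the bounds of Steps 2 and 3. The cross terms are harmless: applying the $\hat X$-part of $\Tr$ to $\wb$ gives $O(\varepsilon\delta)\cdot O(\varepsilon)$.

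\emph{Main obstacle.} The delicate point is Step 1 at high order $k$. Each $T^k$ consumes an $L$-derivative and produces $\hat X$-derivatives along $C_0$, which causes a loss of derivatives in the jets. This is why the bounds are stated only up to $N_1<N_0$.

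We must also check that the $\delta$-power does not degrade. Every $T$ should come with a factor $\kappa\sim\delta$, and the only $O(1)$ normal jet ($T\wb$) must never be multiplied by a non-small coefficient. I would track this by an induction hypothesis $T^kZ^\alpha\phi=O(\varepsilon\delta)$ for $k\ge1$, with the exception $T\wb_g=-\tfrac{2}{\gamma+1}$ exactly.
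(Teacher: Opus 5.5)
Your route is the one the paper relies on. The paper gives no argument of its own and imports the bounds from \cite{Luo-YuRare2}. There the data on $\Sigma_\delta^{u^*}$ are the Taylor polynomials in $u$ of the jets at $S_{\delta,0}$, obtained from \eqref{Euler equations:form 1} with $T\wb_g=-\frac{2}{\gamma+1}$ and $T^k\wb_g=0$ ($k\ge2$) prescribed. The $\varepsilon\delta$ gain does come from $T=\kappa\hat T$ with $\kappa\approx\delta$.

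Steps 2 and 3 are fine. Step 4 needs one correction: you need $Z(\delta/\kappa)=O(\varepsilon\delta)$, not $O(\varepsilon)$. This follows from $Z^\alpha(\kappa-\delta)=O(\varepsilon\delta^2)$ in Proposition \ref{appendix: initial geometry}. It matters because this coefficient multiplies the $O(1)$ quantity $T\wb$ in $\Tr\Zr^\beta\wb$; with only $O(\varepsilon)$ you would get $\Tr\Tr\wb=O(\varepsilon)$ instead of the required $O(\varepsilon\delta)$.

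The step that does not close as written is the induction in Step 1. In $T^k\phi=-\frac{\kappa}{2c}LT^{k-1}\phi+\cdots$, the leading term $LT^{k-1}\phi$ is not a $C_0$-tangential derivative of $U_r$.
\begin{itemize}
\item Your justification ``these are $O(\varepsilon)$ since the data on $C_0$ are an $\varepsilon$-perturbation'' therefore does not apply to it.
\item Your induction hypothesis on $T^kZ^\alpha\phi$, $Z\in\{T,\hat X\}$, says nothing about it either.
\item You cannot obtain it as in Step 3: commuting $T^{k-1}$ through the $w_g$- or $\psi^{(\hat X)}$-equation just returns the unknown $T^k\phi$.
\end{itemize}

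What you must do is differentiate along $C_0$ the expression already obtained for $T^{k-1}\phi$, which holds along all of $C_0$. This brings in $L$-derivatives of $\kappa$ and of the freely prescribed jets $T^j\wb_g$. Only the values of these jets at $S_{\delta,0}$ are prescribed. Their $L$-derivatives are dictated by transport equations along $C_0$, the ODE system recalled in item (4) of Section \ref{sec: existence of initial data}.

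So the induction must run over the mixed jets $L^mX^nT^j\phi$ at $S_{\delta,0}$, with two bounds:
\begin{itemize}
\item $T^j\phi=O(\varepsilon\delta)$, except $T\wb_g=-\frac{2}{\gamma+1}$;
\item $L^mT^j\phi=O(\varepsilon)$ for $j,m\ge1$.
\end{itemize}
This induction does close, and the reason should be stated explicitly. The $\wb_g$-equation in \eqref{Euler equations:form 1} contains no $\hat T$-derivative, and $[L,T]=-(\zeta+\eta)\hat X$ has no $T$-component. Hence $LT^j\wb_g$ contains no Riccati-type term $\kappa^{-1}(T\wb_g)^2$ and stays $O(\varepsilon)$, even though $T\wb_g=O(1)$. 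Your ``main obstacle'' paragraph points in this direction, but it has to be built into the induction hypothesis.
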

We now show that {\bf the bound for specific vorticity $\Omega$ follows form \ref{appendix: initial data bound from LuoYu2} directly}. For $|\gamma|\leq N_{1}-1$,
\[
\Zr^{\gamma}(\Omega)=\Zr^{\gamma}(\frac{\partial_{1}v^{2}-\partial_{2}v^{1}}{\rho})=\frac{1}{\kappar}\Zr^{\gamma}\big(\frac{\Tr(\psi_{2})}{(k_{0}^{-\frac{1}{2}}\gamma^{-\frac{1}{2}}c)^{\frac{2}{\gamma-1}}}\big)+\Zr^{\gamma}\big(\frac{\Xr(\psi_{1})}{(k_{0}^{-\frac{1}{2}}\gamma^{-\frac{1}{2}}c)^{\frac{2}{\gamma-1}}}\big).
\]
In view of prop \ref{appendix: initial data bound from LuoYu2} and $c\approx \mathring{c}$, we can conclude that 
\begin{corollary}\label{cor: initial bound for Omega}
For $|\gamma|\leq N_{1}-1$ and $\Zr\in \{\Tr, \Xr\}$, 
\[
\|\Zr^{\gamma}(\Omega)\|_{L^{\infty}(\Sigma_{\delta}^{u*})}\lesssim \varepsilon.
\]
\end{corollary}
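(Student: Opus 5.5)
The plan is to read the corollary off the displayed identity for $\Zr^{\gamma}(\Omega)$, using Proposition \ref{appendix: initial data bound from LuoYu2} for the velocity derivatives and the lower bound on $c$ from Remark \ref{rem: away from vacuum on Sigma_delta}. First I would justify the identity. On $\Sigma_\delta$ we have $\kappar=t=\delta$ and $\Tr=-\delta\partial_1$, $\Xr=\partial_2$, so $\partial_1 v^2=\delta^{-1}\Tr(\psi_2)$ and $\partial_2 v^1=-\Xr(\psi_1)$. Hence
\[
\Omega=\rho^{-1}\big(\kappar^{-1}\Tr(\psi_2)+\Xr(\psi_1)\big),
\]
and $\rho$ is a smooth, positive function of $c$ alone: $\rho=(k_0^{-1/2}\gamma^{-1/2}c)^{2/(\gamma-1)}$. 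Since $[\Tr,\Xr]=0$ and $\kappar$ is constant on $\Sigma_\delta$, the operator $\Zr^{\gamma}$ passes through the factor $\kappar^{-1}$. This gives exactly the displayed formula.

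Next I would expand $\Zr^{\gamma}\big(\rho^{-1}\Tr(\psi_2)\big)$ by the Leibniz rule. The result is a sum of terms of the form
\[
\Zr^{\gamma_0}(\rho^{-1})\,\Zr^{\gamma_1}\Tr(\psi_2),\qquad |\gamma_0|+|\gamma_1|=|\gamma|.
\]
\begin{itemize}
\item The factor $\Zr^{\gamma_0}(\rho^{-1})$ is a polynomial in $\Zr^{\beta}(c)$ with $|\beta|\le|\gamma_0|$, multiplied by smooth functions of $c$. Since $\frac12 r_0\mathring c_r\le c\le 2\mathring c_r$, these are bounded.
\item The derivatives $\Zr^{\beta}(c)$ are bounded because $c$ is a linear combination of $w$ and $\wb$. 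Proposition \ref{appendix: initial data bound from LuoYu2} gives $\Tr$-derivatives of size $\lesssim 1$ (the leading one being $\Tr\wb\approx-\frac{2}{\gamma+1}$), and all higher or $\Xr$-derivatives are $\lesssim\varepsilon$ or $\lesssim\varepsilon\delta$.
\item The key factor $\Zr^{\gamma_1}\Tr(\psi_2)$ is a $\Tr\Zr^{\beta}$-type derivative of $\psi_2$. I would rewrite it using $[\Tr,\Xr]=0$, so that the $\Tr$ can be placed where the proposition applies. The relevant bounds are $\|\Tr(\psi_2)\|\lesssim\varepsilon\delta$ and $\|\Tr\Zr^{\beta}(\psi)\|\lesssim\varepsilon\delta$ for $|\beta|\ge1$.
\end{itemize}
Every such term is therefore $O(\varepsilon\delta)$, and after dividing by $\kappar=\delta$ it becomes $O(\varepsilon)$. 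The requirement $|\gamma|\le N_1-1$ ensures that the total number of derivatives, at most $N_1$, stays within the range of the proposition.

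The term $\Zr^{\gamma}\big(\rho^{-1}\Xr(\psi_1)\big)$ is handled the same way. I would write $\psi_1=w-\wb$ up to constants, so that its derivatives are $\Xr\Zr^{\beta}(w,\wb)$, which the proposition bounds by $\lesssim\varepsilon$. Bounded coefficients times $\varepsilon$ give $O(\varepsilon)$.

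The main obstacle is the $\kappar^{-1}$ term. Here one must make sure that every Leibniz term places at least one $\Tr$ on $\psi_2$ itself, not on $\rho^{-1}$, so that the gain of one factor $\delta$ is available. This holds because $\Tr$ falls on $\psi_2$ by construction, and $\psi_2$ (unlike $\wb$) has no $O(1)$ part in its $\Tr$-derivative. The order-one quantities $\Tr(c)$ enter only through the bounded coefficients.
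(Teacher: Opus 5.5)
Your proposal is correct and follows the paper's argument. The paper writes the same identity $\Zr^{\gamma}(\Omega)=\kappar^{-1}\Zr^{\gamma}\big(\rho^{-1}\Tr(\psi_{2})\big)+\Zr^{\gamma}\big(\rho^{-1}\Xr(\psi_{1})\big)$ and concludes in one line from Proposition \ref{appendix: initial data bound from LuoYu2} and $c$ being bounded away from zero; your Leibniz bookkeeping supplies the step it leaves implicit, namely that the inner $\Tr$ always falls on $\psi_2$, so the $O(\varepsilon\delta)$ bound offsets the factor $\kappar^{-1}=\delta^{-1}$.
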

In summary, prop \ref{appendix: initial geometry}, prop \ref{appendix: initial data bound from LuoYu2}, together with corollary \ref{cor: initial bound for Omega} imply that the data posed on $\Sigma_{\delta}^{u*}$ satisfies conditions $(\mathbf{I}_{0})$, $(\mathbf{I}_{2})$ and $(\mathbf{I}_{\infty})$.
\item {\bf (4)} To derive the initial flux bound on $C_{0}^{t*}$, the ODE systems for normal jets of $\{\wb, w, \psi_{2}, \hat{T}^{1}, \hat{T}^{2}, \kappa\}$ have been derived in \cite{Luo-YuRare2} by rewriting the acoustical wave equation $\Box_{g}\psi=\cdots$ as
\begin{equation}\label{appendix: transport equation for T(psi)}
\begin{split}
L(T(\psi))&=\kappa\cdot \big(\text{ \fbox{$C_{0}^{t^{*}}$ tangential derivates of $\{\wb, w, \psi_{2}, \hat{T}^{1}, \hat{T}^{2}\}$}}\big)\\
&+T(\psi)\cdot \big(\text{ \fbox{$C_{0}^{t^{*}}$ tangential derivates of $\{\wb, w, \psi_{2}, \hat{T}^{1}, \hat{T}^{2}\}$}}\big),\\
\end{split}
\end{equation}
and writing the transport equation for $\kappa$ as
\begin{equation}\label{appendix: transport equation for T(kappa)}
\begin{split}
L(\kappa)&=\kappa\cdot \big(\text{ \fbox{$C_{0}^{t^{*}}$ tangential derivates of $\{\wb, w, \psi_{2}, \hat{T}^{1}, \hat{T}^{2}\}$}}\big)\\
&+T(\psi)\cdot \big(\text{ \fbox{$C_{0}^{t^{*}}$ tangential derivates of $\{\wb, w, \psi_{2}, \hat{T}^{1}, \hat{T}^{2}\}$}}\big),\\
\end{split}
\end{equation}
and writing the transport equation for $T(\hat{T}^{i}), i=1,2$ as
\begin{equation}\label{appendix: transport equation for T(hat{T})}
\begin{split}
L(\hat{T}^{i})&=\sum_{i=1}^{2}T(\hat{T}^{i})\cdot \big(\text{ \fbox{$C_{0}^{t^{*}}$ tangential derivates of $\{\wb, w, \psi_{2}, \hat{T}^{1}, \hat{T}^{2}\}$}}\big)\\
&+\big(\text{ \fbox{terms already known on $C_{0}^{t^{*}}$ and $C_{0}^{t^{*}}$ tangential derivates of $\{\wb, w, \psi_{2}, \hat{T}^{1}, \hat{T}^{2}, T(\wb), T(w), T(\psi_{2}, \kappa)\}$ }}\big).
\end{split}
\end{equation}
In view of \eqref{eq: reformulation of vorticity in the first null frame}, $\Omega, \partial_{1}\Omega, \partial_{2}\Omega$ can be written as  $C_{0}^{t^{*}}$ tangential derivates of $\{\wb, w, \psi_{2}, \hat{T}^{1}, \hat{T}^{2}\}$. Therefore, the appearance of specific vorticity and its spatial derivates won't effect the structure of \eqref{appendix: transport equation for T(psi)}, \eqref{appendix: transport equation for T(kappa)}, and \eqref{appendix: transport equation for T(hat{T})}. Commuting $T^{k}$ with them, we obtain ODE systems for normal jets of $\{\wb, w, \psi_{2}, \hat{T}^{1}, \hat{T}^{2}, \kappa\}$ with the same structure and same initial data on $S_{\delta, 0}$ as \cite{Luo-YuRare2} and we have
\begin{proposition}\label{appendix: initial flux bound from LuoYu2}
For $|\alpha|\leq N_{2}, 1\leq |\beta|\leq N_{2}, \psi\in \{\wb, w, \psi_{2}\}, Z\in \{T, \hat{X}\}, \Zr\in \{\Tr, \Xr\}$ and $\delta\leq t\leq t^{*}$, 
\begin{equation}
\begin{cases}
\|T(w)\|_{L^{\infty}(S_{t, 0})}+\|T(\psi_{2})\|_{L^{\infty}(S_{t, 0})}+\|T(\wb)+\frac{2}{\gamma+1}\|_{L^{\infty}(S_{t, 0})}\lesssim \varepsilon t,\\
\|LZ^{\alpha}(\psi)\|_{L^{\infty}(S_{t, 0})}+\|LZ^{\alpha}(\psi)\|_{L^{\infty}(S_{t, 0})}\lesssim \varepsilon,\\
\|TZ^{\beta}(\psi)\|_{L^{\infty}(S_{t, 0})}\lesssim \varepsilon t,\\
\|\Tr(w)\|_{L^{\infty}(S_{t, 0})}+\|\Tr(\psi_{2})\|_{L^{\infty}(S_{t, 0})}+\|\Tr(\wb)+\frac{2}{\gamma+1}\|_{L^{\infty}(S_{t, 0})}\lesssim \varepsilon t,\\
\|L\Zr^{\alpha}(\psi)\|_{L^{\infty}(S_{t, 0})}+\|\hat{X}\Zr^{\alpha}(\psi)\|_{L^{\infty}(S_{t, 0})}+\|\Xr\Zr^{\alpha}(\psi)\|_{L^{\infty}(S_{t, 0})}\lesssim \varepsilon,\\
\|T\Zr^{\beta}(\psi)\|_{L^{\infty}(S_{t, 0})}+\|\Tr\Zr^{\beta}(\psi)\|_{L^{\infty}(S_{t, 0})}\lesssim \varepsilon t.
\end{cases}
\end{equation}
\end{proposition}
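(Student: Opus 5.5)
Since every quantity in the statement is evaluated on $C_{0}^{t^{*}}$, I would prove the proposition by integrating the transport equations \eqref{appendix: transport equation for T(psi)}, \eqref{appendix: transport equation for T(kappa)} and \eqref{appendix: transport equation for T(hat{T})} along the generators of $C_{0}$. In the acoustical coordinates $L=\partial_t$, so these are ODEs in $t\in[\delta,t^{*}]$ with $\vartheta$ fixed. The initial values on $S_{\delta,0}$ are supplied by Proposition \ref{appendix: initial data bound from LuoYu2} and Proposition \ref{appendix: initial geometry}: they give $T\psi$ and $TZ^{\beta}\psi$ of size $\varepsilon\delta$, $\kappa\approx\delta$, $\hat{T}^{1}+1=O(\varepsilon^{2}\delta^{2})$ and $\hat{T}^{2}=O(\varepsilon\delta)$. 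The coefficients are built only from $C_{0}$-tangential derivatives ($L$ and $\hat X$) of the known solution $U_{r}$. By \eqref{eq: reformulation of vorticity in the first null frame}, the vorticity terms $\Omega$, $\partial_1\Omega$ and $\partial_2\Omega$ in \eqref{eq: wave-transport system} are also of this tangential type. All such coefficients are therefore bounded by $O(1)$, with $O(\varepsilon)$ deviation from the background, uniformly in $\delta$, because $U_{r}$ is $\varepsilon$-close to $\Ur_{r}$ in $H^{N_0}$ and $C_{0}$ lies in $\mathcal{D}_{0}$, where $U_r$ is smooth.

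\textbf{Step 1: zeroth order.} I would first close the coupled system for $(\kappa, T\wb, Tw, T\psi_2, \hat T^{1},\hat T^{2})$ by a bootstrap on $[\delta,t]$. From $L\kappa=-T(c)-\hat{T}^jT(\psi_j)$ together with $T\wb\approx-\frac{2}{\gamma+1}$, I expect $L\kappa=1+O(\varepsilon)$, hence $\kappa = t+O(\varepsilon t)$. Each equation for $T\psi$ has the form $L(T\psi)=\kappa\cdot O(\varepsilon)+T\psi\cdot O(1)$, so Gronwall on $[\delta,t]$ gives $|T\psi-T\psi|_{\mathrm{bg}}|\lesssim \varepsilon\delta+\int_\delta^t\varepsilon t'\,dt'\lesssim\varepsilon t$. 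This uses that $T\wb$ equals its background value $-\frac{2}{\gamma+1}$ up to $O(\varepsilon)$ corrections multiplied by $\kappa$. The same argument bounds $\hat T^{i}$ via \eqref{appendix: transport equation for T(hat{T})}. The $L Z^{\alpha}\psi$ and $\hat X Z^{\alpha}\psi$ bounds for tangential $Z^{\alpha}$ are immediate from the smoothness of $U_r$. For the remaining mixed ones I would use the equations \eqref{Euler equations:form 2}, which express $L\psi$ in terms of $c\widehat{T}\psi$ and tangential terms, where $c\widehat T=c\kappa^{-1}T$ is bounded.

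\textbf{Step 2: higher order.} I would commute $T^{k}$ and $\hat X^{m}$ through the system, using $[L,T]=-(\zeta+\eta)\hat X$ and $[L,\hat X]=-\chi\hat X$ from \eqref{eq: formulas to control the geometry}. This produces a triangular linear ODE system for $Z^{\beta}T\psi$, $Z^\beta\kappa$ and $Z^\beta\hat T^i$, whose inhomogeneities involve lower-order normal jets and $\kappa$ times tangential quantities. Induction on the number of $T$'s then gives $|TZ^{\beta}\psi|\lesssim\varepsilon t$ for $|\beta|\ge1$: the top-order background contribution of $T\wb$ is constant, so its tangential derivatives vanish up to $O(\varepsilon)$. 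Finally, the ringed estimates follow by writing $\Tr=-t\partial_1$ and $\Xr=\partial_2$ in the frame $(L,T,\hat X)$ via \eqref{eq: Jacobi of (t, u, vartheta) to (t, x_1, x_2)}. For example, $\Tr=t\kappa^{-1}(-\hat T^{1})T+\dots$ with $t\kappa^{-1}=1+O(\varepsilon)$, and the corrections are controlled by the geometric bounds already obtained.

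\textbf{Main obstacle.} I expect the main difficulty to be the derivative loss. In $L(T\psi)=\frac12\mu\hat X^{2}\psi+\dots$, each normal derivative costs two tangential derivatives. This is why the range shrinks from $N_{1}$ to $N_{2}\approx N_{1}/C$, and the regularity count has to be tracked so that the induction closes. A second point needing care is that, because the data carry no irrotationality, the vorticity source terms must be checked to remain of the form $\kappa\cdot(\text{tangential})$ and not merely $O(\varepsilon)$. This is exactly what \eqref{eq: reformulation of vorticity in the first null frame} ensures, since it keeps the $\varepsilon t$ smallness rather than $\varepsilon$.
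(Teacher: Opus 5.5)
Your route is the paper's. The paper integrates, along the generators of $C_{0}$, the Luo--Yu ODE systems for the normal jets of $\{\wb,w,\psi_{2},\hat{T}^{1},\hat{T}^{2},\kappa\}$. It observes, via the reformulation of $\Omega$ in the first null frame, that the vorticity terms are $C_{0}$-tangential, so the structure and the data on $S_{\delta,0}$ are as in Luo--Yu, and it then quotes the resulting bounds.

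There is, however, one step that fails as written. You record only $\kappa=t+O(\varepsilon t)$, and you use $t\kappa^{-1}=1+O(\varepsilon)$ when passing to the ringed frame. Since $T\wb=-\frac{2}{\gamma+1}+O(\varepsilon t)$ has size one, the identity $\Tr\wb=-\hat{T}^{1}\,t\kappa^{-1}\,T\wb-t\hat{X}^{1}\hat{X}\wb$ then gives only $\Tr\wb+\frac{2}{\gamma+1}=O(\varepsilon)$, not $O(\varepsilon t)$. The same problem appears in $\Tr\Zr^{\beta}\wb$, which contains $Z(t\kappa^{-1})\,T\wb$ and therefore needs $Z\kappa=O(\varepsilon t^{2})$. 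The repair is to feed the $T\psi$ bounds back into
$L\kappa=-T(v^{1}+c)-(\hat{T}^{1}+1)T\psi_{1}-\hat{T}^{2}T\psi_{2}=1+O(\varepsilon t)$,
and to use $\kappa-\delta=O(\varepsilon\delta^{2})$ on $S_{\delta,0}$ (from $(\mathbf{I}_{\infty})$). This gives $\kappa-t=O(\varepsilon t^{2})$, and similarly $Z^{\alpha}\kappa=O(\varepsilon t^{2})$ for $|\alpha|\geq 1$ from the commuted equations. The bound $\hat{T}^{1}+1=O(\varepsilon^{2}t^{2})$ follows from $|\hat{T}|=1$ and $\hat{T}^{2}=O(\varepsilon t)$.

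A smaller correction concerns your account of the sources. The right-hand sides of $L(T\psi)$ are genuinely $O(\varepsilon)$, not $\kappa\cdot O(\varepsilon)$. For example, $\mu c^{-1}g(D\wb,D\psi)$ contains $T\wb\,L\psi$, and $L(c^{-1}\kappa)L\psi\approx c^{-1}L\psi$. Even the vorticity term $\mu\rho c^{-1}\Omega\,\partial_{1}c$ contains $\rho\Omega\,\hat{T}^{1}T(c)$, which is of the type $T\psi\cdot(\text{tangential})$, exactly as in the paper's schematic form. This is harmless, since $\int_{\delta}^{t}\varepsilon\,dt'\leq\varepsilon t$. It also shows that the role of the vorticity reformulation is not to supply an extra factor of $\kappa$ for smallness. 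Its role is closure: $\partial_{i}\Omega$ contains the transversal derivative $\hat{T}\Omega$, a normal jet of $\Omega$ on $C_{0}$ that the system does not govern. The relation $B\Omega=0$ turns $\hat{T}\Omega$ into $-c^{-1}L\Omega$, which is known $C_{0}$-tangential data of $U_{r}$. No new unknown therefore enters the ODE system.
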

We now show that {\bf the bound for specific vorticity $\Omega$ follows form \ref{appendix: initial flux bound from LuoYu2} directly}. For $|\gamma|\leq N_{2}-1$,
\[
\Zr^{\gamma}(\Omega)=\Zr^{\gamma}(\frac{\partial_{1}v^{2}-\partial_{2}v^{1}}{\rho})=\frac{1}{\kappar}\Zr^{\gamma}\big(\frac{\Tr(\psi_{2})}{(k_{0}^{-\frac{1}{2}}\gamma^{-\frac{1}{2}}c)^{\frac{2}{\gamma-1}}}\big)+\Zr^{\gamma}\big(\frac{\Xr(\psi_{1})}{(k_{0}^{-\frac{1}{2}}\gamma^{-\frac{1}{2}}c)^{\frac{2}{\gamma-1}}}\big).
\]
In view of prop \ref{appendix: initial flux bound from LuoYu2} and $c\approx \mathring{c}$, we can conclude that 
\begin{corollary}\label{cor: initial flux for Omega}
For $|\gamma|\leq N_{2}-1$ and $\Zr\in \{\Tr, \Xr\}$, 
\[
\|\Zr^{\gamma}(\Omega)\|_{L^{\infty}(S_{t, 0})}\lesssim \varepsilon.
\]
\end{corollary}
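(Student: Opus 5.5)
The bound follows from the explicit formula just displayed, which expresses $\Zr^{\gamma}(\Omega)$ through $\Zr$-derivatives of $\Tr(\psi_2)$, $\Xr(\psi_1)$ and $c$. I would first rewrite the specific vorticity in the second frame. Since $\Tr=-\kappar\partial_1$ and $\Xr=\partial_2$, and $\rho=(k_0^{-1/2}\gamma^{-1/2}c)^{2/(\gamma-1)}$, we have
\[
\Omega=\rho^{-1}\big(\partial_1v^2-\partial_2v^1\big)=\rho^{-1}\big(\kappar^{-1}\Tr(\psi_2)+\Xr(\psi_1)\big).
\]
Here $\kappar=t$ is constant on $S_{t,0}$, and $[\Tr,\Xr]=0$, so all the vector fields $\Zr$ commute with multiplication by $\kappar^{-1}$ on $\Sigma_t$. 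Since $\psi_1=w-\wb$ up to a multiple of $c$, and $c$ is a linear combination of $w$ and $\wb$, we can write $\Xr(\psi_1)$ in terms of $\Xr(w)$ and $\Xr(\wb)$.

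Next I would apply $\Zr^{\gamma}$ and expand with the Leibniz rule. This gives a sum of products of factors of three types:
\begin{itemize}
\item $\Zr^{\gamma_1}(\rho^{-1})$;
\item $t^{-1}\Zr^{\gamma_2}\Tr(\psi_2)$;
\item $\Zr^{\gamma_2}\Xr(w)$ and $\Zr^{\gamma_2}\Xr(\wb)$.
\end{itemize}
Each factor is controlled by Proposition \ref{appendix: initial flux bound from LuoYu2}, with $|\gamma|+1\le N_2$.

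For the first type, $\rho^{-1}$ is a smooth function of $c$, and $c$ is bounded above and below near $\mathring c_r$ on $C_0$. Every derivative of $c$ that appears is a $\Zr$-derivative of order at least $1$ of $w$ or $\wb$, hence bounded by $\lesssim 1$. The only $O(1)$ contribution is $\Tr(\wb)\approx -\frac{2}{\gamma+1}$, and the other derivatives are $O(\varepsilon)$ or $O(\varepsilon t)$. So $|\Zr^{\gamma_1}(\rho^{-1})|\lesssim 1$ by the chain rule.

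For the third type, $\Zr^{\gamma_2}\Xr(\psi)$ is of the form $\Xr\Zr^{\alpha}(\psi)$ up to reordering, which costs nothing because the fields commute. It is therefore bounded by $\varepsilon$ through the fifth line of Proposition \ref{appendix: initial flux bound from LuoYu2}.

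The key point is the $t^{-1}$ singular factor in the second type. The term $\Zr^{\gamma_2}\Tr(\psi_2)=\Tr\Zr^{\gamma_2}(\psi_2)$ is bounded by $\varepsilon t$: for $\gamma_2=0$ this uses the fourth line of the proposition, and for $|\gamma_2|\ge1$ it uses the last line with $\beta=\gamma_2$. Multiplying by $t^{-1}$ then gives $\lesssim\varepsilon$.

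The remaining point is that a product containing the $O(1)$ factor $\Tr(\wb)$ always arises from derivatives falling on $\rho^{-1}$. Such a product is multiplied by a vorticity factor that is already $O(\varepsilon)$, so it stays $\lesssim\varepsilon$. Summing the finitely many terms yields $\|\Zr^{\gamma}(\Omega)\|_{L^\infty(S_{t,0})}\lesssim\varepsilon$ for $|\gamma|\le N_2-1$.

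The only delicate step is the cancellation of the $1/\kappar$ singularity, and it rests entirely on the $\varepsilon t$ vanishing of $\Tr$-derivatives of $\psi_2$ along $C_0$ provided by the proposition.
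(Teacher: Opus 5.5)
Your proposal is correct and follows the paper's argument. The paper uses the same identity $\Omega=\rho^{-1}\big(\kappar^{-1}\Tr(\psi_2)+\Xr(\psi_1)\big)$, with $\rho$ a function of $c$, and concludes directly from Proposition \ref{appendix: initial flux bound from LuoYu2} together with $c\approx\mathring{c}$. Your Leibniz-rule bookkeeping, in particular using the $\varepsilon t$ bound on $\Tr\Zr^{\beta}(\psi_2)$ to absorb the $\kappar^{-1}=t^{-1}$ factor, simply makes that one-line argument explicit.
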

In summary, prop \ref{appendix: initial geometry}, prop \ref{appendix: initial flux bound from LuoYu2}, together with corollary \ref{cor: initial flux for Omega} imply that the data posed on $C_{0}^{t*}$ satisfies conditions $(\mathbf{I}_{0})$, $(\mathbf{I}_{2})$ and $(\mathbf{I}_{\infty})$.
\end{itemize}

\section{Preparations for the energy estimate and the closing of $(\mathbf{B}_{\text{diffeomorphism}})$}\label{section4}
In view of $(\mathbf{B}_{\text{diffeomorphism}})$, we can work in acoustical coordinate $(t, u, \vartheta)$. In particular, we can use the null vector field $L$ to propagate the bound for $\{\hat{T}^{1}, \hat{T}^{2}, \kappa\}$ from $\Sigma_{\delta}^{u}$ to the whole $\mathcal{D}(\delta)(t, u)$. We will use $\Mr$ to denote a generic polynomial of $M$.

\subsection{Control of sound speed $c$ and $T(\wb)$}
In view of $(\mathbf{I}_{\infty})$ and remark \ref{rem: away from vacuum on Sigma_delta}, we have on $\Sigma_{\delta}^{u^{*}}$,
\[
\frac{1}{2}r_{0}\mathring{c}_{r}\leq c\leq 2\mathring{c}_{r}.
\]
In view of $(\mathbf{B}_{\infty})$, we have
\[
|c(t, u, \vartheta)-c(\delta, u, \vartheta)|=|\int_{\delta}^{t} L(c)(t', u, \vartheta) dt'|\leq Mt^{*}\varepsilon=\Mr\varepsilon.
\]
Thus, we can conclude that
\[
\frac{1}{4}r_{0}\mathring{c}_{r}\leq c\leq 4\mathring{c}_{r}.
\]
providing that $M\varepsilon$ is sufficiently small. 

In view of $(\mathbf{I}_{\infty})$ and $(\mathbf{I}_{\infty})$ for $T(\wb)$, we have
\[
|T(\wb)(t, u, \vartheta)-T(\wb)(\delta, u, \vartheta)|=|\int_{\delta}^{t} LT(\wb)(t', u, \vartheta)dt'|\leq M\varepsilon t.
\]
Thus, we can conclude that,
\begin{equation}\label{eq: bound for T(wb)}
\|T(\wb)+\frac{2}{\gamma+1}\|_{L^{\infty}(\Sigma_{t}^{u})}\leq \Mr\varepsilon t, \|T(v^{1}+c)+\frac{2}{\gamma+1}\|_{L^{\infty}(\Sigma_{t}^{u})}\leq \Mr\varepsilon t.
\end{equation}

\subsection{Control of $\hat{T}^{1}, \hat{T}^{2}$, and $\kappa$}

\begin{proposition}
For any $(t, u)\in [\delta, t^{*}]\times [0, u^{*}]$, we have
\begin{equation}\label{eq: 0-th control of the frame}
\begin{cases}
|\big(\hat{T}^{1}+1\big)(t, u)|\leq P(M)\varepsilon^{2}t^{2},\\
|\hat{T}^{2}|(t, u)\leq P(M)\varepsilon t.\\
|\kappa-\kappar|(t, u)\leq P(M)\varepsilon t^{2}.
\end{cases}.
\end{equation}
\end{proposition}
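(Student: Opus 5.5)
We integrate the transport equations from \eqref{eq: formulas to control the geometry} along the integral curves of $L=\partial_t$ in acoustical coordinates, starting from the bounds on $\Sigma_\delta^{u^*}$ supplied by $(\mathbf{I}_\infty)$ (Proposition \ref{appendix: initial geometry}), and estimating the right-hand sides with $(\mathbf{B}_\infty)$. The three quantities are coupled, so we run a small continuity argument inside the bootstrap: we assume the weaker bounds $|\hat T^1+1|\le 2P(M)\varepsilon^2t^2$, $|\hat T^2|\le 2P(M)\varepsilon t$, $|\kappa-\kappar|\le 2P(M)\varepsilon t^2$, and improve them.

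\emph{Step 1: $\hat T^2$.} We have $L(\hat T^i)=\hat T^j\hat X(\psi_j)+\hat X(c)$. Since $|\hat T|=1$, $(\mathbf{B}_\infty)$ gives $|\hat X\psi_j|+|\hat X c|\le \Mr\varepsilon$. Integrating from $\delta$ yields $|\hat T^2(t)|\le |\hat T^2(\delta)|+\Mr\varepsilon(t-\delta)\le C\varepsilon\delta+\Mr\varepsilon t$, which is the desired $P(M)\varepsilon t$ bound.

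\emph{Step 2: $\hat T^1+1$.} Integrating $L\hat T^1$ directly only yields $O(\varepsilon t)$. Instead we use $(\hat T^1)^2+(\hat T^2)^2=1$, so
\[
\hat T^1+1=\frac{(\hat T^2)^2}{1-\hat T^1}.
\]
By $(\mathbf{B}_{\text{diffeomorphism}})$, $1-\hat T^1\ge 2-\frac1{2026}$, and therefore $|\hat T^1+1|\lesssim |\hat T^2|^2\le P(M)\varepsilon^2t^2$.

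\emph{Step 3: $\kappa$.} We write $L\kappa=-T(c)-\hat T^jT(\psi_j)=-T(c+\hat T^j\psi_j)+\psi_jT(\hat T^j)$. The term $T(\psi_j)$ is expanded as $\hat T^1T(\psi_1)+\hat T^2T(\psi_2)$ with $\psi_1=-v^1$, giving
\[
L\kappa=-T(c)+T(v^1)-\hat T^2T(\psi_2)-(\hat T^1+1)T(v^1).
\]
Since $\kappar=t$, we have $L\kappar=1$, and $L$ differs from $\Lr$ only by $O(\hat T^2,\hat T^1+1)$ times $\partial$, with $\Lr$ acting on $t$ equal to $1$. Hence $L(\kappa-\kappar)=-T(c-v^1)-1+\text{error}$. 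By \eqref{eq: bound for T(wb)} we have $T(v^1+c)=-\frac{2}{\gamma+1}+O(\Mr\varepsilon t)$. What is needed, however, is $T(c-v^1)$, which is expressed through $T(\wb)$ and $T(w)$ using $(\mathbf{B}_\infty)$ ($|Tw|\le M\varepsilon t$) and \eqref{eq: bound for T(wb)}.

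The main obstacle is to check that the constants cancel exactly: $T(c-v^1)$ must equal $-1+O(\Mr\varepsilon t)$, consistent with the background profile $\kappar=t$. This follows from the Riemann invariant relations, or equivalently from $L\kappar=1$ being the background identity. The remaining terms are bounded as follows:
\begin{itemize}
\item $\hat T^2T(\psi_2)=O(\Mr\varepsilon t\cdot\varepsilon t)$;
\item $(\hat T^1+1)T(v^1)=O(\varepsilon^2t^2)$, using $|Tv^1|\lesssim 1$.
\end{itemize}
Hence $|L(\kappa-\kappar)|\le\Mr\varepsilon t$. Integrating from $\delta$ with $|\kappa-\delta|\le C\varepsilon\delta^2$ on $\Sigma_\delta$ gives $|\kappa-\kappar|\le C\varepsilon\delta^2+\Mr\varepsilon(t^2-\delta^2)\le P(M)\varepsilon t^2$.

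Each bound improves the continuity assumption by a factor of $1/2$ once $\varepsilon$ is small relative to $M$. This closes the argument.
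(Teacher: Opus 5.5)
Your Step 3 does not work as written, because of a sign error in the expansion of $-\hat T^jT(\psi_j)$. Since $\psi_1=-v^1$,
\[
-\hat T^1T(\psi_1)=\hat T^1T(v^1)=-T(v^1)+(\hat T^1+1)T(v^1),
\]
so the correct identity is
\[
L\kappa=-T(v^1+c)+(\hat T^1+1)T(v^1)-\hat T^2T(\psi_2),
\]
not $-T(c-v^1)+\cdots$.

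With your combination, the ``exact cancellation'' you assert is false. From $c-v^1=\frac{\gamma-3}{2}\wb+\frac{\gamma+1}{2}w$, \eqref{eq: bound for T(wb)} and $|T(w)|\le M\varepsilon t$, one gets $T(c-v^1)=\frac{3-\gamma}{\gamma+1}+O(\Mr\varepsilon t)$. Then $L(\kappa-\kappar)$ would contain the $O(1)$ source $-\frac{4}{\gamma+1}$, and integrating would only give $|\kappa-\kappar|\lesssim t$. Appealing to ``$L\kappar=1$ as the background identity'' verifies nothing, since $L(t)=1$ is just the normalization of $L$. The real check is the Riemann-invariant computation you skipped, and for $c-v^1$ it fails.

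With the sign corrected, what you need is $T(v^1+c)=-1+O(\Mr\varepsilon t)$. This follows from $v^1+c=\frac{\gamma+1}{2}\wb+\frac{\gamma-3}{2}w$, $T(\wb)=-\frac{2}{\gamma+1}+O(\Mr\varepsilon t)$ and $|T(w)|\le M\varepsilon t$. The constant $\frac{2}{\gamma+1}$ in the second bound of \eqref{eq: bound for T(wb)} that you quoted is a misprint for $1$: in the background $T=-t\partial_1$ and $v^1+c=\frac{x_1}{t}+\mathrm{const}$. Then
\[
L(\kappa-\kappar)=-\bigl(T(v^1+c)+1\bigr)+(\hat T^1+1)T(v^1)-\hat T^2T(\psi_2)=O(\Mr\varepsilon t),
\]
and your integration from $\Sigma_\delta$ goes through. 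This is exactly the paper's argument.

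Steps 1 and 2 are fine, and Step 2 is a genuinely different route. The paper keeps the factor $\hat X^i$ in $L(\hat T^i)=\bigl(\hat T^j\hat X(\psi_j)+\hat X(c)\bigr)\hat X^i$. It uses $\hat X^1=\hat T^2=O(\Mr\varepsilon t)$ to get $|L(\hat T^1)|\le\Mr\varepsilon^2t$, then integrates using $|\hat T^1+1|\lesssim\varepsilon^2\delta^2$ on $\Sigma_\delta$. Your identity $\hat T^1+1=(\hat T^2)^2/(1-\hat T^1)$ is purely algebraic. It needs only that $\hat T^1$ stays away from $+1$, which $(\mathbf{B}_{\text{diffeomorphism}})$ provides. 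It also does not rely on the $\hat X^i$ factor, which is missing from the version of the equation listed in \eqref{eq: formulas to control the geometry}.

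Finally, the continuity argument you set up is unnecessary. The three bounds follow in sequence ($\hat T^2$, then $\hat T^1+1$, then $\kappa$) with no circular dependence.
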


\begin{proof}
In view of the transport equation \eqref{eq: formulas to control the geometry} for $\hat{T}^{i}$ and $(\mathbf{B}_{\infty})$, we have
\[
|L(\hat{T}^{i})|=|\big(\hat{T}^{j}\hat{X}(\psi_{j})+\hat{X}(c) \big)\hat{X}^{i}|\leq \Mr\varepsilon,
\]
Integrating from $\delta$ to $t$, we have $|\hat{T}^{1}+1|\leq \Mr\varepsilon t, |\hat{T}^{2}|\leq \Mr\varepsilon t$ in view of $(\mathbf{I}_{\infty})$. Then $|\hat{X}^{1}|=|\hat{T}^{2}|\leq \Mr\varepsilon t$ implies that 
\[
|L(\hat{T}^{1})|=|\big(\hat{T}^{j}\hat{X}(\psi_{j})+\hat{X}(c) \big)\hat{X}^{1}|\leq \Mr\varepsilon^{2}t,
\]
Integrating from $\delta$ to $t$, we have $|\hat{T}^{1}+1|\leq \Mr\varepsilon^{2} t^{2}$ in view of $(\mathbf{I}_{\infty})$. In view of the $(\mathbf{B}_{\infty})$, $|T(v^{1}+c)+1|\leq \Mr\varepsilon$. In view of the transport equation \eqref{eq: formulas to control the geometry} for $\kappa$, we have
\[
|L(\kappa-\kappar)|=|-\big(T(c+v^{1})+1\big)-(\hat{T}^{1}+1)T(\psi_{1})-\hat{T}^{2}T(\psi_{2})|\leq \Mr\varepsilon t.
\]
Integrating from $\delta$ to $t$, we have $|\kappa-\kappar|\leq \Mr\varepsilon t^{2}$ in view of $(\mathbf{I}_{\infty})$.
\end{proof}

\begin{proposition}
For any $(t, u)\in [\delta, t^{*}]\times [0, u^{*}]$, and $Z\in \{T, \hat{X}\}$, we have
\begin{equation}\label{eq: 1-th control of the frame}
\begin{cases}
|Z\big(\hat{T}^{1}+1\big)(t, u)|\leq \Mr\varepsilon^{2}t^{2},\\
|Z\big(\hat{T}^{2}\big)|(t, u)\leq \Mr\varepsilon t.\\
|Z\big(\kappa-\kappar\big)|(t, u)\leq \Mr\varepsilon t^{2}.
\end{cases}.
\end{equation}
\end{proposition}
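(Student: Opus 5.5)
We commute $Z\in\{T,\hat{X}\}$ with the three transport equations in \eqref{eq: formulas to control the geometry} for $\hat{T}^{1},\hat{T}^{2},\kappa$, and integrate along the integral curves of $L=\partial_t$ from $\Sigma_\delta^{u^*}$, exactly as in the proof of \eqref{eq: 0-th control of the frame}. The initial values of $Z(\hat{T}^1+1)$, $Z(\hat{T}^2)$, $Z(\kappa-\kappar)$ on $\Sigma_\delta^{u^*}$ are bounded by $(\mathbf{I}_\infty)$, namely by $C\varepsilon^2\delta^2$, $C\varepsilon\delta$ and $C\varepsilon\delta^2$, since $Z(\kappar)=Z(t)=0$. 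These are consistent with the claimed rates at $t=\delta$.

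The commutators are given by \eqref{eq: formulas to control the geometry}: $[L,\hat{X}]=-\chi\hat{X}$ and $[L,T]=-(\zeta+\eta)\hat{X}$. For $Z=T$ we write $L(Tf)=T(Lf)-(\zeta+\eta)\hat{X}(f)$; for $Z=\hat{X}$ we write $L(\hat{X}f)=\hat{X}(Lf)-\chi\hat{X}(f)$.

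\textbf{Commutator coefficients.} Using the expressions for $\chi,\zeta,\eta$ in terms of $\hat{X}(\psi),T(\psi),\hat{X}(c),\hat{X}(\kappa),\theta$, together with $(\mathbf{B}_\infty)$ and the zeroth-order bounds \eqref{eq: 0-th control of the frame}:
\begin{itemize}
\item $\chi=O(\Mr\varepsilon)$, provided $\theta=O(\Mr\varepsilon t)$;
\item $\zeta+\eta=O(\Mr\varepsilon t)$, since every term carries either a factor $\kappa\sim t$ or a factor $T(\psi)=O(\Mr\varepsilon t)$.
\end{itemize}
The term $\eta$ contains $c\hat{X}(\kappa)$, and $\theta$ involves $\hat{X}(\hat{X}^i)=\mp\hat{X}(\hat{T}^{j})$. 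So the commutator terms feed back the unknowns $\hat{X}(\kappa)$ and $\hat{X}(\hat{T}^i)$ linearly, with coefficients that are small and integrable in $t$. This means we treat all six quantities $Z(\hat{T}^1+1)$, $Z(\hat{T}^2)$, $Z(\kappa-\kappar)$ as one coupled linear ODE system along $L$ and close it with Gronwall.

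\textbf{Source terms.} Differentiating the right-hand sides gives the following.
\begin{itemize}
\item For $\hat{T}^{2}$: $Z\big((\hat{T}^j\hat{X}(\psi_j)+\hat{X}(c))\hat{X}^2\big)$ involves $Z\hat{X}(\psi)$, which is $O(\Mr\varepsilon)$ by $(\mathbf{B}_\infty)$. For $Z=T$ one uses $T\hat{X}=\hat{X}T-\kappa\theta\hat{X}$ and $|\hat{X}T\psi|\lesssim\Mr\varepsilon t$. It also involves $Z(\hat{T}^j)$ and $Z(\hat{X}^i)$, which are the unknowns. This yields $|L Z\hat{T}^2|\le\Mr\varepsilon+\Mr\varepsilon\,(\text{unknowns})$, and integration gives $\Mr\varepsilon t$.
\item For $\hat{T}^1$: the factor $\hat{X}^1=\hat{T}^2$ is $O(\Mr\varepsilon t)$ and $Z\hat{X}^1=Z\hat{T}^2=O(\Mr\varepsilon t)$ from the previous step. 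Hence $|LZ(\hat{T}^1+1)|\le\Mr\varepsilon^2 t$, which integrates to $\Mr\varepsilon^2t^2$.
\item For $\kappa$: $L Z(\kappa-\kappar)=-Z\big(T(c+v^1)+1\big)-Z\big((\hat{T}^1+1)T\psi_1\big)-Z\big(\hat{T}^2T\psi_2\big)+\text{commutator}$. Here $ZT\psi$ is $O(\Mr\varepsilon t)$ by the last line of $(\mathbf{B}_\infty)$; the first term uses \eqref{eq: bound for T(wb)} commuted once, i.e.\ $|ZT(\wb)|\lesssim\Mr\varepsilon t$. This gives $|LZ(\kappa-\kappar)|\le\Mr\varepsilon t$, which integrates to $\Mr\varepsilon t^2$.
\end{itemize}

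\textbf{Expected obstacle.} The main difficulty is the loss of structure in the commutator and source terms containing $\hat{X}(\kappa)$, through $\eta$, and $\hat{X}(\hat{T})$, through $\theta$. These appear multiplied by $T$-derivatives, and $T(\wb)\approx-\tfrac{2}{\gamma+1}$ is \emph{not} small. For instance $(\zeta+\eta)\hat{X}(\kappa)$ in $LT\kappa$ carries $c\hat{X}(\kappa)^2$, and $T(\hat{T}^1)$ picks up terms like $T(\hat{T}^j)\hat{X}(\psi_j)\hat{X}^1$.

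I would first close the $Z=\hat{X}$ bounds alone, since their commutator $\chi\hat{X}$ does not involve $T$. That gives $\hat{X}(\kappa)=O(\Mr\varepsilon t^2)$ and $\hat{X}\hat{T}^i$ small, hence $\theta=O(\Mr\varepsilon t)$. The $Z=T$ bounds then follow by direct integration, with Gronwall handling the residual linear terms, whose coefficients are $O(\Mr\varepsilon)$ and $t^*$ is bounded.
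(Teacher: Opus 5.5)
Your proposal is correct and follows essentially the paper's proof. In both, you commute $Z$ with the transport equations for $\hat{T}^i$ and $\kappa$ and integrate along $L$ from $\Sigma_\delta^{u^*}$ with Gronwall, closing the $Z=\hat{X}$ bounds before the $Z=T$ bounds. Within each $Z$, the order is: $\hat{T}^2$ at rate $\Mr\varepsilon t$, then the improved $\Mr\varepsilon^2t^2$ rate for $\hat{T}^1$ via the factor $\hat{X}^1=\hat{T}^2$, then $\kappa$.

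Two small corrections. First, the term $\hat{X}^1T(\psi_1)$ in $\zeta+\eta$ is $O(\Mr\varepsilon t)$ because $\hat{X}^1=\hat{T}^2=O(\Mr\varepsilon t)$. It is not because $T(\psi_1)$ is small: $T(\psi_1)$ contains $T(\wb)=O(1)$. Second, the $c\theta$ part of $\chi$ feeds $\hat{X}(\hat{T}^i)$ back quadratically, not linearly. The paper handles this exactly as your "provided $\theta=O(\Mr\varepsilon t)$" suggests, by a continuity argument using the $O(\varepsilon\delta)$ initial size.
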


\begin{proof}
\begin{itemize}
\item The case $Z=\hat{X}$. We commute $\hat{X}$ with the transport equation \eqref{eq: formulas to control the geometry} of $\hat{T}^{i}$, 
\[
\begin{split}
L(\hat{X}(\hat{T}^{i}))&=\hat{X}L(\hat{T}^{i})-\chi\cdot \hat{X}(\hat{T}^{i}),\\
&=\hat{X}\Big(\big(\hat{T}^{j}\hat{X}(\psi^{j})+\hat{X}(c)\big)\hat{X}^{i}\Big)-(-\hat{X}^{j}\hat{X}(\psi_{j})+c\hat{X}^{1}\hat{X}(\hat{X}^{2})-c\hat{X}^{2}\hat{X}(\hat{X}^{1}))\cdot\hat{X}(\hat{T}^{i}).
\end{split}
\]
In view of $(\mathbf{B}_{\infty})$ and \eqref{eq: 0-th control of the frame}, we have for $i=1, 2$,
\[
|L(\hat{X}(\hat{T}^{i}))|\leq \Mr\varepsilon \big(|\hat{X}(\hat{T}^{1})|+|\hat{X}(\hat{T}^{2})|\big)+C(|\hat{X}(\hat{T}^{1})|+|\hat{X}(\hat{T}^{2})|\big)^{2}+\Mr\varepsilon.
\]
Integrating from $\delta$ to $t$ with Gronwall type argument\footnote{The quadratic term $C(|\hat{X}(\hat{T}^{1})|+|\hat{X}(\hat{T}^{2})|\big)^{2}$ won't cause any difficulty since the initial sizes of $\hat{X}(\hat{T}^{i})$ are of $O(\varepsilon\delta)$ and $t\in [\delta, t^{*}]$.} , we have $|\hat{X}(\hat{T}^{i})|\leq P(M)\varepsilon t$ in view of $(I_{\infty})$. In view of $|\hat{X}^{1}|\leq P(M)\varepsilon t$,
\[
|L(\hat{X}(\hat{T}^{1}))|\leq P(M)\varepsilon^{2} t.
\]
Integrating from $\delta$ to $t$, we have $|\hat{X}(\hat{T}^{1})|\leq P(M)\varepsilon^{2} t^{2}$ in view of $(I_{\infty})$. 

We commute $\hat{X}$ with transport equation \eqref{eq: formulas to control the geometry} of $\kappa$,
\[
\begin{split}
L(\hat{X}(\kappa))&=\hat{X}L(\kappa)-\chi\cdot\hat{X}(\kappa)\\
&=\hat{X}\big(-T(c)-\hat{T}^{j}T(\psi_{j})\big)-(-\hat{X}^{j}\hat{X}(\psi_{j})+c\hat{X}^{1}\hat{X}(\hat{X}^{2})-c\hat{X}^{2}\hat{X}(\hat{X}^{1}))\cdot\hat{X}(\kappa).
\end{split}
\]
In view of \eqref{eq: formulas to control the geometry}, for $\psi\in \{\wb, w, \psi_{2}\}$,
\[
\hat{X}T(\psi)=T\hat{X}(\psi)+\kappa\theta\cdot \hat{X}(\psi)=T\hat{X}(\psi)+\kappa(-\hat{X}(\hat{X}^{2})\hat{X}^{1}+\hat{X}(\hat{X}^{1})\hat{X}^{2})\cdot \hat{X}(\psi),
\]
we have $|\hat{X}T(\psi)|\leq P(M)\varepsilon t$. In view of estimate of $\hat{X}(\hat{T}^{i})$ and $B_{\infty}$,
\[
|L(\hat{X}(\kappa))|\leq P(M)\varepsilon t+P(M)\varepsilon |\hat{X}(\kappa)|.
\]
Integrating from $\delta$ to $t$ with Gronwall type argument, we have $|\hat{X}(\kappa-\kappar)|\leq P(M)\varepsilon t^{2}$ in view of $(I_{\infty})$.

\item The case $Z=T$. We commute $T$ with the transport equation \eqref{eq: formulas to control the geometry} of $\hat{T}^{i}$, 
\[
\begin{split}
L(T(\hat{T}^{i}))&=TL(\hat{T}^{i})-(\zeta+\eta)\cdot\hat{X}(\hat{T}^{i})\\
&=T\Big(\big(\hat{T}^{j}\hat{X}(\psi^{j})+\hat{X}(c)\big)\hat{X}^{i}\Big)-\big(-\kappa \hat{T}^{j}\hat{X}(\psi_{j})-\hat{X}^{i}T(\psi_{i})-\kappa\hat{X}(c)+c\hat{X}(\kappa)\big)\hat{X}(\hat{T}^{i})
\end{split}
\]
We remark the term $\hat{X}^{i}T(\psi_{i})$ (which is absent in irrotational case) can be bounded by
\[
|\hat{X}^{1}T(\psi_{1})+\hat{X}^{2}T(\psi_{2})|\leq \Mr\varepsilon t.
\]
In view of $(\mathbf{B}_{\infty})$ and \eqref{eq: 0-th control of the frame}, we have for $i=1, 2$,
\[
|L(T(\hat{T}^{i}))|\leq \Mr\varepsilon \big(|T(\hat{T}^{1})|+|T(\hat{T}^{2})|\big)+\Mr\varepsilon t.
\]
Integrating from $\delta$ to $t$ with Gronwall type argument, we have $|T(\hat{T}^{i})|\leq P(M)\varepsilon t$ in view of $(\mathbf{I}_{\infty})$. In view of $|\hat{X}^{1}|\leq \Mr\varepsilon t$,
\[
|L(T(\hat{T}^{1}))|\leq \Mr\varepsilon^{2} t.
\]
Integrating from $\delta$ to $t$, we have $|T(\hat{T}^{1})|\leq \Mr\varepsilon^{2} t^{2}$ in view of $(\mathbf{I}_{\infty})$. 

We commute $T$ with transport equation \eqref{eq: formulas to control the geometry} of $\kappa$,
\[
\begin{split}
L(T(\kappa))&=TL(\kappa)-(\zeta+\eta)\cdot T(\kappa)\\
&=T\big(-T(c)-\hat{T}^{i}T(\psi_{i})\big)-\big(-\kappa \hat{T}^{j}\hat{X}(\psi_{j})-\hat{X}^{i}T(\psi_{i})-\kappa\hat{X}(c)+c\hat{X}(\kappa)\big)\cdot \hat{X}(\kappa).
\end{split}
\]
In view of estimate of $\hat{X}(\hat{T}^{i}), T(\hat{T}^{i})$ and $(\mathbf{B}_{\infty})$,
\[
|L(T(\kappa))|\leq \Mr\varepsilon t.
\]
Integrating from $\delta$ to $t$, we have $|T(\kappa-\kappar)|\leq \Mr\varepsilon t^{2}$ in view of $(\mathbf{I}_{\infty})$.

\end{itemize}
\end{proof}

\begin{proposition}
For any $(t, u)\in [\delta, t^{*}]\times [0, u^{*}]$, $|\alpha|\leq \Ninf-1$, and $Z\in \{T, \hat{X}\}$, we have
\begin{equation}\label{eq: higher order control of the frame}
\begin{cases}
|Z^{\alpha}\big(\hat{T}^{1}+1\big)(t, u)|\leq \Mr\varepsilon^{2}t^{2},\\
|Z^{\alpha}\big(\hat{T}^{2}\big)|(t, u)\leq \Mr\varepsilon t.\\
|Z^{\alpha}\big(\kappa-\kappar\big)|(t, u)\leq \Mr\varepsilon t^{2}.
\end{cases}, |\alpha|\leq \Ninf-1.
\end{equation}
\end{proposition}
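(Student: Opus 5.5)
We argue by induction on $|\alpha|$, with the zeroth and first order cases given by \eqref{eq: 0-th control of the frame} and \eqref{eq: 1-th control of the frame}. Assume the three bounds hold for all $Z^{\beta}$ with $|\beta|\leq n-1$, where $2\leq n\leq \Ninf-1$, and fix $|\alpha|=n$. Commute $Z^{\alpha}$ with the transport equations for $\hat{T}^{i}$ and $\kappa$ in \eqref{eq: formulas to control the geometry}. Iterating the commutators $[L,\hat{X}]=-\chi\hat{X}$ and $[L,T]=-(\zeta+\eta)\hat{X}$ gives
\[
L Z^{\alpha}(\hat{T}^{i}) = Z^{\alpha}\Big(\big(\hat{T}^{j}\hat{X}(\psi_{j})+\hat{X}(c)\big)\hat{X}^{i}\Big)+\sum_{\beta_{1}+\beta_{2}=\alpha-1} Z^{\beta_{1}}(\chi \text{ or } \zeta+\eta)\cdot Z^{\beta_{2}}\hat{X}(\hat{T}^{i}),
\]
and the analogous identity holds for $\kappa$, with source $Z^{\alpha}\big(-T(c)-\hat{T}^{j}T(\psi_{j})\big)$. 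We then express $\chi,\zeta,\eta,\theta$ through their formulas in \eqref{eq: formulas to control the geometry}. In this form every term is a product of factors of three kinds: frame components $Z^{\beta}(\hat{T}^{i})$ and $Z^{\beta}(\kappa)$; derivatives $Z^{\beta}\hat{X}(\psi)$ and $Z^{\beta}T(\psi)$ of the Riemann invariants; and, through $\theta$, $Z^{\beta}\hat{X}(\hat{X}^{i})=\pm Z^{\beta}\hat{X}(\hat{T}^{j})$.

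Next we estimate the solution derivatives. Reordering the $Z$'s (each commutator $[T,\hat{X}]=-\kappa\theta\hat{X}$ produces only lower order terms with frame coefficients) puts them in the form controlled by $(\mathbf{B}_{\infty})$. This gives $|Z^{\beta}\hat{X}(\psi)|\leq \Mr\varepsilon$ and $|Z^{\beta}T(\psi)|\leq \Mr\varepsilon t$ for $|\beta|\leq \Ninf-2$, with the exception of $T(\wb)$, which is $O(1)$ and equal to $-\frac{2}{\gamma+1}$ up to $\Mr\varepsilon t$ by \eqref{eq: bound for T(wb)}. This restriction on $|\beta|$ is why the statement stops at $|\alpha|\leq\Ninf-1$.

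For the $\kappa$ equation, the top order part is $-Z^{\alpha}T(c+v^{1})-Z^{\alpha}\big((\hat{T}^{1}+1)T\psi_{1}\big)-Z^{\alpha}(\hat{T}^{2}T\psi_{2})$. The key point is that if $\alpha$ contains any factor, then $Z^{\alpha}T(c+v^{1})=Z^{\alpha}\big(T(c+v^{1})+1\big)$, and the constant disappears. Hence every contribution is $O(\Mr\varepsilon t)$. Terms where the $O(1)$ factor $T(\wb)$ multiplies $Z^{\beta}(\hat{T}^{1}+1)$ are handled by the induction hypothesis, which gives $O(\varepsilon^{2}t^{2})$ for them.

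Finally we close the top order. Set $\Phi_{n}=\sum_{|\alpha|=n}\big(|Z^{\alpha}(\hat{T}^{1})|+|Z^{\alpha}(\hat{T}^{2})|\big)$. The top order frame terms enter linearly with coefficients $O(\Mr\varepsilon)$, from $Z^{\alpha}(\hat{T}^{j})\hat{X}(\psi_{j})$ and from $\chi$. The only possible quadratic term is $c\,\hat{X}^{i}\hat{X}(\hat{X}^{j})$ inside $\chi$ acting on $\hat{X}(\hat{T})$; by induction its coefficients carry the factor $\hat{X}^{1}=O(\Mr\varepsilon t)$ or lower order quantities that are already small. So
\[
L\Phi_{n}\leq \Mr\varepsilon\,\Phi_{n}+C\Phi_{n}^{2}+\Mr\varepsilon.
\]
By $(\mathbf{I}_{\infty})$ the data satisfy $\Phi_{n}(\delta)\lesssim\varepsilon\delta$, and a continuity argument in $t$, as in the footnoted Gronwall argument above, yields $\Phi_{n}\leq\Mr\varepsilon t$.

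To improve the bound for $\hat{T}^{1}+1$, we use that every term on the right of the $\hat{T}^{1}$ equation carries a factor $Z^{\beta}(\hat{X}^{1})=Z^{\beta}(\hat{T}^{2})=O(\Mr\varepsilon t)$ multiplying an $O(\Mr\varepsilon)$ factor. This gives $|LZ^{\alpha}(\hat{T}^{1})|\leq \Mr\varepsilon^{2}t$, and integrating, with $\varepsilon^{2}\delta^{2}$ data from $(\mathbf{I}_{\infty})$, gives $\Mr\varepsilon^{2}t^{2}$. Plugging these bounds into the $\kappa$ equation gives $|LZ^{\alpha}(\kappa-\kappar)|\leq\Mr\varepsilon t$, where $Z^{\alpha}\kappar$ is handled since $\kappar=t$ satisfies $T\kappar=\hat{X}\kappar=0$ modulo frame terms; integration then gives $\Mr\varepsilon t^{2}$.

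I expect the main obstacle to be the bookkeeping in the commutator expansion. We must check that no term of the form $Z^{\alpha}$ of the geometry appears with an $O(1)$ coefficient, which would destroy the $t$ gains, in particular through $\kappa\theta$ and the $T(\wb)$ factor. We must also check that reordering the $Z$'s never requires more than $\Ninf-2$ derivatives of $\psi$ beyond what $(\mathbf{B}_{\infty})$ controls.
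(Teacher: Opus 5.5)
Your skeleton is the right one: commute $Z^{\alpha}$ through the transport equations, bound the sources by $(\mathbf{B}_{\infty})$ and the lower-order frame bounds, run Gronwall, upgrade $\hat{T}^{1}+1$ via $\hat{X}^{1}=\hat{T}^{2}$, then integrate the $\kappa$ equation. However, the one-parameter induction on $|\alpha|$ does not close as written, because you close $\hat{T}^{i}$ at order $n$ before touching $\kappa$ at order $n$.

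\textbf{The missing term.} When $Z^{\alpha}$ contains a $T$, the commutator $[L,T]=-(\zeta+\eta)\hat{X}$ brings in $\eta$, and $\eta$ contains $c\hat{X}(\kappa)$. Take $Z^{\alpha}=Z^{\alpha'}T$, with $T$ applied first and $|\alpha'|=n-1$. Then $LZ^{\alpha}(\hat{T}^{i})$ contains $-Z^{\alpha'}(\eta)\,\hat{X}(\hat{T}^{i})$. Its leading part $-c\,\hat{X}(\hat{T}^{i})\,Z^{\alpha'}\hat{X}(\kappa)$ involves an order-$n$ derivative of $\kappa$.
\begin{itemize}
\item This term is not covered by your induction hypothesis ($|\beta|\leq n-1$), and it is not part of $\Phi_{n}$.
\item Your list of sources of top-order frame terms ($Z^{\alpha}(\hat{T}^{j})\hat{X}(\psi_{j})$ and $\chi$) omits exactly this one. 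So $L\Phi_{n}\leq \Mr\varepsilon\Phi_{n}+C\Phi_{n}^{2}+\Mr\varepsilon$ is not justified.
\item You cannot simply do $\kappa$ first either. The $\kappa$ equation contains $Z^{\alpha}(\hat{T}^{1}+1)\,T(\psi_{1})$ with $T(\psi_{1})=O(1)$.
\item The $\kappa$ equation at order $n$ also contains top-order $\kappa$ linearly, through $\chi\,\hat{X}Z^{\alpha'}(\kappa)$ and $(\zeta+\eta)\,\hat{X}Z^{\alpha'}(\kappa)$. So ``plugging in'' the $\hat{T}$ bounds is not enough there; a Gronwall step is needed.
\end{itemize}

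\textbf{How the paper handles it.} The paper decouples the two with a two-parameter induction on $(n,m)$, where $m$ is the number of $T$'s in $Z^{\alpha}$. The order is $(n,0)\to(n,1)\to\cdots\to(n,n)\to(n+1,0)$.
\begin{itemize}
\item For $\hat{X}^{n}$ no $[L,T]$ commutator appears at all.
\item At stage $(n,m+1)$, the derivative $Z^{\alpha_{1}}\hat{X}(\kappa)$ produced by $\eta$ carries at most $m$ $T$'s, since one $T$ was consumed by the commutator. It is therefore already bounded at stage $(n,m)$.
\item One then runs Gronwall for $\hat{T}^{i}$ jointly over all $|\beta|=n$ with at most $m+1$ $T$'s, upgrades $\hat{T}^{1}+1$, and only then closes $\kappa$ at the same stage.
\end{itemize}

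\textbf{An alternative repair.} You can keep your induction on $|\alpha|$ and run a coupled continuity argument for $\Phi_{n}$ and $K_{n}:=\sum_{|\alpha|=n}|Z^{\alpha}(\kappa-\kappar)|$. Use the ansatz $\Phi_{n}\leq A_{1}\varepsilon t$ and $K_{n}\leq A_{2}\varepsilon t^{2}$. The coefficient of $K_{n}$ in the $\hat{T}$ equations is $O(\Mr\varepsilon t)$, since it comes multiplied by $\hat{X}(\hat{T}^{i})$. So $A_{1}$ can be fixed independently of $A_{2}$ once $\Mr A_{2}\varepsilon$ is small, and $A_{2}$ is then determined from the $\kappa$ equation. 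One of these two repairs has to be made explicit for the proof to close.
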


\begin{proof}
Let $n, 0\leq n\leq \Ninf-1$ denote the number $|\alpha|$. Let $m, 0\leq m\leq n$ denote the number $T$ appearing in $Z^{\alpha}$. We will prove by induction on $(n, m)$ with order
\[
(0, 0)\rightarrow (1, 0)\rightarrow (1, 1)\rightarrow \cdots \rightarrow (n, n)\rightarrow (n+1, 0)\rightarrow \cdots \rightarrow (n+1, n+1)\rightarrow \cdots \rightarrow (\Ninf-1, \Ninf-1).
\]
The base case $(n,m)=(0, 0), (1,0), (1, 1)$ follows from \eqref{eq: 0-th control of the frame} and \eqref{eq: 1-th control of the frame}. 
\begin{itemize}
\item From the case $(n, n)$ to the case $(n+1, 0)$ with $n\geq 1$, we rewrite the transport equation \eqref{eq: formulas to control the geometry} for $\hat{T}^{i}$ and $\kappa$ as follows,
\[
\begin{cases}
L(\hat{T}^{i})=\hat{X}(v^{1}+c)\hat{X}^{i}+[(\hat{T}^{1}+1)\hat{X}(\psi_{1})+\hat{T}^{2}\hat{X}(\psi_{2})]\hat{X}^{i},\\
L(\kappa)=-T(v^{1}+c)-[(\hat{T}^{1}+1)T(\psi_{1})+\hat{T}^{2}T(\psi_{2})].
\end{cases}
\]
and we commute $\hat{X}^{n}$ with them,
\[
\begin{cases}
L(\hat{X}^{n+1}(\hat{T}^{i}))=\hat{X}^{n}L(T^{i})+[L, \hat{X}^{n+1}](\hat{T}^{i}),\\
L(\hat{X}^{n+1}(\kappa))=\hat{X}^{n}L(\kappa)+[L, \hat{X}^{n+1}](\kappa).
\end{cases}
\]
In view of \eqref{eq: formulas to control the geometry}, we have
\[
\begin{split}
[L,\hat{X}^{n+1}](\hat{T}^{i})&=\sum_{j+k=n}\hat{X}^{k}[L, \hat{X}]\hat{X}^{j}(\hat{T}^{i})=\sum_{j+k=n}\hat{X}^{k}(-\chi)\hat{X}^{j+1}(\hat{T}^{i})\\
&=\sum_{j+k=n}\hat{X}^{k}\big(\hat{X}^{l}\hat{X}(\psi_{l})+c\hat{X}^{1}\hat{X}(\hat{X}^{2})-c\hat{X}^{2}\hat{X}(\hat{X}^{1})\big)\hat{X}^{j+1}(\hat{T}^{i})\\
\end{split}
\]
By induction hypothesis and $(\mathbf{B}_{\infty})$, we have
\[
|L(\hat{X}^{n+1}(\hat{T}^{i}))|\leq \Mr\varepsilon\big(|\hat{X}^{n+1}(\hat{T}^{1})|+|\hat{X}^{n+1}(\hat{T}^{2})|\big)+\Mr\varepsilon.
\]
Integrating from $\delta$ to $t$ with Gronwall type argument, we have $|\hat{X}^{n+1}(\hat{T}^{i})|\leq \Mr\varepsilon t$ in view of $(\mathbf{I}_{\infty})$. In view of $|\hat{X}^{1}|\leq\Mr\varepsilon$, we have
\[
|L(\hat{X}^{n+1}(\hat{T}^{1}))|\leq \Mr\varepsilon^{2}t.
\]
Integrating from $\delta$ to $t$, we have $|\hat{X}^{n+1}(\hat{T}^{1})|\leq \Mr\varepsilon^{2} t^{2}$ in view of $(\mathbf{I}_{\infty})$.

In view of \eqref{eq: formulas to control the geometry}, we also have
\[
\begin{split}
[L,\hat{X}^{n+1}](\kappa)&=\sum_{j+k=n}\hat{X}^{k}[L, \hat{X}]\hat{X}^{j}(\kappa)=\sum_{j+k=n}\hat{X}^{k}(-\chi)\hat{X}^{j+1}(\kappa)\\
&=\sum_{j+k=n}\hat{X}^{k}(\hat{X}^{l}\hat{X}(\psi_{l})+c\hat{X}^{1}\hat{X}(\hat{X}^{2})-c\hat{X}^{2}\hat{X}(\hat{X}^{1}))\hat{X}^{j+1}(\kappa)\\
\end{split}
\]
by induction hypothesis we have
\[
|[L, \hat{X}^{n+1}](\kappa)|\leq \Mr\varepsilon |\hat{X}^{n+1}(\kappa)|+\Mr\varepsilon^{2} t.
\]
In view of \eqref{eq: formulas to control the geometry}, for $\psi\in \{\wb, w, \psi_{2}\}$,
\[
\begin{split}
\hat{X}^{n}T(\psi)&=T\hat{X}^{n}(\psi)+\sum_{k_{1}+k_{2}=n-1}\hat{X}^{k_{1}}[\hat{X}, T]\hat{X}^{k_2}(\psi)\\
&=T\hat{X}^{n}(\psi)+\sum_{k_{1}+k_{2}=n-1}\hat{X}^{k_{1}}\Big(\kappa\big(-\hat{X}(\hat{X}^{2})\hat{X}^{1}+\hat{X}(\hat{X}^{1})\hat{X}^{2}\big)\Big)\hat{X}^{k_2}(\psi)\\
\end{split}
\]
By induction  hypothesis we have $|\hat{X}^{n}T(\psi)|\leq \Mr\varepsilon t$. Therefore, we have
\[
L(\hat{X}^{n+1}(\kappa))\leq \Mr\varepsilon |\hat{X}^{n+1}(\kappa)|+\Mr\varepsilon t.
\]
Integrating from $\delta$ to $t$ with Gronwall type argument, we have $|\hat{X}^{n+1}(\kappa-\kappar)|\leq \Mr\varepsilon t^{2}$ in view of $(\mathbf{I}_{\infty})$.

\item From the case $(n, m)$ to the case $(n, m+1)$ with $m\leq n-1$, we rewrite the transport equation \eqref{eq: formulas to control the geometry} for $\hat{T}^{i}$ and $\kappa$ as follows,
\[
\begin{cases}
L(\hat{T}^{i})=\hat{X}(v^{1}+c)\hat{X}^{i}+[(\hat{T}^{1}+1)\hat{X}(\psi_{1})+\hat{T}^{2}\hat{X}(\psi_{2})]\hat{X}^{i},\\
L(\kappa)=-T(v^{1}+c)-[(\hat{T}^{1}+1)T(\psi_{1})+\hat{T}^{2}T(\psi_{2})].
\end{cases}
\]
and we commute $Z^{\alpha}, |\alpha|=n$ where the number of $T$ appearing in $Z^{\alpha}$ is $m+1$,
\[
\begin{cases}
LZ^{\alpha}(\hat{T}^{i})=Z^{\alpha}L(\hat{T}^{i})+[L, Z^{\alpha}](\hat{T}^{i}),\\
LZ^{\alpha}(\kappa)=Z^{\alpha}L(\kappa)+[L, Z^{\alpha}](\kappa).
\end{cases}
\] 
In view of \eqref{eq: formulas to control the geometry}, we have
\[
\begin{split}
[L, Z^{\alpha}](\hat{T}^{i})&=\sum_{\alpha_{1}+\alpha_{2}<\alpha}Z^{\alpha_{1}}[L, \hat{X}]Z^{\alpha_{2}}(\hat{T}^{i})+Z^{\alpha_{1}}[L, T]Z^{\alpha_{2}}(\hat{T}^{i}),\\
&=\sum_{\alpha_{1}+\alpha_{2}<\alpha}Z^{\alpha_{1}}(-\chi)\hat{X}Z^{\alpha_{2}}(\hat{T}^{i})+\sum_{\alpha_{1}+\alpha_{2}<\alpha}Z^{\alpha_{1}}(-\zeta-\eta)\hat{X}Z^{\alpha_{2}}(\hat{T}^{i}),\\
&=\sum_{\alpha_{1}+\alpha_{2}<\alpha}Z^{\alpha_{1}}(\hat{X}^{l}\hat{X}(\psi_{l})+\hat{X}^{1}\hat{X}(\hat{X}^{2})-\hat{X}^{2}\hat{X}(\hat{X}^{1}))\hat{X}Z^{\alpha_{2}}(\hat{T}^{i})\\
&+\sum_{\alpha_{1}+\alpha_{2}<\alpha}\underbrace{Z^{\alpha_{1}}\big(-\kappa \hat{T}^{j}\hat{X}(\psi_{j})-\hat{X}^{i}T(\psi_{i})-\kappa\hat{X}(c)+c\hat{X}(\kappa)\big)\hat{X}Z^{\alpha_{2}}(\hat{T}^{i})}_{I_{1}}
\end{split}
\]
Note that the numbers of $T$ in $Z^{\alpha_{1}}(\kappa)$ and $Z^{\alpha_{1}}\hat{X}(\kappa)$ from terms $I_{1}$ are no more than $m$, and we can use induction argument to bound them. Thus, 
\[
L(Z^{\alpha}(\hat{T}^{i}))\leq P(M)\varepsilon\sum_{|\beta|=n, \text{the number of $T$ in $Z^{\beta}$ is no more than $m+1$}}(|Z^{\beta}(\hat{T}^{1}+1)|+|Z^{\beta}(\hat{T}^{2})|)+\Mr\varepsilon.
\]
Integrating from $\delta$ to $t$ with Gronwall type argument, we have $|Z^{\alpha}(\hat{T}^{i})|\leq \Mr\varepsilon t$ in view of $(\mathbf{I}_{\infty})$. In view of $|\hat{X}^{1}|\leq \Mr\varepsilon$, we have
\[
|L(Z^{\alpha}(\hat{T}^{1}))|\leq \Mr\varepsilon^{2}t.
\]
Integrating from $\delta$ to $t$, we have $|Z^{\alpha}(\hat{T}^{1})|\leq \Mr\varepsilon^{2} t^{2}$ in view of $(\mathbf{I}_{\infty})$.

In view of \eqref{eq: formulas to control the geometry}, we also have
\[
\begin{split}
[L, Z^{\alpha}](\kappa)&=\sum_{\alpha_{1}+\alpha_{2}<\alpha}Z^{\alpha_{1}}[L, \hat{X}]Z^{\alpha_{2}}(\kappa)+Z^{\alpha_{1}}[L, T]Z^{\alpha_{2}}(\kappa),\\
&=\sum_{\alpha_{1}+\alpha_{2}<\alpha}Z^{\alpha_{1}}(-\chi)\hat{X}Z^{\alpha_{2}}(\kappa)+\sum_{\alpha_{1}+\alpha_{2}<\alpha}Z^{\alpha_{1}}(-\zeta-\eta)\hat{X}Z^{\alpha_{2}}(\kappa),\\
&=\sum_{\alpha_{1}+\alpha_{2}<\alpha}Z^{\alpha_{1}}(\hat{X}^{l}\hat{X}(\psi_{l})+\hat{X}^{1}\hat{X}(\hat{X}^{2})-\hat{X}^{2}\hat{X}(\hat{X}^{1}))\hat{X}Z^{\alpha_{2}}(\kappa)\\
&+\sum_{\alpha_{1}+\alpha_{2}<\alpha}Z^{\alpha_{1}}\big(-\kappa \hat{T}^{j}\hat{X}(\psi_{j})-\hat{X}^{i}T(\psi_{i})-\kappa\hat{X}(c)+c\hat{X}(\kappa)\big)\hat{X}Z^{\alpha_{2}}(\kappa)
\end{split}
\]
by induction hypothesis we have
\[
|[L, Z^{\alpha}](\kappa)|\leq \Mr\varepsilon \sum_{|\beta|=n, \text{the number of $T$ in $Z^{\beta}$ is no more than $m+1$}}|Z^{\beta}(\kappa)|+\Mr\varepsilon^{2} t.
\]
In view of \eqref{eq: formulas to control the geometry}, for $\psi\in \{\wb, w, \psi_{2}\}$,
\[
\begin{split}
Z^{\alpha}T(\psi)&=TZ^{\alpha}(\psi)+\sum_{\alpha_{1}+\alpha_{2}<\alpha}Z^{\alpha_{1}}[\hat{X}, T]Z^{\alpha_2}(\psi)\\
&=TZ^{\alpha}(\psi)+\sum_{\alpha_{1}+\alpha_{2}<\alpha}Z^{\alpha_{1}}\Big(\kappa\big(-\hat{X}(\hat{X}^{2})\hat{X}^{1}+\hat{X}(\hat{X}^{1})\hat{X}^{2}\big)\Big)Z^{\alpha_2}(\psi)\\
\end{split}
\]
By induction  hypothesis we have $|Z^{\alpha}T(\psi)|\leq P(M)\varepsilon t$. Therefore, we have
\[
L(Z^{\alpha}(\kappa))\leq \Mr\varepsilon \sum_{|\beta|=n, \text{the number of $T$ in $Z^{\beta}$ is no more than $m+1$}}|Z^{\beta}(\kappa)|+\Mr\varepsilon t.
\]
Integrating from $\delta$ to $t$ with Gronwall type argument, we have $|Z^{\alpha}(\kappa-\kappar)|\leq \Mr\varepsilon t^{2}$ in view of $(\mathbf{I}_{\infty})$. 
\end{itemize}
which closes the induction argument.
\end{proof}

\begin{corollary}
For $|\beta|\leq \Ninf-2$ and $Z\in \{T, \hat{X}\}$, 
\begin{equation}
\begin{cases}
|Z^{\beta}\big(\slashed{g}-1\big)|(t, u)\leq \Mr\varepsilon t,\\
|Z^{\beta}|\big(\Xi\big)(t, u)\leq \Mr\varepsilon t^{2}.
\end{cases}
\end{equation}
\end{corollary}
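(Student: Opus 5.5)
The plan is to integrate the transport equations for $\slashed{g}$ and $\Xi$ recorded in \eqref{eq: formulas to control the geometry} along $L$ from $\Sigma_{\delta}^{u}$. We then commute with $Z^{\beta}$ exactly as in the proof of \eqref{eq: higher order control of the frame}, using that proposition for all frame quantities. Note that the equation listed as $L(\chi)=2\chi\cdot\slashed{g}$ should be read as $L(\slashed{g})=2\chi\,\slashed{g}$, as stated earlier in the text. So we first control $\chi$ and $\zeta+\eta$ together with their $Z^{\beta}$ derivatives for $|\beta|\leq \Ninf-2$.

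\textbf{Bounds on the source coefficients.} Write $\chi=-\hat{X}^{i}\hat{X}(\psi_{i})-c\theta$ with $\theta=-\hat{X}(\hat{X}^{2})\hat{X}^{1}+\hat{X}(\hat{X}^{1})\hat{X}^{2}$. Since $\hat{X}^{1}=\hat{T}^{2}$ and $\hat{X}^{2}=-\hat{T}^{1}$, the bound \eqref{eq: higher order control of the frame} gives $|Z^{\beta}\theta|\leq\Mr\varepsilon t$. Together with $(\mathbf{B}_{\infty})$ this yields $|Z^{\beta}\chi|\leq\Mr\varepsilon$. Here one derivative is lost relative to $\Ninf-1$, which is why the corollary stops at $\Ninf-2$.

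For $\zeta+\eta$ we use the explicit formulas. The terms $\kappa\hat{T}^{j}\hat{X}(\psi_{j})$, $\kappa\hat{X}(c)$ and $c\hat{X}(\kappa)$ are $O(\Mr\varepsilon t)$, since $\kappa\approx t$ and $\hat{X}(\kappa)=\hat{X}(\kappa-\kappar)$ with $\kappar=t$ constant on $S_{t,u}$. The term $\hat{X}^{i}T(\psi_{i})$ is split as $\hat{X}^{1}T(\psi_{1})+\hat{X}^{2}T(\psi_{2})$. Its size is $O(\Mr\varepsilon t)$ because $|\hat{X}^{1}|\lesssim\varepsilon t$ and $|T(\psi_{2})|\leq M\varepsilon t$.

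Commuting $Z^{\beta}$ through these products uses the commutator formulas $[T,\hat{X}]=-\kappa\theta\hat{X}$ and $[L,\hat{X}]$, $[L,T]$. It also uses the $L^{\infty}$ bounds on $TZ^{\alpha}\psi$ from $(\mathbf{B}_{\infty})$, exactly as in the previous proposition. The result is $|Z^{\beta}(\zeta+\eta)|\leq\Mr\varepsilon t$.

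\textbf{Estimate for $\slashed{g}$.} From $L\log\slashed{g}=2\chi$ and the initial bound $|Z^{\beta}(\slashed{g}-1)|\lesssim\varepsilon\delta$ from $(\mathbf{I}_{\infty})$ and the Proposition on the initial geometry (\ref{appendix: initial geometry}), integration from $\delta$ to $t$ gives $|\slashed{g}-1|\leq\Mr\varepsilon t$.

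For derivatives, commute $Z^{\beta}$ with $L$. This gives $L Z^{\beta}\slashed{g}=Z^{\beta}(2\chi\slashed{g})+[L,Z^{\beta}]\slashed{g}$. The commutator produces terms of the form $Z^{\beta_1}(\chi\text{ or }\zeta+\eta)\,\hat{X}Z^{\beta_2}\slashed{g}$ with $|\beta_{2}|<|\beta|$. We then run the same $(n,m)$ induction on the number of $T$'s, closing with a Gronwall argument since the coefficients are $O(\Mr\varepsilon)$.

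\textbf{Estimate for $\Xi$.} Use $L\Xi=\slashed{g}^{-1/2}(\zeta+\eta)$ with $\Xi|_{\Sigma_\delta}=0$, since $T=\partial_u$ on $\Sigma_{\delta}$. Then $|\Xi|\leq\int_\delta^t\Mr\varepsilon t'\,dt'\leq\Mr\varepsilon t^{2}$, and the derivatives are handled by the same commutation and induction.

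\textbf{Main obstacle.} The delicate point is getting the factor $t$ in $Z^{\beta}(\zeta+\eta)$ rather than just $\varepsilon$. That factor is what makes $\Xi$ of size $\varepsilon t^{2}$. The commuted terms $Z^{\beta}(\hat{X}^{i}T\psi_{i})$ are the ones to watch. The component $\hat{X}^{2}Z^{\beta}T(\psi_{2})$ needs the $t$-weighted bound $|TZ^{\alpha}\psi|\leq M\varepsilon t$ from $(\mathbf{B}_{\infty})$, after moving $T$ to the front via $[T,\hat{X}]$. The component $Z^{\beta}(\hat{X}^{1})\,T(\psi_{1})$ is fine because $T\psi_1=O(1)$ while $Z^{\beta}\hat{X}^{1}=O(\varepsilon t)$.
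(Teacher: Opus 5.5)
Your proposal is correct and follows the paper's proof. You bound $Z^{\beta}\chi$ by $\Mr\varepsilon$ and $Z^{\beta}(\zeta+\eta)$ by $\Mr\varepsilon t$ for $|\beta|\leq \Ninf-2$, commute $Z^{\beta}$ with $L\slashed{g}=2\chi\slashed{g}$ and $L\Xi=\slashed{g}^{-1/2}(\zeta+\eta)$, and integrate from $\Sigma_{\delta}$ using $(\mathbf{I}_{\infty})$ and $\Xi|_{\Sigma_{\delta}}=0$. The paper reads those two coefficient bounds off the proof of the preceding proposition, whereas you derive them explicitly and correctly single out $\hat{X}^{2}T(\psi_{2})$ as the term needing the $t$-weighted bound. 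The paper also closes with a single Gronwall on $\sum_{|\gamma|\leq|\beta|}|Z^{\gamma}(\cdot)|$, since the commuted equations are linear in these quantities with $O(\Mr\varepsilon)$ coefficients, so your $(n,m)$ induction on the number of $T$'s is unnecessary, though harmless.
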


\begin{proof}
From the proof of the previous proposition \eqref{eq: higher order control of the frame}, we have obtained that for $|\beta|\leq \Ninf-2$ and $Z\in \{T, \hat{X}\}$,
\[
\begin{cases}
|Z^{\beta}\big(\chi\big)|(t, u)\leq \Mr\varepsilon,\\
|Z^{\beta}\big(\zeta\big)|(t,u)+|Z^{\alpha}\big(\eta\big)|(t,u)\leq \Mr\varepsilon t.
\end{cases}
\]
Commuting $Z^{\beta}$ with the transport equation \eqref{eq: formulas to control the geometry} of $\slashed{g}$, we have
\[
\begin{split}
L(Z^{\beta}(\slashed{g}))&=Z^{\beta}(L(\slashed{g}))+[L, Z^{\beta}](\slashed{g})\\
&=Z^{\beta}(2\chi\cdot \slashed{g})+\sum_{\beta_{1}+\beta_{2}<\beta}Z^{\beta_{1}}[L, Z]Z^{\beta_{2}}(\slashed{g})\\
&=Z^{\beta}(2\chi\cdot \slashed{g})+\sum_{\beta_{1}+\beta_{2}<\beta}Z^{\beta_{1}}(-\chi)\hat{X}Z^{\beta_{2}}(\slashed{g})+\sum_{\beta_{1}+\alpha_{2}<\beta}Z^{\beta_{1}}(-\zeta-\eta)\hat{X}Z^{\beta_{2}}(\slashed{g})
\end{split}
\]
Therefore, we have
\[
|LZ^{\beta}(\slashed{g})|\leq \Mr\varepsilon \sum_{|\gamma|\leq |\beta|}|Z^{\gamma}(\slashed{g})|.
\]
Integrating from $\delta$ to $t$ with Gronwall type argument, we have $|Z^{\alpha}(\slashed{g}-1)|\leq \Mr\varepsilon t$ in view of $(\mathbf{I}_{\infty})$. 

Commuting $Z^{\beta}$ with the transport equation \eqref{eq: formulas to control the geometry} of $\Xi$, we have
\[
\begin{split}
L(Z^{\beta}(\Xi))&=Z^{\beta}\big(L(\Xi)\big)+[L, Z^{\beta}](\Xi)\\
&=Z^{\beta}\big(\frac{1}{\sqrt{\slashed{g}}}(\zeta+\eta)\big)+\sum_{\beta_{1}+\beta_{2}<\beta}Z^{\beta_{1}}[L, Z]Z^{\beta_{2}}(\Xi)\\
&=Z^{\beta}\big(\frac{1}{\sqrt{\slashed{g}}}(\zeta+\eta)\big)+\sum_{\beta_{1}+\beta_{2}<\beta}Z^{\beta_{1}}(-\chi)\hat{X}Z^{\beta_{2}}(\Xi)+\sum_{\beta_{1}+\alpha_{2}<\beta}Z^{\beta_{1}}(-\zeta-\eta)\hat{X}Z^{\beta_{2}}(\Xi)
\end{split}
\]
Therefore, we have
\[
|LZ^{\beta}(\Xi)|\leq \Mr\varepsilon \sum_{|\gamma|\leq |\beta|}|Z^{\gamma}(\Xi)|+\Mr\varepsilon t.
\]
Integrating from $\delta$ to $t$ with Gronwall type argument, we have $|Z^{\alpha}(\Xi)|\leq \Mr\varepsilon t^{2}$ in view of $\Xi(\delta, u, \vartheta)\equiv 0$ on $\Sigma_{\delta}^{u^{*}}$. 
\end{proof}

 In summary, we have
\begin{proposition}
For $|\alpha|\leq \Ninf-1, |\beta|\leq \Ninf-2$, and $Z\in \{T, \hat{X}\}$, we have
\begin{equation}\label{eq: L infty control of the geometry}
\begin{cases}
|Z^{\alpha}\big(\hat{T}^{1}+1\big)(t, u)|\leq \Mr\varepsilon^{2}t^{2},\\
|Z^{\alpha}\big(\hat{T}^{2}\big)|(t, u)\leq \Mr\varepsilon t.\\
|Z^{\alpha}\big(\kappa-\kappar\big)|(t, u)\leq \Mr\varepsilon t^{2},\\
|Z^{\beta}\big(\chi\big)|(t, u)\leq \Mr\varepsilon,\\
|Z^{\beta}\big(\zeta\big)|(t,u)+|Z^{\alpha}\big(\eta\big)|(t,u)\leq \Mr\varepsilon t,\\
|Z^{\beta}\big(\slashed{g}-1\big)|(t, u)\leq \Mr\varepsilon t,\\
|Z^{\beta}|\big(\Xi\big)(t, u)\leq \Mr\varepsilon t^{2}.
\end{cases}
\end{equation}
\end{proposition}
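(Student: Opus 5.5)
This summary proposition collects three earlier results: the higher-order frame control \eqref{eq: higher order control of the frame}, the bounds on the connection coefficients obtained inside its proof, and the subsequent corollary for $\slashed{g}$ and $\Xi$. The plan is therefore to assemble these rather than prove anything new.

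The first three lines, for $Z^{\alpha}(\hat{T}^{1}+1)$, $Z^{\alpha}(\hat{T}^{2})$ and $Z^{\alpha}(\kappa-\kappar)$ with $|\alpha|\leq \Ninf-1$, are exactly \eqref{eq: higher order control of the frame}. That estimate was obtained by induction on $(n,m)$, where $n=|\alpha|$ and $m$ is the number of $T$'s in $Z^{\alpha}$. At each step one commutes $Z^{\alpha}$ with the transport equations for $\hat{T}^{i}$ and $\kappa$ from \eqref{eq: formulas to control the geometry}, bounds the commutators $[L,\hat{X}]=-\chi\hat{X}$ and $[L,T]=-(\zeta+\eta)\hat{X}$ by the induction hypothesis and $(\mathbf{B}_{\infty})$, and integrates in $t$ from $\delta$ with Gronwall's inequality, using the initial bounds $(\mathbf{I}_{\infty})$. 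I would simply cite this.

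For $\chi$, $\zeta$ and $\eta$, I would substitute the algebraic expressions from \eqref{eq: formulas to control the geometry}:
\begin{itemize}
\item $\chi=-\hat{X}^{i}\hat{X}(\psi_{i})-c\theta$, with $\theta$ written in terms of $\hat{X}(\hat{X}^{j})$ and $\hat{X}^{1}=\hat{T}^{2}$, $\hat{X}^{2}=-\hat{T}^{1}$;
\item $\zeta$ and $\eta$, written in terms of $\kappa\hat{X}(\psi)$, $\hat{X}^{i}T(\psi_{i})$, $\kappa\hat{X}(c)$ and $c\hat{X}(\kappa)$.
\end{itemize}
I then apply $Z^{\beta}$ with $|\beta|\leq\Ninf-2$ and use the Leibniz rule. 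Each factor is either a frame quantity, controlled by the first three lines with one extra derivative (hence the loss from $\Ninf-1$ to $\Ninf-2$), or a derivative of $\psi$, controlled by $(\mathbf{B}_{\infty})$. For $\chi$ this gives $\Mr\varepsilon$.

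For $\zeta$ and $\eta$, every term carries a factor of size $t$:
\begin{itemize}
\item $\kappa=\kappar+O(\varepsilon t^{2})$, so $\kappa\lesssim t$;
\item $T(\psi)$ is $O(\varepsilon t)$ by $(\mathbf{B}_{\infty})$;
\item $\hat{X}(\kappa)$ is $O(\varepsilon t^{2})$.
\end{itemize}
This gives $\Mr\varepsilon t$. The one point needing care is the term $\hat{X}^{i}T(\psi_{i})$ with $\psi_{1}=-v^{1}$. Here $T(\psi_{1})$ is not small, but $T(\psi_{1})$ is $O(1)\cdot t$, since $T(\wb)$ is close to $-\tfrac{2}{\gamma+1}$ and $\kappa\sim t$. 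After expressing $T(v^{1})$ through $T(\wb)$ and $T(w)$ and using \eqref{eq: bound for T(wb)} (in its $Z^{\beta}$-commuted form), this term is still bounded by $\Mr t\,|\hat{X}^{1}|\lesssim\Mr\varepsilon t^{2}$, so it is harmless.

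The last two lines, for $\slashed{g}-1$ and $\Xi$, are the preceding corollary. There one commutes $Z^{\beta}$ with $L\slashed{g}=2\chi\slashed{g}$ and $L\Xi=\slashed{g}^{-1/2}(\zeta+\eta)$, inserts the $\chi,\zeta,\eta$ bounds just obtained, and applies Gronwall starting from $(\mathbf{I}_{\infty})$ and $\Xi|_{\Sigma_{\delta}}=0$.

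The only real obstacle is bookkeeping. One must check that the top-order $T$-derivatives of $\psi$ produced by $Z^{\beta}\zeta$ and $Z^{\beta}\eta$ stay within the range $|\beta|+1\leq\Ninf-2$ covered by $(\mathbf{B}_{\infty})$; if necessary, the range of $\beta$ is shrunk by one index, which matches the stated loss from $\Ninf-1$ to $\Ninf-2$.
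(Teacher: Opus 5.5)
Your proposal is correct and follows the paper's route. The summary proposition just collects \eqref{eq: higher order control of the frame}, the bounds on $\chi,\zeta,\eta$ read off from the algebraic formulas in \eqref{eq: formulas to control the geometry}, and the corollary obtained by integrating $L(\slashed{g})=2\slashed{g}\chi$ and $L(\Xi)=\slashed{g}^{-1/2}(\zeta+\eta)$ from $\Sigma_{\delta}$.

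One small slip to correct: $T(\psi_{1})=-T(v^{1})$ is $O(1)$, not $O(t)$, because $T(\wb)\approx-\frac{2}{\gamma+1}$ already holds for $T=\kappa\hat{T}$ itself. Hence $\hat{X}^{1}T(\psi_{1})=\hat{T}^{2}T(\psi_{1})$ is only $O(\Mr\varepsilon t)$, not $O(\Mr\varepsilon t^{2})$; this is exactly the bound the paper records, and it still suffices for $\zeta$ and $\eta$. Also, no shrinking of the $\beta$-range is needed. $(\mathbf{B}_{\infty})$ already covers $TZ^{\alpha}\psi$ for $|\alpha|\leq\Ninf-2$, and the drop from $\Ninf-1$ to $\Ninf-2$ comes from $\hat{X}(\hat{T}^{i})$ in $\theta$ and $\hat{X}(\kappa)$ in $\eta$.
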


\subsection{Close of $(\mathbf{B}_{\text{diffeomorphism}})$}
In view of \eqref{eq: Jacobi of (t, u, vartheta) to (t, x_1, x_2)}, we have
\[
\begin{pmatrix}
\frac{\partial u}{\partial x_1}&\frac{\partial u}{\partial x_2}\\
\frac{\partial \vartheta}{\partial x_1}&\frac{\partial \vartheta}{\partial x_2}
\end{pmatrix}=\begin{pmatrix}
\frac{\hat{T}^{1}}{\kappa}&\frac{\hat{T}^{2}}{\kappa}\\
\frac{\hat{X}^{1}}{\sqrt{\slashed{g}}}+\frac{\Xi}{\kappa}\slashed{g}\hat{X}^{2}&\frac{\hat{X}^{2}}{\sqrt{\slashed{g}}}-\frac{\Xi}{\kappa}\slashed{g}\hat{X}^{1}
\end{pmatrix}
\]
In view of \eqref{eq: L infty control of the geometry}, we can conclude that
\[
\begin{cases}
|\hat{T}^{1}+1|\leq \Mr\varepsilon^{2}t^{2},\\
|\frac{\partial u}{\partial x_{1}}-\frac{1}{t}|\leq \Mr\varepsilon, |\frac{\partial u}{\partial x_{2}}|\leq \Mr\varepsilon,\\
|\frac{\partial \vartheta}{\partial x_{1}}|\leq \Mr\varepsilon t, |\frac{\partial \vartheta}{\partial x_{2}}-1|\leq \Mr\varepsilon t.
\end{cases}
\]
which closes the geometry assumption $(\mathbf{B}_{\text{diffeomorphism}})$ \eqref{assump: B_geoemtry}.

\subsection{Sobolev inequality and $L^{\infty}$ estimates on acoustical waves and vorticity}

Recall that the Jacobi matrix of the coordinates transformation $(t,u,\theta)\mapsto (t,x_{1},x_{2})$ is given by
\[
\begin{pmatrix}
\frac{\partial t}{\partial t}&\frac{\partial t}{\partial u}&\frac{\partial t}{\partial \vartheta}\\
\frac{\partial x_{1}}{\partial t}&\frac{\partial x_{1}}{\partial u}&\frac{\partial x_{1}}{\partial \vartheta}\\
\frac{\partial x_{2}}{\partial t}&\frac{\partial x_{2}}{\partial u}&\frac{\partial x_{2}}{\partial \vartheta}
\end{pmatrix}=
\begin{pmatrix}
1&0&0\\
L^{1}&\kappa\hat{T}^{1}+\Xi\sqrt{\slashed{g}}\hat{X}^{1}&\sqrt{\slashed{g}}\hat{X}^{1}\\
L^{2}&\kappa\hat{T}^{2}+\Xi\sqrt{\slashed{g}}\hat{X}^{2}&\sqrt{\slashed{g}}\hat{X}^{2}\\
\end{pmatrix}
\]
In view of \eqref{eq: L infty control of the geometry}, we can conclude that for $|\beta|\leq \Ninf-2$ and $Z\in \{T, \hat{X}\}$,
\begin{equation}\label{eq: estimate for Jacobi}
|Z^{\beta}(\frac{\partial x_{1}}{\partial u}+t)|+|Z^{\beta}(\frac{\partial x_{2}}{\partial u})|\leq \Mr\varepsilon t^{2}, |Z^{\beta}(\frac{\partial x_{1}}{\partial \vartheta})|+ |Z^{\beta}(\frac{\partial x_{2}}{\partial \vartheta}-1)|\leq \Mr\varepsilon t.
\end{equation}

In particular, we have
\begin{equation}\label{eq: control of coordinate transform}
\begin{cases}
|\frac{\partial x_{1}}{\partial u}+t|+|\frac{\partial x_{2}}{\partial u}|\leq \Mr\varepsilon t^{2}, |\frac{\partial x_{1}}{\partial \vartheta}|+ |\frac{\partial x_{2}}{\partial \vartheta}-1|\leq \Mr\varepsilon t,\\
\sum_{i=1}^{2}\Big(|\frac{\partial^{2}x^{i}}{\partial u^{2}}|+|\frac{\partial^{2} x^{i}}{\partial \vartheta\partial u}|\Big)\leq \Mr\varepsilon t^{2}, \sum_{i=1}^{2}|\frac{\partial^{2} x^{i}}{\partial \vartheta^{2}}|\leq \Mr\varepsilon t. 
\end{cases}
\end{equation}

Therefore, we have the following Sobolev inequality, whose proof is the same as \cite{Luo-YuRare1},

\begin{lemma}
Under the bootstrap assumption $(\mathbf{B}_{\infty})$, if $\Mr\varepsilon$ is sufficiently small, for all $t\in [\delta,t^{*}]$, for any smooth function $f$ defined on $\Sigma_{t}^{u^{*}}$, we have
\begin{equation}\label{eq: Sobolev inequality}
\|f\|_{L^{\infty}(\Sigma_{t}^{u^{*}})}\lesssim \sum_{k+l\leq 2}\|\Xr^{k}\Tr^{l}(f)\|_{L^{2}(\Sigma_{t}^{u^{*}})}.
\end{equation}
\end{lemma}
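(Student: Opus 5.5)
The plan is to reduce \eqref{eq: Sobolev inequality} to the standard two-dimensional Sobolev embedding $H^{2}\hookrightarrow L^{\infty}$ on a fixed rectangle, by passing to the acoustical coordinates $(u,\vartheta)\in[0,u^{*}]\times\mathbb{R}/2\pi\mathbb{Z}$ on $\Sigma_{t}^{u^{*}}$ and then rewriting the coordinate derivatives in terms of $\Tr,\Xr$. The difficulty is purely geometric. In Cartesian coordinates, $\Sigma_t^{u^*}$ is a thin strip of width comparable to $t u^{*}$, so one cannot simply apply the Euclidean Sobolev inequality with uniform constants. The weights are arranged, however, so that $\Tr=t\Trh=-t\partial_{1}$ rescales the thin direction back to unit size, and $\partial_u\approx\Tr$.

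Step 1 (flat Sobolev on the parameter domain). For a function $F(u,\vartheta)$ on $R=[0,u^{*}]\times\mathbb{T}$, the standard inequality gives
\[
\|F\|_{L^{\infty}(R)}\lesssim\sum_{a+b\le 2}\|\partial_u^{a}\partial_\vartheta^{b}F\|_{L^{2}(du\,d\vartheta)},
\]
with a constant depending only on $u^{*}$ (use an extension across $u=0,u^{*}$, or integrate the one-dimensional estimate in each variable twice). The measure $\sqrt{\slashed g}\,du\,d\vartheta$ used in $\int_{\Sigma_t^u}$ is comparable to $du\,d\vartheta$, since $|\slashed g-1|\le\Mr\varepsilon t$ by \eqref{eq: L infty control of the geometry}.

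Step 2 (change of frame). By the chain rule,
\[
\partial_u=\frac{\partial x_1}{\partial u}\partial_1+\frac{\partial x_2}{\partial u}\partial_2,\qquad \partial_\vartheta=\frac{\partial x_1}{\partial\vartheta}\partial_1+\frac{\partial x_2}{\partial\vartheta}\partial_2 .
\]
With $\partial_1=-t^{-1}\Tr$ and $\partial_2=\Xr$, this becomes
\[
\partial_u=-t^{-1}\frac{\partial x_1}{\partial u}\,\Tr+\frac{\partial x_2}{\partial u}\,\Xr,\qquad \partial_\vartheta=-t^{-1}\frac{\partial x_1}{\partial\vartheta}\,\Tr+\frac{\partial x_2}{\partial\vartheta}\,\Xr .
\]
By \eqref{eq: control of coordinate transform} all four coefficients are bounded. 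In particular $-t^{-1}\partial x_1/\partial u=1+O(\Mr\varepsilon t)$, $\partial x_2/\partial u=O(\Mr\varepsilon t^{2})$, $t^{-1}\partial x_1/\partial\vartheta=O(\Mr\varepsilon)$ and $\partial x_2/\partial\vartheta=1+O(\Mr\varepsilon t)$. Hence $|\partial_uF|+|\partial_\vartheta F|\lesssim|\Tr F|+|\Xr F|$.

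Step 3 (second derivatives). Applying the chain rule twice produces, besides the terms $\Xr^k\Tr^l F$ with $k+l=2$, terms in which a derivative falls on a coefficient. These are controlled by the second-order bounds in \eqref{eq: control of coordinate transform}. The one coefficient that needs care is $t^{-1}\partial_u\partial_{u}x_1$ and its relatives. Since $\partial^{2}x^i/\partial u^{2}$ and $\partial^{2}x^i/\partial u\partial\vartheta$ are $O(\Mr\varepsilon t^{2})$ and $\partial^{2}x^i/\partial\vartheta^{2}=O(\Mr\varepsilon t)$, every coefficient, including those multiplied by $t^{-1}$, is at most $O(\Mr\varepsilon)$. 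Because the factor $t$ is constant on $\Sigma_t$, no derivative ever hits it.

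This is the main point to check: the $t^{-1}$ coming from $\partial_1=-t^{-1}\Tr$ must always be compensated by the $t$ (respectively $t^{2}$) smallness of the $x_1$-derivatives in \eqref{eq: control of coordinate transform}. Once this is verified, every term is bounded by $|\Tr F|+|\Xr F|$ with an $O(1+\Mr\varepsilon)$ coefficient.

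Conclusion. Combining the three steps, and using that $\Tr$ and $\Xr$ commute ($[\Tr,\Xr]=0$), we get
\[
\|f\|_{L^\infty}\lesssim\sum_{k+l\le2}\|\Xr^k\Tr^l f\|_{L^2(\Sigma_t^{u^*})}
\]
for $\Mr\varepsilon$ small, uniformly in $t\in[\delta,t^*]$ and $\delta$.
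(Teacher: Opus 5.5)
Your proposal is correct and follows essentially the same route as the paper. The paper defers to Luo--Yu and states the inequality immediately after the first- and second-order Jacobian bounds \eqref{eq: control of coordinate transform}. Those bounds are used exactly as you use them: apply the flat Sobolev inequality in the $(u,\vartheta)$ coordinates, then convert $\partial_u,\partial_\vartheta$ into $\Tr,\Xr$ via $\partial_1=-t^{-1}\Tr$ and $\partial_2=\Xr$, with the factor $t^{-1}$ absorbed by the $O(t)$ and $O(t^{2})$ smallness of the $x_1$-derivatives.
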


In view of $(\mathbf{B}_{2})$, $L^{\infty}$ estimates of $\{\hat{T}^{1}, \hat{T}^{2}, \kappa\}$ \eqref{eq: L infty control of the geometry} and Sobolev inequality \eqref{eq: Sobolev inequality}. For all multi-index $|\alpha|\leq \Ninf$, for all $\psi\in \{\wb, w, \psi_{2}\}$, except for the case $\Zr^{\alpha}\psi=T(\wb)$ and $\Omega$, we have
\begin{equation}\label{eq: L infty bound for waves}
\|\Zr^{\alpha}(\psi)\|_{L^{\infty}(\Sigma_{t}^{u^{*}})}\lesssim \begin{cases}
\Mr\varepsilon, \ \ \text{if}\ \Zr^{\alpha}=\Xr^{\alpha},\\
\Mr\varepsilon t, \ \ \text{otherwise}.
\end{cases}, \|\Zr^{\alpha}(\Omega)\|_{L^{\infty}(\Sigma_{t}^{u^{*}})}\lesssim \Mr\varepsilon.
\end{equation}

\section{Energy estimate and the closing of $(\mathbf{B}_{2})$}\label{section5}

\subsection{A refined Gronwall type inequality} To handle the the potential dangerous "linear" terms (are of size $\Mr\varepsilon$) in bulk terms, we need a refined Gronwall type inequality which has been introduce in \cite{Luo-YuRare1} and we refer to section 5 of \cite{Luo-YuRare1} for a proof.
\begin{lemma}\label{lem: refined Gronwall}
Let $E(t,u)$ and $F(t, u)$ be two smooth non-negative function defined on $\mathcal{D}(\delta)(t^{*}, u^{*})$ such that 
\[
E(t, u')\leq E(t, u), 0\leq u'\leq u\leq u^{*},\ \ F(t', u)\leq F(t, u), \delta\leq t'\leq t\leq t^{*}.
\]
We assume that there exist positive constants $A, B, C$ such that for all $(t, u)\in [\delta, t^{*}]\times [0, u^{*}]$, we have the following inequality:
\[
E(t, u)+F(t, u)\leq At^{2}+B\int_{0}^{u}F(t, u')+C\int_{\delta}^{t}\frac{E(t', u)}{t'}dt'
\]
Then, if $e^{Bu^{*}}C\leq 1$, we have the following inequality for all $(t, u)\in [\delta, t^{*}]\times [0, u^{*}]$:
\[
E(t, u)+F(t, u)\leq 3Ae^{Bu}t^{2}.
\]
\end{lemma}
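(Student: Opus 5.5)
The plan is to decouple the two integral terms: first run a Gronwall argument in the $u$-variable at fixed $t$, then an ODE argument in $t$ with the weight $1/t'$. The smallness $e^{Bu^{*}}C\le 1$ is used only in the second step, to keep the resulting power of $t$ at least $t^{2}$ after integrating the weight $1/t'$.

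\emph{Step 1 (Gronwall in $u$).} Fix $t\in[\delta,t^{*}]$ and set
\[
G(t,u):=At^{2}+C\int_{\delta}^{t}\frac{E(t',u)}{t'}\,dt'.
\]
Since $E(t',\cdot)$ is nondecreasing in $u$, so is $G(t,\cdot)$. Dropping $E\ge 0$ on the left-hand side of the hypothesis gives
\[
F(t,u)\le G(t,u)+B\int_{0}^{u}F(t,u')\,du'.
\]
Put $Y(u)=\int_{0}^{u}F(t,u')\,du'$. Then $Y'-BY\le G(t,u)$. Integrating $(e^{-Bu}Y)'$ from $0$ and using the monotonicity of $G$ in $u$ yields
\[
BY(u)\le G(t,u)\,(e^{Bu}-1).
\]
Inserting this back into the full hypothesis gives
\[
E(t,u)+F(t,u)\le e^{Bu}\Big(At^{2}+C\int_{\delta}^{t}\frac{E(t',u)}{t'}\,dt'\Big).
\]

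\emph{Step 2 (ODE in $t$).} Now fix $u$ and set $\Phi(t)=\int_{\delta}^{t}E(t',u)\,t'^{-1}\,dt'$, so that $\Phi(\delta)=0$ and $t\Phi'(t)=E(t,u)$. Let $k:=e^{Bu}C$; the hypothesis gives $k\le e^{Bu^{*}}C\le 1$. Dropping $F\ge 0$ in the result of Step 1 gives
\[
\Phi'-\frac{k}{t}\Phi\le e^{Bu}At.
\]
Hence $(t^{-k}\Phi)'\le e^{Bu}At^{1-k}$. Integrating from $\delta$, and using $2-k\ge 1$, we get
\[
\Phi(t)\le e^{Bu}A\,\frac{t^{k}\big(t^{2-k}-\delta^{2-k}\big)}{2-k}\le e^{Bu}At^{2}.
\]
This is the key point: because $k\le 1<2$, the weight $t'^{-1}$ does not produce a $\log$ or a worse power. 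If $k$ were allowed to reach $2$, the argument would break down.

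\emph{Step 3 (conclusion).} Substituting the bound on $\Phi$ into the inequality of Step 1 gives
\[
E+F\le e^{Bu}At^{2}+k\,\Phi(t)\le e^{Bu}At^{2}+e^{Bu}At^{2}=2Ae^{Bu}t^{2}\le 3Ae^{Bu}t^{2}.
\]
The monotonicity of $F$ in $t$ is not actually needed. The only delicate points are:
\begin{itemize}
\item using the monotonicity of $E$ in $u$, so that $G$ can be pulled out of the $u$-Gronwall integral;
\item tracking that the constant in the $t$-step is exactly $e^{Bu}C$, so that the hypothesis $e^{Bu^{*}}C\le 1$ applies.
\end{itemize}
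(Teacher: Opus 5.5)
Your proof is correct and complete. In Step 1, $G(t,\cdot)$ is nondecreasing precisely because $E(t',\cdot)$ is. That is what lets you pull it out of the $u$-integral and obtain $E+F\le e^{Bu}G$. In Step 2, the integrating factor $t^{-k}$ with $k=e^{Bu}C\le e^{Bu^{*}}C\le 1$, together with $\Phi(\delta)=0$, gives the bound on $\Phi$ with no boundary term. Step 3 then follows directly.

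The paper does not prove this lemma itself; it refers to Section 5 of \cite{Luo-YuRare1}. So there is no in-text argument to match step by step.

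Your direct route is sharper than the stated lemma. If you keep the factor $1/(2-k)$ instead of bounding it by $1$, you get
\[
E(t,u)+F(t,u)\le \frac{2}{2-e^{Bu}C}\,Ae^{Bu}t^{2}.
\]
Hence the constant $3$ can be replaced by $2$ under the stated hypothesis. Moreover, the argument only needs $e^{Bu^{*}}C<2$. This is the natural threshold, since $\int_{\delta}^{t}\tau^{2}\,\frac{d\tau}{\tau}\le \frac{t^{2}}{2}$. You are also right that the monotonicity of $F$ in $t$ plays no role.

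An alternative is a continuity argument in $t$. Suppose $E+F\le 3Ae^{Bu}\tau^{2}$ on $[\delta,t_{0}]\times[0,u^{*}]$. Then the weighted integral is at most $\frac{3}{2}ACe^{Bu}t^{2}$. The $u$-Gronwall with the nondecreasing forcing $At^{2}\big(1+\frac{3}{2}Ce^{Bu}\big)$ then improves the bound to $\frac{5}{2}Ae^{Bu}t^{2}$. That version tolerates the slack constant $3$ and does not even use the monotonicity of $E$ in $u$. However, it needs a bootstrap step with a base case at $t=\delta$, whereas yours is a closed-form computation that gives the better constant.
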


\begin{remark}
In applications, it can happen that $C<1$ and $B$ is very large such that $e^{Bu^{*}}C>1$. We can divide $[0, u^{*}]$ into $\cup_{j=0}^{L}[u_{j-1}, u_{j}]$ with equal size such that
\[
L=\frac{2Bu^{*}}{ln(\frac{1}{C})}+1\implies e^{B\frac{u^{*}}{L}}C\leq \sqrt{C}<1.
\]
Then we can apply lemma \ref{lem: refined Gronwall} inductively on $[\delta, t^{*}]\times [u_{j-1}, u_{j}]$ with $1\leq j\leq L$ to obtain that
\[
E(t, u)+F(t, u)\leq \Big(3Ae^{Bu^{*}}\Big)^{\frac{2Bu^{*}}{ln(\frac{1}{C})}+1}t^{2}.
\]
\end{remark}

\subsection{The fundamental energy inequality}

In view of the \eqref{eq: L infty control of the geometry}, the bulk terms $Q_{i}, 1\leq i\leq 4$ in \eqref{vor: energy estimate for wave equation with multiplier hat(L)} together with the bulk terms $\underline{Q}_{i}, 1\leq i\leq 4$ in \eqref{vor: energy estimate for wave equation with multiplier underline(L)} can be bounded by
\[
\sum_{i=1}^{4}\Big(|Q_{i}|+|\underline{Q}_{i}|\Big)\lesssim \int_{0}^{u} \mathscr{F}_{w}(\psi)(t, u') du'+\int_{\delta}^{t}\mathscr{E}_{w}(t', u)dt'.
\]
The bulk terms $V_{1}$ in \eqref{vor: energy estimate for transport equation} can be bounded by
\[
|V_{1}|\lesssim \int_{0}^{u}\mathcal{F}_{t}(\Omega)(t,u^{'})du^{'}.
\]

To be summarized, we have
\begin{itemize}
\item The fundamental energy inequality for acoustical waves
\begin{equation}\label{eq: fundamental inequalities for acoustical wave}
\begin{cases}
\mathscr{E}_{w}(\psi)(t,u)+\mathscr{F}_{w}(\psi)(t,u)=\mathscr{E}_{w}(\psi)(t,u)+\mathscr{F}_{w}(\psi)(t,u)+\mathcal{N}_{w}(\psi)(t,u)+\underline{\mathcal{N}}_{w}(\psi)(t,u)+\text{\bf Err}_{w},\\
\mathcal{N}_{w}(\psi)(t,u)=-\int_{D(t,u)}\mu\varrho\cdot\hat{L}(\psi),\ \ \underline{\mathcal{N}}_{w}(\psi)=-\int_{D(t,u)}\mu\varrho\cdot\underline{L}(\psi),\\
|\text{\bf Err}_{w}|\lesssim \int_{0}^{u}\mathscr{F}_{w}(\psi)(t,u^{'})du^{'}+\int_{\delta}^{t}\mathscr{E}_{w}(\psi)(t^{'},u)dt^{'}.
\end{cases}
\end{equation}
\item The fundamental energy inequality for vorticity
\begin{equation}\label{eq: fundamental inequalities for vorticity}
\begin{cases}
\mathcal{E}_{t}(\Omega)(t,u)+ \mathcal{F}_{t}(\Omega)(t,u)=\ \mathcal{E}_{t}(\Omega)(\delta,u)+ \mathcal{F}_{t}(\Omega)(t,0)+\mathcal{N}_{t}(\Omega)(t,u)+\text{\bf Err}_{t},\\
\mathcal{N}_{t}(\Omega)(t,u)=\int_{D(t,u)} 2\mu F\Omega \\
|\text{\bf Err}_{t}|\lesssim \int_{0}^{u}\mathcal{F}_{t}(\Omega)(t,u^{'})du^{'}.
\end{cases}
\end{equation}
\end{itemize}

\subsection{Energy estimate for vorticity} 
This section is devoted to bound $\Zr^{\alpha}\Omega$ in $L^{2}$ where $|\alpha|\leq N_{top}+1$.  We commute $\Zr^{\alpha}$ with $B\Omega=0$ and apply the fundamental energy inequality for transport equation.

For multi-index $\alpha$ with $|\alpha|=n, n\leq N_{top}+1$, we use $\Omega_{n}$ to denote $\Zr^{\alpha}\Omega$. If we set $\kappar B\Omega_{n}=F_{n}$ and $\Omega_{n}=\Zr(\Omega_{n-1})$. We have $F_{n}=\Zr(F_{n-1})+[\kappar B, \Zr]\Omega_{n-1}$. The commutators $[\kappar B,\Zr], \Zr\in \mathring{\mathcal{L}}=\{\Xr,\Tr\}$ are given by
\begin{align}
\begin{cases}
[\kappar B,\Xr]&=\Xr(v^{1})\Tr-\kappar \Xr(v^{2})\Xr,\\
[\kappar B,\Tr]&=(\Tr(v^{1})+1)\Tr-\kappar\Tr(v^{2})\Xr.
\end{cases}
\end{align}
Therefore, the recursion relations are given by
\begin{equation}
F_{n}=\begin{cases}
\Xr(F_{n-1})+\Xr(v^{1})\Tr(\Omega_{n-1})-\kappar \Xr(v^{2})\Xr(\Omega_{n-1}), \ \ \Zr=\Xr,\\
\Tr(F_{n-1})+\big(\Tr(v^{1})+1\big)\Tr(\Omega_{n-1})-\kappar \Tr(v^{2})\Xr(\Omega_{n-1}), \ \ \Zr=\Tr.
\end{cases}
\end{equation}
with $F_{0}=0$. For $n\leq N_{top}+1$, $F_{n}$ is the combination of terms of form 
\[
\begin{cases}
\Zr^{\beta}\Big(\Xr(v^{1})\Tr\big(\Zr^{\gamma}(\Omega)\big)-\kappar\Xr(v^{2})\Xr\big(\Zr^{\gamma}(\Omega)\big)\Big), \ \ |\beta|+|\gamma|\leq n-1,\\
\Zr^{\beta}[(\Tr(v^{1})+1)\Tr(\Zr^{\gamma}\Omega)-\kappar\Tr(v^{2})\Xr(\Zr^{\gamma}(\Omega))], \ \ |\beta|+|\gamma|\leq n-1.
\end{cases}
\]

Therefore, the corresponding contribution for $\mathcal{N}_{t}(\Omega_{n})$ in \eqref{eq: fundamental inequalities for vorticity} can be divided into four cases:
\begin{itemize}
\item $\mathcal{N}_{t, 1}(\Omega_{n})=\int_{\mathcal{D}(\delta)(t, u)} \frac{2\mu}{\kappar} \Zr^{\beta_{1}}\Xr(v^{1})\Zr^{\beta_{2}}\Tr(\Omega)\Omega_{n}$,
\item $\mathcal{N}_{t, 2}(\Omega_{n})=\int_{\mathcal{D}(\delta)(t, u)} \frac{2\mu}{\kappar} \kappar\Zr^{\beta_{1}}\Xr(v^{2})\Zr^{\beta_{2}}\Xr(\Omega)\Omega_{n}$,  
\item $\mathcal{N}_{t, 3}(\Omega_{n})=\int_{\mathcal{D}(\delta)(t, u)} \frac{2\mu}{\kappar} \kappar\Zr^{\beta_{1}}\Tr(v^{2})\Zr^{\beta_{2}}\Xr(\Omega)\Omega_{n}$,  
\item $\mathcal{N}_{t, 4}(\Omega_{n})=\int_{\mathcal{D}(\delta)(t, u)} \frac{2\mu}{\kappar} \Zr^{\beta_{1}}\big(\Tr(v^{1})+1\big)\Zr^{\beta_{2}}\Tr(\Omega)\Omega_{n}$,  
\end{itemize}
with $|\beta_{1}|+|\beta_{2}|\leq n-1$. In view of $(\mathbf{B}_{2})$, \eqref{eq: L infty bound for waves} and $\frac{\mu}{\kappar}\lesssim 1$,
\[
\sum_{j=1}^{3}\mathcal{N}_{t, j}\lesssim \Mr\varepsilon^{3} t
\]
As for $\mathcal{N}_{t, 4}(\Omega_{n})$, if $\beta_{1}\not=0$, it can be bounded exactly the same way as $\mathcal{N}_{t, j}, 1\leq j\leq 3$. If $\beta_{1}=0$, 
\[
\mathcal{N}_{t, 4}(\Omega_{n})\lesssim \int_{\mathcal{D}(\delta)(t, u)} \Omega_{n}^{2}\lesssim \int_{0}^{u} \mathcal{F}_{t}(\Omega_{n})(t, u^{'}) du'.
\]

In summary, we have for $n\leq \Ntop+1$,
\[
\begin{split}
&\sum_{|\alpha|=n}\Big(\mathcal{E}_{t}\big(\Zr^{\alpha}(\Omega)\big)(t,u)+\mathcal{F}_{t}\big(\Zr^{\alpha}(\Omega)\big)(t,u)\Big)\leq \sum_{|\alpha|=n}\Big(\mathcal{E}_{t}\big(\Zr^{\alpha}(\Omega)\big)(\delta,u)+\mathcal{F}_{t}\big(\Zr^{\alpha}(\Omega)\big)(t,0)\Big)\\
&+\Mr\varepsilon^{3}t+C\int_{0}^{u^{'}}\sum_{|\alpha|=n}\Big(\mathcal{F}_{}\big(\Zr^{\alpha}(\Omega)\big)(t,u^{'}) \Big)du^{'}.
\end{split}
\]
In view of $(\mathbf{I}_{2})$ and Gronwall' s inequality, we have
\begin{equation}\label{close of L2 for vorticity}
\mathscr{E}_{t, \leq \Ntop+1}(\Omega)+\mathscr{F}_{t, \leq \Ntop+1}(\Omega)\lesssim \varepsilon^{2}t+\Mr\varepsilon^{3}t\leq \varepsilon^{2}t.
\end{equation}
providing that $\Mr\varepsilon$ is sufficiently small, which closes the third estimates of bootstrap assumption $(\mathbf{B}_{2})$ \eqref{assumption: B2}. We also have the corollary,
\begin{corollary}
For multiindex $|\alpha|\leq \Ninf$, 
\[
\|\Zr^{\alpha}\Omega\|_{L^{\infty}(\Sigma_{t}^{u})}\lesssim \varepsilon.
\]
\end{corollary}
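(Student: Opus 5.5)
The corollary follows from the closed vorticity energy bound \eqref{close of L2 for vorticity} combined with the Sobolev inequality \eqref{eq: Sobolev inequality}. Fix a multi-index $\alpha$ with $|\alpha|\leq \Ninf$ and apply \eqref{eq: Sobolev inequality} to $f=\Zr^{\alpha}\Omega$ on $\Sigma_{t}^{u}$. This gives
\[
\|\Zr^{\alpha}\Omega\|_{L^{\infty}(\Sigma_{t}^{u})}\lesssim \sum_{k+l\leq 2}\|\Xr^{k}\Tr^{l}\Zr^{\alpha}\Omega\|_{L^{2}(\Sigma_{t}^{u})}.
\]
Each term on the right is of the form $\Zr^{\alpha'}\Omega$ with $|\alpha'|\leq \Ninf+2\leq \Ntop+1$. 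I would choose $\Ninf$ so that this inequality holds, which is the standard choice $\Ninf\approx \Ntop/2$ or simply $\Ninf\leq \Ntop-1$.

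Next I convert the $L^{2}$ norms into the transport energy. By definition,
\[
\mathcal{E}_{t}(\Zr^{\alpha'}\Omega)(t,u)=\int_{\Sigma_{t}^{u}}\mu(\Zr^{\alpha'}\Omega)^{2}.
\]
The volume element carries the factor $\sqrt{\slashed{g}}\,du'\,d\vartheta$ with $\slashed{g}\approx 1$ by \eqref{eq: L infty control of the geometry}. Since $\mu=c\kappa$, the bounds $c\approx\mathring{c}_{r}$ and $\kappa=t+O(\Mr\varepsilon t^{2})$ give $\mu\approx t$. Hence
\[
\|\Zr^{\alpha'}\Omega\|^{2}_{L^{2}(\Sigma_{t}^{u})}\lesssim t^{-1}\mathcal{E}_{t}(\Zr^{\alpha'}\Omega)(t,u)\lesssim t^{-1}\cdot\varepsilon^{2}t=\varepsilon^{2},
\]
by \eqref{close of L2 for vorticity}. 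Taking square roots and summing over the finitely many $\alpha'$ gives $\|\Zr^{\alpha}\Omega\|_{L^{\infty}}\lesssim\varepsilon$ with no loss in $t$.

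The one point to check is that the Sobolev inequality is stated with respect to the coordinate measure $du\,d\vartheta$ of the acoustical coordinates, or equivalently the measure $\sqrt{\slashed g}\,du\,d\vartheta$. It must not be the Euclidean measure $dx_1dx_2$, which is $\approx t\,du\,d\vartheta$. If it were Euclidean, the $t$-weights would have to be rebalanced: the Euclidean $L^2$ norm squared is $\approx t\cdot(\text{acoustical } L^2)^2$, and the $\Tr$ derivatives absorb the corresponding factors $\kappar=t$. Since the paper states that the proof of \eqref{eq: Sobolev inequality} follows \cite{Luo-YuRare1}, where $\Sigma_t^{u^*}$ is parametrized by $(u,\vartheta)\in[0,u^*]\times\mathbb{T}$, I take the acoustical measure. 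With that measure, the energy weight $\mu\sim t$ exactly matches the $\varepsilon^2 t$ growth of the vorticity energy, and the bound closes as written.
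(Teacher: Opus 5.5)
Your argument is correct and is the one the paper intends. The corollary is stated without proof right after \eqref{close of L2 for vorticity}. It follows from that bound together with $\mu\gtrsim t$, which gives $\|\Zr^{\alpha'}\Omega\|_{L^2(\Sigma_t^{u^*})}^2\lesssim t^{-1}\cdot\varepsilon^2 t$ in the paper's acoustical measure $\sqrt{\slashed{g}}\,du\,d\vartheta$, and from the Sobolev inequality \eqref{eq: Sobolev inequality}, provided $\Ninf+2\leq\Ntop+1$.

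One cosmetic adjustment: apply \eqref{eq: Sobolev inequality} on the full slice $\Sigma_t^{u^*}$, as it is stated there, and then restrict using $\|\cdot\|_{L^\infty(\Sigma_t^u)}\leq\|\cdot\|_{L^\infty(\Sigma_t^{u^*})}$. Applying it directly on $\Sigma_t^u$ is not safe, because the Sobolev constant there degenerates as $u\to 0$.
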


\subsection{Lowest energy estimate for acoustical wave }

\subsubsection{The lowest energy estimate for $w$ and $\psi_{2}$}
We take $\psi=w$ or $\psi_{2}$. The structure of source terms of wave-transport system \eqref{eq: wave-transport system} can be represented
\[
\{c^{-1}g(Df_{1},Df_{2}), \rho \partial\Omega, \rho c^{-1}\Omega\partial c, c^{-1}\rho^{-2}\Omega^{2}\}.
\]
where $\partial$ means spatial derivates $\partial_{x_1}, \partial_{x_2}$ and $f_{1}, f_{2}\in \{\wb, w, \psi_{2}\}$.

Therefore, the corresponding contribution for $\mathcal{N}_{w}(\psi)$ and $\underline{\mathcal{N}}_{w}(\psi)$ in \eqref{eq: fundamental inequalities for acoustical wave} can be divided into eight cases:
\begin{itemize}
\item $\mathcal{N}_{w, 1}(\psi)=-\int_{\mathcal{D}(t,u)} \mu c^{-1}g(Df_{1},Df_{2}) \hat{L}(\psi)$
\item $\mathcal{N}_{w, 2}(\psi)=-\int_{\mathcal{D}(t,u)}\mu \rho \partial_{i}\Omega \hat{L}(\psi)$
\item $\mathcal{N}_{w, 3}(\psi)=-\int_{\mathcal{D}(t,u)}\mu \rho c^{-1}\Omega\partial_{i}c\hat{L}(\psi)$
\item $\mathcal{N}_{w, 4}(\psi)=-\int_{\mathcal{D}(t,u)} \mu c^{-1}\rho^{-2}\Omega^{2}\hat{L}(\psi)$
\item $\underline{\mathcal{N}}_{w, 1}(\psi)=-\int_{\mathcal{D}(t,u)} \mu c^{-1}g(Df_{1},Df_{2}) \underline{L}(\psi)$
\item $\underline{\mathcal{N}}_{w, 2}(\psi)=\int_{\mathcal{D}(t,u)}\mu \rho \partial_{i}\Omega \underline{L}(\psi)$
\item $\underline{\mathcal{N}}_{w, 3}(\psi)=\int_{\mathcal{D}(t,u)}\mu \rho c^{-1}\Omega\partial_{i}c\underline{L}(\psi)$
\item $\underline{\mathcal{N}}_{w, 4}(\psi)=-\int_{\mathcal{D}(t,u)} \mu c^{-1}\rho^{-2}\Omega^{2}\underline{L}(\psi)$
\end{itemize}
The self interaction of acoustical waves $\mathcal{N}_{w, 1}(\psi)$ and $\underline{\mathcal{N}}_{w, 1}(\psi)$ can be exactly the same way as the irrotational case by
\[
\mathcal{N}_{w, 1}(\psi)+\underline{\mathcal{N}}_{w, 1}(\psi)\leq \Mr\varepsilon^{3}t^{2}+C\int_{0}^{u}\sum_{\psi\in \{w, \psi_{2}\}}\mathscr{F}_{w}(\psi)(t,u^{'})du^{'}+\frac{1}{8}\int_{\delta}^{t}\sum_{\psi\in \{w, \psi_{2}\}}\frac{\mathscr{E}_{w}(\psi)(t^{'},u)}{t^{'}}dt^{'}.
\]
and we refer to section 5 of \cite{Luo-YuRare1} for details. 

In view of $(\mathbf{B}_{2})$, \eqref{eq: L infty bound for waves}, $\frac{\mu}{\kappar}\lesssim 1$ and Cauchy-Schwartz, the terms $\mathcal{N}_{w, j}(\psi), j=2, 3, 4$ and $\underline{\mathcal{N}}_{w, j}(\psi), j=2, 3, 4$ can be bounded by
\[
\sum_{j=2}^{4}\Big(\mathcal{N}_{w, j}(\psi)+ \underline{\mathcal{N}}_{w, j}(\psi)\Big)\leq C\varepsilon^{2}t^{2}+\frac{1}{8}\int_{\delta}^{t} \frac{\mathscr{E}_{w}(\psi)(t', u)}{t'}dt'.
\]

In summary, we have
\[
\begin{split}
&\sum_{\psi\in \{w, \psi_{2}\}}\Big(\mathscr{E}_{w}(\psi)(t,u)+\mathscr{F}_{w}(\psi)(t,u)\Big)\leq \sum_{\psi\in \{w, \psi_{2}\}}\Big(\mathscr{E}_{w}(\psi)(\delta,u)+\mathscr{F}_{w}(\psi)(t,0)\Big)\\
&+\Mr\varepsilon^{3}t^{2}+C\varepsilon^{2}t^{2}+C\int_{0}^{u}\sum_{\psi\in \{w, \psi_{2}\}}\mathscr{F}_{w}(\psi)(t,u^{'})du^{'}+C\int_{0}^{u}\sum_{\psi\in \{w, \psi_{2}\}}\mathscr{E}_{w}(\psi)(t,u^{'})du^{'}\\
&+\frac{1}{2}\int_{\delta}^{t}\sum_{\psi\in \{w, \psi_{2}\}}\frac{\mathscr{E}_{w}(\psi)(t^{'},u)}{t^{'}}dt^{'}.
\end{split}
\]
In view of $(\mathbf{I}_{2})$ and refined Gronwall's inequality \ref{lem: refined Gronwall}, we can conclude that
\begin{equation}
\sum_{\psi\in \{w, \psi_{2}\}}\Big(\mathscr{E}_{w}(\psi)(t,u)+\mathscr{F}_{w}(\psi)(t,u)\Big)\lesssim \varepsilon^{2}t^{2}.
\end{equation}
providing that $\Mr\varepsilon$ is sufficiently small. This closes the second estimate of the bootstrap assumption $(\mathbf{B}_{2})$ \eqref{assumption: B2}.

\subsubsection{The lowest energy bound for $\underline{w}$}
Recall that
\begin{align*}
\mathring{\mathscr{E}}_{0}(\underline{w})(t,u)=\frac{1}{2}\int_{\Sigma_{t}^{u}}c^{-2}\kappa^{2}(L\underline{w})^{2}+\kappa^{2}(\Xr\underline{w})^{2},\mathring{\mathscr{F}}_{0}(\underline{w})(t,u)=\int_{C_{u}^{t}} c^{-1}\kappa(L\underline{w})^{2}+c\kappa(\Xr\underline{w})^{2}.
\end{align*}
are not included in the bootstrap assumption $(B_{2})$. To handle $L(\wb)$,  we need use the Euler equation in the first null frame \eqref{Euler equations:form 2},
\[
L (\wb) = -c \widehat{T}(\wb)(\widehat{T}^1+1)+\frac{1}{2}c \widehat{T}(\psi_2)\widehat{T}^2 +\frac{1}{2}c \Xh(\psi_2)\Xh^2-c\Xh(\wb)\Xh^1,
\]
which implies that $|L(\wb)-\frac{1}{2}c\hat{X}(\psi_{2})\hat{X}^{2}|\lesssim \Mr\varepsilon^{2}t$. Therefore, for all $(t, u)\in [\delta, t^{*}]\times [0, u^{*}]$, we have
\[
\int_{\Sigma_{t}^{u}}c^{-2}\kappa^{2}(L\underline{w})^{2}+\int_{C_{u}^{t}} c^{-1}\kappa(L\underline{w})^{2}\lesssim \varepsilon^{2}t^{2}
\]
providing that $\Mr\varepsilon$ is sufficiently small. 

To handle $\Xr(\wb)$,  we need use the Euler equation in the second null frame \eqref{Euler equations:form 1 ringed},
\[
\Lr (\psi_2)= -c \Trh(\psi_2)+c\Xr( w+\wb).
\]
Note that for $\psi\in\{w, \psi_{2}\}$,
\[
\begin{cases}
\hat{\Tr}(\psi)-\hat{T}(\psi)=\frac{\hat{T}^{1}+1}{\kappar}\hat{\Tr}(\psi)-\hat{T}^{2}\hat{X}(\psi),\\
\Xr(\psi)-\hat{X}(\psi)=\frac{\hat{X}^{1}}{\kappar}\Tr(\psi)+(\hat{T}^{1}+1)\Xr(\psi).
\end{cases}\implies |\hat{\Tr}(\psi)-\hat{T}(\psi)|+|\Xr(\psi)-\hat{X}(\psi)|\lesssim \Mr\varepsilon^{2}.
\]
Therefore,  for all $(t, u)\in [\delta, t^{*}]\times [0, u^{*}]$, we have
\[
\int_{\Sigma_{t}^{u}} \kappa^{2}(\Xr\underline{w})^{2}+\int_{C_{u}^{t}}c\kappa(\Xr\underline{w})^{2}\lesssim \varepsilon^{2}t^{2}
\]
providing that $\Mr\varepsilon$ is sufficiently small. 

We summarize the zero order energy estimates as
\begin{equation}\label{eq: close of lowest energy}
\sum_{\psi\in\{w,\psi_{2}\}}[\mathscr{E}_{w}(\psi)(t,u)+\mathscr{F}_{w}(\psi)(t,u)]+\mathring{\mathscr{E}}_{0}(\underline{w})(t,u)+\mathring{\mathscr{F}}_{0}(\underline{w})(t,u)\lesssim \varepsilon^{2}t^{2}.
\end{equation}

\subsection{Higher energy estimate for acoustical wave}
To do higher order estimate for wave variables $\Psi\in \{\underline{w},w,\psi_{2}\}$, we will commute $\Zr^{\alpha}, |\alpha|=n, 1\leq n\leq \Ntop$ with the wave-transport system \eqref{eq: wave-transport system} and we can derive the recursion formula for $\Zr^{\alpha}\Psi$ as follows,
\begin{align}
\begin{cases}
\mur\Box_{g}\Psi_{n}=\varrho_{n},\\
\Psi_{n}=\Zr\Psi_{n-1},\\
\varrho_{n}=\Zr(\varrho_{n-1})+{}^{(\Zr)}\sigma_{n-1},\\
{}^{(\Zr)}\sigma_{n-1}=\mur div_{g} {}^{(\Zr)}J_{n-1},\\
{}^{(\Zr)}J_{n-1}^{\mu}=\frac{1}{2}(g^{\mu\alpha}g^{\nu\beta}+g^{\mu\beta}g^{\nu\alpha}-g^{\mu\nu}g^{\alpha\beta}){}^{(\Zr)}\pi_{\alpha\beta}\partial_{\nu}\Psi_{n-1},\\
\Psi_{0}\in \{\underline{w},w,\psi_{2}\},\\
\varrho_{0}=\mur c^{-1}g(Df_{1},Df_{2})+\mur\rho \partial\Omega+\mur\rho c^{-1}\Omega\partial c+\mur c^{-1}\rho^{-2}\Omega^{2}.
\end{cases}
\end{align}
where $\partial$ means spatial derivates $\partial_{1}, \partial_{2}$ and we have neglected unimportant numerical constants. Writing $\Psi_{n}=\Zr_{n}(\Zr_{n-1}(...(\Zr(\Psi_{0}))...))$, we have
\begin{align}
\varrho_{n}=\Zr_{n}(...(\Zr(\varrho_{0}))...)+\sum_{i=0}^{n-1}\Zr_{n}(...(\Zr_{i+2}({}^{(\Zr_{i+1})}\sigma_{i}))...).
\end{align}

Therefore, $\varrho_{n}$ is a sum of the following two types of terms:
\begin{itemize}
\item \text{\bf Type I}: $\Zr^{\alpha}(\varrho_{0}),|\alpha|=n$;
\item \text{\bf Type II}: $\Zr^{\beta}({}^{(\Zr_{i+1})}\sigma_{i}), |\beta|=n-i-1$.
\end{itemize}

The estimation of the \textbf{Type II} contributions to $\mathcal{N}_{w}(\Psi_{n})$ and $\underline{\mathcal{N}}_{w}(\Psi_{n})$ constitutes the most challenging and interesting part of \cite{Luo-YuRare1}. This analysis relies crucially on the structure of the Euler equations in the second null frame \eqref{Euler equations:form 1 ringed}, as well as the {\bf additional vanishing mechanism} of $\Xr(v^{1}+c)$. Moreover, this proof carries over verbatim to our setting (without the irrotational assumption), and we refer the reader to Sections 7 and 8 of \cite{Luo-YuRare1} for full details. It thus remains for us only to handle the \textbf{Type I} terms.

\subsubsection{Energy estimates on \text{\bf Type I} terms.} Since $\frac{\mu}{\mur}\lesssim 1$, it suffices to bound $\mathcal{N}_{w}(\Psi_{n})$ and $\underline{\mathcal{N}}_{w}(\Psi_{n})$ in the following form,
\begin{align*}
\mathcal{N}_{w}(\Psi_{n})=\int_{\mathcal{D}(t,u)}|\Zr^{\alpha}(\varrho_{0})||\hat{L}(\Psi_{n})|,\ \ \underline{\mathcal{N}}_{w}(\Psi_{n})=\int_{\mathcal{D}(t,u)} |\Zr^{\alpha}(\varrho_{0})||\underline{L}(\Psi_{n})|.
\end{align*}
The contribution for $\mathcal{N}_{w}(\psi)$ and $\underline{\mathcal{N}}_{w}(\psi)$ in \eqref{eq: fundamental inequalities for acoustical wave} can be divided into eight cases:
\begin{itemize}
\item $\mathcal{N}_{w, 1}(\Psi_{n})=-\int_{\mathcal{D}(t,u)} \frac{\kappa}{\kappar}\Zr^{\alpha}\Big(\kappar g(Df_{1},Df_{2})\Big) \hat{L}(\Psi_{n})$
\item $\mathcal{N}_{w, 2}(\Psi_{n})=-\int_{\mathcal{D}(t,u)}\frac{\kappa}{\kappar} \Zr^{\alpha}\Big(c^{\frac{\gamma+1}{\gamma-1}}\kappar\partial_{i}\Omega\Big)\hat{L}(\Psi_{n})$
\item $\mathcal{N}_{w, 3}(\Psi_{n})=-\int_{\mathcal{D}(t,u)}\frac{\kappa}{\kappar} \Zr^{\alpha}\Big(\Omega\kappar\partial_{i}(c^{\frac{\gamma+1}{\gamma-1}})\Big)\hat{L}(\Psi_{n})$
\item $\mathcal{N}_{w, 4}(\Psi_{n})=-\int_{\mathcal{D}(t,u)} \frac{\kappa}{\kappar}\Zr^{\alpha}\Big(\kappar c^{-\frac{4}{\gamma-1}}\Omega^{2}\Big)\hat{L}(\Psi_{n})$
\item $\underline{\mathcal{N}}_{w, 1}(\Psi_{n})=-\int_{\mathcal{D}(t,u)} \frac{\kappa}{\kappar}\Zr^{\alpha}\Big(\kappar g(Df_{1},Df_{2})\Big) \Lb(\Psi_{n})$
\item $\underline{\mathcal{N}}_{w, 2}(\Psi_{n})=\int_{\mathcal{D}(t,u)}\frac{\kappa}{\kappar} \Zr^{\alpha}\Big(c^{\frac{\gamma+1}{\gamma-1}}\kappar\partial_{i}\Omega\Big)\Lb(\Psi_{n})$
\item $\underline{\mathcal{N}}_{w, 3}(\Psi_{n})=\int_{\mathcal{D}(t,u)}\frac{\kappa}{\kappar} \Zr^{\alpha}\Big(\Omega\kappar\partial_{i}(c^{\frac{\gamma+1}{\gamma-1}})\Big)\Lb(\Psi_{n})$
\item $\underline{\mathcal{N}}_{w, 4}(\Psi_{n})=-\int_{\mathcal{D}(t,u)} \frac{\kappa}{\kappar}\Zr^{\alpha}\Big(\kappar c^{-\frac{4}{\gamma-1}}\Omega^{2}\Big)\Lb(\Psi_{n})$
\end{itemize}
The self interaction of acoustical waves $\mathcal{N}_{w, 1}(\Psi_{n})$ and $\underline{\mathcal{N}}_{w, 1}(\Psi_{n})$ can be exactly the same way as the irrotational case. To be more precise, the total contribution of $\mathcal{N}_{w, 1}(\Psi_{n})$, $\underline{\mathcal{N}}_{w, 1}(\Psi_{n})$ and \textbf{Type II} can be bounded by
\begin{equation}\label{eq: total contribution of acoustical wave}
\mathcal{N}_{w, 1}(\Psi_{n})+\underline{\mathcal{N}}_{w, 1}(\Psi_{n})\leq \Mr\varepsilon^{3}t^{2}+\frac{C}{a_{0}}\int_{0}^{u}\sum_{|\beta|\leq n}\mathscr{F}_{w}\big(\Zr^{\beta}(\psi)\big)(t,u^{'})du^{'}+a_{0}\int_{\delta}^{t}\sum_{|\beta|\leq n}\frac{\mathscr{E}_{w}\big(\Zr^{\beta}(\psi)\big)(t^{'},u)}{t^{'}}dt^{'}.
\end{equation}
where $a_{0}$ is small constant which will be chosen later, and we refer to section 8 of \cite{Luo-YuRare1} for details.

All nonlinear terms admit a unified treatment, making a case-by-case exposition unnecessary; the primary difficulty lies instead with the linear terms. Indeed, in view of $(\mathbf{B}_{2})$, \eqref{eq: L infty bound for waves}, the bound $\frac{\mu}{\kappar}\lesssim 1$, the Cauchy–Schwarz inequality, and the fact that we have closed the bootstrap assumption on $\Omega$ in \ref{close of L2 for vorticity}, the terms $\mathcal{N}_{w, j}(\Psi_{n})$ for $j=2,3,4$ and $\underline{\mathcal{N}}_{w, j}(\Psi_{n})$ for $j=2,3,4$ can be bounded by
\[
\sum_{j=2}^{4}\Big(\mathcal{N}_{w, j}(\Psi_{n})+ \underline{\mathcal{N}}_{w, j}(\Psi_{n})\Big)\leq \frac{C}{a_{0}}\varepsilon^{2}t^{2}+\Mr\varepsilon^{3}t^{2}+a_{0}\int_{\delta}^{t} \frac{\mathscr{E}_{w}(\Psi_{n})(t', u)}{t'}dt',
\]
where $a_{0}$ is a small constant to be chosen later. To illustrate this, consider $\mathcal{N}_{w, 2}(\Psi_{n})$: the most critical scenario arises when all $\Zr^{\alpha}$ derivatives act on $\partial_{1}\Omega$, which we handle as follows:
\[
\begin{split}
\int_{\mathcal{D}(t, u)} \Zr^{\alpha+1}(\Omega)\Lb(\Psi_{n})&\leq \frac{1}{4a_{0}} \int_{\mathcal{D}(t, u)} |\Zr^{\alpha+1}(\Omega)|^{2}\kappar +a_{0}\int_{\mathcal{D}(t, u)} \frac{|\Lb(\Psi_{n})|^{2}}{\kappar}\\
&\leq \frac{C}{a_{0}}\varepsilon^{2}t^{2}+a_{0}\int_{\delta}^{t}\frac{\mathscr{E}_{w}(\Psi_{n})(t', u)}{t'}dt'.
\end{split}
\]
All other cases are analogous or more straightforward.

In summary, upon an appropriate choice of $a_{0}$, we obtain for all $1\leq n\leq \Ntop$,
\[
\begin{split}
&\sum_{|\alpha|=n}\sum_{\psi\in \{\wb, w, \psi_{2}\}}\Big(\mathscr{E}_{w}\big(\Zr^{\alpha}(\psi)\big)(t,u)+\mathscr{F}_{w}(\Zr^{\alpha}\big(\psi)\big)(t,u)\Big)\leq \sum_{|\alpha|=n}\sum_{\psi\in \{\wb, w, \psi_{2}\}}\Big(\mathscr{E}_{w}\big(\Zr^{\alpha}(\psi)\big)(\delta,u)+\mathscr{F}_{w}\big(\Zr^{\alpha}(\psi)\big)(t,0)\Big)\\
&+\Mr\varepsilon^{3}t^{2}+C\varepsilon^{2}t^{2}+C\sum_{|\alpha|\leq n}\sum_{\psi\in \{\wb, w, \psi_{2}\}}\Big(\int_{0}^{u} \mathscr{F}_{w}\big(\Zr^{\alpha}(\psi)\big)(t,u^{'})du^{'}+C\int_{0}^{u}\mathscr{E}_{w}\big(\Zr^{\alpha}(\psi)\big)(t,u^{'})du^{'}\Big)\\
&+\frac{1}{2}\sum_{|\alpha|\leq n}\sum_{\psi\in \{\wb, w, \psi_{2}\}}\int_{\delta}^{t}\frac{\mathscr{E}_{w}\big(\Zr^{\alpha}(\psi)\big)(t^{'},u)}{t^{'}}dt^{'}.
\end{split}
\]
In view of $(\mathbf{I}_{2})$ and applying refined Gronwall's inequality \ref{lem: refined Gronwall} inductively from $n=1$ to $n=\Ntop$, we can conclude that for any $1\leq n\leq \Ntop$,
\begin{equation}
\sum_{|\alpha|=n}\sum_{\psi\in \{\wb, w, \psi_{2}\}}\Big(\mathscr{E}_{w}\big(\Zr^{\alpha}(\psi)\big)(t,u)+\mathscr{F}_{w}\big(\Zr^{\alpha}(\psi)\big)(t,u)\Big)\lesssim \varepsilon^{2}t^{2}.
\end{equation}
providing that $\Mr\varepsilon$ is sufficiently small. This closes the bootstrap assumption $(\mathbf{B}_{2})$ \eqref{assumption: B2} together with \eqref{close of L2 for vorticity} and \eqref{eq: close of lowest energy}.

In view of Sobolev inequality \eqref{eq: Sobolev inequality}, we have 
\begin{corollary}
For all multi-index $|\alpha|\leq \Ninf+1$, for all $\psi\in \{\wb, w, \psi_{2}\}$, except for the case $\Zr^{\alpha}\psi=T(\wb)$ and $\Omega$, we have
\begin{equation}\label{eq: improved L infty bound for waves}
\|\Zr^{\alpha}(\psi)\|_{L^{\infty}(\Sigma_{t}^{u})}\lesssim \begin{cases}
\varepsilon, \ \ \text{if}\ \Zr^{\alpha}=\Xr^{\alpha},\\
\varepsilon t, \ \ \text{otherwise}.
\end{cases}, \|\Zr^{\alpha}(\Omega)\|_{L^{\infty}(\Sigma_{t}^{u})}\lesssim \varepsilon.
\end{equation}
Moreover, $\Xr(v^{1}+c)$ and $\Tr(v^{1}+c)+1$ has the following additional vanishing property: for any $|\beta|\leq \Ninf-1$, 
\begin{equation}\label{eq: additional vanishing}
\begin{cases}
\|\Zr^{\beta}\Xr(v^{1}+c)\|_{\Sigma_{t}^{u}}\lesssim \varepsilon t,\\
\|\Zr^{\beta}\Big(T(v^{1}+c)+1\Big)\|_{\Sigma_{t}^{u}}\lesssim \varepsilon t.
\end{cases}
\end{equation}
\end{corollary}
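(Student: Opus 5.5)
The first part, \eqref{eq: improved L infty bound for waves}, comes from combining the closed energy bounds with the Sobolev inequality \eqref{eq: Sobolev inequality}. For $|\alpha|\le \Ninf+1$ with $\Ninf+3\le\Ntop$, I apply \eqref{eq: Sobolev inequality} to $f=\Zr^{\alpha}(\psi)$. The right-hand side involves $\|\Zr^{\gamma}\Zr^{\alpha}\psi\|_{L^2(\Sigma_t^u)}$ with $|\gamma|\le 2$.

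These $L^2$ norms are controlled by the energies $\mathscr{E}_{w}(\Zr^{\beta}\psi)$, which are now bounded by $\varepsilon^{2}t^{2}$ for $1\le|\beta|\le\Ntop$, and at lowest order by \eqref{eq: close of lowest energy}. The relevant pieces of the energy are $\int\kappa^2|\slashed{\nabla}\Zr^\beta\psi|^2$ and $\int|\Lb\Zr^\beta\psi|^2$.

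\emph{$\Xr$-derivatives.} Since $\kappa\approx t$ by \eqref{eq: L infty control of the geometry}, and $|\Xr-\Xh|$ is small as in the lowest-order estimate, we get $\|\Xr\Zr^{\beta}\psi\|_{L^2}\lesssim \varepsilon$.

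\emph{$\Tr$-derivatives.} Write $2\Tr=\Lbr-c^{-1}\kappar\Lr$ and compare $\Lbr$ with $\Lb$ using the frame bounds $|\Th^1+1|\lesssim\varepsilon^2t^2$ and $|\Th^2|\lesssim\varepsilon t$. This gives $\|\Tr\Zr^\beta\psi\|_{L^2}\lesssim\varepsilon t$.

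\emph{Zeroth-order term.} The undifferentiated $L^2$ norm of $\Zr^\alpha\psi$ is handled by integrating in $u$ from $C_0$, where the data bounds $(\mathbf{I}_\infty)$ give the right size.

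Putting these together yields $\varepsilon$ when $\Zr^\alpha$ consists only of $\Xr$'s, and $\varepsilon t$ as soon as some $\Tr$ is present. The excluded case $T(\wb)$ is the genuine $-\frac{2}{\gamma+1}$ profile, already treated in \eqref{eq: bound for T(wb)}. The $\Omega$ bound follows the same way from \eqref{close of L2 for vorticity} with $\mu\approx t$, at one more derivative since $\Omega$ is controlled up to order $\Ntop+1$. One point needs care: the vorticity energy carries weight $\mu\sim t$ while its bound is $\varepsilon^2t$, so the weight cancels and one obtains $\varepsilon$ without a factor $t$.

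For the additional vanishing \eqref{eq: additional vanishing}, I would not use Sobolev. Instead I would use the Euler equations in the second frame \eqref{Euler equations:form 3}. Note that $v^1+c$ is, up to constants, a combination of $\wb$ and $w$ with $\Lr(\wb)=\frac12 c\Xr(\psi_2)$.

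\emph{$\Xr(v^1+c)$.} Commute $\Xr$ with this equation, using $[\Lr,\Xr]=\yr\Tr-\chir\Xr$ where $y=\Xr(v^1+c)$. This gives a transport equation $\Lr(y)=\frac{y}{\kappar}\Tr(v^1+c)+O(\varepsilon^2)+\text{(terms }\lesssim\varepsilon t)$. Because $\Tr(v^1+c)\approx-\frac{2}{\gamma+1}$, the coefficient $\yr\Tr(v^1+c)$ is a damping term of size $-\frac{2}{\gamma+1}\frac{y}{t}$, equivalently $\frac{d}{dt}(t^{a}y)\lesssim t^{a}\cdot(\text{source})$. Integrating from $\Sigma_\delta$, where $(\mathbf{I}_\infty)$ gives $|y|\lesssim\varepsilon\delta$, then yields $|y|\lesssim\varepsilon t$.

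\emph{$\Tr(v^1+c)+1$.} This quantity (with $T$ replaced by $\Tr$ where convenient) equals $z$. It satisfies a similar Riccati-type equation $\Lr z=\zr\,(\cdots)$, obtained from the $[\Lr,\Tr]$ commutator. The same integrating-factor argument gives $z=O(\varepsilon t)$, consistent with \eqref{eq: bound for T(wb)}.

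Higher $\Zr^\beta$ derivatives are handled by induction, commuting $\Zr^\beta$ and treating commutator terms with the bounds just obtained.

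The main obstacle is the sign and strength of the $1/t$ coefficient in these transport equations. If it is not damping, integrating from $\delta$ loses the factor $t$. I would first verify the sign from the exact background profile \eqref{eq: 1-D rarefaction wave}. Failing that, I would fall back on the Sobolev route, using the $\Xr$-energy of $\wb$ and $w$ weighted by $\kappa^{2}$.
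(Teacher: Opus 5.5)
Your first part is the paper's argument: there the corollary is read off from the Sobolev inequality and the closed energies. $\Xr$-derivatives are controlled by $\kappa^{-1}\sqrt{\mathscr{E}_{w}}\lesssim\varepsilon$, $\Tr$-derivatives by $\Lb$ and $\kappa L$ (giving $\varepsilon t$), and in the vorticity energy the weight $\mu\sim t$ cancels against the bound $\varepsilon^{2}t$.

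For the additional vanishing the paper gives no argument of its own; it cites the Type II estimates of Section 7 of \cite{Luo-YuRare1}. Your direct transport argument is a different route, and its structure is right. We have $\Lr y=\Xr\Lr(v^{1}+c)+\yr\,\Tr(v^{1}+c)-\chir\,y$ with $\Tr(v^{1}+c)=z-1$, so the $1/t$ term is $-(1-z)y/t$, which is damping.

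The gap is the step ``integrating from $\Sigma_\delta$, where $(\mathbf{I}_{\infty})$ gives $|y|\lesssim\varepsilon\delta$''. Since $\partial_{2}=\hat{X}^{2}\Xh+\kappa^{-1}\hat{T}^{2}T$, on $\Sigma_\delta$ one has $y=\Xh(v^{1}+c)-\hat{T}^{2}/\kappa+O(\varepsilon^{2}\delta)$. Here $(\mathbf{I}_{\infty})$ only says that each of the two main terms is $O(\varepsilon)$; it says nothing about their cancellation. With $y(\delta)=O(\varepsilon)$, your damped equation yields only $|y|\lesssim\varepsilon\delta/t+\varepsilon t$. This is not $O(\varepsilon t)$ near $t=\delta$, and it leaves $\yr$ of size $\varepsilon/\delta$ there, which the commutator $[\Lr,\Xr]$ cannot tolerate.

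The cancellation does hold for the Luo--Yu choice of $u$ on $\Sigma_\delta$. There $\partial_{2}u=\hat{T}^{2}/\kappa$ equals $\delta^{-1}$ times the $x_2$-derivative of $\int_{0}^{\delta}(v^{1}-c\hat{T}^{1})\,d\tau$ along $C_0$. The mean value theorem along $C_0$, together with the Taylor construction in $u$, then gives $y=O(\varepsilon\delta)$, and similarly $\Xr^{k}y=O(\varepsilon\delta)$. This must be verified from the construction and supplied as an input; it is not a consequence of the stated data bounds.

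A related problem arises at $C_0$. Since $\Lr u=c(1+\hat{T}^{1})/\kappa\geq 0$, backward $\Lr$-curves starting near $C_0$ can exit through $C_0$, where no bound $y=O(\varepsilon t)$ is provided. Integrate along $L$ instead. By your first part, $(L-\Lr)y=t^{-1}c(\hat{T}^{1}+1)\Tr y-c\hat{T}^{2}\Xr y=O(\varepsilon^{2}t)$. Hence $Ly=-(1-z)t^{-1}y-\chir\,y+O(\varepsilon)$ along the generators of $C_u$, all of which start on $\Sigma_\delta$.

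Smaller points:
\begin{itemize}
\item $\Tr(v^{1}+c)\approx-1$, since $v^{1}+c=x_{1}/t$ in the one-dimensional fan; the value $-\frac{2}{\gamma+1}$ belongs to $\Tr\wb$.
\item The source $\Xr\Lr(v^{1}+c)$ contains the linear term $\frac{\gamma-1}{2}c\,\Xr^{2}\psi_{2}$, which is $O(\varepsilon)$, not $O(\varepsilon^{2})+O(\varepsilon t)$. This is harmless, because an $O(\varepsilon)$ source integrated over $[\delta,t]$ still gives $O(\varepsilon t)$.
\item Only the pure derivatives $\Xr^{k+1}(v^{1}+c)$ and the undifferentiated $z$ need the transport argument. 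Any $\Zr^{\beta}$ containing a $\Tr$ is already $O(\varepsilon t)$ by the first part.
\item The sign of the $1/t$ coefficient is not the real danger: any coefficient $a/t$ with $a<1$ would still give $O(\varepsilon t)$. The initial vanishing is what matters.
\item Your fallback would not work. The energies of $\wb$ and $w$ separately give only $\|\Xr^{k}\psi\|_{L^{\infty}}\lesssim\varepsilon$, and the extra factor $t$ is a property of the particular combination $v^{1}+c$.
\end{itemize}
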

We remark that \eqref{eq: additional vanishing} is a byproduct of estimation of the \textbf{Type II} in \cite{Luo-YuRare1} and we refer to section 7 of \cite{Luo-YuRare1} for details.

\section{The closing of $(\mathbf{B}_{\infty})$ assumption}\label{section6}

\subsection{Closing $(\mathbf{B}_{\infty})$ for $w, \psi_{2}$}

\begin{itemize}
\item The case $\hat{X}^{n}(\psi), \psi\in \{w, \psi_{2}\}, n\leq \Ninf$. Since $\hat{X}=-\hat{T}^{1}\Xr-\frac{\hat{T}^{2}}{\kappar}\Tr$
We can rewrite $\hat{X}^{n}(\psi), \psi\in \{w, \psi_{2}\}, n=|\alpha|\leq \Ninf$ as
\[
(\hat{X}^{\Tr}\Tr+\hat{X}^{\Xr}\Xr)^{n}\psi
\]
For $f\in \{\hat{X}^{\Xr}, \hat{X}^{\Tr}\}$, we have
\[
\Zr^{\alpha}(f)\lesssim \begin{cases}
1, \Zr^{\alpha}(f)=\hat{X}^{\Xr},\\
\Mr\varepsilon, \ \ \text{otherwise}.
\end{cases}
\]
Therefore, we can conclude that
\[
|Z^{\alpha}(\psi)|\lesssim |\Xr^{n}(\Omega)|+\Mr\varepsilon^{2}\lesssim \varepsilon.
\]
providing that $\Mr\varepsilon$ is sufficiently small.

\item The case $Z^{\alpha}(\psi), \psi\in \{w, \psi_{2}\}, n=|\alpha|\leq \Ninf, Z^{\alpha}\not=\hat{X}^{n}$. In view of commutator for $[T, \hat{X}]$, it suffices to close the bound for $TZ^{\beta}\psi$ with $m=|\beta|\leq \Ninf-1$. Since
\[
\begin{cases}
\hat{X}=-\hat{T}^{1}\Xr-\frac{\hat{T}^{2}}{\kappar}\Tr,\\
T=\kappa \hat{T}^{2}\Xr-\frac{\kappa}{\kappar}\hat{T}^{1}\Tr.
\end{cases}
\]
We can rewrite $TZ^{\beta}(\psi), m=|\beta|\leq \Ninf-1$ as
\begin{equation}
(T^{\Xr}\Xr+T^{\Tr}\Tr)(Z_{1}^{\Tr}\Tr+Z_{1}^{\Xr}\Xr)\cdots (Z_{m}^{\Tr}\Tr+Z_{m}^{\Xr}\Xr)\psi
\end{equation}
For $f\in \{\hat{X}^{\Xr}, \hat{X}^{\Tr}, T^{\Xr}, T^{\Tr}\}$, we have
\[
\Zr^{\gamma}(f)\lesssim \begin{cases}
1, \Zr^{\gamma}(f)=\hat{X}^{\Xr}\ \ \text{or}\ \ \Zr^{\gamma}(f)=T^{\Tr},\\
\Mr\varepsilon, \Zr^{\gamma}(f)=\Zr^{\gamma}(\hat{X}^{\Tr}),\\
\Mr \varepsilon t,\ \ \text{otherwise}.
\end{cases}, |\gamma|\leq \Ninf-1.
\]

Therefore, we can conclude that
\[
|TZ^{\beta}(\psi)|\lesssim |\Tr\Zr_{1}\cdots \Zr_{m}(\psi)|+\Mr\varepsilon^{2}t\lesssim \varepsilon t.
\]
providing that $\Mr\varepsilon$ is sufficiently small.
\item The case $LZ^{\alpha}(\psi), \psi\in \{\hat{X}, T\}, |\alpha|\leq \Ninf-1$. Since
\[
\begin{cases}
\hat{X}=-\hat{T}^{1}\Xr-\frac{\hat{T}^{2}}{\kappar}\Tr,\\
T=\kappa \hat{T}^{2}\Xr-\frac{\kappa}{\kappar}\hat{T}^{1}\Tr.
\end{cases}
\]
We can rewrite $LZ^{\alpha}(\psi), m=|\alpha|\leq \Ninf-1$ as
\begin{equation}
L(Z_{1}^{\Tr}\Tr+Z_{1}^{\Xr}\Xr)\cdots (Z_{m}^{\Tr}\Tr+Z_{m}^{\Xr}\Xr)\psi
\end{equation}
For $f\in \{\hat{X}^{\Xr}, \hat{X}^{\Tr}, T^{\Xr}, T^{\Tr}\}$, we have
\[
L\Zr^{\gamma}(f)\lesssim \begin{cases}
t^{-1}, L\Zr^{\gamma}(f)=L\Zr^{\gamma}(T^{\Tr})\ \ \text{or}\ \ L\Zr^{\gamma}(f)=L\Zr^{\gamma}(\hat{X}^{\Tr}),\\
\Mr \varepsilon ,\ \ \text{otherwise}.
\end{cases}, |\gamma|\leq \Ninf-1.
\]
Therefore, we can conclude that
\[
|LZ^{\alpha}(\psi)|\lesssim |L\Tr\Zr_{1}\cdots \Zr_{m}(\psi)|+\Mr\varepsilon^{2}\lesssim \varepsilon t.
\]
providing that $\Mr\varepsilon$ is sufficiently small.
\end{itemize}

\subsection{The closing $(\mathbf{B}_{\infty})$ for $\wb$}
It suffices to bound the maximal characteristic speed $v^{1}+c$ in place of $\underline{w}$ since $v^{1}+c=\frac{\gamma+1}{2}\wb+\frac{\gamma-3}{2}w$

\begin{itemize}
\item The case $\hat{X}^{n}(v^{1}+c), n\leq \Ninf$. Since $\hat{X}=-\hat{T}^{1}\Xr-\frac{\hat{T}^{2}}{\kappar}\Tr$, we can rewrite $\hat{X}^{n}(v^{1}+c)$ as
\[
(\hat{X}^{\Tr}\Tr+\hat{X}^{\Xr}\Xr)^{n}(v^{1}+c)
\]
The summation of which consists of terms like
\[
\Zr_{1}^{\beta_{1}}(f_{1})\cdots \Zr^{\beta_{m}}_{m}(f_{m})\Zr^{\alpha_{1}}(v^{1}+c), \sum_{k=1}^{m}|\beta_{k}|+|\alpha_{1}|=n, |\alpha_{1}|\geq 1.
\]
where $f_{i}\in \{\hat{X}^{\Xr}, \hat{X}^{\Tr}\}$. The terms can be divided into three cases:
\begin{itemize}
\item main term: $\beta_{1}=...=\beta_{n}=0, f_{1}=\cdots=f_{n}=\hat{X}^{\Xr}$ 
\[
|(\hat{X}^{\Xr})^{n}\mathring{Z}_{1}\cdots \mathring{Z}_{n}(v^{1}+c)|\lesssim \varepsilon t.
\]
\item {\bf linear error term}: $\hat{X}^{n-1}(\hat{X}^{\Tr})\Tr(v^{1}+c)$. 
\item nonlinear error terms: Note that $\Zr^{\alpha_{1}}(v^{1}+c)\not=\Tr(v^{1}+c)$ and we have
\[
|\Zr_{1}^{\beta_{1}}(f_{1})\cdots \Zr^{\beta_{n}}_{n}(f_{n})\Zr^{\alpha_{1}}(v^{1}+c)|\lesssim \Mr\varepsilon^{2} t.
\]
\end{itemize}
Note that the rough bound $\|\hat{X}^{n-1}(\hat{X}^{\Tr})\Tr(v^{1}+c)\|_{L^{\infty}(\Sigma_{t}^{u})}\lesssim \Mr\varepsilon$ for {\bf linear error term} is insufficient to close the $(\mathbf{B}_{\infty})$ assumption for $v^{1}+c$. In view of \eqref{eq: additional vanishing}, $|\Tr(v^{1}+c)+1|\lesssim \varepsilon t$, we can rewrite the {\bf linear error term} as 
\[
\hat{X}^{n-1}(\hat{X}^{\Tr})\Tr(v^{1}+c)=-\hat{X}^{n-1}(\hat{X}^{\Tr})+\Mr\varepsilon^{2}t.
\]
and we have
\[
\hat{X}^{n}(v^{1}+c)+\hat{X}^{n-1}(\hat{X}^{\Tr})\lesssim \varepsilon t+\Mr\varepsilon^{2}t .
\]

In view of the transport equation for $\hat{T}^{2}$ \eqref{eq: formulas to control the geometry}, we also have
\[
L\hat{X}^{n-1}(\hat{T}^{2})=\hat{X}^{n}(v^{1}+c)+\Mr\varepsilon^{2}t,
\]
integrating $L\hat{X}^{n-1}(\hat{T}^{2})$ from $\delta$ to $t$,
\[
\hat{X}^{n-1}(\hat{T}^{1})(t,u,\theta)-\hat{X}^{n-1}(\hat{T}^{2})(\delta, u, \theta)-\int_{\delta}^{t}\hat{X}^{n}(v^{1}+c)(\tau, u, \theta)d\tau=\Mr\varepsilon^{2}t^{2}.
\]
Since $\hat{X}^{\Tr}=-\frac{\hat{T}^{2}}{\kappar}$, we have
\[
|\hat{X}^{n}(v^{1}+c)|(t,u,\theta)\leq \frac{|\hat{X}^{n-1}(\hat{T}^{2})|(\delta, u, \theta)}{t}+\frac{1}{t}\int_{\delta}^{t}|\hat{X}^{n}(v^{1}+c)|(\tau, u, \theta)d\tau+\Mr\varepsilon^{2}t+\varepsilon t.
\]
By Gronwall's inequality, we have
\[
|\hat{X}^{n}(v^{1}+c)|(t,u,\theta)\leq \frac{|\hat{X}^{n-1}(\hat{T}^{2})|(\delta, u, \theta)}{\delta}+\Mr\varepsilon^{2}t+\varepsilon t.
\]
which implies that $|Z^{\beta}T(v^{1}+c)|\lesssim \varepsilon+\Mr\varepsilon^{2}t+\varepsilon t\lesssim \varepsilon $ providing that $\varepsilon$ is sufficiently small.

\item The case $Z^{\alpha}(v^{1}+c), Z^{\alpha}\not=\hat{X}^{\alpha}, 2\leq |\alpha|\leq \Ninf$. In view of commutator formula \eqref{eq: formulas to control the geometry} for $[T, \hat{X}]$,
\[
\begin{split}
Z^{\alpha_{1}}TZ^{\alpha_{2}}(v^{1}+c)&=Z^{\alpha_{1}+\alpha_{2}}T(v^{1}+c)+\sum_{\beta_{1}+\beta_{2}<\alpha_{2}}Z^{\beta_{1}}(\zeta+\eta)\hat{X}Z^{\beta_{2}}T(v^{1}+c)\\
&=Z^{\alpha_{1}+\alpha_{2}}T(v^{1}+c)+\Mr\varepsilon^{2}t^{2}.
\end{split}
\]
Therefore it suffices to close the bound for $Z^{\beta}T(v^{1}+c)$ with $m=|\beta|\leq \Ninf-1$. Since
\[
\begin{cases}
\hat{X}=-\hat{T}^{1}\Xr-\frac{\hat{T}^{2}}{\kappar}\Tr,\\
T=\kappa \hat{T}^{2}\Xr-\frac{\kappa}{\kappar}\hat{T}^{1}\Tr.
\end{cases}
\]
we can rewrite $Z^{\beta}T(v^{1}+c)$ as
\[
(Z_{1}^{\Tr}\Tr+Z_{1}^{\Xr}\Xr)\cdots (Z_{m}^{\Tr}\Tr+Z_{m}^{\Xr}\Xr)(T^{\Xr}\Xr+T^{\Tr}\Tr)(v^{1}+c),
\]
the summation of which consists of terms like
\[
\Zr_{1}^{\beta_{1}}(f_{1})\cdots \Zr^{\beta_{n}}_{n}(f_{n})\Zr^{\alpha_{1}}(v^{1}+c), \sum_{k=1}^{n}|\beta_{k}|+|\alpha_{1}|=n, |\alpha_{1}|\geq 1.
\]
where $f_{i}\in \{\hat{X}^{\Xr}, \hat{X}^{\Tr}, T^{\Xr}, T^{\Tr}\}$. The terms can be divided into three cases:
\begin{itemize}
\item main term: $\beta_{1}=...=\beta_{m}=0, f_{1}=Z_{1}^{\Zr_{1}}, ..., f_{m}=Z_{m}^{\Zr_{m}}$ 
\[
|Z_{1}^{\mathring{Z}_{1}}\cdots Z_{m}^{\mathring{Z}_{m}}T^{\Tr}\mathring{Z}_{1}\cdots \mathring{Z}_{m}\Tr(v^{1}+c)|\lesssim \varepsilon t.
\]
\item {\bf linear error term}: $Z^{\beta}(T^{\Tr})\Tr(v^{1}+c)$. 
\item nonlinear error terms:  Note that $\Zr^{\alpha_{1}}(v^{1}+c)\not=\Tr(v^{1}+c)$ and we have
\[
|\Zr_{1}^{\beta_{1}}(f_{1})\cdots \Zr^{\beta_{m}}_{m}(f_{m})\Zr^{\alpha_{1}}(v^{1}+c)|\lesssim \Mr\varepsilon^{2} t.
\]
\end{itemize}
Note that the rough bound $\|Z^{\beta}(T^{\Tr})\Tr(v^{1}+c)\|_{L^{\infty}(\Sigma_{t})}\lesssim \Mr\varepsilon t$ for {\bf linear error term} is insufficient to close the $(\mathbf{B}_{\infty})$ assumption for $v^{1}+c$. In view of \eqref{eq: additional vanishing}, $|\Tr(v^{1}+c)+1|\lesssim \varepsilon t$, we can write {\bf linear error term}  as
\[
Z^{\beta}(T^{\Tr})\Tr(v^{1}+c)=-Z^{\beta}(T^{\Tr})+\Mr\varepsilon^{2}t,
\]
and we have
\[
Z^{\beta}T(v^{1}+c)+Z^{\beta}(T^{\Tr})\lesssim \varepsilon t+ \Mr\varepsilon^{2}t.
\]
In view of the transport equation for $\kappa$ and $\hat{T}^{1}$ \eqref{eq: formulas to control the geometry}, we also have
\[
L(Z^{\beta}(\kappa\hat{T}^{1}))=Z^{\beta}T(v^{1}+c)+\Mr\varepsilon^{2}t, Z^{\beta}(\kappa\hat{T}^{1})=-Z^{\beta}(\kappa)+\Mr\varepsilon^{2}t^{2}.
\]
Integrating $LZ^{\beta}(\kappa\hat{T}^{1})$ from $\delta$ to $t$,
\[
Z^{\beta}(\kappa\hat{T}^{1})(t,u,\theta)-Z^{\beta}(\kappa\hat{T}^{1})(\delta, u, \theta)-\int_{\delta}^{t}Z^{\beta}T(v^{1}+c)(\tau, u, \theta)d\tau=\Mr\varepsilon^{2}t^{2}.
\]
Since $T^{\Tr}=-\frac{\kappa}{\kappar}\hat{T}^{1}$, we can obtain
\[
|Z^{\beta}T(v^{1}+c)|(t,u,\theta)\leq \frac{|Z^{\beta}(\kappa)|(\delta, u, \theta)}{t}+\frac{1}{t}\int_{\delta}^{t}|Z^{\beta}T(v^{1}+c)|(\tau, u, \theta)d\tau+\Mr\varepsilon^{2}t+\varepsilon t.
\]
By Gronwall's inequality, we have
\[
|Z^{\beta}T(v^{1}+c)|(t,u,\theta)\leq \frac{|Z^{\beta}(\kappa)|(\delta, u, \theta)}{\delta}+\Mr\varepsilon^{2}t+\varepsilon t.
\]
which implies that $|Z^{\beta}T(v^{1}+c)|\lesssim \Mr\varepsilon^{2}t+\varepsilon t\lesssim \varepsilon t$ providing that $\varepsilon$ is sufficiently small.

\item The case $LZ^{\alpha}(\wb), |\alpha|\leq \Ninf-1$. We commute $Z^{\alpha}, 1\leq |\alpha|\leq \Ninf-1$ with the equation 
\[L(\underline{w})=-c\frac{T(\underline{w})}{\kappa}(\hat{T}^{1}+1)+\frac{1}{2}c\frac{T(\psi_{2})}{\kappa}+\frac{1}{2}c\hat{X}(\psi_{2})-c\hat{X}(\underline{w})\hat{X}^{1}\]
to conclude that $\|LZ^{\alpha}(\underline{w})\|_{L^{\infty}(\Sigma_{t})}\lesssim \varepsilon$ providing that $\varepsilon$ is sufficiently small.
\end{itemize}

{\bf This closes the $(\mathbf{B}_{\infty})$ assumption \eqref{assumption: B_infty} and we have finished the proof of Theorem \ref{theorem: A priori Energy estimate}}. 

In summary, we have 
\begin{proposition}
For any multi-index $\alpha$ with $|\alpha|\leqslant \Ninf - 1, |\beta|\leq \Ninf, |\gamma|\leq \Ninf-2$, for all $t\in [\delta,t^*]$, for all $\psi,\psi'\in \{\wb,w,\psi_2\},  Z\in \{\hat{X}, T\}$ except for the case $(\alpha, \psi')=(0,\wb)$, we have (independent of $\delta$)
\begin{equation}\label{eq:basic L estimates}
\begin{cases}
\|T(\wb)+\frac{2}{\gamma+1}\|_{L^{\infty}(\Sigma_{t}^{u^{*}})}\lesssim \varepsilon t, 
\|L Z^\alpha \psi \|_{L^\infty(\Sigma_t^{u^{*}})}+\|\Xh Z^\alpha \psi \|_{L^\infty(\Sigma_t^{u^{*}})}+t^{-1}\|T Z^\alpha \psi' \|_{L^\infty(\Sigma_t^{u^{*}})}\lesssim \varepsilon,\\
\|Z^{\alpha}(\hat{T}^{1}+1)\|_{L^{\infty}(\Sigma_{t}^{u^{*}})}\lesssim \varepsilon^{2}t^{2}, \|Z^{\alpha}(\hat{T}^{2})\|_{L^{\infty}(\Sigma_{t}^{u^{*}})}\lesssim \varepsilon t, \|Z^{\alpha}(\kappa-t)\|_{L^{\infty}(\Sigma_{t}^{u^{*}})}\lesssim \varepsilon t^{2},\\
\|Z^{\gamma}\big(\chi\big)\|_{L^{\infty}(\Sigma_{t}^{u^{*}})}\leq \Mr\varepsilon, \|Z^{\gamma}\big(\slashed{g}-1\big)\|_{L^{\infty}(\Sigma_{t}^{u^{*}})}\lesssim \varepsilon t,\\
\|Z^{\gamma}\big(\zeta\big)\|_{L^{\infty}(\Sigma_{t}^{u^{*}})}+\|Z^{\alpha}\big(\eta\big)\|_{L^{\infty}(\Sigma_{t}^{u^{*}})}\leq \Mr\varepsilon t, \|Z^{\gamma}\|\big(\Xi\big)\|_{L^{\infty}(\Sigma_{t}^{u^{*}})}\leq \Mr\varepsilon t^{2},\\
\|\Zr^{\alpha}\Xr(v^{1}+c)\|_{\Sigma_{t}^{u}}\lesssim \varepsilon t, \|\Zr^{\alpha}\Big(T(v^{1}+c)+1\Big)\|_{\Sigma_{t}^{u}}\lesssim \varepsilon t,\\
\|Z^{\beta}\Omega\|_{L^{\infty}(\Sigma_{t}^{u^{*}})}\lesssim \varepsilon.
\end{cases}
\end{equation}
\end{proposition}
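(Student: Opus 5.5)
The summary proposition collects, with $\Mr$ replaced by absolute constants, the bounds obtained at the close of the bootstrap. The plan is to run the continuity argument on $t\in[\delta,t^*]$ once more and read off each line of \eqref{eq:basic L estimates} from the improved bounds. The argument rests on three facts already established.

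First, the bootstrap is closed. Sections \ref{section4}--\ref{section6} recover $(\mathbf{B}_{2})$, $(\mathbf{B}_{\infty})$ and $(\mathbf{B}_{\text{diffeomorphism}})$ with constants independent of $M$, once $\Mr\varepsilon$ is small. Hence the set of times on which the ansatz holds is open, closed, and contains $\delta$ by $(\mathbf{I}_{2})$ and $(\mathbf{I}_{\infty})$. So the ansatz holds on all of $[\delta,t^*]$ with a fixed $M$, and that $M$ depends only on the data constants $C_0$ and not on $\delta$.

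With $M$ fixed, the lines of \eqref{eq:basic L estimates} come from the following sources.

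The bounds for $L Z^\alpha\psi$, $\Xh Z^\alpha\psi$ and $t^{-1}TZ^\alpha\psi'$ are exactly the improved $(\mathbf{B}_{\infty})$ obtained in Section \ref{section6}. The exceptional pair $(\alpha,\psi')=(0,\wb)$ is the one not covered there. For that pair we use \eqref{eq: bound for T(wb)} instead. Its constant improves from $\Mr$ to absolute once we integrate $LT(\wb)$, now that $|LT(\wb)|\lesssim\varepsilon$.

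The frame bounds on $\hat T^1+1$, $\hat T^2$ and $\kappa-t$ come from \eqref{eq: higher order control of the frame}, together with $\kappar=t$. The bounds on $\chi$, $\zeta$, $\eta$, $\slashed g-1$ and $\Xi$ come from \eqref{eq: L infty control of the geometry}. Both sets were derived by integrating the transport equations of \eqref{eq: formulas to control the geometry} along $L$. Rerunning those integrations with the improved $L^\infty$ inputs (now $\lesssim\varepsilon$ in place of $M\varepsilon$) replaces $P(M)$ by a constant wherever the paper writes $\lesssim$. Where the statement keeps $\Mr$, the constant is simply left as is.

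The additional vanishing of $\Zr^\alpha\Xr(v^1+c)$ and $\Zr^\alpha(T(v^1+c)+1)$ is \eqref{eq: additional vanishing}. It is inherited from the Type II analysis of \cite{Luo-YuRare1}.

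The vorticity bound follows from \eqref{close of L2 for vorticity}, the Sobolev inequality \eqref{eq: Sobolev inequality}, and the corollary following it. These give $\|\Zr^\beta\Omega\|_{L^\infty}\lesssim\varepsilon$ for $|\beta|\le\Ninf$. To convert $\Zr$-derivatives into $Z$-derivatives we expand $\Xh$ and $T$ in the second frame, using $\Xh=-\hat T^1\Xr-\frac{\hat T^2}{\kappar}\Tr$ and $T=\kappa\hat T^2\Xr-\frac{\kappa}{\kappar}\hat T^1\Tr$. The coefficients are bounded, with $\Zr$-derivatives of size $\lesssim 1$ or $\varepsilon$, so the conversion costs nothing.

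The main obstacle is bookkeeping: the orders of regularity and the independence from $\delta$. The ranges $|\alpha|\le\Ninf-1$, $|\beta|\le\Ninf$ and $|\gamma|\le\Ninf-2$ must match the orders at which each preceding estimate was closed. The step I would check most carefully is the conversion of the top-order $\Xh^{n}(v^1+c)$ bound of Section \ref{section6}. There the Gronwall step produced a term $|\Xh^{n-1}\hat T^2|(\delta)/\delta$, which has to be controlled by $(\mathbf{I}_{\infty})$ ($\lesssim\varepsilon\delta/\delta$) uniformly in $\delta$. Once this and the analogous $\kappa$ term are verified, every constant is independent of $\delta$, and the proposition follows.
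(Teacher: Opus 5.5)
Your proposal is correct and follows the paper's route. The proposition is simply the collection of bounds obtained once $(\mathbf{B}_2)$, $(\mathbf{B}_\infty)$ and $(\mathbf{B}_{\text{diffeomorphism}})$ are closed in Sections 4--6 with $M$ fixed independently of $\delta$. You trace each line to its source correctly, including the separate $T(\wb)$ bound for the excluded pair, the frame-expansion conversion for $\Omega$, and the $\delta$-uniformity of the $|\Xh^{n-1}\hat T^{2}|(\delta)/\delta$ and $|Z^\beta\kappa|(\delta)/\delta$ terms. Re-running the transport integrations to trade $\Mr$ for absolute constants is harmless but unnecessary: once $M$ is fixed, $\Mr$ is itself a $\delta$-independent constant.
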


\section{The existence of rarefaction waves connected to the data on the right}
For all $\delta>0$ and the data constructed on $\Sigma_\delta^{u^{*}}$, we have a unique solution $U_{\delta}$ defined on $\mathcal{D}(\delta)$. For the sake of simplicity, we drop the dependence on $\delta$ and write the solution as $U$.

\subsection{The region of convergence}\label{section:region of convergence}

For the constant state {\color{black}$\Ur_{r}$} on $x_1\geqslant 0$, the corresponding characteristic boundary of the future development is a flat hypersurface ${C}^{\rm cst}_{0}$ inside $\mathbb{R}^3$, which is given by
\[
\frac{x_{1}}{t}=\mathring{v}_{r}^{1}+\mathring{c}.
\]


We compute the null vector field $L^{\rm cst}$ and the acoustical function $u^{\rm cst}$ associated to the constant state {\color{black}$(\mathring{v}_r,\mathring{c}_r)$}. In view of the associated 1-dimensional rarefaction wave computed in \eqref{eq: 1-D rarefaction wave}, we have
\[L^{\rm cst}=\partial_t+\frac{x_1}{t}\partial_1+\mathring{v}_{r}^{2}\partial_{2}, \quad u^{\rm cst}=(\mathring{v}_{r}^{1}+\mathring{c})-\frac{x_1}{t}. \] 
Moreover, in this case, the acoustical function $u$ on $\Sigma_\delta$ is defined by $u^{\rm cst}=(\mathring{v}_{r}^{1}+\mathring{c})-\frac{x_1}{\delta}$.

We now compare $L$ and $L^{\rm cst}$. Since $L^i = L(x^i)=v^i-c\Th^i$, we can compare $L$ and $L^{\rm cst}$ as follows
\[L-L^{\rm cst}=\big(v^1+c-\frac{x_1}{t}\big)\partial_1+\big(v^2-\mathring{v}_{r}^{2}-c\Th^2\big)\partial_2-c\big(\Th^1+1\big)\partial_1.\]

Let $\slashed{x}_{1}$ be the restriction of $x_{1}$ on $C_{0}^{t^{*}}$, then in acoustical coordinate $(t, \vartheta)$, 
\[
\slashed{x}_{1}=\int_{0}^{t}(v^{1}-c\hat{T}^{1})(\tau, \vartheta)d\tau.
\]
Therefore, we have
\[ \big(v^1+c - \frac{\slashed{x}_1}{t}\big)(t,\vartheta) = \frac{1}{t}\int_0^t (v^1+c)(t,\vartheta) - (v^1+c)(\tau,\vartheta)d\tau + {\color{black}\frac{1}{t}\int_0^t \big(c\cdot (\Th^1+1)\big)(\tau,\vartheta)d\tau}. \]
Therefore, by $|\hat{T}^{1}+1|\lesssim \varepsilon^{2}t^{2}$ on $C_{0}^{u^{*}}$ and the intermediate value theorem  we have 
\[ \|v^1+c-\frac{x_1}{t}\|_{L^\infty(C_0 \cap \Sigma_t)}\lesssim \varepsilon t. \]
Note that in acoustical coordinate $(t, u, \vartheta)$, $\frac{\partial}{\partial u}=T+\Xi\slashed{g}\hat{X}$, in view of \eqref{eq:basic L estimates} and $\Tr\big(v^1+c-\frac{x_1}{t}\big)=\Tr(v^1+c)+1$, we have 
\[\|\frac{\partial}{\partial u}\big(v^1+c-\frac{x_1}{t}\big)\|_{L^\infty(\Sigma_t^{u^{*}})}\lesssim \varepsilon t.\]
We can integrate the above inequality in $u$ form $C_0$ and this leads to
\[\|v^1+c-\frac{x_1}{t}\|_{L^\infty(\Sigma_t^{u^{*}})}\lesssim \varepsilon t.\]
In view of the pointwise bounds on $\psi_2, \Th^2$ and $\Th^1+1$ in \eqref{eq:basic L estimates},
we conclude that
\begin{equation}\label{eq: compare L Lcst}
	\|(L-L^{\rm cst})(x_1)\|_{L^\infty(\Sigma_t)} \lesssim \varepsilon t, \quad \|(L-L^{\rm cst})(x_2)\|_{L^\infty(\Sigma_t)} \lesssim \varepsilon,
\end{equation}
on the entire $\mathcal{D}(\delta)(t^{*}, u^{*})$.

Now we compare the function $u$ with its counterpart $u^{\rm cst}$. The exact construction of $u$ on $\Sigma_{\delta}^{u^{*}}$ is given by in 3.1 of \cite{Luo-YuRare2},
\[
u(x_{1}, x_{2})=-\frac{x_{1}}{\delta}+\frac{\slashed{x}_{1}(\delta, \vartheta(\delta, x_{2}))}{\delta}=-\frac{x_{1}}{\delta}+\int_{0}^{\delta} (v^{1}-c\hat{T}^{1})(\tau, \vartheta(\delta, x_{2}))d\tau
\]

First, since $u\big|_{S_{\delta,0}}\equiv 0$ and the data is $O(\varepsilon)$-close to the constant state {\color{black}$(\mathring{v}_r,\mathring{c}_r)$},  we have on $S_{\delta, 0}$
\[ 
\begin{split}
u^{\rm cst}-u&= (\mathring{v}_{r}^{1}+\mathring{c}_{r})-\frac{x_{1}}{\delta}-(-\frac{x_{1}}{\delta}-\frac{1}{\delta}\int_{0}^\delta I(\tau,x_2)d\tau)\\
&= (\mathring{v}_{r}-v(\delta, \vartheta(\delta, x_{2})))+(\mathring{c}_{r}-c(\delta, \vartheta(\delta, x_{2})))+\frac{1}{\delta}\int_{0}^{\delta}\Big((v^{1}+c)(\delta, \vartheta(\delta, x_{2}))-(v^{1}+c)(\tau, \theta(\delta, x_{2}))\Big)d\tau\\
&+\frac{1}{\delta}\int_0^\delta \big(c\cdot (\Th^1+1)\big)(\tau,\vartheta)d\tau\lesssim \varepsilon+\varepsilon t\lesssim \varepsilon.
\end{split}
\]
In view of \eqref{eq:basic L estimates}, we have on $\Sigma_{\delta}^{u^{*}}$ that
\begin{equation}
\nabla\big[u-\big(k^{\rm cst}-\frac{x_1}{\delta}\big)\big] = -\frac{1}{\delta}\big(0,\int_{0}^{\delta}X(\psi_{1}+c\cdot\hat{T}^{1})(\tau, \theta(\delta, x_{2}))\frac{\partial \theta}{\partial x_{2}}(\delta, x_{2})d\tau\big) \lesssim O(\varepsilon).
\end{equation}
for all $u\in [0, u^*]$.  
Furthermore, since $L^{\rm cst}(u^{\rm cst}) = 0$, by \eqref{eq: compare L Lcst}  we have 
\begin{equation}
	L(u^{\rm cst}) = (L-L^{\rm cst})\Big((\mathring{v}_{r}^{1}+\mathring{c})-\frac{x_1}{t}\Big)=-\frac{(L-L^{\rm cst})(x_{1})}{t} \lesssim O(\varepsilon).
\end{equation}
In view of these estimates, We conclude that
\begin{equation}\label{eq: compare u ucst}
	\|u - u^{\rm cst}\|_{L^\infty(\Sigma_t^{u^{*}})} \lesssim \varepsilon.
\end{equation}

\begin{center}
\includegraphics[width=3.2in]{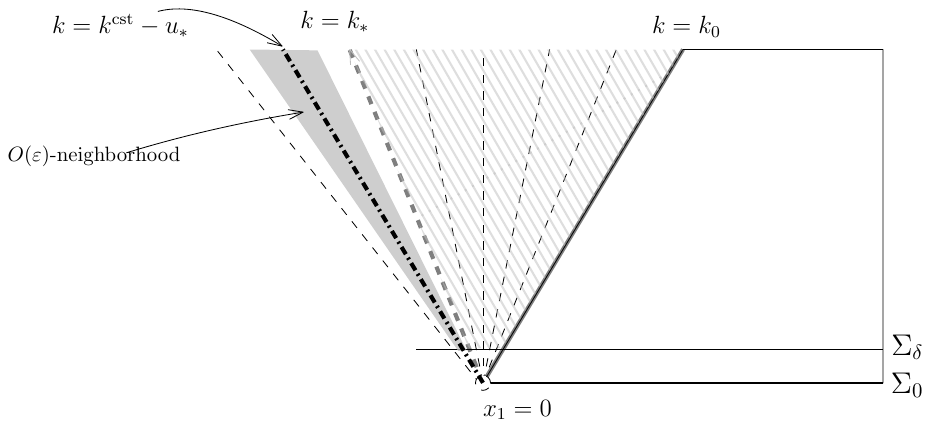}
\end{center}

By definition, $C_{u^*}$ is the characteristic hypersurface emanated from the level set  $u=u^*$ inside $\Sigma_\delta$ along the direction of $L$. {\color{black}More precisely, we consider the
	vector field $L$ along $S_{\delta,u^*}$ and extend each vector to a geodesic to generate $C_{u^*}$. Similarly, the hypersurface $C^{\rm cst}_{u^*}$ is generated by extending each vector $L^{\rm cst}$ along the surface $u^{\rm cst}=u^*$, i.e., the circle $\frac{x^1}{\delta} = \mathring{v}_{r}^{1}+\mathring{c}_{r} - u^*$, to a geodesic of the background solution which corresponds to the constant state $(\mathring{v}_r,\mathring{c}_r)$ on the right.}
In the above picture, it is  depicted as the dash-dotted line and it is the center in the grey region. It can be written down explicitly as follows:
 \[C^{\rm cst}_{u^*}=\big\{(t,x_1,x_2)\big|\frac{x_1}{t}=\mathring{v}_{r}^{1}+\mathring{c}_{r} - u^*, t\in [\delta, t^*], x_2\in [0,2\pi]\big\}.\]

Note that \eqref{eq: compare L Lcst} and \eqref{eq: compare u ucst} imply that $C_{u^*}$ is in the $O(\varepsilon t)$-neighborhood (depicted as the grey region in the picture) of $C^{\rm cst}_{u^*}$, i.e., on each $\Sigma_t$, the distance between $C^{\rm cst}_{u^*}$ and {\color{black}$C_{u^*}$} is bounded above by {\color{black}$A_0 \varepsilon t$} where $A_0$ is a universal constant.
We define 
\begin{equation}\label{def: varepsilon0}
\varepsilon_0= A_0 \varepsilon. 
\end{equation} 
We also define
\[k_*=(\mathring{v}_{r}^{1}+\mathring{c})-u^*+\varepsilon_0.\]
For all $\delta>0$, we then consider the following region\footnote{While it is more natural to consider the region $\mathcal{D}(\delta)$ directly, its left boundary depends on $\delta$. To apply the Arzelà–Ascoli type argument on a fixed domain, we therefore define $\mathcal{W}$ as follows.}:
\[\mathcal{W}_\delta=\big\{(t,x_1,x_2)\in \mathcal{D}(\delta)\big| \delta\leqslant t\leqslant t^*,  \frac{x_1}{t}\geqslant k_*\big\}.\]
This is the shaded region in the above picture. It is clear that $\mathcal{W}_\delta$ is a compact domain. We also have $\mathcal{W}_\delta\subset \mathcal{D}_{\delta'}$\footnote{Note that $u=u^{*}\implies u^{\rm cst}\geq u^{*}-\varepsilon_{0}$, which ensures that $C_{u}, 0\leq u\leq u^{*}$ are contained $\mathcal{W}_{\delta}$.} for all $\delta\geqslant \delta'$. 
We define
\[\mathcal{W}=\bigcup_{0<\delta<\frac{1}{2}, \delta\ \text{is dyadic}}\mathcal{W}_\delta.\]
 In the rest of the section, we will construct centered rarefaction waves on $\mathcal{W}$ associated to the solution $U_{r}$ on $\mathcal{D}_0$. Since for any dyadic $\delta$, we have derived $\delta$ independent estimates \eqref{eq:basic L estimates} for $U_{\delta}$. By repeating Arzelà–Ascoli theorem and diagonal argument as \cite{Luo-YuRare2}, there exists $N_{3}\leq \Ntop$ such that 
 \[
 \{c_{\delta}, v^{1}_{\delta}, v^{2}_{\delta}, \Omega_{\delta}, \hat{T}^{1}_{\delta}, \hat{T}^{2}_{\delta}, \kappa_{\delta}, \chi_{\delta}, \zeta_{\delta}, \eta_{\delta}, \Xi_{\delta}, u_{\delta}, \vartheta_{\delta}\}
 \]
converges to $C^{0}\Big((0,t^{*}]; C^{N_{3}}(\mathcal{W}\cap \Sigma_{t})\Big)$ functions 
\[
 \{c, v^{1}, v^{2}, \Omega, \hat{T}^{1}, \hat{T}^{2}, \kappa, \chi, \zeta, \eta, \Xi, u, \vartheta\}
 \]
defined on $\mathcal{W}$. In particular we have,
\begin{proposition}
For the solution $U$ on $\mathcal{W}$ together with associated geometric quantities, for any multi-index $\alpha$ with $|\alpha|\leqslant N_{3} - 1, |\beta|\leq N_{3}, |\gamma|\leq N_{3}$, for all $t\in [\delta,t^*]$, for all $\psi,\psi'\in \{\wb,w,\psi_2\},  Z\in \{\hat{X}, T\}$ except for the case $(\alpha, \psi')=(0,\wb)$, we have 
\begin{equation}\label{eq: L estimates in W}
\begin{cases}
\|T(\wb)+\frac{2}{\gamma+1}\|_{L^{\infty}(\Sigma_{t}^{u^{*}})}\lesssim \varepsilon t, 
\|L Z^\alpha \psi \|_{L^\infty(\Sigma_t^{u^{*}})}+\|\Xh Z^\alpha \psi \|_{L^\infty(\Sigma_t^{u^{*}})}+t^{-1}\|T Z^\alpha \psi' \|_{L^\infty(\Sigma_t^{u^{*}})}\lesssim \varepsilon,\\
\|Z^{\alpha}(\hat{T}^{1}+1)\|_{L^{\infty}(\Sigma_{t}^{u^{*}})}\lesssim \varepsilon^{2}t^{2}, \|Z^{\alpha}(\hat{T}^{2})\|_{L^{\infty}(\Sigma_{t}^{u^{*}})}\lesssim \varepsilon t, \|Z^{\alpha}(\kappa-t)\|_{L^{\infty}(\Sigma_{t}^{u^{*}})}\lesssim \varepsilon t^{2},\\
\|Z^{\gamma}\big(\chi\big)\|_{L^{\infty}(\Sigma_{t}^{u^{*}})}\leq \Mr\varepsilon, \|Z^{\gamma}\big(\slashed{g}-1\big)\|_{L^{\infty}(\Sigma_{t}^{u^{*}})}\lesssim \varepsilon t,\\
\|Z^{\gamma}\big(\zeta\big)\|_{L^{\infty}(\Sigma_{t}^{u^{*}})}+\|Z^{\alpha}\big(\eta\big)\|_{L^{\infty}(\Sigma_{t}^{u^{*}})}\leq \Mr\varepsilon t, \|Z^{\gamma}\|\big(\Xi\big)\|_{L^{\infty}(\Sigma_{t}^{u^{*}})}\leq \Mr\varepsilon t^{2},\\
\|\Zr^{\alpha}\Xr(v^{1}+c)\|_{\Sigma_{t}^{u}}\lesssim \varepsilon t, \|\Zr^{\alpha}\Big(T(v^{1}+c)+1\Big)\|_{\Sigma_{t}^{u}}\lesssim \varepsilon t,\\
\|Z^{\beta}\Omega\|_{L^{\infty}(\Sigma_{t}^{u^{*}})}\lesssim \varepsilon.
\end{cases}
\end{equation}
\end{proposition}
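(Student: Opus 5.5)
The plan is to obtain the bounds \eqref{eq: L estimates in W} by passing to the limit $\delta\to 0$ in the $\delta$-independent estimates \eqref{eq:basic L estimates}. These estimates were established for each approximate solution $U_\delta$ on $\mathcal{D}(\delta)(t^*,u^*)$. We first fix a dyadic $\delta_0$ and work on the compact set $\mathcal{W}_{\delta_0}$, which is contained in $\mathcal{D}(\delta)$ for every dyadic $\delta\le\delta_0$. On this set, \eqref{eq:basic L estimates} gives uniform bounds on $Z^{\alpha}$-derivatives of
\[
\{c_\delta, v_\delta, \Omega_\delta, \hat{T}_\delta, \kappa_\delta, \chi_\delta, \zeta_\delta, \eta_\delta, \Xi_\delta\}
\]
up to order about $\Ninf$. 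The closed assumption $(\mathbf{B}_{\text{diffeomorphism}})$ and \eqref{eq: estimate for Jacobi} convert these into uniform bounds on the Cartesian derivatives on $\Sigma_t\cap\mathcal{W}_{\delta_0}$. The only losses are powers of $t^{-1}\ge t^{*-1}$ coming from $T$ versus $\partial_{x_1}$. Since $t\ge\delta_0$ here, these powers are harmless constants. The $L$-bounds control time derivatives, so the family is equicontinuous in $(t,x)$.

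Next we extract the limit. By Arzel\`a--Ascoli, each of the listed quantities, as well as $u_\delta$ and $\vartheta_\delta$ (whose gradients are controlled by $(\mathbf{B}_{\text{diffeomorphism}})$), has a subsequence converging in $C^{0}([\delta_0,t^*];C^{N_3}(\Sigma_t\cap\mathcal{W}_{\delta_0}))$. Here $N_3$ is a few derivatives below $\Ninf$, with the drop accounting for compactness. A diagonal argument over dyadic $\delta_0\to0$ then yields limits on all of $\mathcal{W}$.

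Then we pass the bounds to the limit. Every inequality in \eqref{eq:basic L estimates} has the form $|F_\delta(t,u,\vartheta)|\le C\,\varepsilon^{a}t^{b}$, with $C$ independent of $\delta$. Here $F_\delta$ is a polynomial expression in $N_3$-jets of the converging quantities, together with frame coefficients built from $\hat{T}_\delta$ and $\kappa_\delta$. We make two observations:
\begin{itemize}
\item the vector fields $Z_\delta\in\{\hat{X}_\delta,T_\delta\}$ and $L_\delta$ have coefficients that converge in $C^{N_3}$;
\item pointwise convergence then gives $F_\delta\to F$, the same expression built from the limiting quantities and the limiting frame.
\end{itemize}
Taking pointwise limits at fixed $(t,x)$ preserves non-strict inequalities, so each bound holds for the limit with the same constant. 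The weights $t$ and $t^2$ are fixed functions, so they are unaffected by the limit.

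The main obstacle is identifying the limiting geometric objects correctly, so that the inequalities genuinely concern the limiting solution. The limit $\kappa$ must equal $g(T,T)^{1/2}$ of the limit foliation, $\hat{T}$ must be its unit normal, $\chi,\zeta,\eta$ must be its connection coefficients, and $\Omega$ must be the specific vorticity of the limit velocity. To establish this, we will pass to the limit in the structure equations \eqref{eq: formulas to control the geometry}, in $L=B-c\hat{T}$, and in $B\Omega=0$. All of these relations involve at most one derivative of quantities converging in $C^{N_3}$, so they pass to the limit. Uniqueness for the transport ODEs along $L$ then identifies the limit objects.

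One further point needs care. The index ranges in the statement ($|\beta|\le N_3$ for $\Omega$ and $|\gamma|\le N_3$ for $\chi$, etc.) must lie within the range where convergence holds. We will choose $N_3$ small enough relative to $\Ninf$ that every index appearing in the statement stays within the convergence range; if needed, this requires reducing $N_3$ once more.
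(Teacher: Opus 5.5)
Your proposal is correct and matches the paper's argument: the paper applies Arzel\`a--Ascoli and a diagonal argument over dyadic $\delta$ (citing \cite{Luo-YuRare2}) to the $\delta$-independent bounds \eqref{eq:basic L estimates}, obtains convergence in $C^{0}\big((0,t^{*}];C^{N_{3}}(\mathcal{W}\cap\Sigma_{t})\big)$, and passes the non-strict inequalities to the limit, exactly as you do on the sets $\mathcal{W}_{\delta_0}$. Your extra discussion fills in details the paper leaves implicit: identifying the limiting frame, connection coefficients and vorticity (which is simply algebraic in $\nabla u$, $c$, $v$ on each $\Sigma_t$, e.g.\ $\hat{T}=\nabla u/|\nabla u|$ and $\kappa=|\nabla u|^{-1}$, so no ODE uniqueness is needed), and choosing $N_{3}$ sufficiently below $\Ninf$.
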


To resolve the initial singularity $\mathbf{S}_{*}$ with acoustical coordinate $(t, u, \vartheta)$, we still have to derive uniform estimates on $L^n(U)$ for sufficiently large $n$, which is absent in \eqref{eq: L estimates in W}.

\subsection{Retrieving the uniform bounds on $L$-derivatives}\label{Section:The precise bounds on L derivatives}

\subsubsection{An ODE systems for $L^{k+1}(\psi)$ and $L^{k}(\Omega)$}\label{section 7.2}

 The main idea in this subsection is to use the acoustical wave equations for $\psi\in \{\wb,w,\psi_2\}$ in the null frame $(\Lb,L,\Xh)$ and transport equation $B\Omega=0$ for $\Omega$. We will integrate along the $\Lb$ direction to retrieve the uniform bounds on $L^k(\psi)$ from $C_0$ where  $L^k(\psi)$ are uniformly bounded from the continuity across $C_0$. We will integrate along $c^{-1}\kappa B$ direction to retrieve the uniform bounds on $L^{k+1}(\Omega)$ from $C_{0}$ where $L^{k+1}(\Omega)$ are uniformly bounded from the continuity across $C_{0}$.

 Let $Z\in \{T, \hat{X}\}, \psi\in \{\wb, w, \psi_{2}\}, f\in \{\hat{T}^{1}, \hat{T}^{2}, \kappa\}$. We define the set $\mathbf{W}_{n,m}, n\geq 1, m\geq 1$ to be the set
 \[
 \mathbf{W}_{n,m}=\{Z^{\alpha}L^{k}(\psi), |\alpha|+k\leq n, k\leq m\}\cup \{Z^{\alpha}(f), |\alpha|\leq n\}.
 \]
 
 $\mathbf{V}_{n,m}, n\geq 1, m\geq 1$ to be the set
 \[
 \mathbf{V}_{n,m}=\mathbf{W}_{n,m}\cup \{Z^{\alpha}L^{k-1}(\Omega), k+|\alpha|=n, k\leq m\}.
 \]
 
We will use $\mathbf{P}_{n,m}$ to denote a general polynomial form the ring $\mathbb{R}[\mathbf{V}_{n,m}, c^{-1}, \rho, \rho^{-1}]$. We will use $\mathbf{P}^{w}_{n,m}$ to denote a general polynomial form the ring $\mathbb{R}[\mathbf{W}_{n,m}, c^{-1}, \rho, \rho^{-1}]$. Note that $\mathbf{P}_{n, m}^{w}$ can be written as $\mathbf{P}_{n,m}$, $\mathbf{P}_{n,m}$ can't be written as $\mathbf{P}_{n,m}^{w}$ in general.

The following examples help to elucidate the definition of the symbol $\mathbf{P}^{w}_{n,m}$ and $\mathbf{P}_{n,m}$.
\begin{itemize}
\item $\theta=-\hat{X}^{1}\hat{X}(\hat{X}^{2})+\hat{X}^{2}\hat{X}(\hat{X}^{1})\implies \theta=\mathbf{P}_{1,1}^{w}$.
\item $\chi=\hat{X}^{i}\hat{X}(v^{i})-c\hat{X}^{2}\hat{X}(\hat{X}^{1})+c\hat{X}^{1}\hat{X}(\hat{X}^{2})\implies \chi=\mathbf{P}_{1,1}^{w}$.
\item $\zeta=\kappa\Big(\frac{1}{2}\big(\hat{T}^{j}\hat{X}(v^{j})+\hat{X}^{i}\hat{T}(v^{i})\big)-\hat{X}(c)\Big)\implies \zeta=\mathbf{P}_{1,1}^{w}$.
\item $\eta=\frac{1}{2}\kappa\big(\hat{T}^{j}\hat{X}(v^{j})+\hat{X}^{i}\hat{T}(v^{i})\big)+c\hat{X}(\kappa)\implies \zeta=\mathbf{P}_{1,1}^{w}$.
\item $[L, \hat{X}]=-\chi\hat{X}, [L, T]=-(\zeta+\eta)\hat{X}\implies [L, Z]=\mathbf{P}_{1,1}^{w}Z, Z\in \{T, \hat{X}\}$.
\item $L(\hat{T}^{k})=\big(\hat{T}^{j}\hat{X}(\psi_{j})+\hat{X}(c)\big)\hat{X}^{k}\implies L(\hat{T}^{k})=\mathbf{P}_{1,1}^{w}$.
\item $L(\kappa)=-T(c)-\hat{T}^{i}T(\psi_{i})\implies L(\kappa)=\mathbf{P}_{1,1}^{w}$.
\item On the one hand, $\Omega=\mathbf{P}$ by definition. On the other hand, $\Omega=\mathbf{P}_{1,1}^{w}$ in view of \eqref{eq: reformulation of vorticity in the first null frame}.
\item On the one hand, $\mu\partial_{i}\Omega=\mathbf{P}_{1,1}$ by definition. On the other hand, $\mu\partial_{i}\Omega=\mathbf{P}_{2, 2}^{w}$ n view of \eqref{eq: reformulation of vorticity in the first null frame}.
\end{itemize}

 Roughly speaking, for both $\mathbf{P}_{n, m}$ and $\mathbf{P}_{n,m}^{w}$, $m$ counts the number of $L$ derivates. To be more precise, we have
 \begin{lemma}
 For $Z\in \{T, \hat{X}\}$, we have
 \begin{equation}\label{Formulas for P_n,m}
 \begin{cases}
 LZ^{\alpha}L^{k}(\psi)=Z^{\alpha}L^{k+1}(\psi)+\mathbf{P}^{w}_{|\alpha|+k, k},\\
 LZ^{\alpha}L^{k-1}(\Omega)=Z^{\alpha}L^{k}(\Omega)+\mathbf{P}_{|\alpha|+k-1, k},\\
 LZ^{\alpha}(f)=\mathbf{P}^{w}_{|\alpha|+1, 1}.
 \end{cases}
 \end{equation}
 which implies that
 \[
 L(\mathbf{P}^{w}_{n,m})=\mathbf{P}^{w}_{n+1, m+1}, Z(\mathbf{P}_{n,m}^{w})=\mathbf{P}^{w}_{n+1, m}.
 \]
 \end{lemma}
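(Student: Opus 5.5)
The plan is an induction on the number of derivatives. The main tools are the commutator formulas $[L,\hat{X}]=-\chi\hat{X}$ and $[L,T]=-(\zeta+\eta)\hat{X}$ from \eqref{eq: formulas to control the geometry}, together with the list of examples just above, which show that $\chi,\zeta,\eta,\theta$ are of the form $\mathbf{P}^{w}_{1,1}$.

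First I treat the case $k=0$ for $\psi$ and $f$. For $f\in\{\hat{T}^1,\hat{T}^2,\kappa\}$, the transport equations give $L(f)=\mathbf{P}^w_{1,1}$. I then commute $Z^\alpha$ through $L$ one vector field at a time:
\[
LZ^\alpha(f)=Z^\alpha L(f)+\sum Z^{\alpha_1}[L,Z]Z^{\alpha_2}(f).
\]
Each commutator equals $\mathbf{P}^w_{1,1}\hat{X}$. A $Z$-derivative of a $\mathbf{P}^w_{n,m}$ raises $n$ by one and leaves $m$ unchanged, since $Z$ of an element $Z^\beta L^j\psi$ or $Z^\beta f$ is again an element of the generating set. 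For $c^{-1},\rho,\rho^{\pm1}$, the Leibniz rule gives a polynomial in $Z(c)$, and $c$ is a linear combination of $\wb,w$. Counting derivatives then yields $\mathbf{P}^w_{|\alpha|+1,1}$.

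Next comes the general formula for $\psi$. I write
\[
LZ^\alpha L^k\psi=Z^\alpha L^{k+1}\psi+\sum_{\alpha_1+\alpha_2+1=|\alpha|} Z^{\alpha_1}\big(\mathbf{P}^w_{1,1}\,\hat{X} Z^{\alpha_2}L^k\psi\big).
\]
Distributing $Z^{\alpha_1}$ produces factors $Z^{\beta}(\chi,\zeta,\eta)$. These are $\mathbf{P}^w_{|\beta|+1,1}$, so they involve at most one $L$, and only through $L$ of $\psi$ or of $f$ inside $\zeta,\eta$. In the original formulas, $\zeta$ and $\eta$ involve $T(\psi_i)$ and $\hat{X}(\kappa)$, not $L$. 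The remaining factors are $Z^{\gamma}L^k\psi$ with $|\gamma|+k\le|\alpha|+k$. Hence the total is $\mathbf{P}^w_{|\alpha|+k,k}$, where I need $k\ge1$ for the $L$-count $\max(1,k)$ to equal $k$. The edge case $k=0$ needs a separate look, and I would simply check it using $m\ge1$ in the definition. The $\Omega$ formula is identical, except that the generating set now includes $Z^\beta L^{k-1}\Omega$, so the error lands in $\mathbf{P}_{|\alpha|+k-1,k}$. Total order $|\alpha|+k-1$ is consistent with $\mathbf{V}$ indexing $L^{k-1}\Omega$ at level $k$.

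Finally, the consequences. $L(\mathbf{P}^w_{n,m})=\mathbf{P}^w_{n+1,m+1}$ follows by Leibniz from the three formulas applied to each generator: $Z^\alpha L^k\psi$ goes to an element of $\mathbf{W}_{n+1,m+1}$ plus $\mathbf{P}^w_{n,m}$, and $f$ goes to $\mathbf{P}^w_{n+1,1}$. For the scalar coefficients, $L(c)$ and $L(\rho)$ are $L\psi$-type terms. The statement $Z(\mathbf{P}^w_{n,m})=\mathbf{P}^w_{n+1,m}$ is immediate, because $Z$ applied to a generator $Z^\alpha L^k\psi$ is just $Z^{\alpha'}L^k\psi$.

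The main obstacle is bookkeeping rather than analysis. I must make sure the commutator coefficients $\zeta$ and $\eta$ never secretly contain $L$-derivatives or vorticity when written in the first null frame. This holds because, by \eqref{eq: formulas to control the geometry}, they are expressed through $T\psi$, $\hat{X}\psi$, $\hat{X}\kappa$ and $\hat{T}^i$. The identity $\hat{T}(v^i)=\kappa^{-1}T(v^i)$ carries a $\kappa^{-1}$, which is harmless since $\zeta$ has a prefactor $\kappa$.
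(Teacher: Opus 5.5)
Your proposal is correct and is essentially the paper's proof. Both expand $[L,Z^{\alpha}]=\sum Z^{\alpha_1}[L,Z]Z^{\alpha_2}$ using $[L,\hat{X}]=-\chi\hat{X}$ and $[L,T]=-(\zeta+\eta)\hat{X}$ with $\chi,\zeta,\eta=\mathbf{P}^{w}_{1,1}$, then count indices; the paper dispatches the $\Omega$ case as ``exactly the same way.'' You additionally make explicit two things the paper leaves implicit: the Leibniz derivation of $L(\mathbf{P}^{w}_{n,m})=\mathbf{P}^{w}_{n+1,m+1}$, and the fact that the commutator coefficients contain no $L$-derivatives, no $\Omega$ and no $\kappa^{-1}$.

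One bookkeeping point should be stated explicitly. Your count $|\alpha|+k-1$ for the $\Omega$ error requires placing $Z^{\beta}L^{j}\Omega$ at first index $|\beta|+j$, which is how the paper uses it in practice (e.g.\ $(\zeta+\eta)\hat{X}(\Omega)=\mathbf{P}_{1,1}$). With the literal definition of $\mathbf{V}_{n,m}$ (condition $k+|\alpha|=n$), the error would only land in $\mathbf{P}_{|\alpha|+k,k}$.
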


 \begin{proof}
 From the definition of $\mathbf{V}_{n,m}$, it is clear that $Z(\mathbf{P}_{n,m})=\mathbf{P}_{n+1, m}$. Then we can check terms in \eqref{Formulas for P_n,m} one by one.
 \begin{itemize}
 \item The first term $LZ^{\alpha}L^{k}(\psi)$ can be treated as follows,
 \[
 \begin{split}
 LZ^{\alpha}L^{k}(\psi)&=Z^{\alpha}L^{k+1}(\psi)+\sum_{|\alpha_{1}|+|\alpha_{2}|\leqslant |\alpha|-1}Z^{\alpha_{1}}[L, Z]Z^{\alpha_{2}}L^{k}(\psi)\\
 &=Z^{\alpha}L^{k+1}(\psi)+\sum_{|\alpha_{1}|+|\alpha_{2}|\leqslant |\alpha|-1}Z^{\alpha_{1}}\mathbf{P}^{w}_{1,1}Z^{\alpha_{2}+1}L^{k}(\psi)\\
 &=Z^{\alpha}L^{k+1}(\psi)+\mathbf{P}^{w}_{|\alpha|+k, k}.
 \end{split}
 \]
 \item The second term can be treated exactly the same way as the first term.
 \item The third term $LZ^{\alpha}(f), f\in \{\hat{T}^{1}, \hat{T}^{2}, \kappa\}$ can be treated as follows,
 \[
 \begin{split}
 LZ^{\alpha}(f)&=Z^{\alpha}L(f)+\sum_{|\alpha_{1}|+|\alpha_{2}|\leqslant |\alpha|-1}Z^{\alpha_{1}}[L,Z]Z^{\alpha_{2}}(f)\\
 &=Z^{\alpha}\mathbf{P}_{1,1}+\sum_{|\alpha_{1}|+|\alpha_{2}|\leqslant |\alpha|-1}Z^{\alpha_{1}}\mathbf{P}^{w}_{1,1}Z^{\alpha_{2}+1}(f)\\
 &=\mathbf{P}^{w}_{|\alpha|+1, 1}.
 \end{split}
 \]
 \end{itemize}
 \end{proof}

 Now we can derive the ODE systems for $L^{k+1}(\psi)$ and $L^{k}(\Omega)$.
 \begin{lemma}
 Formally, for $k\geq 1$, we have the following system:
 \begin{equation}\label{ODE system for L^k+1(psi) and L^k(Omega)-basic}
 \begin{cases}
 c^{-1}\kappa B(L^{k}(\Omega))=-k\cdot L(c^{-1}\kappa)L^{k}(\Omega)+\mathbf{P}_{k,k},\\
 \Lb(L^{k+1}(\psi))=\mathbf{P}_{1, 1}L^{k+1}(\psi)+\mathbf{P}_{1,1}\sum_{|\beta|\leq 1}Z^{\beta}L^{k}(\Omega)+\kappa c \hat{X}^{2}L^{k}(\psi)+\mathbf{P}_{k+1, k}.
 \end{cases}
 \end{equation}
 \end{lemma}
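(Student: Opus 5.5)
We derive the two identities separately, by commuting $L^{k}$ through the transport equation $B\Omega=0$ and through the null-frame form \eqref{eq:wave operator in null frame} of the wave equations \eqref{eq: wave-transport system}. Throughout we use the bookkeeping rules \eqref{Formulas for P_n,m}. In particular we use $[L,Z]=\mathbf{P}^{w}_{1,1}Z$, $L(\mathbf{P}^{w}_{n,m})=\mathbf{P}^{w}_{n+1,m+1}$ and $Z(\mathbf{P}_{n,m})=\mathbf{P}_{n+1,m}$.

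\emph{Transport part.} Since $L=B-c\hat{T}$ and $\Lb=c^{-1}\kappa L+2T$, we have $c^{-1}\kappa B=c^{-1}\kappa L+T=\frac{1}{2}(\Lb+c^{-1}\kappa L)$. Hence $c^{-1}\kappa B$ is a combination of $L$ and $T$, and its commutator with $L$ is
\[
[L,c^{-1}\kappa B]=L(c^{-1}\kappa)L+[L,T]=L(c^{-1}\kappa)L-(\zeta+\eta)\hat{X}.
\]
We apply $L^{k}$ to $c^{-1}\kappa B\Omega=0$ and move the $L$'s past $c^{-1}\kappa B$ one at a time, inducting on $k$. Each step contributes one copy of $-L(c^{-1}\kappa)L^{k}(\Omega)$, which gives the coefficient $-k$. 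All remaining terms are of the following kinds:
\begin{itemize}
\item higher $L$-derivatives of $c^{-1}\kappa$ multiplied by $L^{j}\Omega$ with $j<k$;
\item $L^{j}(\zeta+\eta)\,\hat{X}L^{k-1-j}\Omega$;
\item $L^{j}(\zeta+\eta)$ paired with $L$-derivatives of $\hat{X}\Omega$ reorganized via $[L,\hat{X}]$.
\end{itemize}
Because $\zeta,\eta,\kappa$ and $L\kappa=-T(c)-\hat{T}^{i}T(\psi_{i})$ are all $\mathbf{P}^{w}_{1,1}$, every such term lies in $\mathbf{P}_{k,k}$. One point needs care: $L^{j}(c^{-1}\kappa)$ involves $L^{j-1}T(\psi)$, and this is rewritten as $TL^{j-1}\psi$ plus commutator terms, so the term stays within $\mathbf{V}_{k,k}$.

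\emph{Wave part.} Write $\Box_g\psi=\varrho$ using \eqref{eq:wave operator in null frame} and multiply by $\mu$:
\[
L\Lb\psi=\mu\hat{X}^{2}\psi-\tfrac{1}{2}\chi\Lb\psi-\tfrac{1}{2}\chib L\psi-2\zeta\hat{X}\psi-\mu\varrho .
\]
Using $[L,\Lb]=L(c^{-1}\kappa)L+2[L,T]$, we convert $L\Lb\psi$ into $\Lb L\psi$ plus $\mathbf{P}_{1,1}L\psi$ plus $\mathbf{P}^{w}_{1,1}\hat{X}\psi$. Applying $L^{k}$ and commuting in the same way, the top-order term on the left is $\Lb L^{k+1}\psi$. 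On the right, the terms come out as follows.
\begin{itemize}
\item \emph{Principal term.} $L^{k}(\mu\hat{X}^{2}\psi)$ gives $\mu\hat{X}^{2}L^{k}\psi$ plus lower-order terms. This is the displayed term $\kappa c\hat{X}^{2}L^{k}(\psi)$, which contains two tangential derivatives of an order-$k$ quantity.
\item \emph{Linear terms.} $L^{k}(\chi\Lb\psi)$ and the commutator pieces $kL(c^{-1}\kappa)L^{k+1}\psi$ produce the $\mathbf{P}_{1,1}L^{k+1}(\psi)$ term.
\item \emph{Source term.} In $\mu\varrho$, the quadratic forms $g(Df_1,Df_2)$ expand in the null frame into $\mu^{-1}$-weighted products of $L,\Lb,\hat{X}$ derivatives. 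After multiplying by $\mu$ they are polynomial in the frame derivatives, so $L^{k}$ of them is $\mathbf{P}_{1,1}L^{k+1}\psi+\mathbf{P}_{k+1,k}$.
\item \emph{Vorticity terms.} The terms $\rho\,\partial_i\Omega$ are handled via \eqref{eq: reformulation of vorticity in the first null frame}: $\mu\partial_i\Omega$ is a combination with $\mathbf{P}^{w}$ coefficients of $\kappa\hat{X}\Omega$ and $\kappa c^{-1}L\Omega$. Applying $L^{k}$ gives the term $\mathbf{P}_{1,1}\sum_{|\beta|\le1}Z^{\beta}L^{k}(\Omega)$ plus $\mathbf{P}_{k+1,k}$.
\item \emph{Remaining terms.} The terms $\Omega\partial c$ and $\Omega^{2}$ are lower order and fall into $\mathbf{P}_{k+1,k}$.
\end{itemize}

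\emph{Main obstacle.} The hard part is checking that no term contains $L^{k+1}$ of a geometric quantity, or $L^{k+1}\Omega$. Such a term would put $k+2$ $L$-derivatives on $\psi$ (since $L\Omega\sim$ derivatives of $\psi$) and would destroy the triangular ODE structure. This comes down to two checks. First, $L$ applied to $\hat{T}^{i}$ or $\kappa$ costs only one derivative of $\psi$, by the structure equations \eqref{eq: formulas to control the geometry}. Second, the vorticity source contributes $L^{k}\Omega$ and not $L^{k+1}\Omega$, precisely because $\mu\partial_i\Omega$ carries the factor $\kappa c^{-1}$ in front of $L\Omega$, which is absorbed into the $\mathbf{P}_{1,1}$ coefficient at the correct level. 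We would verify both claims by induction on $k$, using the index counting of the preceding lemma.
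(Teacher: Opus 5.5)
Your overall scheme matches the paper's. You induct on $k$, commuting $L$ through $c^{-1}\kappa B\Omega=0$ using $[L,c^{-1}\kappa B]=L(c^{-1}\kappa)L-(\zeta+\eta)\hat{X}$, and through the null-frame wave equation solved for $\Lb(L\psi)$. Your transport part, the principal term $\mu\hat{X}^{2}\psi$, and the quadratic acoustic sources are handled correctly.

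The gap is the vorticity source, which is exactly the step you single out as the main obstacle, and the mechanism you give for it is wrong. With \eqref{eq: reformulation of vorticity in the first null frame} you have $\mu\partial_{1}\Omega=c\kappa\hat{X}^{1}\hat{X}(\Omega)+\kappa\hat{X}^{2}L(\Omega)$. Applying $L^{k}$ to the second piece produces $\kappa\hat{X}^{2}L^{k+1}(\Omega)$. Moving the factor $\kappa c^{-1}$ into a coefficient does not remove an $L$ from $\Omega$. The quantity $L^{k+1}(\Omega)$ is neither of the form $Z^{\beta}L^{k}(\Omega)$ nor an element of $\mathbf{V}_{k+1,k}$, so the claimed right-hand side is not obtained. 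This is precisely the extra $\kappa\,\mathbf{P}^{w}_{k+2,k+2}$ term the paper records in Remark \ref{remark: ODE without loss of derivates} for the reformulated equation. That remark describes a different identity from the one in the lemma.

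The missing step is to use the transport equation itself: $B\Omega=0$ means $c^{-1}\kappa L(\Omega)=-T(\Omega)$. Equivalently, decompose $\partial_{1}=\hat{X}^{1}\hat{X}-\hat{X}^{2}\hat{T}$ and $\partial_{2}=\hat{X}^{2}\hat{X}+\hat{X}^{1}\hat{T}$ directly. This gives $\mu\partial_{1}\Omega=c\kappa\hat{X}^{1}\hat{X}(\Omega)-c\hat{X}^{2}T(\Omega)$, and similarly for $\partial_{2}$. This is how the paper writes $\mu\varrho$: as a combination of $\{\kappa,\rho,c^{-1},\hat{T}^{1},\hat{T}^{2}\}\times\{\dots,Z(\psi)\Omega,Z(\Omega),\Omega^{2}\}$ with $Z\in\{T,\hat{X}\}$, keeping $\Omega$ as an independent unknown. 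Since $[L,Z]=\mathbf{P}^{w}_{1,1}\hat{X}$, you get $L^{k}Z(\Omega)=ZL^{k}(\Omega)$ plus terms in $\mathbf{P}_{k+1,k}$. That supplies the $|\beta|=1$ part of the sum.

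Alternatively, after applying $L^{k}$, invoke the first line of the lemma, which you have already proved. It lets you replace $c^{-1}\kappa L^{k+1}(\Omega)$ by $-TL^{k}(\Omega)-kL(c^{-1}\kappa)L^{k}(\Omega)+\mathbf{P}_{k,k}$.

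A smaller slip: the $\Omega\,\partial c$ and $\Omega^{2}$ sources do not land in $\mathbf{P}_{k+1,k}$. They produce $Z(c)L^{k}(\Omega)$ and $\Omega L^{k}(\Omega)$. These form the $|\beta|=0$ part of $\mathbf{P}_{1,1}\sum_{|\beta|\leq 1}Z^{\beta}L^{k}(\Omega)$.
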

 
 \begin{remark}
On the one hand, the term $\kappa c\hat{X}^{2}L^{k}(\psi)$ has total derivate $k+2$, which will lead to loss of derivates in the procedure of retrieving the bounds of $L^{k+1}(\psi)$ and $L^{k}(\Omega)$. On the other hand, the term $\kappa c\hat{X}^{2}L^{k}(\psi)$ vanishes on initial singularity $\mathbf{S}_{*}$ will won't lead to  loss of derivates in the procedure of deriving the asymptotic behavior of solutions.
 \end{remark}
 
 \begin{proof}
 We treat the $L^{k}(\Omega)$ part and $L^{k+1}(\psi)$ separately by induction argument.
 \begin{itemize}
 \item To deal with $L^{k}(\Omega)$, we commute $L$ the equation $c^{-1}\kappa B\Omega=0$ to obtain
 \[
 c^{-1}\kappa B(L(\Omega))=-L(c^{-1}\kappa)L\Omega+\underbrace{(\zeta+\eta)\hat{X}(\Omega)}_{\mathbf{P}_{1,1}}
 \]
 which confirms the case $k=1$. From the case $k$ to $k+1$, we commute $L$ with first equation of \eqref{ODE system for L^k+1(psi) and L^k(Omega)-basic} to obtain
 \[
 c^{-1}\kappa B(L^{k+1}\Omega)=-(k+1)\cdot L(c^{-1}\kappa)L^{k+1}(\Omega)+\underbrace{(\zeta+\eta)\hat{X}L^{k}(\Omega)-k\cdot L^{2}(c^{-1}\kappa)L^{k}(\Omega)+L(\mathbf{P}_{k,k})}_{\mathbf{P}_{k+1, k+1}}.
 \]
 which closes the induction argument.
 \item To deal with $L^{k+1}(\psi)$, we write the acoustical wave equation $\Box_{g}(\psi)=\varrho$ in the null frame $\{L, \Lb, \hat{X}\}$ as follows:
 \begin{equation}\label{Transport equation for L(psi)}
 \Lb(L(\psi))=-(L(c^{-1}\kappa)-c^{-1}\kappa\hat{X}^{j}\hat{X}(\psi_{j}))L(\psi)+\mu \hat{X}^{2}(\psi)-\chi T(\psi)+2\eta\hat{X}(\psi)-\mu \varrho.
 \end{equation}
 where the term $\mu \varrho$ is linear combination of terms
 \[
 \{\kappa, \rho, c^{-1}, \hat{T}^{1}, \hat{T}^{2}\}\times \{L(\psi)^{2}, L(\psi)Z(\psi), Z(\psi)^{2}, Z(\psi)\Omega, Z(\Omega), \Omega^{2}\}.
 \]
 We commute $L$ with \eqref{Transport equation for L(psi)} and check all the possible terms one by one
 \begin{itemize}
 \item $[L, \Lb](L\psi)=-2(\zeta+\eta)\hat{X}L\psi+L(c^{-1}\kappa)L^{2}(\psi)=\mathbf{P}_{1, 1}L^{2}(\psi)+\mathbf{P}_{2, 1}$,
 \item $(L(c^{-1}\kappa)-c^{-1}\kappa\hat{X}^{j}\hat{X}(\psi_{j}))L^{2}(\psi)=\mathbf{P}_{1, 1}L^{2}(\psi)$,
 \item $L(L(c^{-1}\kappa)-c^{-1}\kappa\hat{X}^{j}\hat{X}(\psi_{j}))L(\psi)=-\kappa c^{-2}L(\psi)L^{2}(c)+\mathbf{P}_{2,1}=\mathbf{P}_{1, 1}L^{2}(\psi)+\mathbf{P}_{2, 1}$,
 \item $L(\mu\hat{X}^{2}(\psi))=c\kappa \hat{X}^{2}L(\psi)+\mathbf{P}_{2,1}$.
 \item $L(-\chi T(\psi)+2\eta\hat{X}(\psi))=\mathbf{P}_{2,1}$.
 \item $L(L(\psi)^{2})=2L(\psi)L^{2}(\psi)=\mathbf{P}_{1, 1}L^{2}(\psi)$,
 \item $L(L(\psi)Z(\psi))=Z(\psi)L^{2}(\psi)+LZ(\psi)L(\psi)=\mathbf{P}_{1, 1}L^{2}(\psi)+\mathbf{P}_{2,1}$,
 \item $L(Z(\psi)^{2})=\mathbf{P}_{2,1}$
 \item $L(Z(\psi)\Omega)=LZ(\psi)\Omega+Z(\psi)L\Omega=\mathbf{P}_{1,1}L\Omega+\mathbf{P}_{2,1}$,
 \item $L(Z(\Omega))=ZL(\Omega)+\mathbf{P}_{2,1}$,
 \item $L(\Omega^{2})=2\Omega L(\Omega)=\mathbf{P}_{1, 1}L(\Omega)$.
 \end{itemize}
 Therefore, we have
 \[
 \Lb(L^{2}(\psi))=\mathbf{P}_{1,1}L^{2}(\psi)+\mathbf{P}_{1,1}\sum_{|\beta|\leq 1}Z^{\beta}L\Omega+\kappa c \hat{X}^{2}L(\psi)+\mathbf{P}_{2,1}.
 \]
 which confirms the case $k=1$. To go from case $k$ to $k=1$, we commute $L$ with the second equation of  \eqref{ODE system for L^k+1(psi) and L^k(Omega)-basic} to obtain
 \[
 \begin{split}
 \Lb(L^{k+2}(\psi))&=\underbrace{-L(c^{-1}\kappa)}_{\mathbf{P}_{1,1}}L^{k+2}(\psi)+\underbrace{2(\zeta+\eta)\hat{X}L^{k+1}(\psi)+L(\mathbf{P}_{1, 1})L^{k+1}(\psi)++L(\mathbf{P}_{1,1})\sum_{|\beta|\leq 1}Z^{\beta}L^{k}(\Omega)}_{\mathbf{P}_{k+2,k+1}}\\
 &+\mathbf{P}_{1, 1}L^{k+2}(\psi)+\mathbf{P}_{1,1}\sum_{|\beta|\leq 1}Z^{\beta}L^{k+1}(\Omega)+\underbrace{\mathbf{P}_{1,1}\sum_{|\beta|\leq 1}[L, Z^{\beta}]L^{k}(\Omega)}_{\mathbf{P}_{k+2, k+1}}+\underbrace{L(\kappa c) \hat{X}^{2}L^{k}(\psi)}_{\mathbf{P}_{k+2, k+1}}\\
 &+\kappa c \hat{X}^{2}L^{k+1}(\psi)+\underbrace{\kappa c[L, \hat{X}^{2}]L^{k}(\psi)+L(\mathbf{P}_{k+1, k})}_{\mathbf{P}_{k+2,k+1}}\\
 &=\mathbf{P}_{1,1}L^{k+2}(\psi)+\mathbf{P}_{1,1}\sum_{|\beta|\leq 1}Z^{\beta}L^{k+1}(\Omega)+\kappa c\hat{X}^{2}L^{k+1}(\psi)+\mathbf{P}_{k+2,k+1}.
 \end{split}
 \]
 which closes the induction argument.
 \end{itemize}
 \end{proof}

\begin{remark}\label{remark: ODE without loss of derivates}
In the deriving of transport equation of $L^{2}(\psi)$, if we use \eqref{eq: reformulation of vorticity in the first null frame} for vorticity appearing in the source terms of wave equations $\Box_{g}\psi=\varrho$, we will obtain
\[
\Lb(L^{2}\psi)=\mathbf{P}^{w}_{1,1}L^{2}(\psi)+\mathbf{P}^{w}_{2,1}+\kappa c\hat{X}^{2}L(\psi)+\kappa \mathbf{P}_{3,3}^{w}.
\]
Therefore, we can obtain
\[
\Lb(L^{k+1}(\psi))=\mathbf{P}^{w}_{1,1}L^{k+1}(\psi)+\mathbf{P}_{k+1, k}^{w}+\kappa c\hat{X}^{2}L^{k}(\psi)+\kappa \mathbf{P}_{k+2, k+2}^{w}.
\]
which will be used in the deriving higher jets of solutions on the initial singularity $\mathbf{S}_{*}$.
\end{remark}

 \subsubsection{An ODE systems for $L^{k+1}Z^{\alpha-1}(\psi)$ and $L^{k}Z^{\alpha}(\Omega)$.} Set the set $\tilde{\mathbf{V}}_{n, m}$ to be the set
 \[
 \tilde{\mathbf{V}}_{n,m}:=\mathbf{V}_{n, m}\cup \{\frac{Z^{\alpha}(c^{-1}\kappa)}{c^{-1}\kappa}, |\alpha|\leq n\}
 \]
 We will use $\mathbf{Q}_{n,m}$ to denote a general polynomial form the ring $\mathbb{R}[\tilde{\mathbf{V}}_{n,m}, c^{-1}, \rho, \rho^{-1}]$. Note that $Z(\mathbf{Q}_{n,m})=\mathbf{Q}_{n+1, m}$.  The following examples help to elucidate the definition of the symbol $\mathbf{Q}_{n,m}$.
 
 \begin{itemize}
 \item The commutators $[\Lb, \hat{X}]$ and $[\Lb, T]$
 \[
 \begin{cases}
 [\Lb, \hat{X}]=-\kappa(c^{-1}\chi+\theta)\hat{X}-\frac{\hat{X}(c^{-1}\kappa)}{c^{-1}\kappa}\Lb+2\frac{\hat{X}(c^{-1}\kappa)}{c^{-1}\kappa}T,\\
 [\Lb, T]=-c^{-1}\kappa(\zeta+\eta)\hat{X}-\frac{T(c^{-1}\kappa)}{c^{-1}\kappa}\Lb+2\frac{T(c^{-1}\kappa)}{c^{-1}\kappa}T,
 \end{cases}\implies [\Lb, Z]=\mathbf{Q}_{1, 1}\Lb+\mathbf{Q}_{1,1}Z.
 \]
 \item The commutators $[c^{-1}\kappa B, \hat{X}]$ and $[c^{-1}\kappa B, T]$
 \[
 \begin{cases}
 [c^{-1}\kappa B, \hat{X}]=-\kappa(c^{-1}\chi+\theta)\hat{X}-\frac{\hat{X}(c^{-1}\kappa)}{c^{-1}\kappa}c^{-1}\kappa B+\frac{\hat{X}(c^{-1}\kappa)}{c^{-1}\kappa}T,\\
 [c^{-1}\kappa B, T]=-c^{-1}\kappa(\zeta+\eta)\hat{X}-\frac{T(c^{-1}\kappa)}{c^{-1}\kappa}c^{-1}\kappa B+\frac{T(c^{-1}\kappa)}{c^{-1}\kappa}T,
 \end{cases}\implies [c^{-1}\kappa B, Z]=\mathbf{Q}_{1, 1}c^{-1}\kappa B+\mathbf{Q}_{1,1}Z.
 \]
 \end{itemize}
 
 Now we can derive the ODE systems for $L^{k+1}Z^{\alpha-1}(\psi)$ and $L^{k}Z^{\alpha}(\Omega)$.
 \begin{lemma}
 Formally, for $k\geq 1$, we have the following system:
 \begin{equation}\label{ODE system for L^k+1(psi) and L^k(Omega)-commuted}
 \begin{cases}
 c^{-1}\kappa B(Z^{\alpha}L^{k}(\Omega))=\mathbf{Q}_{|\alpha|+1, 1}Z^{\leq |\alpha|}L^{k}(\Omega)+\mathbf{Q}_{k+|\alpha|,k}, |\alpha|\geq 0\\
 \Lb(Z^{\alpha-1}L^{k+1}(\psi))=\mathbf{Q}_{|\alpha|, 1}Z^{\leq |\alpha|-1}L^{k+1}(\psi)+\mathbf{Q}_{|\alpha|,1}Z^{\leq |\alpha|}L^{k}(\Omega)+\kappa \mathbf{Q}_{|\alpha|, 1} Z^{\alpha-1}\hat{X}^{2}L^{k}(\psi)+\mathbf{Q}_{k+|\alpha|, k}, |\alpha|\geq 1.
 \end{cases}
 \end{equation}
 Note that the term $\kappa \mathbf{Q}_{|\alpha|, 1} Z^{\alpha-1}\hat{X}^{2}L^{k}(\psi)=\mathbf{Q}_{k+|\alpha|+1, k}$. We also have a rough version:
 \[
  \begin{cases}
 c^{-1}\kappa B(Z^{\alpha}L^{k}(\Omega))=\mathbf{Q}_{|\alpha|+1, 1}Z^{\leq |\alpha|}L^{k}(\Omega)+\mathbf{Q}_{k+|\alpha|,k}, |\alpha|\geq 0\\
 \Lb(Z^{\alpha-1}L^{k+1}(\psi))=\mathbf{Q}_{|\alpha|, 1}Z^{\leq |\alpha|-1}L^{k+1}(\psi)+\mathbf{Q}_{|\alpha|,1}Z^{\leq |\alpha|}L^{k}(\Omega)+\mathbf{Q}_{k+|\alpha|+1, k}, |\alpha|\geq 1.
 \end{cases}
 \]
 \end{lemma}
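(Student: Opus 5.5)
The plan is an induction on $|\alpha|$. At each step we apply $Z^{\alpha}$ to the uncommuted system \eqref{ODE system for L^k+1(psi) and L^k(Omega)-basic} and move every $Z$ past the transport operators $c^{-1}\kappa B$ and $\Lb$. The only tools needed are the commutator identities
\[
[\Lb, Z]=\mathbf{Q}_{1,1}\Lb+\mathbf{Q}_{1,1}Z,\qquad [c^{-1}\kappa B, Z]=\mathbf{Q}_{1,1}c^{-1}\kappa B+\mathbf{Q}_{1,1}Z,
\]
the rule $Z(\mathbf{Q}_{n,m})=\mathbf{Q}_{n+1,m}$, and the fact that $\mathbf{P}_{n,m}$ is a special case of $\mathbf{Q}_{n,m}$. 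At each step the commutator terms fall into two classes: they either reproduce the principal unknown with fewer $Z$'s, or they are absorbed into the lower-order remainder $\mathbf{Q}_{k+|\alpha|,k}$.

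\textbf{Transport equation.} The base case $|\alpha|=0$ is the first line of \eqref{ODE system for L^k+1(psi) and L^k(Omega)-basic}, because $k L(c^{-1}\kappa)$ is a $\mathbf{Q}_{1,1}$ coefficient. For the inductive step, write $Z^{\alpha}=ZZ^{\alpha'}$. Then
\[
c^{-1}\kappa B(ZZ^{\alpha'}L^k\Omega)=Z\big(c^{-1}\kappa B(Z^{\alpha'}L^k\Omega)\big)-[c^{-1}\kappa B,Z]Z^{\alpha'}L^k\Omega .
\]
We treat the two pieces separately.
\begin{itemize}
\item In the first piece, $Z$ hits the inductive right-hand side. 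This produces $\mathbf{Q}_{|\alpha|+1,1}Z^{\le|\alpha|}L^k\Omega$ together with $\mathbf{Q}_{k+|\alpha|,k}$.
\item In the commutator piece, the $\mathbf{Q}_{1,1}Z$ part gives $\mathbf{Q}_{1,1}Z^{\le|\alpha|}L^k\Omega$. The $\mathbf{Q}_{1,1}c^{-1}\kappa B$ part is handled by substituting the inductive equation back in.
\end{itemize}
Note that $Z^{\le|\alpha|}L^{k-1}$-type quantities of total order $k+|\alpha|-1$, and $Z^{\beta}L^k\Omega$ with $|\beta|<|\alpha|$, are both already in $\tilde{\mathbf V}_{k+|\alpha|,k}$. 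This is what lets them go into the remainder.

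\textbf{Wave equation.} The argument is the same. The base case $|\alpha|=1$ is exactly the second line of \eqref{ODE system for L^k+1(psi) and L^k(Omega)-basic}. There, $\kappa c\hat X^2L^k\psi$ is the term $\kappa\mathbf{Q}_{1,1}Z^{0}\hat X^2L^k\psi$, and the term $\mathbf{P}_{1,1}\sum_{|\beta|\le1}Z^\beta L^k\Omega$ fits the $\Omega$ slot.

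For the inductive step, commuting $Z$ through $\Lb$ gives a term $\mathbf{Q}_{1,1}\Lb(Z^{\alpha'-1}L^{k+1}\psi)$. We re-expand it using the inductive hypothesis. The remaining pieces are handled as follows.
\begin{itemize}
\item The term $\mathbf{Q}_{1,1}Z(Z^{\alpha'-1}L^{k+1}\psi)$ lands in $Z^{\le|\alpha|-1}L^{k+1}\psi$.
\item When $Z$ falls on $\kappa\mathbf{Q}\,Z^{\alpha'-1}\hat X^2L^k\psi$, two things can happen. If it hits the derivative part, we get the new principal term $\kappa\mathbf{Q}_{|\alpha|,1}Z^{\alpha-1}\hat X^2L^k\psi$. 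If it hits $\kappa$, we use $Z\kappa=\kappa\cdot\frac{Z(c^{-1}\kappa)}{c^{-1}\kappa}+\kappa c^{-1}Z(c)\cdot(\dots)$; this keeps the prefactor $\kappa$ and costs only a $\mathbf{Q}_{1,1}$ factor.
\end{itemize}
This is precisely why the ratios $Z^{\alpha}(c^{-1}\kappa)/(c^{-1}\kappa)$ were adjoined to $\tilde{\mathbf V}$: otherwise the factor $\kappa$ in front of the top-order term could be lost.

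\textbf{Rough version and the delicate point.} The rough version then follows immediately. Since $\kappa\in\tilde{\mathbf V}$ and $Z^{\alpha-1}\hat X^2L^k\psi$ has $k$ $L$'s and $k+|\alpha|+1$ derivatives in total, the top-order term is itself a $\mathbf{Q}_{k+|\alpha|+1,k}$.

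The step I expect to need the most care is the bookkeeping of the index $m$, which counts $L$-derivatives. Commuting $Z$ with $L$ (via $[L,Z]=\mathbf{P}^w_{1,1}Z$) does not raise $m$. However, the $c^{-1}\kappa B$ commutator contains $\Lb$- and $B$-components, and these must be rewritten through $L$ and $T$ (using $\Lb=c^{-1}\kappa L+2T$ and $c^{-1}\kappa B=c^{-1}\kappa L+T$). I will verify that every such rewriting produces at most $k$ $L$'s on $\Omega$, or at most $k+1$ $L$'s on $\psi$, only in the principal slots, so that no hidden $L^{k+1}\Omega$ or $L^{k+2}\psi$ enters the remainder $\mathbf{Q}_{k+|\alpha|,k}$.
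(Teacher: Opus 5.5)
Your scheme is exactly the paper's proof: induction on $|\alpha|$, moving one $Z$ at a time through $c^{-1}\kappa B$ and $\Lb$ with $[\Lb,Z]=\mathbf{Q}_{1,1}\Lb+\mathbf{Q}_{1,1}Z$ and $[c^{-1}\kappa B,Z]=\mathbf{Q}_{1,1}c^{-1}\kappa B+\mathbf{Q}_{1,1}Z$, then re-inserting the inductive equation for the $\Lb$- resp.\ $c^{-1}\kappa B$-component. Two pieces of your bookkeeping are wrong, however. First, $Z^{\beta}L^{k}(\Omega)$ does \emph{not} belong to $\tilde{\mathbf{V}}_{k+|\alpha|,k}$ for any $\beta$. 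By definition $\mathbf{V}_{n,m}$ contains $Z^{\beta}L^{j-1}(\Omega)$ only for $j\le m$, so the remainder $\mathbf{Q}_{k+|\alpha|,k}$ involves at most $L^{k-1}(\Omega)$ and $L^{k}(\psi)$. This distinction is what the lemma is for: in Proposition~\ref{prop: estimates on Lk} the remainder is bounded by the induction hypothesis in $k$, while $Z^{\le|\alpha|}L^{k}(\Omega)$ and $Z^{\le|\alpha|-1}L^{k+1}(\psi)$ are the new unknowns handled by Gronwall. So the terms $Z^{\beta}L^{k}(\Omega)$ with $|\beta|<|\alpha|$ cannot go into the remainder. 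They must go into the principal slot $\mathbf{Q}_{|\alpha|+1,1}Z^{\le|\alpha|}L^{k}(\Omega)$, as the paper does, and as you yourself do in the wave base case.

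Second, your ``delicate point'' goes in the wrong direction. Rewriting $c^{-1}\kappa B=c^{-1}\kappa L+T$ on $Z^{\alpha'}L^{k}(\Omega)$ would create $c^{-1}\kappa\, Z^{\alpha'}L^{k+1}(\Omega)$. Likewise, $\Lb=c^{-1}\kappa L+2T$ on $Z^{\alpha'-1}L^{k+1}(\psi)$ would create $L^{k+2}(\psi)$. Neither fits any slot, so the verification you propose would fail. The commutator identities you quote already avoid this, because $L$ has been eliminated through $L=(c^{-1}\kappa)^{-1}(c^{-1}\kappa B-T)$, resp.\ $(c^{-1}\kappa)^{-1}(\Lb-2T)$. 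That division is the real reason the ratios $Z^{\alpha}(c^{-1}\kappa)/(c^{-1}\kappa)$ are adjoined to $\tilde{\mathbf{V}}$. They are not needed to keep the factor $\kappa$. When $Z$ falls on the coefficient $\kappa\mathbf{Q}$ of the previous top-order term, the result multiplies $Z^{\alpha'-1}\hat{X}^{2}L^{k}(\psi)$. That quantity carries only $k+|\alpha|$ derivatives and $k$ $L$'s, so it lies in $\mathbf{Q}_{k+|\alpha|,k}$ anyway. With the $c^{-1}\kappa B$- and $\Lb$-components always handled by substitution and these slot assignments corrected, your argument coincides with the paper's.
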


\begin{proof}
We treat the $Z^{\alpha}L^{k}(\Omega)$ part and $Z^{\alpha-1}L^{k+1}(\psi)$ separately by induction on $|\alpha|$. 
\begin{itemize}
\item The base case $|\alpha|=0$ of the first equation in \eqref{ODE system for L^k+1(psi) and L^k(Omega)-commuted} follows from the equation for $L^{k}(\Omega)$ of \eqref{ODE system for L^k+1(psi) and L^k(Omega)-commuted}. From the case $|\alpha|$ to case $|\alpha|+1$, we commute $Z$ with the first equation of \eqref{ODE system for L^k+1(psi) and L^k(Omega)-commuted} to obtain
\[
\begin{split}
c^{-1}\kappa B(Z^{\alpha+1}L^{k}(\Omega))&=\underbrace{\mathbf{Q}_{1,1}c^{-1}\kappa B(Z^{\alpha}L^{k}(\Omega))}_{\mathbf{Q}_{|\alpha|+2,1}Z^{\leq |\alpha|+1}L^{k}(\Omega)+\mathbf{Q}_{k+|\alpha|+1, k}}\\
&+\underbrace{\mathbf{Q}_{1,1}Z^{\alpha+1}L^{k}(\Omega)+\mathbf{Q}_{|\alpha|+1, 1}Z^{\leq |\alpha|+1}L^{k}(\Omega)+Z(\mathbf{Q}_{|\alpha|+1, 1})Z^{\leq |\alpha|}L^{k}(\Omega)}_{\mathbf{Q}_{|\alpha|+2,1}Z^{\leq |\alpha|+1}L^{k}(\Omega)}\\
&+\underbrace{Z(\mathbf{Q}_{k+|\alpha|,k})}_{\mathbf{Q}_{k+|\alpha|+1, k}}
\end{split}
\]
which closes the induction argument.
\item The base case $|\alpha|=1$ of second equation in \eqref{ODE system for L^k+1(psi) and L^k(Omega)-commuted} follows from the equation for $L^{k+1}(\psi)$ of \eqref{ODE system for L^k+1(psi) and L^k(Omega)-commuted}. From the case $|\alpha|$ to case $|\alpha|+1$, we commute $Z$ with the second equation of \eqref{ODE system for L^k+1(psi) and L^k(Omega)-commuted} to obtain
\[
\begin{split}
\Lb(Z^{\alpha}L^{k+1}(\psi))&=\underbrace{\mathbf{Q}_{1,1}\Lb(Z^{\alpha-1}L^{k+1}\psi)}_{\mathbf{Q}_{|\alpha|+1, 1}Z^{\leq |\alpha|}L^{k+1}(\psi)+\mathbf{Q}_{|\alpha|+1,1}Z^{\leq |\alpha|+1}L^{k}(\Omega)+\kappa \mathbf{Q}_{|\alpha|+1, 1} Z^{\alpha}\hat{X}^{2}L^{k}(\psi)+\mathbf{Q}_{k+|\alpha|+1, k}}\\
&+\underbrace{\mathbf{Q}_{1,1}Z^{|\alpha|}L^{k+1}(\psi)+Z(\mathbf{Q}_{|\alpha|, 1})Z^{\leq |\alpha|-1}L^{k+1}(\psi)+\mathbf{Q}_{|\alpha|, 1}Z^{\leq |\alpha|}L^{k+1}(\psi)}_{\mathbf{Q}_{|\alpha|+1, 1}Z^{\leq |\alpha|}L^{k+1}(\psi)}\\
&+\underbrace{Z(\mathbf{Q}_{|\alpha|,1})Z^{\leq |\alpha|}L^{k}(\Omega)+\mathbf{Q}_{|\alpha|,1}Z^{\leq |\alpha|+1}L^{k}(\Omega)}_{\mathbf{Q}_{|\alpha|+1,1}Z^{\leq |\alpha|+1}L^{k}(\Omega)}\\
&+\kappa \mathbf{Q}_{\alpha, 1} Z^{\alpha}\hat{X}^{2}L^{k}(\psi)+\underbrace{Z(\kappa \mathbf{Q}_{|\alpha|+1}) Z^{\alpha-1}\hat{X}^{2}L^{k}(\psi)+Z(\mathbf{Q}_{k+|\alpha|, k})}_{\mathbf{Q}_{k+1+|\alpha|,k}}
\end{split}
\]
which closes the induction argument.
\end{itemize}
\end{proof}

\subsubsection{The uniform bounds on $L$-derivatives in effective region $\mathcal{D}^{+}(\delta)$}
In acoustical coordinate $(t, u, \theta)$, the region $\mathcal{D}(\delta)$ is given by $(t, u, \theta)\in [\delta, 1]\times [0, u^{*}]\times \mathbb{T}^{1}$, the bottom boundary $\Sigma_{\delta}$ is given by $(t, u, \theta)\in {\delta}\times [0, u^{*}]\times \mathbb{T}^{1}$. the right boundary $C_{0}$ is given by $(t, u, \theta)\in [\delta, 1]\times {0}\times \mathbb{T}^{1}$. The intersection of bottom boundary and right boundary $S_{\delta, 0}$ is given by $(t, u, \theta)\in {\delta}\times {0}\times \mathbb{T}^{1}$. By $c^{-1}\kappa B=T+c^{-1}\kappa L$, we have $c^{-1}\kappa B(u)=1$ and $c^{-1}\kappa B(t)=c^{-1}\kappa$. Therefore, the region $\mathcal{D}(\delta)$ can be divided into two parts:
\begin{itemize}
\item Effective region:  $\mathcal{D}^{+}(\delta)$ to be union of integral curves of $c^{-1}\kappa B$ emanated from $C_{0}$.
\item Irrelevant region: $\mathcal{D}^{-}(\delta)$ to be union of integral curves of $c^{-1}\kappa B$ emanated from $\Sigma_{\delta}$.
\end{itemize}
Moreover, the inner boundary $B_{\delta}$ of both $\mathcal{D}^{+}(\delta)$ and $\mathcal{D}^{-}(\delta)$ is given by the union of integral curves of $c^{-1}\kappa B$ emanated from $S_{\delta, 0}$

By $\Lb=2T+c^{-1}\kappa L$, we have $\Lb(u)=1<2=c^{-1}\kappa B(u)$ and $\Lb(t)=c^{-1}\kappa=c^{-1}\kappa B(t)$. Therefore, the effective region $\mathcal{D}^{+}(\delta)$ must be a subset of union of integral curves of $\Lb$ emanated from $C_{0}$, which make it possible to apply transport system \eqref{ODE system for L^k+1(psi) and L^k(Omega)-commuted} to retrieve uniform bounds on $L$-derivates in $\mathcal{D}^{+}(\delta)$ from $C_{0}$.

\begin{center}
\begin{tikzpicture}
\draw[thick] (0, 0)--(1, 0)--(3, 4)--(0,4)--(0,0);
\draw[thick, dotted, ->] (1, 0)--(0.3, 0.7);
\draw[thick, dotted] (0.3, 0.7)--(0, 1);
\draw[thick, dotted, ->] (1.5, 1)--(0.75, 1.75);
\draw[thick, dotted] (0.75, 1.75)--(0, 2.5);
\draw[thick, dashed, ->] (1.1, 0.2)--(0.8, 0.35);
\draw[thick, dashed] (0.8, 0.35)--(0.5, 0.5);
\draw[thick, dashed, ->] (1.2, 0.4)--(0.6, 0.7);
\draw[thick, dashed] (0.6, 0.7)--(0, 1);
\draw[thick, dashed, ->] (1.5, 1)--(0.75, 1.375);
\draw[thick, dashed] (0.75, 1.375)--(0, 1.75);
\draw (0, 1)--(1.5, 1);
\node[left] at (0,0) {$\Sigma_{\delta}$};
\node[left] at (0,1) {$\Sigma_{C\dot\delta}$};
\node[right] at (1,0) {$S_{\delta, 0}$};
\end{tikzpicture}
\end{center}
We remark that the dotted directed line represents the integral curve of $c^{-1}\kappa B$, the dashed directed line represents the integral curve of $\Lb$.

We give an estimate of the size of the irrelevant domain $\mathcal{D}^-(\delta)$. By $c^{-1}\kappa B(u)=1$ we can use $u$ to parameterize such an integral curve. We need to bound the maximal possible time. Since $c^{-1}\kappa B=c^{-1}\kappa\frac{\partial}{\partial t}+\frac{\partial}{\partial u}-\Xi\frac{\partial}{\partial \theta}$, we can parametrize the integral curve by $(t(u), u, \theta(u)), u\in [a, u^{*}]$ where $a\geq 0$ is chosen such that $(t(a), a, \theta(a))\in \Sigma_{\delta}$ represents the initial point of the integral curve. We divide the interval $[a, u^{*}]$ in to 
\[
[a, u^{*}]=\cup_{k=0}^{n} [u_{k}, u_{k+1}],
\]
where $u_{0}=a$ and $u_{k}$ is chosen such that $t(u^{k})=2^{k}\delta$. On the one hand, on each interval $[u_{k}, u_{k+1}]$, we have
\[
2^{k+1}\delta-2^{k}\delta=\int_{u_{k}}^{u_{k+1}} c^{-1}\kappa c^{-1}\kappa B(t)du=\int_{u_{k}}^{u_{k+1}} c^{-1}\kappa du\approx (u_{k+1}-u_{k})2^{k}\delta
\]
which implies that $u_{k+1}-u_{k}$ has a universal positive lower bound. On the other hand, we have
\[
\sum_{k=0}^{n}(u_{k+1}-u_{k})= u^{*},
\] 
which implies that $n$  has a universal upper bound. Hence, there exists a universal constant $C$ so that the maximal time on $\mathcal{D}^{-}(\delta)$ is at most $t_{\delta}=C\cdot \delta$.  Therefore, the limit of  the effective domains $\mathcal{D}^+(\delta)$ also gives $\mathcal{W}$.

\begin{proposition}\label{prop: estimates on Lk}
One effective domain $\mathcal{D}^{+}(\delta)$, the terms in $\mathbf{V}_{N_{3}+1-k, k}, 2\leq 2k\leq N^{*}+1$ are bounded by $C\cdot \varepsilon$ where $C$ is a universal constant. In particular, we have for $\psi\in \{\underline{w}, w, \psi_{2}\}$ and $Z\in \{\hat{X}, T\}$,
\[
\begin{cases}
\|L^{k}Z^{\alpha}\psi\|_{L^{\infty}(\mathcal{D}^{+}(\delta))}\lesssim \varepsilon, \\
\|L^{k}Z^{\alpha}\Omega\|_{L^{\infty}(\mathcal{D}^{+}(\delta))}\lesssim \varepsilon 
\end{cases}, 2k+|\alpha|\leq N_{3}.
\]
\end{proposition}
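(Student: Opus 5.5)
We argue by a double induction: an outer induction on the number $k$ of $L$-derivatives and an inner induction on the number $|\alpha|$ of $Z$-derivatives. At each stage we integrate the transport systems \eqref{ODE system for L^k+1(psi) and L^k(Omega)-basic} and \eqref{ODE system for L^k+1(psi) and L^k(Omega)-commuted} backwards from $C_0$. On $C_0^{t^*}$ every quantity $L^kZ^\alpha\psi$ and $L^kZ^\alpha\Omega$ is known and of size $O(\varepsilon)$. For $\psi$ this follows from Proposition \ref{appendix: initial flux bound from LuoYu2}, since $L$ and $\hat X$ are tangent to $C_0$ and the $T$-jets on $C_0$ are controlled there. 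For $\Omega$ it follows from \eqref{eq: reformulation of vorticity in the first null frame}, which expresses $\Omega$, $\partial_i\Omega$, and hence $L^k Z^\alpha\Omega$, through $C_0$-tangential derivatives of $\{\wb,w,\psi_2,\hat T^i\}$. The base of the induction is $k=0$ (with $|\alpha|\le N_3$) for $\psi$ and for the geometric quantities $f\in\{\hat T^1,\hat T^2,\kappa\}$, which is exactly \eqref{eq: L estimates in W}; this places $\mathbf{W}_{N_3,0}$ under control.

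For the step from $k$ to $k+1$, assume every element of $\mathbf V_{N_3+1-k,k}$ is bounded by $C_k\varepsilon$ on $\mathcal D^+(\delta)$. Every coefficient $\mathbf P_{1,1}$, $\mathbf Q_{|\alpha|,1}$ is then bounded: the only potentially singular entries are $Z^\alpha(c^{-1}\kappa)/(c^{-1}\kappa)$, and these are $O(1)$ because $|Z^\alpha(\kappa-t)|\lesssim\varepsilon t^2$ and $\kappa\approx t$. The forcing terms $\mathbf P_{k+1,k}$ and $\mathbf Q_{k+|\alpha|+1,k}$ involve at most $k$ $L$-derivatives and total order $k+|\alpha|+1\le N_3+1-k$, so they lie in the inductively controlled set. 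This is why the total order budget drops by two per $L$-derivative, and it explains the constraint $2k+|\alpha|\le N_3$ in the statement. It applies in particular to the derivative-losing term $\kappa c\hat X^2L^k\psi$ and its commuted versions, which cost only one extra $Z$.

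We then treat the pair $(Z^{\le|\alpha|}L^{k}\Omega, Z^{\le|\alpha|-1}L^{k+1}\psi)$ as a coupled linear ODE system along two families of curves: $\Omega$ along integral curves of $c^{-1}\kappa B$, and $\psi$ along integral curves of $\Lb$. Both families are parametrized by $u$, since $\Lb u=1$ and $c^{-1}\kappa B(u)=1$, so $u\in[0,u^*]$ has bounded length. By the construction of the effective region, the integral curves through any point of $\mathcal D^+(\delta)$ reach $C_0$ before exiting through $\Sigma_\delta$. We run a Gronwall argument in $u$ for the combined supremum
\[
\mathcal A(u)=\sup_{u'\le u}\big(|Z^{\le|\alpha|}L^k\Omega|+|Z^{\le|\alpha|-1}L^{k+1}\psi|\big),
\]
taken over the appropriate sub-region bounded by $C_0$ and the level set $u$.

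The term $-k\,L(c^{-1}\kappa)L^k\Omega$ has a bounded coefficient, because $L(c^{-1}\kappa)=O(1)$. It therefore only enlarges the Gronwall constant in a $k$-dependent way, which is harmless since $k$ is bounded. The coupling through $\mathbf Q Z^{\le|\alpha|}L^k\Omega$ inside the $\psi$-equation is closed by estimating the $\Omega$-unknowns first at the same $u$-level, so the coupled system is triangular up to Gronwall. We expect this yields $\mathcal A(u)\le C_{k+1}\varepsilon$ with $C_{k+1}$ independent of $\delta$.

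The main obstacle is geometric rather than analytic: both curve families must be shown to stay inside $\mathcal D^+(\delta)$ and to hit $C_0$. The two curves through the same point are distinct, since $\Lb(u)=1$ while $c^{-1}\kappa B(u)=1$ with different $t$-speeds. Our plan is to define the Gronwall domain as the union of $\Lb$-curves issuing from $C_0$, and to check that it contains the backward $c^{-1}\kappa B$ curves from its points using the comparison already noted before the proposition. The $\Xi\partial_\vartheta$ components are $O(\varepsilon t^2)$ by \eqref{eq: L estimates in W} and do not spoil this inclusion. Once that is in place, the double induction closes and gives the stated bounds.
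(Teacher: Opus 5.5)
Your overall scheme is the paper's: induction on the number of $L$-derivatives via the rough commuted transport system, $O(1)$ coefficients (including $Z^{\alpha}(c^{-1}\kappa)/(c^{-1}\kappa)$), forcing at the previous $L$-level with a loss of two orders per $L$, and a triangular Gronwall from $C_0$ that solves for $\Omega$ before $\psi$. However, the step you single out as the main obstacle is set up the wrong way round.

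First, $\Lb u=2$, not $1$; the text before the proposition has the two values interchanged. Since $Lu=0$ and $Tu=1$, the fields $\Lb=c^{-1}\kappa L+2T$ and $c^{-1}\kappa B=c^{-1}\kappa L+T$ have the same $t$-component $c^{-1}\kappa$ and the same $\vartheta$-drift $-\Xi$ per unit of $u$. But per unit of $u$, an $\Lb$-curve moves in $t$ only half as fast as a $c^{-1}\kappa B$-curve. Hence the union of $\Lb$-curves issuing from $C_0$ is strictly larger than $\mathcal{D}^{+}(\delta)$; this is the inclusion noted before the proposition. With $\kappa\approx t$ and $c$ frozen, the two lower boundaries are roughly $t\approx\delta e^{u/(2c)}$ and $t\approx\delta e^{u/c}$. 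From points of the $\Lb$-union near its lower boundary, the backward $c^{-1}\kappa B$-curve reaches $\Sigma_\delta$ before $C_0$. There $Z^{\alpha}L^{k}\Omega$ has no $\delta$-uniform initial bound, which is exactly why $\mathcal{D}^{-}(\delta)$ is discarded. So the $\Omega$-unknowns, which feed the $\psi$-equation through $\mathbf{Q}_{|\alpha|,1}Z^{\leq|\alpha|}L^{k}\Omega$, are uncontrolled there. The check you propose, that the $\Lb$-union contains the backward $c^{-1}\kappa B$-curves of its points, amounts to the reverse inclusion and is false.

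The fix is to run everything on $\mathcal{D}^{+}(\delta)$ itself. It is invariant under the backward $c^{-1}\kappa B$-flow by definition. It is also invariant under the backward $\Lb$-flow. Its lower boundary is swept out by $c^{-1}\kappa B$-curves from $S_{\delta,0}$. At a first contact with that boundary, a backward $\Lb$-curve would have to descend in $t$ (per unit of $u$) at least as fast as the boundary curve through that point. It descends at only half that rate, and the $\vartheta$-drifts are identical, so contact cannot occur. Thus both backward curves from any point of $\mathcal{D}^{+}(\delta)$ stay in $\mathcal{D}^{+}(\delta)$ and end on $C_0$, and your Gronwall argument then closes.

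A smaller point: your inductive hypothesis that every element of $\mathbf{V}_{N_3+1-k,k}$ is $O(\varepsilon)$ is false as written. The elements $\wb$, $T\wb$, $\hat T^{1}$ and $\kappa$ belong to these sets and are not $O(\varepsilon)$. So saying the forcing lies in the controlled set only makes it $O(1)$. To get $O(\varepsilon)$ forcing you need the structural fact behind the paper's footnote: no monomial of $\mathbf{P}_{k+1,k}$ or $\mathbf{Q}_{k+|\alpha|,k}$ is of size $\approx 1$. Each carries a factor that vanishes on the one-dimensional background, such as an $L$- or $\hat X$-derivative of $\psi$, $\Omega$ or its derivatives, $\hat T^{2}$, or $Z^{\alpha}\kappa$ with $|\alpha|\geq 1$.
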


\begin{proof}
We prove by induction. The $L^{\infty}$ bounds of the solution form proposition \eqref{eq: L estimates in W} implies that the base case $k=1$ and $|\frac{Z^{\alpha}(c^{-1}\kappa)}{c^{-1}\kappa}|\lesssim 1, Z\in \{T, \hat{X}\}, |\alpha|\leq N_{3}$.

 From the case $k$ to $k+1$, we use the rough version transport system \eqref{ODE system for L^k+1(psi) and L^k(Omega)-commuted} for $Z^{\alpha}L^{k}(\Omega)$ and $Z^{\alpha-1}L^{k+1}(\psi)$ with $|\alpha|\leq N_{3}-2k$.\footnote{To estimate terms in $\mathbf{V}_{n-k, k+1}$, it suffices to estimate $Z^{\alpha}L^{k}(\Omega)$ and $Z^{\alpha-1}L^{k+1}(\psi)$ with $|\alpha|-1\leq n-k-(k+1)\iff |\alpha|\leq n-2k$.}
\[
 \begin{cases}
 c^{-1}\kappa B(Z^{\alpha}L^{k}(\Omega))=\mathbf{Q}_{N_{3}, 1}Z^{\leq |\alpha|}L^{k}(\Omega)+\mathbf{Q}_{N_{3}+1-k,k}, |\alpha|\geq 0,\\
 \Lb(Z^{\alpha-1}L^{k+1}(\psi))=\mathbf{Q}_{N_{3}, 1}Z^{\leq |\alpha|-1}L^{k+1}(\psi)+\mathbf{Q}_{N_{3},1}Z^{\leq |\alpha|}L^{k}(\Omega)+\mathbf{Q}_{N_{3}+1-k, k}, |\alpha|\geq 1.
 \end{cases}
\]
By induction hypothesis, terms $\mathbf{Q}_{N_{3}, 1}$ appearing in above system must be of size $O(1)$, terms $\mathbf{Q}_{N_{3}+1-k, k}$ appearing in above system must be of size $O(\varepsilon)$.\footnote{Comparing with the case $\varepsilon=0$, there are no monomials of $\approx 1$ size.} Therefore, the Gronwall’s inequality gives the desired estimates for all terms in $\mathbf{V}_{N_{3}-k, k+1}$ which closes the induction argument.
\end{proof}

Since the limit of  the effective domains $\mathcal{D}^+(\delta_l)$ also gives $\mathcal{W}$. By passing to the limit, Proposition \ref{prop: estimates on Lk}
gives the following proposition:
\begin{proposition}\label{prop: estimates on Lk final}
For the solution $U$ on $\mathcal{W}$, for any positive integer $k\geqslant 1$ and all multi-indices $\alpha$ with $k+|\alpha|\leqslant \frac{N_{3}}{2}$, for all $\psi \in \{\wb,w,\psi_2\}$ and for all $Z\in \{\Xh, T\}$, we have
\begin{equation}\label{eq: Lk estimates  on effective domain}
\|L^k Z^\alpha (\psi) \|_{L^\infty(\mathcal{W})}+\|L^k Z^\alpha (\Omega) \|_{L^\infty(\mathcal{W})} \lesssim \varepsilon.
\end{equation}
\end{proposition}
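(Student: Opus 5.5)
The plan is to obtain Proposition \ref{prop: estimates on Lk final} from Proposition \ref{prop: estimates on Lk} by a limiting argument. The key inputs are that the bounds on the effective domains $\mathcal{D}^{+}(\delta)$ are uniform in $\delta$, and that the irrelevant region $\mathcal{D}^{-}(\delta)$ lies below $t_\delta=C\delta$.

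\textbf{Step 1: uniform bounds on a fixed compact piece.} Fix $k\geq 1$ and $\alpha$ with $k+|\alpha|\leq N_3/2$. Then $2k+|\alpha|\leq N_3$, so Proposition \ref{prop: estimates on Lk} applies and gives
\[
\|L^kZ^\alpha\psi_\delta\|_{L^\infty(\mathcal{D}^+(\delta))}+\|L^kZ^\alpha\Omega_\delta\|_{L^\infty(\mathcal{D}^+(\delta))}\lesssim\varepsilon,
\]
with a constant independent of $\delta$. Fix a compact set $K\subset\mathcal{W}$, say $K\subset\{t\geq t_0\}$. For all dyadic $\delta$ with $C\delta<t_0$ we have $K\cap\mathcal{D}^-(\delta)=\emptyset$. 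Hence $K\subset\mathcal{D}^+(\delta)\cap\mathcal{W}_\delta$, and the bound above holds uniformly on $K$ along the sequence.

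\textbf{Step 2: passing the bound to the limit.} In acoustical coordinates $(t,u,\vartheta)$ one has $L=\partial_t$, $T=\partial_u-\Xi\partial_\vartheta$ and $\hat X=\slashed g^{-1/2}\partial_\vartheta$. The coefficients $\Xi_\delta$ and $\slashed g_\delta$ converge in $C^{N_3}$ by the Arzel\`a--Ascoli construction of Section \ref{section:region of convergence}. Using $[L,Z]=\mathbf{P}^w_{1,1}Z$, I will convert the uniform bounds on $L^kZ^\alpha$ into uniform bounds on all coordinate derivatives $\partial_t^k\partial_u^a\partial_\vartheta^b$ of $(\psi_\delta,\Omega_\delta)$ for $k+a+b\leq N_3/2$, on compact subsets of $\mathcal{W}$. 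Bounding one order higher where needed (the gap between $N_3/2$ and $N_3$ leaves room), Arzel\`a--Ascoli together with a further diagonal subsequence shows the convergence $(\psi_\delta,\Omega_\delta)\to(\psi,\Omega)$ holds in $C^{N_3/2}$ of the spacetime coordinates. In particular $L^kZ^\alpha\psi_\delta\to L^kZ^\alpha\psi$ pointwise, and likewise for $\Omega$.

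\textbf{Step 3: conclusion.} The pointwise limit of functions bounded by $C\varepsilon$ is bounded by $C\varepsilon$, which gives \eqref{eq: Lk estimates  on effective domain} on $K$. Exhausting $\mathcal{W}$ by such compact sets $K$ yields the estimate on all of $\mathcal{W}$. The constant is uniform because it is independent of $t_0$.

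The main obstacle is Step 2. The limit so far was taken only in $C^0_tC^{N_3}_x$, so convergence of the time derivatives $L^k$ has to be upgraded. This must be done in coordinates that themselves depend on $\delta$, since $u_\delta$ and $\vartheta_\delta$ converge only along the subsequence. I would handle this by working in the fixed coordinate chart $(t,u,\vartheta)$, where all regions are rectangles $[t_0,t^*]\times[0,\ub]\times\mathbb{T}$, and by checking that the coordinate maps to $(t,x_1,x_2)$ converge in $C^1$, with Jacobian determinant $-\kappa\sqrt{\slashed g}\neq0$ for $t\geq t_0$.
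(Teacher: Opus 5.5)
Your proposal is correct and follows the paper's route. The paper likewise gets this proposition simply by passing to the limit in the $\delta$-uniform bounds of Proposition \ref{prop: estimates on Lk}, using that $\mathcal{D}^-(\delta)\subset\{t\le C\delta\}$ so the effective domains $\mathcal{D}^+(\delta)$ exhaust $\mathcal{W}$; your Steps 1--3 spell out the compactness details the paper leaves implicit, with one small correction: at top order (e.g.\ $\alpha=0$, $k=N_3/2$) Proposition \ref{prop: estimates on Lk} leaves no spare derivative for Arzel\`a--Ascoli, but none is needed, since a uniform $C\varepsilon$ bound on the top derivative passes to the uniform limit of the next-lower derivative by weak-$*$ lower semicontinuity (that limit is $C\varepsilon$-Lipschitz).
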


\subsection{Additional vanishing of transversal derivatives of $\Omega$}

In view of the equivalence $B\Omega = 0 \iff T(\Omega) = c^{-1}\kappa L(\Omega)$ and the bound \eqref{eq: Lk estimates on effective domain}, we deduce that the transversal derivatives of $\Omega$ enjoy an \textbf{additional vanishing property} in the rarefaction wave region. More precisely, we have:

\begin{proposition}\label{prop:additional_vanishing_Omega}
For $|\alpha|\leq \frac{1}{2}N_{3}-1$, $Z\in \{T, \hat{X}\}$ and $\Zr\in \{\Tr, \Xr\}$, it holds that
\begin{equation}\label{eq: addition vanishing of transversal derivate of Omega}
\begin{cases}
\|TZ^{\alpha}(\Omega)\|_{L^{\infty}(\Sigma_{t}^{u^{*}})}\lesssim \varepsilon t,\\
\|\Tr\Zr^{\alpha}(\Omega)\|_{L^{\infty}(\Sigma_{t}^{u^{*}})}\lesssim \varepsilon t.
\end{cases}
\end{equation}
\end{proposition}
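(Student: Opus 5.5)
The plan is to derive both bounds directly from the transport equation for $\Omega$ written in the first null frame, and then convert frames using the pointwise geometry bounds.

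\textbf{Step 1: the first null frame.} Since $B = L + c\hat{T}$ and $T = \kappa\hat{T}$, the equation $B\Omega = 0$ reads
\[
T(\Omega) = -c^{-1}\kappa\, L(\Omega).
\]
The sign is immaterial for what follows, and the paper writes the same identity as $T(\Omega)=c^{-1}\kappa L(\Omega)$. Apply $Z^{\alpha}$ to this identity. This gives
\[
Z^{\alpha}T(\Omega) = -\sum_{\alpha_1+\alpha_2=\alpha} Z^{\alpha_1}(c^{-1}\kappa)\, Z^{\alpha_2}L(\Omega).
\]
\begin{itemize}
\item By \eqref{eq: L estimates in W}, $|Z^{\alpha_1}(c^{-1}\kappa)|\lesssim t$: we have $\kappa = t + O(\varepsilon t^2)$ with the same bound for $Z^{\alpha_1}(\kappa-t)$, and $Z(t)=0$.
\item $Z^{\alpha_1}(c^{-1})$ is bounded.
\item It remains to bound $Z^{\alpha_2}L(\Omega)$ by $\varepsilon$. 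Commute $L$ to the front with \eqref{Formulas for P_n,m}: $Z^{\alpha_2}L(\Omega) = LZ^{\alpha_2}(\Omega) + \mathbf{P}$-type commutator terms. The commutator terms are products of $\mathbf{P}^{w}_{1,1}$ factors, which are $O(\varepsilon)$ or $O(1)$, with lower-order $\hat{X}Z^{\beta}(\Omega)$, which are $O(\varepsilon)$ by \eqref{eq: L estimates in W}.
\item The leading term $LZ^{\alpha_2}(\Omega)$ is $O(\varepsilon)$ by Proposition \ref{prop: estimates on Lk final} with $k=1$, provided $1+|\alpha|\le N_3/2$. This is where the restriction $|\alpha|\le \frac12 N_3-1$ enters.
\end{itemize}
Hence $|Z^{\alpha}T(\Omega)|\lesssim \varepsilon t$.

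\textbf{Step 2: moving $T$ to the left.} To get $TZ^{\alpha}(\Omega)$ rather than $Z^{\alpha}T(\Omega)$, commute $T$ past the $Z$'s using $[T,\hat{X}] = -\kappa\theta\,\hat{X}$. Every commutator term carries the factor $\kappa\theta = O(t)$ times $\hat{X}Z^{\beta}(\Omega) = O(\varepsilon)$, so it is $O(\varepsilon t)$. This gives the first estimate.

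\textbf{Step 3: the second null frame.} Express the ringed fields in the first frame:
\[
\Tr = \kappar\Trh = -t\,\partial_1 = -t\bigl(\hat{X}^1\hat{X} - \hat{X}^2\hat{T}\bigr), \qquad \Xr = \partial_2 = \hat{X}^2\hat{X} + \hat{X}^1\hat{T}.
\]
Since $|\hat{X}^1| = |\hat{T}^2|\lesssim \varepsilon t$, we get $\Tr = \frac{t}{\kappa}\hat{X}^2\, T - t\hat{X}^1\hat{X}$, with coefficients whose $Z$-derivatives are bounded using \eqref{eq: L estimates in W}.

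Expand $\Tr\Zr^{\alpha}(\Omega)$ as a polynomial in frame coefficients and $Z$-derivatives of $\Omega$. By construction, every resulting term has one of two forms:
\begin{itemize}
\item it contains a factor $TZ^{\beta}(\Omega) = O(\varepsilon t)$, by Step 1 and Step 2; or
\item it contains a factor $t\hat{X}^1 = O(\varepsilon t^2)$, or a $Z$-derivative of coefficients such as $Z^{\gamma}(t\hat{X}^1)$ or $Z^{\gamma}(t/\kappa)$. These are $O(\varepsilon t)$ or carry a factor $T(\cdot)$ on the coefficient side, which is $O(\varepsilon t)$.
\end{itemize}
The inner ringed fields $\Zr$ are likewise rewritten as combinations of $T$ and $\hat{X}$ with controlled coefficients, so only $Z$-derivatives of $\Omega$ of order $\le |\alpha|+1$ appear.

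\textbf{Main obstacle.} The delicate point is the bookkeeping in Step 3. The inner $\Xr$ factors produce $\hat{X}^1\hat{T} = \frac{\hat{X}^1}{\kappa}T$, where $\frac{1}{\kappa}\sim\frac{1}{t}$. This contributes $\frac{\varepsilon t}{t}\,TZ^{\beta}(\Omega) = O(\varepsilon)\cdot O(\varepsilon t)$, which is harmless. One must also check that no term of the form $\frac{1}{\kappa}T(\text{coefficient})$ appears without a compensating small factor. I would handle this as in Section \ref{section6}: expand $(\Zr^{\Tr}\Tr + \Zr^{\Xr}\Xr)$-type products inversely, and verify that every term has at least one $T$ acting on $\Omega$ or a factor of size $\varepsilon t$.
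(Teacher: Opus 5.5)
Your proposal is correct and follows the paper's route: the paper's argument is exactly your Step 1, namely $T(\Omega)=\mp c^{-1}\kappa L(\Omega)$ from $B\Omega=0$, combined with $\kappa\sim t$ and the uniform bound on $LZ^{\alpha}(\Omega)$ from Proposition \ref{prop: estimates on Lk final}, which is where the restriction $|\alpha|\le\frac12 N_3-1$ comes from. Your Steps 2 and 3 spell out the commutator and frame-change bookkeeping that the paper leaves implicit. The right way to close that bookkeeping is your observation that the $\hat{X}^1/\kappa$ coefficients coming from inner $\Xr$ factors, whose derivatives are only $O(\varepsilon)$, always multiply a word that already contains $T$.
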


\begin{remark}\label{rem:additional_vanishing_not_ansatz}
It is an interesting fact that, although \eqref{eq: addition vanishing of transversal derivate of Omega} can be obtained as a corollary of the energy estimates, it seems difficult to establish the \textbf{additional vanishing property} for the transversal derivatives of $\Omega$ in the rarefaction wave region during the construction of the data on $\Sigma_{\delta}^{u^{*}}$. Therefore, this additional vanishing property cannot be part of the ansatz. Instead, we can only make the weaker ansatz:
\[
\|Z^{\alpha}(\Omega)\|_{L^{\infty}(\Sigma_{t}^{u^{*}})}\lesssim \varepsilon.
\]
Nevertheless, the structure of the wave-transport system and the refined Gronwall's inequality enable us to close the energy estimates even with this weaker assumption.
\end{remark}

\subsection{The resolution of initial singularity $\mathbf{S}_{*}$ with acoustical coordinate $(t, u, \vartheta)$}

\subsubsection{The construction of coordinate system for the initial singularity $\mathbf{S}_{*}$}
In view of the domains defined in Section \ref{section:region of convergence}, there exists a $\overline{u}$ so that $0\leqslant u^*-\overline{u}\leqslant \varepsilon_0$ and for each $u \in [0,\overline{u}]$, the characteristic hypersurface $C_u$ is complete in the following sense: for all $0<t\leq t^{*}$, $C_u\cap \Sigma_t$ is a complete circle. We then define 
\[\overline{\mathcal{W}}=\bigcup_{u\in [0,\overline{u}]}C_u.\]

In view of \eqref{eq: L estimates in W} and \eqref{eq: Lk estimates  on effective domain}, the solution $(\wb,w, \psi_2)$ are now $C^{\frac{N_{3}}{2}}$ functions on $\mathcal{W}$ with respect to acoustical coordinate $(t, u, \vartheta)\in (0, 1]\times [0, \ub]\times \mathbb{R}/2\pi\mathbb{Z}$. At the same time, acoustical coordinate coordinate $(t, u, \vartheta)$ are $C^{\frac{N_{3}}{2}}$ functions of Euclidean coordinate $(t, x_{1}, x_{2})$. Therefore, the region $\overline{\mathcal{W}}$ can be parametrized by acoustical coordinate $(t, u, \vartheta)\in (0, 1]\times [0, \ub]\times \mathbb{R}/2\pi\mathbb{Z}$.

Recall that Jacobi matrix of the coordinate transformation $(t,u,\theta)\mapsto (x_{0}, x_{1}, x_{2})$ is given by
\[
\begin{pmatrix}
\frac{\partial x^{0}}{\partial t}&\frac{\partial x^{0}}{\partial u}&\frac{\partial x^{0}}{\partial \theta}\\
\frac{\partial x^{1}}{\partial t}&\frac{\partial x^{1}}{\partial u}&\frac{\partial x^{1}}{\partial \theta}\\
\frac{\partial x^{2}}{\partial t}&\frac{\partial x^{2}}{\partial u}&\frac{\partial x^{2}}{\partial \theta}
\end{pmatrix}=\begin{pmatrix}
1&0&0\\
L^{1}&\kappa \hat{T}^{1}+\Xi\sqrt{\slashed{g}}\hat{X}^{1}&\sqrt{\slashed{g}}\hat{X}^{1}\\
L^{2}&\kappa \hat{T}^{2}+\Xi\sqrt{\slashed{g}}\hat{X}^{2}&\sqrt{\slashed{g}}\hat{X}^{2}
\end{pmatrix}.
\]
For all $(t,u,\vartheta)\in (0,t^*]\times [0,\overline{u}]\times [0,2\pi]$, for all multi-indices $\alpha, \beta, k$ with $|\alpha|+|\beta|+k\leq \frac{N_{3}}{2}$, we have
\[
|L^{k}T^{\alpha}X^{\beta}(\frac{\partial x_{1}}{\partial \theta})|+|L^{k}T^{\alpha}X^{\beta}(\frac{\partial x_{1}}{\partial u}+t)|+|L^{k}T^{\alpha}X^{\beta}(\frac{\partial x_{2}}{\partial \theta}-1)|+|L^{k}T^{\alpha}X^{\beta}(\frac{\partial x_{2}}{\partial u})|\lesssim \varepsilon.
\]
which implies $x_{1}, x_{2}$ (parametrized by $(t, u, \vartheta)\in (0, t^{*}]\times [0, \ub]\times \mathbb{R}/2\pi\mathbb{Z}$) can be extend smoothly up to $t=0$ such that $x_{1}, x_{2}\in C^{N_{4}}\Big([0, t^{*}]\times [0, \overline{u}]\times [0, 2\pi]\Big)$. Moreover, we have
\[
\begin{cases}
x_{2}(0, u, \vartheta)=\vartheta, \\
x_{1}(0, u, \vartheta)=0.
\end{cases}, \forall (u, \vartheta)\in [0, \ub]\times \mathbb{R}/2\pi\mathbb{Z}.
\]
which resolves the initial singularity $\mathbf{S}_{*}$.

According to \eqref{eq: L estimates in W} and \eqref{eq: Lk estimates  on effective domain}, for all $\psi \in \{\wb,w,\psi_2\}$ for all $(t,u,\vartheta)\in (0,t^*]\times [0,\overline{u}]\times [0,2\pi]$, for all multi-indices $\alpha, \beta$ and for all $k\geqslant 1$, if $|\alpha|+|\beta|+k\leqslant \frac{N_{3}}{2}$, we have
\[
\big|\big(L^k T^\alpha X^\beta \psi\big)(t,u,\vartheta)\big|\lesssim \varepsilon, \big|(L^{k}T^{\alpha}{X}^{\beta}\Omega)(t,u,\vartheta)\big|\lesssim \varepsilon.
\]
which implies $(\wb, w, \psi_{2}, \Omega)$ (parametrized by $(t, u, \vartheta)\in (0, t^{*}]\times [0, \ub]\times \mathbb{R}/2\pi\mathbb{Z}$) can be extend smoothly up to $t=0$ such that   $\big(\wb, w,\psi_2, \Omega\big)\in C^{N_{4}}\big([0,t^*]\times [0,\overline{u}]\times [0,2\pi]\big)$.

\subsubsection{The behavior of $(\wb, w, \psi_{2}, \Omega, \hat{T}^{1}, \hat{T}^{2}, \kappa, \slashed{g}, \Xi)$ on the initial singularity $\mathbf{S}_{*}$}

By Proposition \eqref{eq: L estimates in W}, we have:

\begin{proposition}\label{prop:data at the singularity}
On $\mathbf{S}_{*}$, the solution $U$ (expressed in terms of Riemann invariants) and the associated geometric quantities $(\hat{T}^{1}, \hat{T}^{2}, \kappa, \slashed{g}, \Xi)$ assume a very simple form:
\begin{equation}\label{eq:data at the singularity}
\begin{cases}
\wb(u,\vartheta)= \wb_r(0,\vartheta)-\frac{2}{\gamma+1}u,\\
w(u,\vartheta)=w_r(0,\vartheta),\\
\psi_2(u,\vartheta)=-v^2_r(0,\vartheta),\\
\hat{T}^{1}(u, \vartheta)=-1,\\
\hat{T}^{2}(u, \vartheta)=0,\\
\kappa(u, \vartheta)=0,\\
\slashed{g}(u, \vartheta)=1,\\
\Xi(u, \vartheta)=0,
\end{cases}
\quad (u, \vartheta)\in [0, \ub]\times \mathbb{R}/2\pi\mathbb{Z}.
\end{equation}
\end{proposition}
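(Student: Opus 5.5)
Each identity in \eqref{eq:data at the singularity} comes from the uniform bounds \eqref{eq: L estimates in W} and \eqref{eq: Lk estimates  on effective domain}, which are independent of $\delta$ and persist in the limit on $\overline{\mathcal{W}}$, combined with the $C^{N_4}$ extension of all quantities to $t=0$ in the acoustical coordinates $(t,u,\vartheta)$. The general mechanism is this: any quantity $f$ with $|f|\lesssim \varepsilon t^{p}$ for some $p>0$ has a continuous extension that vanishes on $\{t=0\}$. Since $\mathbf{S}_*$ corresponds to $\{t=0\}\times[0,\ub]\times\mathbb{R}/2\pi\mathbb{Z}$, the geometric identities follow directly.

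\textbf{Geometric quantities.} From $|\hat{T}^{1}+1|\lesssim\varepsilon^{2}t^{2}$, $|\hat{T}^{2}|\lesssim\varepsilon t$, $|\kappa-t|\lesssim\varepsilon t^{2}$, $|\slashed{g}-1|\lesssim\varepsilon t$ and $|\Xi|\lesssim\varepsilon t^{2}$ on $\Sigma_t^{u^*}$, letting $t\to0$ gives $\hat{T}^{1}=-1$, $\hat{T}^{2}=0$, $\kappa=0$, $\slashed{g}=1$ and $\Xi=0$ on $\mathbf{S}_*$.

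\textbf{Riemann invariants.} On $C_0$ we have $U=U_r$, and $\vartheta$ on $C_0$ is defined by $L\vartheta=0$ with $\vartheta=x_2$ on $S_{0,0}$. Because $U_r$ is smooth up to $t=0$, the trace of $U$ on $S_{0,0}=\mathbf{S}_*\cap\{u=0\}$ is $U_r(0,\vartheta)$, where I use $x_2(0,u,\vartheta)=\vartheta$. To move from $u=0$ to a general $u$, I integrate in $u$ along $t=0$ using $\frac{\partial}{\partial u}=T+\Xi\sqrt{\slashed{g}}\hat{X}$, which I read off from \eqref{eq: L T in terms of coordinates}. The bounds give $|T\psi'|\lesssim\varepsilon t$ for $\psi'\in\{w,\psi_2\}$ and $|T\wb+\frac{2}{\gamma+1}|\lesssim\varepsilon t$. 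The term $\Xi\hat{X}\psi$ is $O(\varepsilon^2 t^2)$. Hence at $t=0$ we get $\partial_u w=\partial_u\psi_2=0$ and $\partial_u\wb=-\frac{2}{\gamma+1}$. Integrating from $u=0$ yields $w=w_r(0,\vartheta)$, $\psi_2=-v^2_r(0,\vartheta)$ and $\wb=\wb_r(0,\vartheta)-\frac{2}{\gamma+1}u$.

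\textbf{Main obstacle.} The delicate point is justifying that these pointwise limits commute with $\partial_u$ at $t=0$. This requires continuity of $\partial_u\psi$ up to $t=0$, which the $C^{N_4}$ extension in $(t,u,\vartheta)$ provides, together with the fact that the $\delta$-uniform estimates hold on the whole of $\overline{\mathcal{W}}$, not only on the effective region. Equivalently, one can take the limits of $\psi(t,u,\vartheta)-\psi(t,0,\vartheta)=\int_0^u\partial_u\psi\,du'$ as $t\to0$, using dominated convergence and the uniform bounds. A minor point is to confirm that the trace of $\vartheta$ at $S_{0,0}$ matches $x_2$, but this holds by the definition \eqref{def: vartheta in acoustical}.
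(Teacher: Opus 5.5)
Your proof is correct and follows the paper's own route. The paper obtains \eqref{eq:data at the singularity} directly from the $\delta$-independent bounds \eqref{eq: L estimates in W}: the $t$-weighted bounds on $\hat{T}^{1}+1$, $\hat{T}^{2}$, $\kappa-t$, $\slashed{g}-1$ and $\Xi$, together with $|T(\wb)+\frac{2}{\gamma+1}|+|T(w)|+|T(\psi_{2})|\lesssim\varepsilon t$ and the trace $U=U_{r}$ on $C_{0}$. Your integration of $\partial_{u}=T+\Xi\sqrt{\slashed{g}}\hat{X}$ from $u=0$ simply makes that step explicit.

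Your ``main obstacle'' is also milder than you suggest. The quantities $\partial_{u}w$, $\partial_{u}\psi_{2}$ and $\partial_{u}\wb+\frac{2}{\gamma+1}$ are $O(\varepsilon t)$ uniformly in $(u,\vartheta)$, and $|L\psi|\lesssim\varepsilon$ gives continuity in $t$. So the identity $\psi(t,u,\vartheta)-\psi(t,0,\vartheta)=\int_{0}^{u}\partial_{u}\psi\,du'$ passes to $t=0$ by uniform convergence, with no need for the $C^{N_{4}}$ extension of $\partial_{u}\psi$.
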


\subsubsection{The behavior of $(\Omega, L(\wb), L(w), L(\psi_{2}), L(\Omega))$ on the initial singularity $\mathbf{S}_{*}$}

\begin{itemize}
\item $\boldsymbol{\Omega}$.

In view of the equation $T(\Omega) = -c^{-1}\kappa L(\Omega)$ and Proposition \ref{prop: estimates on Lk final}, we have $T(\Omega) = 0$, and thus
\[
\Omega(u,\vartheta) = \Omega_{r}(0,\vartheta) \quad \text{for } (u, \vartheta)\in [0, \ub]\times \mathbb{R}/2\pi\mathbb{Z}.
\]

\item $\boldsymbol{L(\underline{w})}$.

Using the Euler equation for $\underline{w}$ extended to $\mathbf{S}_{*}$, we obtain
\[
L(\underline{w})(u, \vartheta) = -\frac{1}{2}\Bigl(c_{r}(0, \vartheta)-\frac{\gamma-1}{\gamma+1}u\Bigr)\partial_{2}v^{2}_{r}(0, \vartheta) \quad \text{for } (u, \vartheta)\in [0, \ub]\times \mathbb{R}/2\pi\mathbb{Z}.
\]

\item $\boldsymbol{L(w)}$.

Using the wave equation for $w$, we derive
\[
\begin{cases}
2\partial_{u}L(w)(u, \vartheta) + c^{-1}L(w)(u,\vartheta) = \frac{1}{2}\partial_{\vartheta}\psi_{2}(u, \vartheta),\\
L(w)(0, \vartheta) = 2(c_{r}\partial_{1}w_{r})(0, \vartheta) + \frac{1}{2}(c_{r}\partial_{2}\psi_{2})(0,\vartheta),
\end{cases}
\]
Solving this ODE for $(u, \vartheta)\in [0, \ub]\times \mathbb{R}/2\pi\mathbb{Z}$ yields
\[
\begin{split}
L(w)(u, \vartheta) &= \frac{\gamma+1}{2(3-\gamma)}\Bigl(c_{r}-\frac{\gamma-1}{\gamma+1}u\Bigr)\partial_{2}\psi_{r2}(0,\vartheta) \\
&\quad - \frac{\gamma+1}{2(3-\gamma)}\Bigl(\frac{c}{c_{r}}\Bigr)^{\frac{1}{2}\frac{\gamma+1}{\gamma-1}}c_{r}\partial_{2}\psi_{r2}(0, \vartheta) \\
&\quad + \Bigl(\frac{c}{c_{r}}\Bigr)^{\frac{1}{2}\frac{\gamma+1}{\gamma-1}}L(w_{r})(0, \vartheta).
\end{split}
\]

\item $\boldsymbol{L(\psi_{2})}$.

Using the wave equation for $\psi_{2}$, we derive
\[
\begin{cases}
2\partial_{u}L(\psi_{2})(u, \vartheta) + \frac{4}{\gamma+1}c^{-1}L(\psi_{2})(u, \vartheta) = \frac{4}{\gamma+1}\rho \Omega,\\
L(\psi_{2})(0, \vartheta) = L(\psi_{2r})(0, \vartheta) = (c_{r}\partial_{1}\psi_{2})(0, \vartheta) + c_{r}\partial_{2}(\underline{w}_{r} + w_{r})(0, \vartheta).
\end{cases}
\]
Solving this ODE for $(u, \vartheta)\in [0, \ub]\times \mathbb{R}/2\pi\mathbb{Z}$ gives
\[
L(\psi_{2})(u, \vartheta) = \frac{2u}{\gamma+1}\Bigl(c_{r}-\frac{\gamma-1}{\gamma+1}u\Bigr)^{\frac{2}{\gamma-1}}\Omega_{r}(0,\vartheta) + \Bigl(\frac{c}{c_{r}}\Bigr)^{\frac{2}{\gamma-1}}L(\psi_{2r})(0,\vartheta).
\]

\item $\boldsymbol{L(\Omega)}$.

Commuting $L$ with the equation $c^{-1}\kappa B(\Omega) = 0$, we obtain $c^{-1}\kappa B(L(\Omega)) = -L(c^{-1}\kappa)L(\Omega) + (\zeta+\eta)\hat{X}\Omega$. Using the transport equation for $L(\Omega)$,
\[
\begin{cases}
\partial_{u}L(\Omega)(u, \vartheta) = -c^{-1}(u, \vartheta)L(\Omega)(u, \vartheta),\\
L(\Omega)(0,\vartheta) = L(\Omega_{r})(0, \vartheta),
\end{cases}
\]
solving this ODE for $(u, \vartheta)\in [0, \ub]\times \mathbb{R}/2\pi\mathbb{Z}$ yields
\[
L(\Omega)(u,\vartheta) = \Bigl(\frac{c(u,\vartheta)}{c_{r}(0,\vartheta)}\Bigr)^{\frac{\gamma+1}{\gamma-1}}L(\Omega_{r})(0,\vartheta).
\]
\end{itemize}

\begin{remark}\label{remark: 1-th L derivates on singularity}
Since $\hat{X} = X = \frac{\partial}{\partial \vartheta}$ and $T = \frac{\partial}{\partial u}$ on $\mathbf{S}_{*}$, terms of the form $Z^{\alpha}L(\psi)$ on $\mathbf{S}_{*}$ can be computed by applying $\frac{\partial}{\partial \vartheta}$ and $\frac{\partial}{\partial u}$ to $L(\psi)(u, \vartheta)$. As a result, on $\mathbf{S}_{*}$,
\[
Z^{\alpha}L(\psi)(u, \vartheta) \quad \text{for } (u, \vartheta)\in [0, \ub]\times \mathbb{R}/2\pi\mathbb{Z}
\]
are smooth functions of $u$ and $\partial^{i}_{1}\partial_{2}^{j}\psi_{r}(0, \vartheta)$ for $i\leq 1$ and $i+j\leq |\alpha|+1$.
\end{remark}

\subsubsection{The behavior of $(L^{k+1}(\wb), L^{k+1}(w), L^{k+1}(\psi_{2}), L^{k}(\Omega))$ on the initial singularity $\mathbf{S}_{*}$}

To resolve the \emph{compatibility issues} arising in the applications to the Riemann problem \textbf{in the regime of $R-R$} in section \ref{sec: Application to the Riemann problem 1} and \textbf{in the regime of $S-R$} in section \ref{sec: Application to the Riemann problem 2}, we must characterize the dependence of $L^{k-1}(\Omega)(u, \vartheta)$ on $\mathbf{S}_{*}$ on the normal derivatives $\partial^{k}_{1}U_{r}$.

Roughly speaking, the following proposition establishes that the $n$-jets of the solution $(\wb, w, \psi_{2})(u, \vartheta)$ on the initial singularity $\mathbf{S}_{*}$ depend only on $u$ and the $n$-jets of the initial data $(c_{r}, v^{1}_{r}, v^{2}_{r})$ at $x_{1}=0$, in accordance with our intuitive expectation.

\begin{proposition}\label{prop:jet_dependence_singularity}
Terms of the form $Z^{\alpha}L^{k}(\psi)(u, \vartheta)$ for $k+|\alpha|\leq N_{4}$ are smooth functions on $\mathbf{S}_{*}$ of $u$ and $\partial^{i}_{1}\partial_{2}^{j}\psi_{r}(0,\vartheta)$ for $i\leq k$ and $i+j\leq n$. Moreover, the coefficients $\mathbf{P}^{w}_{n, k}(u, \vartheta)$ for $n\leq N_{4}$ are smooth functions of $u$ and $\partial^{i}_{1}\partial_{2}^{j}\psi_{r}(0, \vartheta)$ for $i\leq k$ and $i+j\leq n$.
\end{proposition}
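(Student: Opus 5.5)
We argue by a double induction on the number $k$ of $L$-derivatives and, for fixed $k$, on the number of tangential derivatives $|\alpha|$. We work throughout on $\mathbf{S}_*$, where by Proposition \ref{prop:data at the singularity} the geometry degenerates to the very simple form $\kappa=0$, $\hat{T}^1=-1$, $\hat{T}^2=0$, $\slashed{g}=1$, $\Xi=0$. Consequently $T=\partial_u$ and $\hat{X}=\partial_\vartheta$ there. Any $Z^\alpha$ applied to a function already known on $\mathbf{S}_*$ is therefore just a $(u,\vartheta)$-derivative. Such a derivative preserves the class of ``smooth functions of $u$ and $\partial_1^i\partial_2^j\psi_r(0,\vartheta)$ with $i\le k$, $i+j\le n$,'' raising $n$ by $|\alpha|$ and leaving $k$ unchanged (a $\partial_\vartheta$ raises $j$, a $\partial_u$ raises nothing). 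So it suffices to treat $\alpha=0$, i.e.\ $L^k(\psi)$ on $\mathbf{S}_*$, together with the geometric quantities entering $\mathbf{P}^w_{n,k}$.

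\textbf{Base case $k\le1$.} For $k=0$ this is Proposition \ref{prop:data at the singularity}. For $k=1$ it is the explicit computation of $L(\wb),L(w),L(\psi_2)$ preceding the statement, together with Remark \ref{remark: 1-th L derivates on singularity}. The geometric elements $Z^\alpha(\hat T^i),Z^\alpha(\kappa)$ of $\mathbf{W}_{n,k}$ are constants on $\mathbf{S}_*$, and $L$-derivatives of them are again of type $\mathbf{P}^w_{\cdot,1}$ by \eqref{Formulas for P_n,m}. Hence every $\mathbf{P}^w_{n,k}$, being a polynomial in elements of $\mathbf{W}_{n,k}$ and in $c^{-1},\rho^{\pm1}$, inherits the claimed dependence once the entries of $\mathbf{W}_{n,k}$ do. Note that $c$ and $\rho$ are functions of $\wb+w$, so they depend only on $u$ and on the zeroth jet of the data.

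\textbf{Inductive step $k\to k+1$.} Here we use the $\Omega$-free form from Remark \ref{remark: ODE without loss of derivates}:
\[
\Lb(L^{k+1}\psi)=\mathbf{P}^w_{1,1}L^{k+1}\psi+\mathbf{P}^w_{k+1,k}+\kappa c\hat{X}^2L^k\psi+\kappa\mathbf{P}^w_{k+2,k+2}.
\]
On $\mathbf{S}_*$ we have $\kappa=0$ and $\Lb=c^{-1}\kappa L+2T=2\partial_u$. Both terms carrying a factor $\kappa$ therefore vanish, which is exactly the absence of derivative loss noted in the remark after \eqref{ODE system for L^k+1(psi) and L^k(Omega)-basic}. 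What remains is a linear ODE in $u$,
\[
2\partial_u L^{k+1}\psi=\mathbf{P}^w_{1,1}L^{k+1}\psi+\mathbf{P}^w_{k+1,k},
\]
whose coefficients are known by the induction hypothesis. Its initial value at $u=0$ is $L^{k+1}\psi_r(0,\vartheta)$, read off on $C_0\cap\{t=0\}$. Expanding $L=\partial_t+(v^1+c)\partial_1+v^2\partial_2$ at $t=0$ and eliminating $\partial_t$ with the Euler equations, this initial value is a smooth function of $\partial_1^i\partial_2^j\psi_r(0,\vartheta)$ with $i\le k+1$ and $i+j\le k+1$. Solving the ODE then gives the claim for $L^{k+1}\psi$, and applying $Z^\alpha$ as above gives it for $Z^\alpha L^{k+1}\psi$.

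\textbf{Expected main obstacle.} The delicate point is the bookkeeping of the jet index when the vorticity source is substituted through \eqref{eq: reformulation of vorticity in the first null frame}. There we must check that $\mu\partial_i\Omega=\mathbf{P}^w_{2,2}$, and that after commutation no $\Omega$-term survives except inside a $\kappa$-multiplied term or inside $\mathbf{P}^w_{k+1,k}$. A term $L^{k+1}$ hidden in a coefficient with no $\kappa$ prefactor would raise the normal order $i$ beyond $k$. We would verify this by tracking, at each commutation in the proof of \eqref{ODE system for L^k+1(psi) and L^k(Omega)-basic}, that the terms $L(\Omega)$ arising from the source come multiplied by $\mu$ or $\kappa$, and hence vanish on $\mathbf{S}_*$. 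Finally, the degree of smoothness $N_4$ is dictated by the regularity $C^{N_4}$ up to $t=0$ established above.
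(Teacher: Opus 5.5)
Your proposal is correct and is essentially the paper's own proof. Both argue by induction on $k$ using the $\Omega$-free transport equation of Remark~\ref{remark: ODE without loss of derivates}, drop the $\kappa$-weighted terms on $\mathbf{S}_*$ (where $\Lb=2\partial_u$), solve the resulting linear ODE in $u$ with initial value $L^{k+1}\psi_r(0,\vartheta)$ read off on $C_0$, and interpret $Z^{\alpha}$ as $\partial_u,\partial_\vartheta$ on $\mathbf{S}_*$.

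One small correction to your bookkeeping paragraph: not every vorticity contribution vanishes on $\mathbf{S}_*$. Examples are the $L(\kappa)$-terms produced when $L$ hits the $\mu$ prefactor, and the $\Omega\,T(c)$ part of $\mu\rho c^{-1}\Omega\partial_1 c$. These contain $L^{k+1}\psi$ only linearly and are absorbed into $\mathbf{P}^w_{1,1}L^{k+1}\psi+\mathbf{P}^w_{k+1,k}$. So only the top-order $L^{k+2}\psi$ contribution needs the factor $\kappa$, which is exactly what that Remark records.
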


\begin{proof}
The base case $k=1$ follows directly from Remark \ref{remark: 1-th L derivates on singularity}. For the inductive step from $k$ to $k+1$, we employ the transport system for $L^{k+1}(\Omega)$ from Remark \ref{remark: ODE without loss of derivates}, restricted to the initial singularity $\mathbf{S}_{*}$. Noting that the terms $\kappa c\hat{X}^{2}L^{k}(\psi)$ and $\kappa \mathbf{P}_{k+2, k+2}^{w}$ vanish identically on $\mathbf{S}_{*}$, we obtain
\[
2\frac{\partial}{\partial u}\bigl(L^{k+1}(\psi)\bigr)(u, \vartheta)=\mathbf{P}^{w}_{1,1}(u, \vartheta)L^{k+1}(\psi)(u, \vartheta)+\mathbf{P}_{k+1, k}^{w}(u, \vartheta).
\] 
By the induction hypothesis, the coefficients on the right-hand side are smooth functions of $u$ and $\partial^{i}_{1}\partial_{2}^{j}\psi_{r}(0, \vartheta)$ for $i\leq k$ and $i+j\leq k+1$. The initial data $L^{k+1}(\psi)(0, \vartheta)$ are likewise smooth functions of $\partial^{i}_{1}\partial_{2}^{j}\psi_{r}(0,\vartheta)$ for $i\leq k+1$ and $i+j\leq k+1$. Solving this linear ODE yields that $L^{k+1}(\psi)(u, \vartheta)$ on $\mathbf{S}_{*}$ are smooth functions of $u$ and $\partial^{i}_{1}\partial_{2}^{j}\psi_{r}(0,\vartheta)$ for $i\leq k+1$ and $i+j\leq k+1$. Since $\hat{X}=X=\frac{\partial}{\partial \vartheta}$ and $T=\frac{\partial}{\partial u}$ on $\mathbf{S}_{*}$, it follows that $Z^{\alpha}L^{k+1}(\psi)(u, \vartheta)$ for $k+1+|\alpha|\leq N_{4}$ are smooth functions of $u$ and $\partial^{i}_{1}\partial_{2}^{j}\psi_{r}(0,\vartheta)$ for $i\leq k+1$ and $i+j\leq k+1+|\alpha|$.
\end{proof}

In the next proposition, we establish that $L^{n-1}(\Omega)(u, \vartheta)$ on the initial singularity $\mathbf{S}_{*}$ exhibits a controlled, explicit dependence on the top-order normal derivative $\partial_{1}^{n}v_{r}^{2}(0, \vartheta)$.

\begin{proposition}\label{prop:top_order_dependence_Omega}
For $1\leq n\leq N_{4}$, $L^{n-1}(\Omega)(u, \vartheta)$ admits the decomposition
\begin{equation}\label{eq:Omega_top_order_decomposition}
L^{n-1}(\Omega)(u, \vartheta)=\frac{c^{\frac{\gamma+1}{\gamma-1}(n-1)}}{\rho_{r}^{n}}\partial_{1}^{n}v_{r}^{2}(0, \vartheta)+F_{n, n-1}(u, U_{r}(0, \vartheta)),
\end{equation}
where $F_{n, n-1}(u, U_{r}(0, \vartheta))$ denotes a function depending smoothly on $u$ and $\partial^{i}_{1}\partial_{2}^{j}\psi_{r}(0,\vartheta)$ for $i\leq n-1$ and $i+j\leq n$.
\end{proposition}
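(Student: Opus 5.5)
We argue by induction on $n$, using the transport equation for $L^{k}(\Omega)$ along $c^{-1}\kappa B$ restricted to $\mathbf{S}_{*}$, in the same spirit as the proof of Proposition \ref{prop:jet_dependence_singularity}. On $\mathbf{S}_{*}$ we have $\kappa=0$, $\hat{T}^{1}=-1$, $\hat{T}^{2}=0$, $\Xi=0$ by Proposition \ref{prop:data at the singularity}. Hence $c^{-1}\kappa B=T+c^{-1}\kappa L$ reduces to $\partial_{u}$, and $L(c^{-1}\kappa)=c^{-1}L(\kappa)=-c^{-1}(T(c)+\hat{T}^{j}T(\psi_{j}))=-c^{-1}\,T(v^{1}+c)\cdot(-1)$. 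Using $T(\wb)=-\frac{2}{\gamma+1}$ and $T(w)=T(\psi_2)=0$ on $\mathbf{S}_*$, this equals a fixed multiple of $c^{-1}$. The first equation of \eqref{ODE system for L^k+1(psi) and L^k(Omega)-basic} with $k=n-1$ therefore becomes the linear ODE
\[
\partial_{u}L^{n-1}(\Omega)(u,\vartheta)=-(n-1)\,L(c^{-1}\kappa)\,L^{n-1}(\Omega)+\mathbf{P}_{n-1,n-1}(u,\vartheta),
\]
whose integrating factor is an explicit power of $c(u,\vartheta)=c_{r}(0,\vartheta)-\frac{\gamma-1}{\gamma+1}u$. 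This is the source of the prefactor $c^{\frac{\gamma+1}{\gamma-1}(n-1)}$; the case $n=2$ matches the explicit formula for $L(\Omega)$ on $\mathbf{S}_*$ computed above. The initial value at $u=0$ lies on $S_{0,0}\subset C_{0}$ and equals $L^{n-1}(\Omega_{r})(0,\vartheta)$, which can be computed from the data $U_{r}$.

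The key steps are as follows. First, the base case $n=1$: $\Omega=\Omega_{r}(0,\vartheta)=\rho_{r}^{-1}(\partial_{1}v_{r}^{2}-\partial_{2}v_{r}^{1})$, so the top term is $\rho_{r}^{-1}\partial_{1}v^{2}_{r}$ and the remainder contains no $\partial_{1}$ derivatives. Second, the initial value: on $C_{0}$ at $t=0$ we have $L=\partial_{t}+(v^{1}+c)\partial_{1}+v^{2}\partial_{2}$. Writing $L^{n-1}\Omega_{r}$ and using the Euler equations to replace each $\partial_{t}$ by spatial derivatives, the only term containing $\partial_{1}^{n}v^{2}_{r}$ comes from $(\partial_t+(v^1+c)\partial_1)^{n-1}$ acting on $\rho_r^{-1}\partial_1 v^2_r$. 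Since $B\Omega=0$, we have $L\Omega=-c\hat{T}(\Omega)=c\,\partial_{1}\Omega$ at $\hat{T}=-\partial_1$, so each $L$ contributes a factor of $c_{r}\partial_{1}$ at top order. This gives the top term $c_{r}^{n-1}\rho_{r}^{-1}\partial_{1}^{n}v_{r}^{2}$ plus terms with $i\le n-1$. Third, the forcing term: we must show that $\mathbf{P}_{n-1,n-1}$ restricted to $\mathbf{S}_{*}$ involves only $\partial_{1}^{i}\partial_2^j$ with $i\le n-1$. Its entries are $Z^{\alpha}L^{k}(\psi)$ with $k\le n-1$, together with $Z^\alpha L^{k-1}(\Omega)$ with $k\le n-1$ (handled by the induction hypothesis, since their $\vartheta$- and $u$-derivatives preserve the $i$-count) and geometric quantities. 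By Proposition \ref{prop:jet_dependence_singularity}, the $L^{k}(\psi)$ terms depend on $\partial_{1}^{i}$ with $i\le k\le n-1$. Fourth, we solve the ODE by Duhamel's formula. The homogeneous part transports the top-order initial term with the power of $c/c_{r}$, which combined with $c_{r}^{n-1}$ should give the stated form (possibly after adjusting the $\rho$ factor via $\rho\propto c^{2/(\gamma-1)}$). Everything else is absorbed into $F_{n,n-1}$.

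We expect the main obstacle to be the bookkeeping in the third step. One must check that in the derivation of the $\Omega$-equation, the $(\zeta+\eta)\hat{X}L^{k}(\Omega)$ and $L^{j}(c^{-1}\kappa)$ terms do not secretly contain $L^{n-1}$ derivatives of $T(\psi)$ carrying $\partial_{1}^{n}$ information. We would handle this by noting that $L^{j}\kappa$ involves $L^{j-1}T\psi$ and rewriting $LT=TL+[L,T]$ with $[L,T]\propto(\zeta+\eta)\hat X$; on $\mathbf{S}_*$ the factors of $\kappa$ vanish and $T=\partial_u$. Reducing $L^{j-1}T\psi=\partial_{u}L^{j-1}\psi+\cdots$ then keeps $i\le j-1$, and Proposition \ref{prop:jet_dependence_singularity} applies. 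The exact exponent of $c$ must also be verified against the explicit coefficient $L(c^{-1}\kappa)|_{\mathbf{S}_*}$.
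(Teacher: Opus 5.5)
Your proposal is correct and is essentially the paper's argument. Restrict the transport equation for $L^{n-1}(\Omega)$ to $\mathbf{S}_{*}$, where $c^{-1}\kappa B=\partial_u$; solve the resulting linear ODE in $u$ with integrating factor $(c/c_r)^{\frac{\gamma+1}{\gamma-1}(n-1)}$; and read off the $\partial_1^n v_r^2$ term from the initial value $L^{n-1}(\Omega_r)(0,\vartheta)$. You differ only in two minor respects: you control the $\Omega$-entries of $\mathbf{P}_{n-1,n-1}$ by induction on $n$, whereas the paper rewrites them through the null-frame reformulation of vorticity and then invokes Proposition \ref{prop:jet_dependence_singularity}; and you spell out the top-order identity $L^{n-1}\Omega_r \approx c_r^{n-1}\partial_1^{n-1}\Omega_r$, which the paper only asserts.

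One small slip: on $\mathbf{S}_{*}$ one has $L(c^{-1}\kappa)=-c^{-1}T(v^1+c)=+c^{-1}$, and your displayed chain carries a spurious extra factor $-1$. It is this positive sign that gives the positive exponent, consistent with the paper's explicit $n=2$ formula.
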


\begin{proof}
We begin with the transport equation for $L^{k}(\Omega)$ from \eqref{ODE system for L^k+1(psi) and L^k(Omega)-basic}, specialized to the case $k = n-1$:
\[
\frac{\partial}{\partial u} \bigl(L^{n-1}(\Omega)\bigr)(u, \vartheta)=-(n-1)c^{-1}\cdot L^{n-1}(\Omega)(u, \vartheta)+\mathbf{P}_{n-1,n-1}(u, \vartheta).
\]
In view of \eqref{eq: reformulation of vorticity in the first null frame}, the term $\mathbf{P}_{n-1,n-1}(u, \vartheta)$, when restricted to $\mathbf{S}_{*}$, is a smooth function of $u$ and $\partial^{i}_{1}\partial_{2}^{j}\psi_{r}(0,\vartheta)$ for $i\leq n-1$ and $i+j\leq n$. Solving this linear ODE thus yields
\[
\begin{split}
L^{n-1}(\Omega)(u, \vartheta)&=\Bigl(\frac{c}{c_{r}}\Bigr)^{\frac{\gamma+1}{\gamma-1}(n-1)}L^{n-1}(\Omega_{r})(0, \vartheta)+F_{n, n-1}(u, U_{r}(0, \vartheta))\\
&=\frac{c^{\frac{\gamma+1}{\gamma-1}(n-1)}}{\rho_{r}^{n}}\partial_{1}^{n}v_{r}^{2}(0, \vartheta)+F_{n, n-1}(u, U_{r}(0, \vartheta)),
\end{split}
\]
as required.
\end{proof}

\section{Structural Stability of the Rarefaction Wave-Rarefaction Wave Configuration with Vorticity}\label{sec: Application to the Riemann problem 1}

In their second paper \cite{Luo-YuRare2}, Luo and Yu established the existence and uniqueness of solutions to the Riemann problem in the irrotational $R$-$R$ regime. In this section, we prove the existence part of the same problem without assuming irrotationality. There are two primary motivations for this extension:
\begin{enumerate}
\item To clarify the crucial role played by vorticity for the compatibility conditions in the $R$-$R$ regime.
\item To serve as a warm-up for the more complex applications in subsequent sections.
\end{enumerate}
We also remark that the uniqueness of solutions in the $R$-$R$ regime, which was proven via the relative entropy method in \cite{Luo-YuRare2}, is independent of the irrotationality assumption and can be applied verbatim to the rotational case.

\subsection{The determination of inner boundary $H$ and $\Hb$}\label{Section: app to Riemann 2}
We now consider the Cauchy problem with the $\varepsilon$-perturbed Riemann data in the regime of $R-R$, see Definition \ref{def:data for R-R}. We have already proved that,  for the data $(v_r,c_r)$ given on $x_1>0$, we can construct a family of rarefaction waves connecting to it. It corresponds to {\bf the front rarefaction waves} and is depicted on the right of the following picture.
\begin{center}
\includegraphics[width=3.5in]{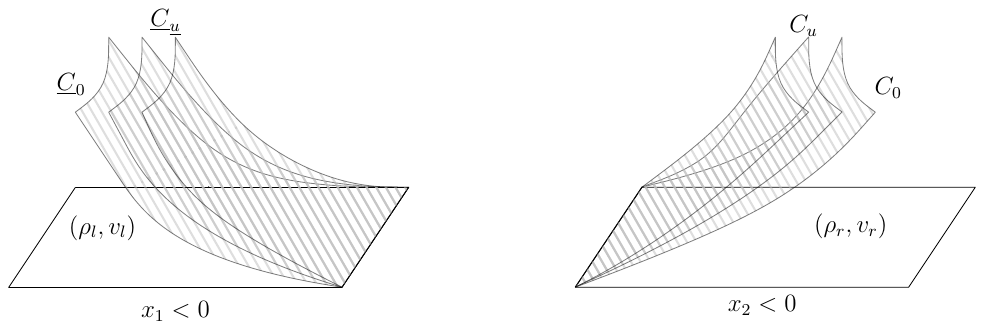}
\end{center}
For the data $(v_l,c_l)$ given on $x_1<0$, we can construct also a family of rarefaction waves connecting to it. This is {\bf the back rarefaction waves} and is depicted on the left of the above picture. These two families of rarefactions are associated to different families of characteristic hypersurfaces. We use $\Cb_{\ub}$ and $C_u$ to denote the back rarefaction wave fronts and  the front rarefaction wave fronts respectively. For a given front rarefaction wave front $C_u$, it cuts the singularity or equivalently the limiting surface $\mathbf{S}_*$ at $S_{0,u}$. We have shown that we can at least open up the front rarefaction waves up to $u=\overline{u}$, i.e., the solution exists for $u\in [0,\overline{u}]$. This is depicted in the right part of the following picture.
\begin{center}
\includegraphics[width=3.5in]{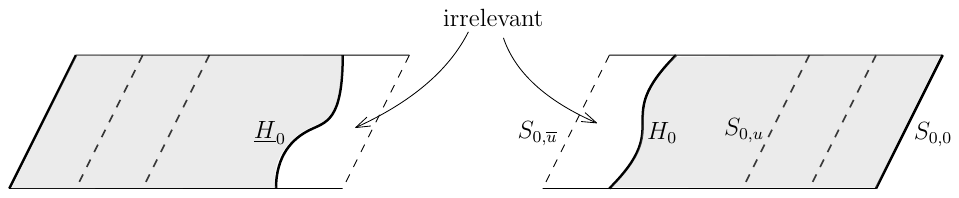}
\end{center}

We will show that there exists a curve $H_0$ between $S_{0,0}$ and $S_{0,\overline{u}}$ so that the region bounded by $H_0$ and $S_{0,\overline{u}}$ is not relevant to the perturbed Riemann problem. Let $H$ be the union of all the null geodesics emanated from $H_0$ along the $L$ direction. The  physically relevant front rarefaction wave region is the spacetime domain bounding $H$ and $C_0$. We also have a similar picture for back rarefaction waves.

We now define the inner boundary $H$ for the front rarefaction waves. According to \eqref{eq:data at the singularity}, we have $\wb(0,u,\vartheta)= \wb_r(0,\vartheta)-\frac{2}{\gamma+1}u$. In particular, $\wb$ decreases as $u$ increases. The curve $H_0$ consists of those points where $\wb$ decreases to $\wb_l$, $\wb_l$ being the Riemann invariants defined by the data on $x_1<0$, i.e., $\wb=\wb_l$. More precisely, we define
\begin{equation}\label{eq:H_0}
	H_0:=\big\{(u,\vartheta)\big|u=\frac{\gamma+1}{2}\big(\wb_r(0,\vartheta)-\wb_l(0,\vartheta)\big)\big\},
\end{equation}
where $\vartheta=x_2$. Since the solution on $\mathbf{S}_*$ is also  $O(\varepsilon)$-close to the one dimensional picture, this shows the existence of $H_0$.

\begin{center}
\includegraphics[width=2.5in]{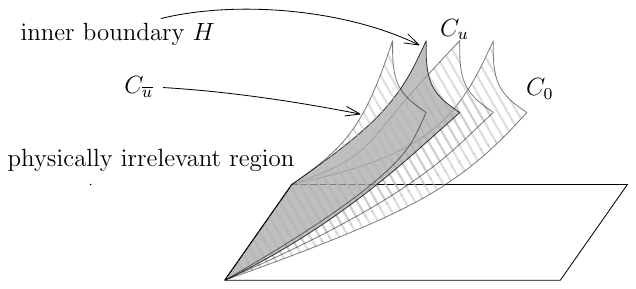}
\end{center}

For the back rarefaction waves, we can define $\Hb$ in a similar manner. The initial curve $\Hb_0$ is given by
\[\Hb_0:=\big\{(\ub,\vartheta)\big|\ub=\frac{\gamma+1}{2}\big(w_l(0,\vartheta)-w_r(0,\vartheta)\big)\big\},\]
where $\ub$ is the acoustical functions defined in the back rarefaction wave region. The $\Hb$ is defined as the null hypersurface emanated from $\Hb_0$. 

The above constructions give two characteristic hypersurfaces $H$ and $\Hb$. They are all emanated from the singularity
\[\mathbf{S}_*:=\big\{(t,x_1,x_2)\big|t=0,x_1=0\big\}\]

\subsection{Reduction to the Goursat problem with data on $H$ and $\Hb$}

Recall that the \textbf{front rarefaction wave region} $\overline{\mathcal{W}}$ is parametrized by $(t, u, \theta)\in [0, t^{*}]\times [0, \ub]\times \mathbb{R}/2\pi\mathbb{Z}$. The inner boundary $H\subset \overline{\mathcal{W}}_{right}$ can thus be parametrized by $(t, \vartheta)\in [0, t^{*}]\times \mathbb{R}/2\pi\mathbb{Z}$ as follows:
\[
H=\{\Big(t, \frac{\gamma+1}{2}\big(\wb_{r}(0, \vartheta)-\wb_{l}(0, \vartheta)\big), \vartheta\Big): (t, \theta)\in [0, t^{*}]\times \mathbb{R}/2\pi\mathbb{Z}\}.
\]
We also have the $H$-tangent vector fields
\[
\begin{cases}
{}^{(H)}L:=\frac{\partial}{\partial t}=L,\\
{}^{(H)}X:=(1+\frac{\partial u(\theta)}{\partial \theta}\cdot\Xi)X+\frac{\partial u(\theta)}{\partial \theta}\cdot T. 
\end{cases}
\]
and the $H$-intrinsic data
\[
\begin{cases}
\psi^{(H)}(t, \vartheta)= \Big((\wb(t, u(\vartheta),\vartheta), w(t, u(\vartheta),\vartheta), \psi_{2}(t, u(\vartheta),\vartheta))\Big),\\
\Omega^{(H)}(t, \theta)=\Omega(t, u(\vartheta),\vartheta).
\end{cases}
\]
For simplicity, we will omit the ${}^{(H)}$ notation from now on, as we will only discuss the vector fields $\{L, X\}$ tangent to $H$ and the data $(c, v^{1}, v^{2}, \Omega)$ intrinsic to $H$. In view of \eqref{eq: Lk estimates  on effective domain}, \eqref{eq:Omega_top_order_decomposition} and Proposition \ref{prop:jet_dependence_singularity}, we have:

\begin{proposition}\label{prop: data on H}
On the inner boundary $H$ of the \textbf{front rarefaction waves}, the data $(c, v^{1}, v^{2})$ satisfies the following three properties:
\begin{itemize}
\item The data $(c, v^{1}, v^{2})|_{H}$ satisfies the Euler equations along $H$, with
\[
\|L^{k}X^{s}\psi\|_{L^{\infty}(H)}\lesssim \varepsilon, 1\leq k+s\leq N_{4}.
\]
\item 
\[L^{k}X^{s}(\psi)(0, \vartheta)=F_{k+s, k}\big((U_{l}, U_{r})\big)(\vartheta),\] 
where $F_{k+s, k}\big((U_{l}, U_{r})\big)(\vartheta)$ denotes a generic smooth function of $\partial_{1}^{i}\partial_{2}^{j}\psi_{r}(0, \vartheta), \partial_{1}^{i}\partial_{2}^{j}\psi_{r}(0, \vartheta), i\leq k, i+j\leq k+s$.
\item $L^{n-1}(\Omega)(0, \vartheta)$ has good dependence on $\partial_{1}^{n}v_{r}^{2}(0, \vartheta)$:
\begin{equation}\label{eq: L^n-1 Omega on H}
L^{n-1}(\Omega)(0, \vartheta)=\frac{c^{\frac{\gamma+1}{\gamma-1}(n-1)}}{\rho_{r}^{n}}\partial_{1}^{n}v_{r}^{2}(0, \vartheta)+F_{n, n-1}\big((U_{l}, U_{r})\big)(\vartheta).
\end{equation}
\end{itemize}
\end{proposition}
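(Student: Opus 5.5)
The plan is to restrict the results already established on the front rarefaction region $\overline{\mathcal{W}}$ to the hypersurface $H=\{u=u_H(\vartheta)\}$, where $u_H(\vartheta)=\frac{\gamma+1}{2}(\wb_r-\wb_l)(0,\vartheta)$. Each of the three properties is then checked separately.

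\textbf{First property.} The restricted functions satisfy the Euler equations along $H$ automatically, because $H\subset\overline{\mathcal{W}}$ and $U$ solves \eqref{eq: Euler equations} there. For the bounds, the intrinsic vector fields are ${}^{(H)}L=L$ and ${}^{(H)}X=(1+u_H'\Xi)X+u_H'T$. Here $u_H$ is a smooth function of $(U_l,U_r)(0,\vartheta)$ with $|\partial_\vartheta^s u_H|\lesssim \varepsilon$ for $s\geq 1$, by the smallness assumption in Definition \ref{def:data for R-R}. I will expand $L^kX^s\psi$ by the Leibniz rule and commute $L$ past $X$ and $T$ using $[L,X]=0$ and $[L,T]=-(\zeta+\eta)\hat X$. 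This writes $L^kX^s\psi$ as a finite sum of products: derivatives of $u_H$ and $\Xi$, times terms $L^{k'}T^aX^b\psi$ with $k'+a+b\le k+s$. Each factor of the second kind is $O(\varepsilon)$ by \eqref{eq: Lk estimates  on effective domain} together with \eqref{eq: L estimates in W}. The one exception is $T(\wb)$, which is $O(1)$; it only ever appears multiplied by $u_H'$, which is $O(\varepsilon)$. The total is therefore $O(\varepsilon)$, for indices up to $N_4$ after renaming the regularity indices.

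\textbf{Second property.} At $t=0$ the geometry is trivial by Proposition \ref{prop:data at the singularity}: $\Xi=0$, $\hat X=X=\partial_\vartheta$, $T=\partial_u$. Hence $L^kX^s\psi(0,\vartheta)$ is a polynomial in $\partial_\vartheta^j u_H$ and $(\partial_u^a\partial_\vartheta^b L^{k'}\psi)(0,u_H(\vartheta),\vartheta)$. By Proposition \ref{prop:jet_dependence_singularity}, each factor $\partial_u^a\partial_\vartheta^b L^{k'}\psi(0,u,\vartheta)$ is a smooth function of $u$ and of $\partial_1^i\partial_2^j\psi_r(0,\vartheta)$ with $i\le k'$ and $i+j\le k'+a+b$. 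Substituting $u=u_H(\vartheta)$ adds dependence only on the jets $\partial_2^j(\wb_l,\wb_r)$, which is tangential, so the normal order $i$ is not raised. Counting derivatives then gives $i\le k$ and $i+j\le k+s$, as claimed.

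\textbf{Third property.} Here I evaluate \eqref{eq:Omega_top_order_decomposition} at $u=u_H(\vartheta)$. The leading coefficient $c^{\frac{\gamma+1}{\gamma-1}(n-1)}/\rho_r^n$ has $c=c(0,u_H(\vartheta),\vartheta)$. That value is itself a function of order-zero data, by Proposition \ref{prop:data at the singularity} and the definition of $u_H$, so the coefficient keeps the stated form. The remainder $F_{n,n-1}(u_H(\vartheta),U_r(0,\vartheta))$ becomes a smooth function of $(U_l,U_r)$ jets with $i\le n-1$ and $i+j\le n$. This yields \eqref{eq: L^n-1 Omega on H}.

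\textbf{Main obstacle.} The delicate point is the bookkeeping in the second property. One must verify that no step in converting ${}^{(H)}X$-derivatives into $T$-derivatives at $t=0$ raises the normal order $i$ beyond $k$. This is true because $T$-derivatives on $\mathbf{S}_*$ are $u$-derivatives, and by Proposition \ref{prop:jet_dependence_singularity} these add only tangential jets. I would check this carefully via induction on $s$.
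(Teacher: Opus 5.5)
Your proposal is correct and follows the paper's route. The paper obtains this proposition in one line by restricting \eqref{eq: Lk estimates  on effective domain}, Proposition~\ref{prop:jet_dependence_singularity} and \eqref{eq:Omega_top_order_decomposition} to $H=\{u=u_H(\vartheta)\}$, exactly as you do, and your extra bookkeeping correctly fills in what it leaves implicit: the form ${}^{(H)}X=(1+u_H'\Xi)X+u_H'T$, the fact that the only $O(1)$ factor $T(\wb)$ always carries a derivative of $u_H$, and the fact that substituting $u=u_H(\vartheta)$ adds only $\partial_2$-jets of order-zero data.
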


A corresponding result holds for the \textbf{back rarefaction waves}:

\begin{proposition}\label{prop: data on Hb}
On the inner boundary $\Hb$ of the \textbf{back rarefaction waves}, the data $(\cb, \vb^{1}, \vb^{2})$ satisfies the following four properties:
\begin{itemize}
\item The data $(\cb, \vb^{1}, \vb^{2})|_{\Hb}$ satisfies the Euler equations along $\Hb$.
\item The data $(\cb, \vb^{1}, \vb^{2})$ on $\Hb$ are $O(\varepsilon)$-close to constant states, with
\[
\|\Lb^{k}\Xb^{s}\psib\|_{L^{\infty}(\Hb)}\lesssim \varepsilon, 1\leq k+s\leq N_{4}.
\]
\item 
\[
\Lb^{k}\hat{\Xb}^{s}(\psi)(0, \vartheta)=F_{k+s, k}\big((U_{l}, U_{r})\big)(\vartheta),
\] 
where $F_{k+s, k}\big((U_{l}, U_{r})\big)(\vartheta)$ denotes a generic smooth function of $\partial_{1}^{i}\partial_{2}^{j}\psi_{r}(0, \vartheta), \partial_{1}^{i}\partial_{2}^{j}\psi_{r}(0, \vartheta), i\leq k, i+j\leq k+s$.
\item $\Lb^{n-1}(\Omegab)(0, \vartheta)$ has good dependence on $\partial_{1}^{n}v_{l}^{2}(0, \vartheta)$:
\begin{equation}\label{eq: L^n-1 Omega on Hb}
\Lb^{n-1}(\Omegab)(0, \vartheta)=(-1)^{n-1}\frac{\cb^{\frac{\gamma+1}{\gamma-1}(n-1)}}{\rho_{l}^{n}}\partial_{1}^{n}v_{l}^{2}(0, \vartheta)+F_{n, n-1}\big((U_{l}, U_{r})\big)(\vartheta).
\end{equation}
\end{itemize}
\end{proposition}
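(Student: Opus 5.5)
The plan is to repeat the whole front-rarefaction construction for the back rarefaction waves, using a reflection, and then read off the four properties in the same way as Proposition \ref{prop: data on H}. Specifically, apply the reflection $x_1\mapsto -x_1$, $v^1\mapsto -v^1$. It maps the Euler equations \eqref{eq: Euler equations} to themselves, interchanges the Riemann invariants $w$ and $\wb$, and sends the data $U_l$ on $x_1\le 0$ to data on $x_1\ge 0$. This reflected data is an $\varepsilon$-perturbation of the reflected constant state. Note that $v^2$ does not change sign under the reflection, while $\partial_1$ does. Under the reflection, $\Cb_0$, $\Hb$, $\ub$, $\Lb$, $\Xb$ are exactly $C_0$, $H$, $u$, $L$, $X$ of the reflected problem.

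In the reflected problem, the reflected data satisfies the hypotheses of the existence proposition for $C^N$ data for rarefaction waves. Theorem \ref{theorem: A priori Energy estimate} and Theorem \ref{theorem: existence-rarefaction} then give the reflected solution on $\overline{\mathcal{W}}$, in acoustical coordinates that are $C^{N_4}$ up to $t=0$.

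The four properties then follow one by one.
\begin{enumerate}
\item The first property (Euler equations along $\Hb$) holds because the restriction of a classical solution satisfies them.
\item For the second property, I pull back \eqref{eq: L estimates in W} and Proposition \ref{prop: estimates on Lk final}, and then restrict to $\Hb$ using the tangent fields ${}^{(\Hb)}\Xb=(1+\partial_\vartheta \ub\,\Xi)\Xb+\partial_\vartheta\ub\, T$. The functions $\ub(\vartheta)=\frac{\gamma+1}{2}(w_l-w_r)(0,\vartheta)$ are smooth with $O(1)$ bounds, so each tangential derivative costs at most $O(1)$ factors times quantities of size $O(\varepsilon)$.
\item The third property is Proposition \ref{prop:jet_dependence_singularity} applied to the reflected data. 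I then evaluate at $\ub=\ub(\vartheta)$; this introduces dependence on $U_r$ only through $w_r(0,\vartheta)$, which is allowed in $F_{k+s,k}$.
\end{enumerate}

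The main point is the fourth property, where the sign must be tracked. Applying Proposition \ref{prop:top_order_dependence_Omega} to the reflected solution gives $\tilde L^{n-1}\tilde\Omega = \tilde c^{\frac{\gamma+1}{\gamma-1}(n-1)}\tilde\rho^{-n}\tilde\partial_1^{n}\tilde v^2+F$. The reflection acts as follows:
\begin{itemize}
\item the specific vorticity changes sign, $\tilde\Omega=-\Omegab$, because $\partial_1v^2-\partial_2v^1$ is odd;
\item $\tilde\partial_1^n\tilde v^2=(-1)^n\partial_1^n v_l^2$;
\item $\tilde L$ corresponds to $\Lb$ with no sign change, since it is normalized by $\Lb(t)=1$.
\end{itemize}
Combining these gives the factor $-(-1)^n=(-1)^{n-1}$, which is \eqref{eq: L^n-1 Omega on Hb}. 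Here $\rho$ and $c$ are evaluated on the reflected side. I expect the main obstacle to be checking carefully that all the lower-order terms $\mathbf{P}_{n-1,n-1}$ in the ODE along $\mathbf{S}_*$ transform into admissible functions $F_{n,n-1}$. This should follow from \eqref{eq: reformulation of vorticity in the first null frame}, which expresses them through at most $n-1$ normal derivatives.
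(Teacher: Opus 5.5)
Your reflection argument is correct, and it is the symmetry the paper relies on when it simply asserts that ``a corresponding result holds'' for the back rarefaction waves; no separate proof is given there. Your bookkeeping $\tilde\Omega=-\Omegab$, $\tilde\partial_1^n\tilde v^2=(-1)^n\partial_1^n v_l^2$, $\tilde L\leftrightarrow\Lb$ yields the correct factor $(-1)^{n-1}$. The lower-order terms you flag as the main obstacle are not really one: the reflection sends each $\partial_1^i\partial_2^j$ to $\pm\partial_1^i\partial_2^j$, so the admissible class $F_{n,n-1}$ is preserved automatically.

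One step needs tightening: the second property with $k=0$. On the back side the exceptional quantity is $\partial_{\ub}w\approx-\frac{2}{\gamma+1}$, the reflection of $T(\wb)$, and it is $O(1)$ rather than $O(\varepsilon)$. So an $O(1)$ bound on $\ub(\vartheta)$ is not enough to make ${}^{(\Hb)}\Xb^{s}w=(\partial_\vartheta+\partial_\vartheta\ub\,\partial_{\ub})^{s}w$ small. What saves it is that every term containing $\partial_{\ub}w$ carries a factor $\partial_\vartheta^{j}\ub=\frac{\gamma+1}{2}\partial_2^{j}(w_l-w_r)(0,\vartheta)$ with $j\geq1$, and these factors are $O(\varepsilon)$.
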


It remains to construct the solution to the Euler equations in the region bounded by $H$, $\Hb$ and $\Sigma_{t^*}$. These three hypersurfaces are depicted in grey in the figure below.
\begin{center}
\includegraphics[width=2.2in]{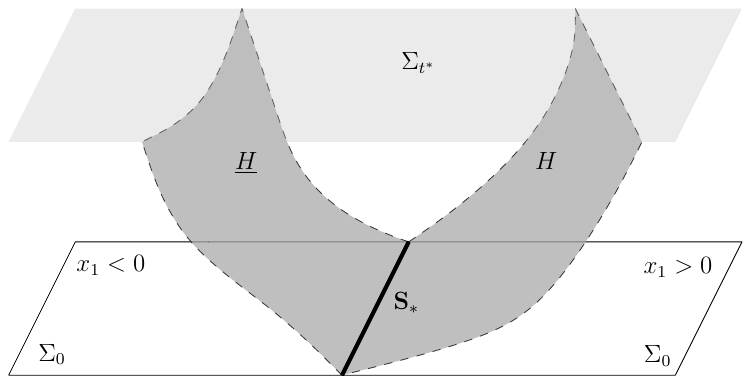}
\end{center}

To obtain a smooth solution, it is necessary to impose \textbf{compatibility conditions} up to order $N_{4}$ on $\mathbf{S}_{*}=H\cap \Hb$. We show that these compatibility conditions can be constructed inductively.

\subsubsection{The $0$-th order compatibility condition}

To obtain a smooth solution, it is necessary to require that 
\[
(c,v^{1},v^{2})=(\cb,\vb^{1},\vb^{2})\ \ \text{on}\ \ \mathbf{S}_{*}.
\]
Note that the conditions $c=\cb$ and $v^{1}=\vb^{1}$ are automatically satisfied by the definition of $H$ and $\Hb$ in Section \ref{Section: app to Riemann 2}. It is thus necessary to impose the $0$-th order compatibility condition on $\mathbf{S}_{*}$:
\[
v^{2}(0, \vartheta)=\vb^{2}(0, \vartheta), \quad \vartheta\in \mathbb{R}/2\pi\mathbb{Z}.
\]

\subsubsection{The higher order compatibility conditions}

To address the higher-order compatibility conditions, we first write the Euler equations as a characteristic system adapted to the geometry of $H$ and $\Hb$.

We first write the Euler equations as a characteristic system along $H$:
\begin{equation}\label{eq: characteristic system along H}
\begin{cases}
\begin{aligned}
L(\wb)={}&\frac{1}{2}c\hat{X}(\psi^{\hat{X}})+\frac{1}{2}c\theta\cdot \psi^{(\hat{T})}\\
&+\frac{1}{2}\psi^{(\hat{X})}\hat{X}(\psi^{(\hat{T})}+c)-\frac{1}{2}\theta\psi^{(\hat{X})}\psi^{(\hat{X})},
\end{aligned}\\[1.5ex]
\begin{aligned}
L(\psi^{(\hat{X})})={}&-c\hat{T}(\psi^{(\hat{X})})+c\hat{X}_{r}\left(\frac{2c}{\gamma-1}\right)+c\frac{\hat{X}(\kappa)}{\kappa}\psi^{(\hat{T})}\\
&-\psi^{(\hat{T})}\hat{X}(\psi^{(\hat{T})}+c)+\theta\psi^{(\hat{X})}\psi^{(\hat{T})},
\end{aligned}\\[1.5ex]
\begin{aligned}
L(w)={}&-2c\hat{T}(w)+\frac{1}{2}c\hat{X}(\psi^{\hat{X}})+c\frac{\hat{X}(\kappa)}{\kappa}\psi^{(\hat{X})}\\
&+\frac{1}{2}c\theta\cdot \psi^{(\hat{T})}-\frac{1}{2}\psi^{(\hat{X})}\hat{X}(\psi^{(\hat{T})}+c)+\frac{1}{2}\theta\psi^{(\hat{X})}\psi^{(\hat{X})},
\end{aligned}\\[1.5ex]
\textbf{$\hat{T}(\wb)|_{\mathbf{S}_{*}}=-\partial_{1}\left(\frac{1}{\gamma-1}c+\frac{1}{2}v^{1}\right)$, can be freely prescribed},\\[1ex]
\hat{T}(w)|_{\mathbf{S}_{*}}=-\partial_{1}\left(\frac{1}{\gamma-1}c-\frac{1}{2}v^{1}\right), \text{is determined by data on $H$},\\[1ex]
\hat{T}(\psi^{(\hat{X})})|_{\mathbf{S}_{*}}=-\partial_{1}(-v^{2}), \text{is determined by data on $H$}.
\end{cases}
\end{equation}
This system is expressed in terms of the geometric Riemann invariants associated with the direction $-\hat{T}$:
\[
\wb:=\frac{1}{2}(\frac{2}{\gamma-1}c+\psi^{(\hat{T})}), w:=\frac{1}{2}(\frac{2}{\gamma-1}c-\psi^{(\hat{T})}), \psi^{(\hat{X})}.
\]

We similarly write the Euler equations as a characteristic system along $\Hb$:
\begin{equation}\label{eq: characteristic system along Hb}
\begin{cases}
\begin{aligned}
\Lb(w)={}&\frac{1}{2}\cb\cdot\hat{\Xb}(\psib^{\hat{\Xb}})+\frac{1}{2}\cb\cdot\thetab\cdot \psib^{(\hat{\Tb})}\\
&+\frac{1}{2}\psib^{(\hat{\Xb})}\hat{\Xb}(-\psib^{(\hat{\Tb})}+\cb)+\frac{1}{2}\thetab\cdot\psib^{(\hat{\Xb})}\psib^{(\hat{\Xb})},
\end{aligned}\\[1.5ex]
\begin{aligned}
\Lb(\psib^{(\hat{\Xb})})={}&\cb\cdot\hat{\Tb}(\psib^{(\hat{\Xb})})+\cb\cdot\hat{\Xb}\left(\frac{2\cb}{\gamma-1}\right)-\cb\frac{\hat{\Xb}(\kappab)}{\kappab}\psib^{(\hat{\Tb})}\\
&-\psib^{(\hat{\Tb})}\hat{\Xb}(-\psib^{(\hat{\Tb})}+\cb)+\thetab\psib^{(\hat{\Xb})}\psib^{(\hat{\Tb})},
\end{aligned}\\[1.5ex]
\begin{aligned}
\Lb(\wb)={}&2\cb\cdot\hat{\Tb}(\wb)+\frac{1}{2}\cb\cdot\hat{\Xb}(\psib^{\hat{\Xb}})+\cb\frac{\hat{\Xb}(\kappab)}{\kappab}\psib^{(\hat{\Xb})}\\
&+\frac{1}{2}\cb\cdot\thetab\cdot \psib^{(\hat{\Tb})}-\frac{1}{2}\psib^{(\hat{\Xb})}\hat{\Xb}(-\psib^{(\hat{\Tb})}+\cb)-\frac{1}{2}\thetab\psib^{(\hat{\Xb})}\psib^{(\hat{\Xb})},
\end{aligned}\\[1.5ex]
\hat{\Tb}(\wb)|_{\mathbf{S}_{*}}=-\partial_{1}\left(\frac{1}{\gamma-1}\cb+\frac{1}{2}\vb^{1}\right), \text{is determined by data on $\Hb$},\\[1ex]
\textbf{$\hat{\Tb}(w)|_{\mathbf{S}_{*}}=-\partial_{1}\left(\frac{1}{\gamma-1}\cb-\frac{1}{2}\vb^{1}\right)$, can be freely prescribed},\\[1ex]
\hat{\Tb}(\psib^{(\hat{\Xb})})|_{\mathbf{S}_{*}}=-\partial_{1}(-\vb^{2}), \text{is determined by data on $\Hb$}.
\end{cases}
\end{equation}
This system is expressed in terms of the geometric Riemann invariants associated with the direction $\hat{\Tb}$:
\[
\wb:=\frac{1}{2}(\frac{2}{\gamma-1}\cb+\psib^{(\hat{\Tb})}), w:=\frac{1}{2}(\frac{2}{\gamma-1}\cb-\psi^{(\hat{\Tb})}), \psib^{(\hat{\Xb})}.
\]

In view of \eqref{eq: characteristic system along H} and \eqref{eq: characteristic system along Hb}, for each $1\leq n\leq N_{4}$, assume that all compatibility conditions up to order $n-1$ have been imposed to ensure that 
\[
(\partial_{1}^{n-1}c, \partial_{1}^{n-1}v^{1}, \partial_{1}^{n-1}v^{2})=(\partial_{1}^{n-1}\cb, \partial_{1}^{n-1}\vb^{1}, \partial_{1}^{n-1}\vb^{2})\ \ \text{on}\ \ \mathbf{S}_{*}.
\]
Then the conditions $\partial_{1}^{n}c=\partial_{1}^{n}\cb$ and $\partial_{1}^{n}v^{1}=\partial_{1}^{n}\vb^{1}$ on $\mathbf{S}_{*}$ are automatically satisfied by setting
\[
\hat{T}^{n}(\wb):=\hat{\Tb}^{n}(\wb), \hat{\Tb}^{n}(w):=\hat{T}^{n}(w)\ \ \text{on}\ \ \mathbf{S}_{*},
\]
since $\hat{T}^{n}(\wb)|$ and $\hat{\Tb}^{n}(w)$ on $\mathbf{S}_{*}$ can be freely prescribed. Once $\hat{T}^{n}(\wb)|$ and $\hat{\Tb}^{n}(w)$ on $\mathbf{S}_{*}$ have been freely prescribed, $\hat{\Tb}^{n}(\psi^{(\hat{\Xb})})$ and $\hat{T}^{n}(\psi^{(\hat{X})})$ are uniquely determined in view of the non-characteristic property of the tangential velocity $\psi^{(\hat{X})}$ with respect to $H$ and $\Hb$. It is thus necessary to impose the $n$-th order compatibility condition a priori:
\begin{equation}\label{eq: q1}
\hat{\Tb}^{n}(\psi^{(\hat{\Xb})})=\hat{T}^{n}(\psi^{(\hat{X})})\ \ \text{on}\ \ \mathbf{S}_{*}.
\end{equation}

Moreover, \eqref{eq: q1} can be reformulated in terms of $\Omega$ in place of the tangential velocity $\psi^{(\hat{X})}$:
\[
\begin{cases}
\varrho\hat{T}^{n-1}(\Omega)+\sum_{1\leq k\leq n-1}\frac{(n-1)!}{k!(n-1-k)!}\hat{T}^{k}(\varrho)\hat{T}^{n-1-k}(\Omega)=\hat{T}^{n}(\psi^{(\hat{X})})-\hat{T}^{n-1}\hat{X}(\psi^{(\hat{T})}),\\
\underline{\varrho}\hat{\Tb}^{n-1}(\Omegab)+\sum_{1\leq k\leq n-1}\frac{(n-1)!}{k!(n-1-k)!}\hat{T}^{k}(\underline{\varrho})\hat{T}^{n-1-k}(\Omega)=\hat{\Tb}^{n}(\psib^{(\hat{\Xb})})-\hat{\Tb}^{n-1}\hat{\Xb}(\psib^{(\hat{\Tb})}),
\end{cases}\ \ \text{on}\ \ \mathbf{S}_{*},
\]
which implies that
\begin{equation}\label{eq: compatibility of Omega}
\hat{\Tb}^{n}(\psi^{(\hat{\Xb})})=\hat{T}^{n}(\psi^{(\hat{X})})\ \ \text{on}\ \ \mathbf{S}_{*}\iff \hat{\Tb}^{n-1}(\Omegab)=\hat{T}^{n-1}(\Omega) \ \ \text{on}\ \ \mathbf{S}_{*}.
\end{equation}

We may use the equations $B\Omega=0$ and $B\Omegab=0$ to relate $\hat{T}^{n-1}(\Omega)$ to $L^{n-1}(\Omega)$, and $\hat{\Tb}^{n-1}(\Omegab)$ to $\Lb^{n-1}(\Omegab)$, respectively:
\[
\begin{cases}
L^{n-1}(\Omega)=(-c)^{n-1}\hat{T}^{n-1}\Omega+F_{n, n-1}\big((\underline{U}, U)\big)(0, \vartheta),\\
\Lb^{n-1}(\Omegab)=\cb^{n-1}\hat{\Tb}^{n-1}\Omegab+F_{n, n-1}\big((\underline{U}, U)\big)(0, \vartheta).
\end{cases}
\]
where $F_{n, n-1}\big((\underline{U}, U)\big)(0, \vartheta)$ denotes a generic smooth function of $\partial_{1}^{i}\partial_{2}^{j}U(0, \vartheta)$ and $\partial_{1}^{i}\partial_{2}^{j}\underline{U}(0, \vartheta)$ with $i+j\leq n$ and $i\leq n-1$. The $n$-th order compatibility condition can thus be written in terms of $L^{n-1}(\Omega)$ and $\Lb^{n-1}(\Omegab)$ as
\begin{equation}\label{eq: L^n-1 Omega and L^n-1 Omegab}
\Lb^{n-1}(\Omegab)(0, \vartheta)=(-1)^{n-1}L^{n-1}(\Omega)(0, \vartheta)+F_{n, n-1}\big((\underline{U}, U)\big)(0, \vartheta)\ \ \text{on}\ \ \mathbf{S}_{*}.
\end{equation}

Combining \eqref{eq: L^n-1 Omega on H}, \eqref{eq: L^n-1 Omega on Hb} and \eqref{eq: L^n-1 Omega and L^n-1 Omegab}, we summarize the \textbf{$n$-th order compatibility condition for the $R$-$R$ problem} as
\begin{equation}\label{eq: n-th compatibility condition for R-R}
\begin{cases}
v_{l}^{2}(0, x_{2})=v_{r}^{2}(0, x_{2}),\\
\frac{1}{\rho_{l}^{n}}\partial_{1}^{n}v_{l}^{2}(0, x_{2})=\frac{1}{\rho_{r}^{n}}\partial_{1}^{n}v_{r}^{2}(0, x_{2})+F\big((U_{l}, U_{r})\big)(\vartheta), 1\leq n\leq N_{4}.
\end{cases}
\end{equation}

In summary, for sufficiently large $N_{4}$, we may apply the $H^{N_{4}}$ well-posedness theory for the classical Goursat problem for the Euler equations and we refer to Speck-Yu \cite{SpeckYu} for details. For sufficiently small $\varepsilon$, this allows us to construct a solution to the equations in the region bounded by $H$, $\Hb$ and $\Sigma_{t^*}$. This completes the construction of the solution to the Riemann problem, and hence the proof of Theorem \ref{thm: R-R}.

\begin{remark}
The uniqueness of entropy solutions to the $R$-$R$ problem can be established via the relative entropy method, in exactly the same manner as in \cite{Luo-YuRare2}.
\end{remark}

\begin{remark}
In the irrotational setting, the 0-th order compatibility condition
\[
v_{l}^{2}(0, x_{2})=v_{r}^{2}(0, x_{2}), \quad x_{2}\in \mathbb{R}/2\pi\mathbb{Z},
\]
for the $R$-$R$ problem automatically implies all higher-order compatibility conditions for any $n\geq 1$, as \eqref{eq: compatibility of Omega} holds trivially in the irrotational setting. This explains why only the 0-th order compatibility condition is required in \cite{Luo-YuRare2}.
\end{remark}

\begin{remark}
There exists a large class of initial data satisfying Definition \ref{def:data for R-R}. In fact, for any initial data satisfying Definition \ref{def:data for R-R} up to the compatibility conditions, we may make $O(\varepsilon)$ modifications to
\[
v_{l}^{2}(0, x_{2}), \partial_{1}v_{l}^{2}(0, x_{2}), \cdots \partial_{1}^{n}v_{l}^{2}(0, x_{2}), \cdot, 0\leq n\leq N_{4}.
\]
inductively, in view of the structure of the $n$-th order compatibility condition for the $R$-$R$ problem given in \eqref{eq: n-th compatibility condition for R-R}.
\end{remark}

\section{Structural Stability of the Shock Wave-Rarefaction Wave Configuration}\label{sec: Application to the Riemann problem 2}

\subsection{A short review of Majda's results on nonlinear stability of a single shock front}

Let the initial data be an $O(\varepsilon)$ perturbation of the piecewise constant initial data \eqref{data: a single shock}, which admits a single shock wave:
\[
U(0, x_{1}, x_{2})=\begin{cases}
U_{l}, & x_{1}\leq 0,\\
U_{r}, & x_{1}\geq 0.
\end{cases}
\]
Let the curved shock front be parametrized by $x_{1}=\phi(t, x_{2})$, with
\[
U(t, x_{1}, x_{2})=\begin{cases}
U_{-}(t, x_{1}, x_{2}), & x_{1}\leq \phi(t, x_{2}),\\
U_{+}(t, x_{1}, x_{2}), & x_{1}\geq \phi(t, x_{2}).
\end{cases}
\]

To better discuss the compatibility conditions for shock waves, we introduce the following geometric quantities: $\hat{T}$, the unit spatial normal vector field to the shock front; $\hat{X}$, the unit spatial tangent vector field to the shock front; $\sigma$, the normalized shock speed; and $\slashed{g}$, the first fundamental form of the shock front, given by
\begin{equation}
\begin{cases}
\hat{T}=(\frac{1}{\sqrt{1+(\partial_{2}\phi)^{2}}}, \frac{-\partial_{2}\phi}{\sqrt{1+(\partial_{2}\phi)^{2}}}),\\
\hat{X}=(\frac{\partial_{2}\phi}{\sqrt{1+(\partial_{2}\phi)^{2}}}, \frac{1}{\sqrt{1+(\partial_{2}\phi)^{2}}}),\\
\sigma=\frac{\partial_{t}\phi}{\sqrt{1+(\partial_{2}\phi)^{2}}},\\
\slashed{g}:=\sqrt{1+(\partial_{2}\phi)^{2}}.
\end{cases}
\end{equation}

On the shock front $x_{1}=\phi(t, x_{2})$, the Rankine–Hugoniot conditions for the shock front, together with the entropy condition, imply that
\begin{equation}\label{eq: R-H for shock}
\begin{cases}
 v_{+}^{(\hat{T})}=v_{-}^{(\hat{T})}-\sqrt{\frac{P_{+}-P_{-}}{(\rho_{+}-\rho_{-})\rho_{+}\rho_{-}}}(\rho_{+}-\rho_{-}),\\
v_{+}^{(\hat{X})}=v_{-}^{(\hat{X})}.
\end{cases}
\end{equation}
The shock speed along the front is given by
\[
\sigma=\frac{\rho_{+}v_{+}^{(\hat{T})}-\rho_{-}v_{-}^{(\hat{T})}}{\rho_{+}-\rho_{-}}.
\]

By flattening the boundary via the coordinate transformation
\[
\begin{cases}
t=s,\\
x_{1}=y_{1}+\phi(s, y_{2}),\\
x_{2}=y_{2},
\end{cases}, (s, y_{1}, y_{2})\in [0, t^{*}]\times \mathbb{R}\times \mathbb{T}.
\]
It can be shown that the necessary $k$-th order compatibility condition for the existence of a piecewise smooth solution with a single shock front takes the form
\begin{equation}\label{eq: compatibility condition for a single shock front}
\begin{cases}
\partial_{x_1}^{k}(\frac{c_{r}}{\gamma-1}+\frac{1}{2}v_{r}^{1})(0, x_{2})=G\Big(\partial_{x_1}^{k}(\frac{c_{l}}{\gamma-1}+v_{l}^{1}), F_{k, k-1}\big(U_{l}, U_{r})(x_{2})\big)\Big),\\
\partial_{x_1}^{k}v_{r}^{2}(0, x_{2})=(\frac{v_{l}^{1}-\sigma}{v_{r}^{1}-\sigma})^{k}\partial_{x_1}^{k}v_{l}^{2}(0, x_{2})+F_{k, k-1}\big(U_{l}, U_{r})\big)(x_{2}),
\end{cases}
\end{equation}
where $F_{k, k-1}\big(U_{l}, U_{r})(x_{2})$ denotes a generic smooth function of $\partial^{i}_{1}\partial^{j}_{2}U_{l}(0, x_{2})$ and $\partial^{i}_{1}\partial^{j}_{2}U_{r}(0, x_{2})$ of size $O(\varepsilon)$ with $i+j\leq k$ and $i\leq k-1$. We refer to \cite{MajdaShock2} for full details, as these works establish how to construct the compatibility conditions in a far more general setting than the 2D isentropic Euler equations.

We summarize Majda's results \cite{MajdaShock2, MajdaShock3} for the 2D isentropic Euler equations for a polytropic gas as follows. Let $(U_{r}, U_{l})$ be an $\varepsilon$-localized perturbation of the piecewise constant initial data \eqref{data: a single shock}, which admits a single shock wave. Suppose further that $(U_{r}, U_{l})$ satisfies the Rankine–Hugoniot conditions and the entropy condition \eqref{eq: R-H for shock} along $x_{1}=0$, as well as the $k$-th order compatibility conditions \eqref{eq: compatibility condition for a single shock front} for $1\leq k\leq N$ and sufficiently large $N$. Then the Euler equations \eqref{eq: Euler equations} admit a piecewise smooth solution with a single shock front on $[0, t^{*}]\times \mathbb{R}\times \mathbb{R}/2\pi\mathbb{Z}$.

\subsection{Construction of the $S-R$ configuration}

 \begin{itemize}
\item  We construct a classical solution $U_{r}$ on the region completely determined by the initial data $U_{r}$ posed on $\Sigma_{0}^{\geqslant 0} = \{x_{1}\geqslant 0\}$, up to a finite time $t^{*}$. We denote the parametrization of the outermost 3rd-family characteristic surface $C_{0}$ by $x_{1}=\phi_{C_{0}}(t, x_{2})$.

\begin{center}
\begin{tikzpicture}[scale=1.5]
\coordinate (O) at (0,0);
\fill (O) circle (0.8pt);

\draw[thick] (-3,0) -- (3,0);
\draw[thick] (-3,2) -- (3,2);

\node[right] at (3,0) {$\Sigma_{0}$};
\node[right] at (3,2) {$\Sigma_{t^{*}}$};
\node[below] at (O) {$x_{1}=0$};

\node[above] at (1.5,0) {$U_{r}$ on $\Sigma_{0}^{\geqslant 0}$};
\node[above] at (-1.5,0) {$U_{l}$ on $\Sigma_{0}^{\leqslant 0}$};

\draw[thick,dashed] (O) -- (2.5,2);
\node[above] at (2.5,2) {$C_{0}$};

\node at (2.2,1) {$U_{r}$};
\end{tikzpicture}
\end{center}
 
\item
In view of Theorem \ref{theorem: existence-rarefaction}, we can construct a family of rarefaction fronts emanating from the singular surface $\mathbf{S}_{*} = \{t=0, x_{1}=0\}$ and connecting to $U_{r}$ along $C_{0}$. Moreover, the rarefaction wave region can be parametrized by acoustical coordinates $(t, u, \vartheta)\in [0, t^{*}]\times[0, \ub]\times \mathbb{T}$ which resolve the initial singularity at $\mathbf{S}_{*}$.

Furthermore, in acoustical coordinates $(t, u, \vartheta)$, the solution $U_{R}$ exhibits a one-dimensional profile along the $u$-direction at $t=0$:
\begin{equation}
\begin{cases}
c_{R}(0, u, \vartheta)=c_{r}(0, \vartheta)-\frac{\gamma-1}{\gamma+1}u,\\
v_{R}^{1}(0, u, \vartheta)=v_{r}^{1}(0, \vartheta)-\frac{2}{\gamma+1}u,\\
v_{R}^{2}(0, u, \vartheta)=v_{r}^{2}(0, \vartheta).
\end{cases}
\end{equation}
For higher-order $C_{u}$-tangent jets at $t=0$, in view of Proposition \ref{prop:jet_dependence_singularity} and Proposition \ref{prop:top_order_dependence_Omega}, we have
\[
\begin{cases}
L^{n}U_{R}(0, u, \vartheta)=F_{n}\big(u, U_{r}(0, \vartheta)\big),\\
L^{n-1}(\Omega_{R})(u, \vartheta)=\frac{c_{R}^{\frac{\gamma+1}{\gamma-1}(n-1)}}{\rho_{r}^{n}}\partial_{1}^{n}v_{r}^{2}(0, \vartheta)+F_{n, n-1}(u, U_{r}(0, \vartheta)).
\end{cases}
\]

\begin{center}
\begin{tikzpicture}[scale=1.5]
\coordinate (O) at (0,0);
\fill (O) circle (0.8pt);

\draw[thick] (-3,0) -- (3,0);
\draw[thick] (-3,2) -- (3,2);

\node[right] at (3,0) {$\Sigma_{0}$};
\node[right] at (3,2) {$\Sigma_{t^{*}}$};
\node[below] at (O) {$x_{1}=0$};

\node[above] at (1.5,0) {$U_{r}$ on $\Sigma_{0}^{\geqslant 0}$};
\node[above] at (-1.5,0) {$U_{l}$ on $\Sigma_{0}^{\leqslant 0}$};

\foreach \x in {0.6,0.9,1.2,1.5,1.8,2.1} {
  \draw[dashed] (O) -- (\x,2);
}

\draw[thick,dashed] (O) -- (2.5,2);
\node[above] at (2.5,2) {$C_{0}$};

\node at (2.2,1) {$U_{r}$};
\node at (1.25,1.2) {$U_{R}$};
\end{tikzpicture}
\end{center}

\item To determine the inner boundary $H$ of the rarefaction wave region, for each $x_{2}\in \mathbb{R}/2\pi\mathbb{Z}$, we let $u=g(x_{2})$ denote the unique solution to the Rankine–Hugoniot condition:
\[
v_{R}^{1}=v_{l}^{1}-\sqrt{\frac{\Big(P(\rho_{R})-P(\rho_{l})\Big)(\rho_{R}-\rho_{l})}{\rho_{R}\rho_{l}}}.
\]
Then the inner boundary $H$ is defined by the level set $u(t, x_{1}, x_{2})=g(x_{2})$. By the implicit function theorem, $H$ can be parametrized by $x_{1}=\phi_{H}(t, x_{2})$.

\begin{center}
\begin{tikzpicture}[scale=1.5]
\coordinate (O) at (0,0);
\fill (O) circle (0.8pt);

\draw[thick] (-3,0) -- (3,0);
\draw[thick] (-3,2) -- (3,2);

\node[right] at (3,0) {$\Sigma_{0}$};
\node[right] at (3,2) {$\Sigma_{t^{*}}$};
\node[below] at (O) {$x_{1}=0$};

\node[above] at (1.5,0) {$U_{r}$ on $\Sigma_{0}^{\geqslant 0}$};
\node[above] at (-1.5,0) {$U_{l}$ on $\Sigma_{0}^{\leqslant 0}$};

\foreach \x in {1.8,2.1} {
  \draw[dashed] (O) -- (\x,2);
}

\draw[thick,dashed] (O) -- (2.5,2);
\node[above] at (2.5,2) {$C_{0}$};
\draw[thick,dashed] (O) -- (1.5,2);
\node[above] at (1.5,2) {$H$};

\node at (2.2,1) {$U_{r}$};
\node at (1.25,1.2) {$U_{R}$};
\end{tikzpicture}
\end{center}
 
\item Let $U_{H}$ be the restriction of $U_{R}$ to $H$. We then need to prescribe\footnote{In view of \eqref{eq: characteristic system along H}, the normal jets $\partial_{1}^{k}\left(\frac{c_{H}}{\gamma-1}+\frac{1}{2}v_{H}^{1}\right)$ on $\mathbf{S}_{*}$ can be freely prescribed.} normal jets of $U_{H}$ on $\mathbf{S}_{*}$ inductively to ensure that $U_{l}$ and $U_{H}$ are compatible on $\mathbf{S}_{*}$ up to order $N_{4}$ in the sense of admitting a single 1st-family shock front.

Assume that $\partial_{x_1}^{j}\left(\frac{c_{H}}{\gamma-1}+\frac{1}{2}v_{H}^{1}\right)(0, x_{2})$ for $1\leq j\leq k-1$ have been prescribed to ensure compatibility up to order $k-1$. Then $\partial_{x_1}^{k}\left(\frac{c_{H}}{\gamma-1}-\frac{1}{2}v_{H}^{1}\right)(0, x_{2})$ and $\partial_{x_1}^{k}v_{H}^{2}(0, x_{2})$ are determined by $\partial_{x_1}^{a}\partial_{x_2}^{b}U_{l, r}(0, x_{2})$ for $a+b\leq k$.

On the one hand, the compatibility condition of the shock-rarefaction configuration \eqref{condition: S-R} ensures that
\begin{equation}\label{local: SR1}
\partial_{x_1}^{k}v_{H}^{2}(0, x_{2})=\left(\frac{v^{1}_{l}-\sigma}{v_{H}^{1}-\sigma}\right)^{k}\partial_{x_1}^{k}v_{l}^{2}(0, x_2)+F_{k, k-1}\big((U_{l}, U_{H})\big)(0, x_{2}).
\end{equation}
On the other hand, we can freely prescribe $\partial_{1}^{k}\left(\frac{c_{H}}{\gamma-1}+\frac{1}{2}v_{H}^{1}\right)(0, x_{2})$ by setting
\begin{equation}\label{local: SR2}
\partial_{x_1}^{k}\left(\frac{c_{H}}{\gamma-1}+\frac{1}{2}v_{H}^{1}\right)(0, x_{2})=G\Big(\partial_{x_1}^{k}\left(\frac{c_{l}}{\gamma-1}+v_{l}^{1}\right)(0, x_{2}),F_{k, k-1}\big((U_{l}, U_{H})\big)\Big)(0, x_{2}).
\end{equation}
Therefore, \eqref{local: SR1} and \eqref{local: SR2} imply that $U_{l}$ and $U_{H}$ are compatible on $\mathbf{S}_{*}$ up to order $N_{4}$ in the sense of admitting a single shock front.

\item Since the normal jets of $U_{H}$ are determined on $\mathbf{S}_{*}$, they are uniquely determined by solving ODE systems along $H$ and we refer to \cite{wang2025constructingcharacteristicinitialdata} for details. Moreover, the size of the normal jets of $U_{H}$ is of $O(\varepsilon)$. By Taylor expansion, we can construct auxiliary data $U_{au, r}(t^{*}, x_{1}, x_{2})$ on $\Sigma_{t^{*}}^{x_{1} \geqslant \phi_{H}(t^{*}, x_{2})}$ to ensure that:
\begin{itemize}
\item The auxiliary data $U_{au}$ are smoothly compatible with $U_{H}$ on $H\cap \Sigma_{t^{*}}$ up to order $N_{5}$\footnote{To obtain all the normal jets of $U_{H}$ along $H$, we need to use the wave equation $L(TU_{H})=\mu\hat{X}^{2}(U_{H})+\text{lower order terms}$, which causes loss of derivatives since we need to use two tangential derivatives to estimate one normal derivative.},
\item The auxiliary data $U_{au}$ have compact support\footnote{Note that we can use a cut-off function away from $\mathbf{S}_{*}$.},
\item The auxiliary data $U_{au}$ are of size $O_{C^{N_{5}}}(\varepsilon)$.
\end{itemize}

\begin{center}
\begin{tikzpicture}[scale=1.5]
\coordinate (O) at (0,0);
\fill (O) circle (0.8pt);

\draw[thick] (-3,0) -- (5,0);
\draw[thick] (-3,2) -- (5,2);

\node[right] at (5,0) {$\Sigma_{0}$};
\node[right] at (5,2) {$\Sigma_{t^{*}}$};
\node[below] at (O) {$x_{1}=0$};

\node[above] at (-1.5,0) {$U_{l}$ on $\Sigma_{0}^{\leqslant 0}$};

\draw[thick,dashed] (O) -- (1.5,2);
\node[above] at (1.5,2) {$H$};

\node at (0.75,1) {$U_{H}$ on $H$};
\node[above] at (3,2) {$U_{au, r}$ on $\Sigma_{t^{*}}^{x_{1}\geqslant \phi_{H}(t^{*}, x_{2})}$};  
\end{tikzpicture}
\end{center}

\item{\bf Given initial data $U_{H}$ on H and $U_{au}$ on $\Sigma_{t^{*}}^{x_{1}\geqslant \phi_{H}(t^{*}, x_{2})}$}, we can construct a $C^{N_{6}}$ solution $U_{au}$ on the region $\{(t, x_{1}, x_{2}): 0\leq t\leq t^{*}, x_{2}\in \mathbb{T}, x_{1}\geq \phi_{H}(t, x_{2})\}$ and we refer to Speck-Yu \cite{SpeckYu} for details on the local well-posedness of the Goursat problem for general compressible Euler equations. In particular, we obtain $C^{N_{6}}$ auxiliary data $U_{au, r}(0, x_{1}, x_{2})$ on $\Sigma_{0}^{\geqslant 0}$. Moreover, $U_{l}$ and $U_{au, r}$ are compatible up to order $N_{6}$ in the sense of admitting a single shock front.

\begin{center}
\begin{tikzpicture}[scale=1.5]
\coordinate (O) at (0,0);
\fill (O) circle (0.8pt);

\draw[thick] (-3,0) -- (5,0);
\draw[thick] (-3,2) -- (5,2);

\node[right] at (5,0) {$\Sigma_{0}$};
\node[right] at (5,2) {$\Sigma_{t^{*}}$};
\node[below] at (O) {$x_{1}=0$};

\node[above] at (-1.5,0) {$U_{l}$ on $\Sigma_{0}^{\leqslant 0}$};
\node[above] at (2,0) {$U_{au}(0, x_{1}, x_{2})$ on $\Sigma_{0}^{\geqslant 0}$};  

\draw[thick,dashed] (O) -- (1.5,2);
\node[above] at (1.5,2) {$H$};
\end{tikzpicture}
\end{center}

\item We can apply Majda's results \cite{MajdaShock2,MajdaShock3} directly to the Cauchy data $(U_{l}, U_{au, r})$ to construct a single 1st-family shock front $S: x_{1}=\phi_{S}(t, x_{2})$ together with a piecewise $C^{N_{7}}$ smooth solution
\[
\begin{cases}
U_{-}(t, x_{1}, x_{2}), & x_{1}\leq \phi_{S}(t, x_{2}),\\
U_{+}(t, x_{1}, x_{2}), & x_{1}\geq \phi_{S}(t, x_{2}),
\end{cases}
\quad (t, x_{1}, x_{2})\in [0, t^{*}]\times \mathbb{R}\times \mathbb{R}/2\pi \mathbb{Z},
\]
provided that $N_{6}$ is sufficiently large.
 
\begin{center}
\begin{tikzpicture}[scale=1.5]
\coordinate (O) at (0,0);
\fill (O) circle (0.8pt);

\draw[thick] (-3,0) -- (4,0);
\draw[thick] (-3,2) -- (4,2);

\node[right] at (4,0) {$\Sigma_{0}$};
\node[right] at (4,2) {$\Sigma_{t^{*}}$};
\node[below] at (O) {$x_{1}=0$};

\node[above] at (-1.5,0) {$U_{l}$ on $\Sigma_{0}^{\leqslant 0}$};
\node[above] at (2,0) {$U_{au}(0, x_{1}, x_{2})$ on $\Sigma_{0}^{\geqslant 0}$};  

\draw[thick,solid] (O) -- (-1,2);
\node[above] at (-1,2) {$S$};

\node at (-2,1) {$U_{-}$};
\node at (0.5,1) {$U_{+}$};
\end{tikzpicture}
\end{center}

\item Finally, we verify that
\[
 U(t, x_{1}, x_{2}):=\begin{cases}
 U_{-}(t, x_{1}, x_{2}), & x_{1}\leq \phi_{S}(t, x_{2}),\\
U_{+}(t, x_{1}, x_{2}), & \phi_{S}(t, x_{2})\leq x_{1}\leq \phi_{H}(t, x_{2}),\\
U_{R}(t, x_{1}, x_{2}), & \phi_{H}(t, x_{2})\leq x_{1}\leq \phi_{C_{0}}(t, x_{2}),\\
U_{r}(t, x_{1}, x_{2}), & \phi_{C_0}(t, x_{2})\leq x_{1},
 \end{cases}
 \quad (t, x_{1}, x_{2})\in [0, t^{*}]\times \mathbb{R}\times \mathbb{T},
 \]
is a piecewise smooth entropy solution with initial data $(U_{l}, U_{r})$. In fact, it suffices to note that $U_{R}=U_{+}$ on $H$ by construction.

\begin{center}
\begin{tikzpicture}[scale=1.5]
\coordinate (O) at (0,0);
\fill (O) circle (0.8pt);

\draw[thick] (-3,0) -- (3,0);
\draw[thick] (-3,2) -- (3,2);

\node[right] at (3,0) {$\Sigma_{0}$};
\node[right] at (3,2) {$\Sigma_{t^{*}}$};
\node[below] at (O) {$x_{1}=0$};

\node[above] at (-1.5,0) {$U_{l}$ on $\Sigma_{0}^{\leqslant 0}$};
\node[above] at (1.5,0) {$U_{r}$ on $\Sigma_{0}^{\geqslant 0}$};  

\draw[thick,solid] (O) -- (-1,2);
\node[above] at (-1,2) {$S$};
\draw[thick,dashed] (O) -- (1.5,2);
\node[above] at (1.5,2) {$H$};
\draw[thick,dashed] (O) -- (2.5,2);
\node[above] at (2.5,2) {$C_{0}$};

\foreach \x in {1.8,2.1} {
  \draw[dashed] (O) -- (\x,2);
}

\node at (-2,1) {$U_{-}$};
\node at (0.3,1) {$U_{+}$};
\node at (1.25,1.2) {$U_{R}$};
\node at (2.5,1) {$U_{r}$};
\end{tikzpicture}
\end{center}

 \end{itemize}

\section{Structural Stability of the Rarefaction Wave-Vortex Sheet-Rarefaction Wave Configuration}\label{sec: Application to the Riemann problem 3}

\subsection{A short review of Coulombel and Secchi's results on nonlinear stability of a single vortex sheet}

Let the initial data be an $O(\varepsilon)$ perturbation of the piecewise constant initial data \eqref{eq: data for a single vortex sheet}, which admits a single vortex sheet:
\[
U(0, x_{1}, x_{2})=\begin{cases}
U_{l}, & x_{1}\leq 0,\\
U_{r}, & x_{1}\geq 0.
\end{cases}
\]
Let the curved vortex sheet be parametrized by $x_{1}=\phi(t, x_{2})$, with
\[
U(t, x_{1}, x_{2})=\begin{cases}
U_{-}(t, x_{1}, x_{2}), & x_{1}\leq \phi(t, x_{2}),\\
U_{+}(t, x_{1}, x_{2}), & x_{1}\geq \phi(t, x_{2}).
\end{cases}
\]

Following the Coulombel–Secchi approach, we introduce eikonal functions $\Phi_{\pm}(t, x_{1}, x_{2})$ for $(t, x_{1}, x_{2})\in [0, \infty)\times (0, \infty)\times \mathbb{T}$ to fix the boundary:
\[
\begin{cases}
-\partial_{t}\Phi_{\pm}+v_{\pm}^{1}-v_{\pm}^{2}\partial_{x_{2}}\Phi_{\pm}=0,\\
\Phi_{\pm}(0, x_{1}, x_{2})=\pm x_{1}.
\end{cases}
\]
In particular, we have
\[
\Phi_{\pm}(t, 0, x_{2})=\phi(t, x_{2}).
\]

To better discuss the compatibility conditions for vortex sheets, we introduce the following geometric quantity: $\hat{T}_{\pm}$, the unit spatial normal vector field to the vortex sheet, given by
\begin{equation}
\hat{T}_{\pm}=(-\frac{1}{\sqrt{1+(\partial_{2}\Phi_{\pm})^{2}}}, \frac{\partial_{2}\Phi_{\pm}}{\sqrt{1+(\partial_{2}\Phi_{\pm})^{2}}}).
\end{equation}

On the vortex sheet $x_{1}=\phi(t, x_{2})$, the Rankine–Hugoniot conditions for the vortex sheet imply that
\begin{equation}\label{eq: R-H for vortex sheet}
\begin{cases}
 v_{+}^{(\hat{T}_{+})}=v_{-}^{(\hat{T}_{-})},\\
c_{+}=c_{-}.
\end{cases}
\end{equation}

By flattening the boundary via the coordinate transformation
\[
\begin{cases}
t=s,\\
x_{1}=\Phi_{\pm}(s, y_{1}, y_{2}),\\
x_{2}=y_{2},
\end{cases}
\quad (s, y_{1}, y_{2})\in \mathbb{R}_{+}\times \mathbb{R}_{+}\times \mathbb{T},
\]
it has been shown in \cite{Coulombel-Secchi2} that the necessary $k$-th order compatibility condition for the existence of a piecewise smooth solution with a single vortex sheet takes the form
\begin{equation}\label{eq: compatibility condition for a single vortex sheet}
\begin{cases}
\partial_{x_1}^{k}\big(c_{r}-v_{r}^{1}+c_{l}-v_{l}^{1}\big)=F_{k, k-1}\big((U_{l}, U_{r})\big)(0, x_{2}),\\
\partial_{x_1}^{k}\big(c_{r}+v_{r}^{1}+c_{l}+v_{l}^{1}\big)=F_{k, k-1}\big((U_{l}, U_{r})\big)(0, x_{2}),
\end{cases}
\end{equation}
where $F_{k, k-1}\big((U_{l}, U_{r})\big)(x_{2})$ denotes a generic smooth function of $\partial^{i}_{1}\partial^{j}_{2}U_{l}(0, x_{2})$ and $\partial^{i}_{1}\partial^{j}_{2}U_{r}(0, x_{2})$ of size $O(\varepsilon)$ with $i+j\leq k$ and $i\leq k-1$.

We summarize Coulombel and Secchi's results \cite{Coulombel-Secchi1, Coulombel-Secchi2} for the 2D isentropic Euler equations for a polytropic gas as follows. Let $(U_{r}, U_{l})$ be an $\varepsilon$-localized perturbation of the piecewise constant initial data \eqref{eq: data for a single vortex sheet}, which admits a single vortex sheet. Moreover, we require the piecewise constant initial data \eqref{eq: data for a single vortex sheet} to satisfy the \textbf{supersonic condition} \eqref{eq: super sonic condition}. Suppose further that $(U_{r}, U_{l})$ satisfies the Rankine–Hugoniot conditions \eqref{eq: R-H for vortex sheet} along $x_{1}=0$, as well as the $k$-th order compatibility conditions \eqref{eq: compatibility condition for a single vortex sheet} for $1\leq k\leq N$ and sufficiently large $N$. Then the Euler equations \eqref{eq: Euler equations} admit a piecewise smooth solution with a single vortex sheet on $[0, t^{*}]\times \mathbb{R}\times \mathbb{R}/2\pi\mathbb{Z}$.

\subsection{Construction of the $R-V-R$ configuration}

\begin{itemize}
\item  We construct classical solutions $U_{r}$ and $U_{l}$ on the regions completely determined by the initial data $U_{r}$ posed on $\Sigma_{0}^{\geqslant 0} = \{x_{1}\geqslant 0\}$ and $U_{l}$ posed on $\Sigma_{0}^{\leqslant 0} = \{x_{1}\leqslant 0\}$, respectively, up to a finite time $t^{*}$. We denote the parametrizations of the outermost 3rd-family characteristic surface $C_{0}$ and the outermost 1st-family characteristic surface $\Cb_{0}$ by $x_{1}=\phi_{C_{0}}(t, x_{2})$ and $x_{1}=\phi_{\Cb_{0}}(t, x_{2})$, respectively.

\begin{center}
\begin{tikzpicture}[scale=1.5]
\coordinate (O) at (0,0);
\fill (O) circle (0.8pt);

\draw[thick] (-3,0) -- (3,0);
\draw[thick] (-3,2) -- (3,2);

\node[right] at (3,0) {$\Sigma_{0}$};
\node[right] at (3,2) {$\Sigma_{t^{*}}$};
\node[below] at (O) {$x_{1}=0$};

\node[above] at (1.5,0) {$U_{r}$ on $\Sigma_{0}^{\geqslant 0}$};
\node[above] at (-1.5,0) {$U_{l}$ on $\Sigma_{0}^{\leqslant 0}$};

\draw[thick,dashed] (O) -- (2.5,2);
\node[above] at (2.5,2) {$C_{0}$};
\draw[thick,dashed] (O) -- (-2,2);
\node[above] at (-2,2) {$\Cb_{0}$};

\node at (2.2,1) {$U_{r}$};
\node at (-2.2,1) {$U_{l}$};
\end{tikzpicture}
\end{center}
 
\item
By Theorem \ref{theorem: existence-rarefaction}, we can construct families of rarefaction fronts emanating from the singular surface $\mathbf{S}_{*} = \{t=0, x_{1}=0\}$, with solutions $U_{R}$ and $U_{L}$ connecting to $U_{r}$ along $C_{0}$ and to $U_{l}$ along $\Cb_{0}$, respectively. The right rarefaction wave region can be parametrized by acoustical coordinates $(t, u, \vartheta)\in [0, t^{*}]\times[0, u^{*}]\times \mathbb{T}$, and the left rarefaction wave region by $(t, \ub, \vartheta)\in [0, t^{*}]\times[0, \ub^{*}]\times \mathbb{T}$, both of which resolve the initial singularity at $\mathbf{S}_{*}$.

Furthermore, in acoustical coordinates $(t, u, \vartheta)$, the solution $U_{R}$ exhibits a one-dimensional profile along the $u$-direction at $t=0$:
\begin{equation}
\begin{cases}
c_{R}(0, u, \vartheta)=c_{r}(0, \vartheta)-\frac{\gamma-1}{\gamma+1}u,\\
v_{R}^{1}(0, u, \vartheta)=v_{r}^{1}(0, \vartheta)-\frac{2}{\gamma+1}u,\\
v_{R}^{2}(0, u, \vartheta)=v_{r}^{2}(0, \vartheta).
\end{cases}
\end{equation}
For higher-order $C_{u}$-tangent jets at $t=0$, in view of Proposition \ref{prop:jet_dependence_singularity} and Proposition \ref{prop:top_order_dependence_Omega}, we have
\[
\begin{cases}
L^{n}U_{R}(0, u, \vartheta)=F_{n}\big(u, U_{r}(0, \vartheta)\big),\\
L^{n-1}(\Omega_{R})(u, \vartheta)=\frac{c_{R}^{\frac{\gamma+1}{\gamma-1}(n-1)}}{\rho_{r}^{n}}\partial_{1}^{n}v_{r}^{2}(0, \vartheta)+F_{n, n-1}(u, U_{r}(0, \vartheta)).
\end{cases}
\]

Similarly, in acoustical coordinates $(t, \ub, \vartheta)$, the solution $U_{L}$ exhibits a one-dimensional profile along the $\ub$-direction at $t=0$:
\begin{equation}\label{formula: 1-D pattern left}
\begin{cases}
c_{L}(0, \ub, \vartheta)=c_{l}(0, \vartheta)-\frac{\gamma-1}{\gamma+1}\ub,\\
v_{L}^{1}(0, \ub, \vartheta)=v_{l}^{1}(0, \vartheta)+\frac{2}{\gamma+1}\ub,\\
v_{L}^{2}(0, \ub, \vartheta)=v_{l}^{2}(0, \vartheta).
\end{cases}
\end{equation}
For higher-order $\Cb_{\ub}$-tangent jets at $t=0$, we have
\[
\begin{cases}
\Lb^{n}U_{L}(0, \ub, \vartheta)=F_{n}\big(\ub, U_{l}(0, \vartheta)\big),\\
\Lb^{n-1}(\Omega_{L})(\ub, \vartheta)=\frac{(-c_{L})^{\frac{\gamma+1}{\gamma-1}(n-1)}}{\rho_{l}^{n}}\partial_{1}^{n}v_{l}^{2}(0, \vartheta)+F_{n, n-1}(\ub, U_{l}(0, \vartheta)).
\end{cases}
\]

\begin{center}
\begin{tikzpicture}[scale=1.5]
\coordinate (O) at (0,0);
\fill (O) circle (0.8pt);

\draw[thick] (-3,0) -- (3,0);
\draw[thick] (-3,2) -- (3,2);

\node[right] at (3,0) {$\Sigma_{0}$};
\node[right] at (3,2) {$\Sigma_{t^{*}}$};
\node[below] at (O) {$x_{1}=0$};

\node[above] at (1.5,0) {$U_{r}$ on $\Sigma_{0}^{\geqslant 0}$};
\node[above] at (-1.5,0) {$U_{l}$ on $\Sigma_{0}^{\leqslant 0}$};

\foreach \x in {1.0,1.3,1.6,1.9,2.2} {
  \draw[dashed] (O) -- (\x,2);
}
\foreach \x in {-0.8,-1.1,-1.4,-1.7} {
  \draw[dashed] (O) -- (\x,2);
}

\draw[thick,dashed] (O) -- (2.5,2);
\node[above] at (2.5,2) {$C_{0}$};
\draw[thick,dashed] (O) -- (-2,2);
\node[above] at (-2,2) {$\Cb_{0}$};

\node at (2.2,1) {$U_{r}$};
\node at (0.9,1.2) {$U_{R}$};
\node at (-0.9,1.2) {$U_{L}$};
\node at (-2.2,1) {$U_{l}$};
\end{tikzpicture}
\end{center}

\item To determine the inner boundaries $H$ and $\Hb$ of the rarefaction wave regions, for each $x_{2}\in \mathbb{T}$, we let $u=g_{r}(x_{2})$ and $\ub=g_{l}(x_{2})$ denote the unique solutions to the system:
\[
\begin{cases}
c_{r}(0, x_{2})-\frac{\gamma-1}{\gamma+1}u=c_{l}(0, x_{2})-\frac{\gamma-1}{\gamma+1}\ub,\\
v^{1}_{r}(0, x_{2})-\frac{2}{\gamma+1}u=v_{l}^{1}(0, x_{2})+\frac{2}{\gamma+1}\ub.
\end{cases}
\]
Then the inner boundary $H$ is defined by the level set $u(t, x_{1}, x_{2})=g_{r}(x_{2})$, and the inner boundary $\Hb$ by $\ub(t, x_{1}, x_{2})=g_{l}(x_{2})$. By the implicit function theorem, $H$ and $\Hb$ can be parametrized by $x_{1}=\phi_{H}(t, x_{2})$ and $x_{1}=\phi_{\Hb}(t, x_{2})$, respectively.

\begin{center}
\begin{tikzpicture}[scale=1.5]
\coordinate (O) at (0,0);
\fill (O) circle (0.8pt);

\draw[thick] (-3,0) -- (3,0);
\draw[thick] (-3,2) -- (3,2);

\node[right] at (3,0) {$\Sigma_{0}$};
\node[right] at (3,2) {$\Sigma_{t^{*}}$};
\node[below] at (O) {$x_{1}=0$};

\node[above] at (1.5,0) {$U_{r}$ on $\Sigma_{0}^{\geqslant 0}$};
\node[above] at (-1.5,0) {$U_{l}$ on $\Sigma_{0}^{\leqslant 0}$};

\foreach \x in {1.3,1.6,1.9,2.2} {
  \draw[dashed] (O) -- (\x,2);
}
\foreach \x in {-1.1,-1.4,-1.7} {
  \draw[dashed] (O) -- (\x,2);
}

\draw[thick,dashed] (O) -- (2.5,2);
\node[above] at (2.5,2) {$C_{0}$};
\draw[thick,dashed] (O) -- (1.0,2);
\node[above] at (1.0,2) {$H$};
\draw[thick,dashed] (O) -- (-0.8,2);
\node[above] at (-0.8,2) {$\Hb$};
\draw[thick,dashed] (O) -- (-2,2);
\node[above] at (-2,2) {$\Cb_{0}$};

\node at (2.2,1) {$U_{r}$};
\node at (0.9,1.2) {$U_{R}$};
\node at (-0.9,1.2) {$U_{L}$};
\node at (-2.2,1) {$U_{l}$};
\end{tikzpicture}
\end{center}
 
\item Let $U_{H}$ and $U_{\Hb}$ be the restrictions of $U_{R}$ and $U_{L}$ to $H$ and $\Hb$, respectively. We need to prescribe\footnote{In view of the characteristic nature of $H$ and $\Hb$, the normal jets of $\left(\frac{c_{H}}{\gamma-1}+\frac{1}{2}v_{H}^{1}\right)$ on $\mathbf{S}_{*}$ and $\left(\frac{c_{\Hb}}{\gamma-1}-\frac{1}{2}v_{\Hb}^{1}\right)$ on $\mathbf{S}_{*}$ can be freely prescribed.} higher normal jets of $U_{H}$ and $U_{\Hb}$ on $\mathbf{S}_{*}$ inductively to ensure that $U_{H}$ and $U_{\Hb}$ are compatible on $\mathbf{S}_{*}$ up to order $N_{4}$ in the sense of admitting a single vortex sheet.

Assume that 
\[
\partial_{x_{1}}^{j}\left(\frac{c_{H}}{\gamma-1}+\frac{1}{2}v_{H}^{1}\right)(0, x_{2}) \quad \text{for } 0\leq j\leq k-1,
\]
together with 
\[
\partial_{x_{1}}^{j}\left(\frac{c_{\Hb}}{\gamma-1}-\frac{1}{2}v_{\Hb}^{1}\right)(0, x_{2}) \quad \text{for } 0\leq j\leq k-1,
\]
have been prescribed to ensure compatibility up to order $k-1$. Then 
\[
\partial_{x_{1}}^{k}\left(\frac{c_{H}}{\gamma-1}-\frac{1}{2}v_{H}^{1}\right)(0, x_{2}) \quad \text{and} \quad \partial_{x_{1}}^{k}\left(\frac{c_{\Hb}}{\gamma-1}+\frac{1}{2}v_{\Hb}^{1}\right)(0, x_{2}),
\]
as well as
\[
\partial_{x_{1}}^{k}v_{H}^{2}(0, x_{2}) \quad \text{and} \quad \partial_{x_{1}}^{k}v_{\Hb}^{2}(0, x_{2}),
\]
are determined by $\partial_{x_1}^{a}\partial_{x_2}^{b}U_{l, r}(0, x_{2})$ for $a+b\leq k$.

In view of \eqref{eq: characteristic system along H} and \eqref{eq: characteristic system along Hb}, we can freely prescribe 
\[
\partial_{x_{1}}^{k}\left(\frac{c_{H}}{\gamma-1}+\frac{1}{2}v_{H}^{1}\right)(0, x_{2}) \quad \text{and} \quad \partial_{x_{1}}^{k}\left(\frac{c_{\Hb}}{\gamma-1}-\frac{1}{2}v_{\Hb}^{1}\right)(0, x_{2})
\]
by solving the linear system:
\begin{equation}\label{local: RVR1}
\begin{cases}
\frac{\gamma-3}{2}\partial_{x_1}^{k}\wb_{H}(0, x_{2})+\frac{\gamma+1}{2}\partial_{x_1}^{k}w_{\Hb}(0, x_{2})=F_{k, k-1}\big((U_{\Hb}, U_{H})\big)(0, x_{2}),\\
\frac{\gamma+1}{2}\partial_{x_1}^{k}\wb_{H}(0, x_{2})+\frac{\gamma-3}{2}\partial_{x_1}^{k}w_{\Hb}(0, x_{2})=F_{k, k-1}\big((U_{\Hb}, U_{H})\big)(0, x_{2}).
\end{cases}
\end{equation}

This linear system is always uniquely solvable for any right-hand side $F_{k, k-1}$. Indeed, the determinant of its coefficient matrix is
\[
\det\begin{pmatrix}
\frac{\gamma-3}{2} & \frac{\gamma+1}{2} \\
\frac{\gamma+1}{2} & \frac{\gamma-3}{2}
\end{pmatrix}
= \left(\frac{\gamma-3}{2}\right)^2 - \left(\frac{\gamma+1}{2}\right)^2 = -2(\gamma-1).
\]

Since $\gamma>1$ for a polytropic gas, we have $-2(\gamma-1) \neq 0$, which implies that the coefficient matrix is non-singular. Therefore, \eqref{local: RVR1} implies that $U_{\Hb}$ and $U_{H}$ are compatible on $\mathbf{S}_{*}$ up to order $N_{4}$ in the sense of \eqref{eq: compatibility condition for a single vortex sheet}.

\item Since the normal jets of $U_{H}$ and $U_{\Hb}$ are determined on $\mathbf{S}_{*}$, they are uniquely determined by solving ODE systems along $H$ and $\Hb$, respectively; we refer to \cite{wang2025constructingcharacteristicinitialdata} for details. Moreover, the size of the normal jets of $U_{H}$ and $U_{\Hb}$ is of $O(\varepsilon)$. By Taylor expansion, we can construct auxiliary data $U_{au, r}(t^{*}, x_{1}, x_{2})$ and $U_{au, l}(t^{*}, x_{1}, x_{2})$ on $\Sigma_{t^{*}}^{x_{1} \geqslant \phi_{H}(t^{*}, x_{2})}$ and $\Sigma_{t^{*}}^{x_{1} \leqslant \phi_{\Hb}(t^{*}, x_{2})}$, respectively, to ensure that:
\begin{itemize}
\item The auxiliary data $U_{au, r}$ and $U_{au, l}$ are smoothly compatible with $U_{H}$ and $U_{\Hb}$ on $H\cap \Sigma_{t^{*}}$ and $\Hb\cap\Sigma_{t^{*}}$, respectively, up to order $N_{5}$\footnote{To obtain all the normal jets of $U_{H}$ along $H$, we need to use the wave equation $L(TU_{H})=\hat{X}^{2}(U_{H})+\text{lower order terms}$, which causes loss of derivatives since we need to use two tangential derivatives to estimate one normal derivative.},
\item The auxiliary data $U_{au, r}$ and $U_{au, l}$ have compact support\footnote{Note that we can use cut-off functions which vanish away from $\mathbf{S}_{*}$.},
\item The auxiliary data $U_{au, r}$ and $U_{au, l}$ are of size $O_{C^{N_{5}}}(\varepsilon)$.
\end{itemize}

\begin{center}
\begin{tikzpicture}[scale=1.5]
\coordinate (O) at (0,0);
\fill (O) circle (0.8pt);

\draw[thick] (-4,0) -- (4,0);
\draw[thick] (-4,2) -- (4,2);

\node[right] at (4,0) {$\Sigma_{0}$};
\node[right] at (4,2) {$\Sigma_{t^{*}}$};
\node[below] at (O) {$x_{1}=0$};

\draw[thick,dashed] (O) -- (1.0,2);
\node[above] at (1.0,2) {$H$};
\draw[thick,dashed] (O) -- (-0.8,2);
\node[above] at (-0.8,2) {$\Hb$};

\node at (0.6,1) {$U_{H}$ on $H$};
\node at (-0.6,1) {$U_{\Hb}$ on $\Hb$};
\node[above] at (2.5,2) {$U_{au, r}$ on $\Sigma_{t^{*}}^{\geqslant \phi_{H}(t^{*}, x_{2})}$};
\node[above] at (-2.5,2) {$U_{au, l}$ on $\Sigma_{t^{*}}^{\leqslant \phi_{\Hb}(t^{*}, x_{2})}$};
\end{tikzpicture}
\end{center}

\item \textbf{Given initial data $U_{H}$ on $H$, $U_{\Hb}$ on $\Hb$, $U_{au, r}$ on $\Sigma_{t^{*}}^{x_{1} \geqslant \phi_{H}(t^{*}, x_{2})}$ and $U_{au, l}$ on $\Sigma_{t^{*}}^{x_{1} \leqslant \phi_{\Hb}(t^{*}, x_{2})}$}, we can construct $C^{N_{6}}$ solutions $U_{au, r}$ and $U_{au, l}$ on the regions $\{(t, x_{1}, x_{2}): 0\leq t\leq t^{*}, x_{2}\in \mathbb{T}, x_{1}\geq \phi_{H}(t, x_{2})\}$ and $\{(t, x_{1}, x_{2}): 0\leq t\leq t^{*}, x_{2}\in \mathbb{T}, x_{1}\leq \phi_{\Hb}(t, x_{2})\}$, respectively; we refer to Speck-Yu \cite{SpeckYu} for details on the local well-posedness of the Goursat problem for general compressible Euler equations. In particular, we obtain $C^{N_{6}}$ auxiliary data $U_{au, r}(0, x_{1}, x_{2})$ on $\Sigma_{0}^{\geqslant 0}$ and $U_{au, l}(0, x_{1}, x_{2})$ on $\Sigma_{0}^{\leqslant 0}$. Moreover, $U_{au, l}$ and $U_{au, r}$ are compatible up to order $N_{6}$ in the sense of admitting a single vortex sheet \eqref{eq: compatibility condition for a single vortex sheet}.

\begin{center}
\begin{tikzpicture}[scale=1.5]
\coordinate (O) at (0,0);
\fill (O) circle (0.8pt);

\draw[thick] (-4,0) -- (4,0);
\draw[thick] (-4,2) -- (4,2);

\node[right] at (4,0) {$\Sigma_{0}$};
\node[right] at (4,2) {$\Sigma_{t^{*}}$};
\node[below] at (O) {$x_{1}=0$};

\node[above] at (2.5,0) {$U_{au, r}(0, x_{1}, x_2)$ on $\Sigma_{0}^{\geqslant 0}$};
\node[above] at (-2.5,0) {$U_{au, l}(0, x_1, x_2)$ on $\Sigma_{0}^{\leqslant 0}$};

\draw[thick,dashed] (O) -- (1.0,2);
\node[above] at (1.0,2) {$H$};
\draw[thick,dashed] (O) -- (-0.8,2);
\node[above] at (-0.8,2) {$\Hb$};
\end{tikzpicture}
\end{center}

\item We can apply the results of Coulombel and Secchi \cite{Coulombel-Secchi1, Coulombel-Secchi2} directly to the Cauchy data $(U_{au, l}, U_{au, r})$ to construct a single vortex sheet $V: x_{1}=\phi_{V}(t, x_{2})$ together with a piecewise smooth solution
\[
\begin{cases}
U_{-}(t, x_{1}, x_{2}), & x_{1}\leq \phi_{V}(t, x_{2}),\\
U_{+}(t, x_{1}, x_{2}), & x_{1}\geq \phi_{V}(t, x_{2}),
\end{cases}
\quad (t, x_{1}, x_{2})\in [0, t^{*}]\times \mathbb{R}\times \mathbb{R}/2\pi\mathbb{Z}.
\]

\begin{center}
\begin{tikzpicture}[scale=1.5]
\coordinate (O) at (0,0);
\fill (O) circle (0.8pt);

\draw[thick] (-4,0) -- (4,0);
\draw[thick] (-4,2) -- (4,2);

\node[right] at (4,0) {$\Sigma_{0}$};
\node[right] at (4,2) {$\Sigma_{t^{*}}$};
\node[below] at (O) {$x_{1}=0$};

\node[above] at (2.5,0) {$U_{au, r}(0, x_{1}, x_2)$ on $\Sigma_{0}^{\geqslant 0}$};
\node[above] at (-2.5,0) {$U_{au, l}(0, x_1, x_2)$ on $\Sigma_{0}^{\leqslant 0}$};

\draw[thick,dotted] (O) -- (0.1,2);
\node[above] at (0.1,2) {$V$};

\node at (2.5,1) {$U_{+}$};
\node at (-2.5,1) {$U_{-}$};
\end{tikzpicture}
\end{center}

\item Finally, we verify that
\[
 U(t, x_{1}, x_{2}):=\begin{cases}
 U_{l}, & x_{1}\leq \phi_{\Cb_{0}}(t, x_{2}),\\
 U_{L}, & \phi_{\Cb_{0}}(t, x_{2})\leq x_{1}\leq \phi_{\Hb}(t, x_{2}),\\
 U_{-}, & \phi_{\Hb}(t, x_{2})\leq x_{1}\leq \phi_{V}(t, x_{2}),\\
U_{+}, & \phi_{V}(t, x_{2})\leq x_{1}\leq \phi_{H}(t, x_{2}),\\
U_{R}, & \phi_{H}(t, x_{2})\leq x_{1}\leq \phi_{C_{0}}(t, x_{2}),\\
U_{r}, & \phi_{C_0}(t, x_{2})\leq x_{1},
 \end{cases}
 \quad (t, x_{1}, x_{2})\in [0, t^{*}]\times \mathbb{R}\times \mathbb{T},
 \]
is a piecewise smooth entropy solution with initial data $(U_{l}, U_{r})$. In fact, it suffices to note that $U_{R}=U_{+}$ on $H$ and $U_{L}=U_{-}$ on $\Hb$ by construction.

\begin{center}
\begin{tikzpicture}[scale=1.5]
\coordinate (O) at (0,0);
\fill (O) circle (0.8pt);

\draw[thick] (-3,0) -- (3,0);
\draw[thick] (-3,2) -- (3,2);

\node[right] at (3,0) {$\Sigma_{0}$};
\node[right] at (3,2) {$\Sigma_{t^{*}}$};
\node[below] at (O) {$x_{1}=0$};

\node[above] at (1.5,0) {$U_{r}$ on $\Sigma_{0}^{\geqslant 0}$};
\node[above] at (-1.5,0) {$U_{l}$ on $\Sigma_{0}^{\leqslant 0}$};

\foreach \x in {1.3,1.6,1.9,2.2} {
  \draw[dashed] (O) -- (\x,2);
}
\foreach \x in {-1.1,-1.4,-1.7} {
  \draw[dashed] (O) -- (\x,2);
}

\draw[thick,dotted] (O) -- (0.1,2);
\node[above] at (0.1,2) {$V$};
\draw[thick,dashed] (O) -- (1.0,2);
\node[above] at (1.0,2) {$H$};
\draw[thick,dashed] (O) -- (-0.8,2);
\node[above] at (-0.8,2) {$\Hb$};
\draw[thick,dashed] (O) -- (2.5,2);
\node[above] at (2.5,2) {$C_{0}$};
\draw[thick,dashed] (O) -- (-2,2);
\node[above] at (-2,2) {$\Cb_{0}$};

\node at (-2.2,1) {$U_{l}$};
\node at (-0.9,1.2) {$U_{L}$};
\node at (-0.2,1.2) {$U_{-}$};
\node at (0.3,1.2) {$U_{+}$};
\node at (0.9,1.2) {$U_{R}$};
\node at (2.2,1) {$U_{r}$};
\end{tikzpicture}
\end{center}
\end{itemize}

\section*{Acknowledgment}

\thanks{JJ is supported by Starting Research Fund from Hunan University under Grant No. 531118010918. TL is supported by  a grant from the Research Grants Council of the Hong Kong Special
Administrative Region, China (Project No. 11313025).}


\begin{thebibliography}{99}



\bibitem{AlinhacWaveRare0} S. Alinhac, \textit{Existence d'ondes de rar\'efaction pour des \'ecoulements isentropiques}, S\'eminaire sur les \'equations aux d\'eriv\'ees partielles 1986–1987, Exp. No. XVI, 16 pp., \'Ecole Polytech., Palaiseau, 1987. 

\bibitem{AlinhacWaveRare1} S. Alinhac, \textit{Existence d'ondes de rar\'efaction pour des syst\`emes quasi-lin\'eaires hyperboliques multidimensionnels}, Comm. Partial Differential Equations 14 (1989), no. 2, 173–230. 

\bibitem{AlinhacWaveRare2}S. Alinhac, \textit{Unicit\'e d'ondes de rar\'efaction pour des syst\`emes quasi-lin\'eaires hyperboliques multidimensionnels}, Indiana Univ. Math. J. 38 (1989), no. 2, 345–363. 

\bibitem{Benzoni-Gavage-Serre2007book}
S. Benzoni-Gavage, and D. Serre, \textit{Multi-dimensional Hyperbolic Partial Differential Equations}, Oxford University Press 2007.

\bibitem{18} B. Stevens, Short-time structural stability of compressible vortex sheets with surface tension, \textit{Arch. Ration. Mech. Anal.} 222 (2016) 603--730.



\bibitem{Chen-Chen} G.-Q. Chen and J. Chen, \textit{Stability of rarefaction waves and vacuum states for the multidimensional Euler equations}. J. Hyperbolic Differ. Equ. 4 (2007), no. 1, 105-122.

\bibitem{ChenXinYin} Chen, S., Xin, Z. and Yin, H, \textit{Global Shock Waves¶for the Supersonic Flow Past a Perturbed Cone} Commun. Math. Phys. 228, 47–84 (2002). https://doi.org/10.1007/s002200200652.


\bibitem{ChenLi}S. Chen and D. Li, \textit{Cauchy problem with general discontinuous initial data along a smooth curve for 2-d Euler system}, J. Differential Equations 257 (2014), no. 6, 1939–1988.

\bibitem{ChristodoulouShockDevelopment}  D. Christodoulou, \textit{The shock development problem}, EMS Monographs in Mathematics, European Mathematical Society (EMS), Zürich, 2019. 

\bibitem{CK} D. Christodoulou and S. Klainerman, \textit{The global nonlinear stability of the Minkowski space}, Princeton Mathematical Series, vol. 41, Princeton University Press, Princeton, NJ, 1993. 

\bibitem{ChiodaroliDeLellisKreml2015} E. Chiodaroli, C. De Lellis, O. Kreml, \textit{Global Ill-Posedness of the Isentropic System of Gas Dynamics}, Comm. Pure Appl. Math. 68 (2015), no. 7, 1157–1190.


\bibitem{ChristodoulouMiao} D. Christodoulou and S. Miao, \textit{Compressible flow and Euler's equations}, Surveys of Modern Mathematics, vol. 9, International Press, Somerville, MA; Higher Education Press, Beijing, 2014. 


\bibitem{CourantFriedrichs}R. Courant and K. O. Friedrichs, \textit{Supersonic flow and shock waves}, Reprinting of the 1948 original. Applied Mathematical Sciences, Vol. 21. Springer-Verlag, New York-Heidelberg, 1976. xvi+464 pp.


\bibitem{Coulombel-Secchi1} J.-F. Coulombel and P. Secchi, \textit{The stability of compressible vortex sheets in two space dimensions}, Indiana Univ. Math. J. {\bf 53} (2004), no.~4, 941--1012; MR2095445.

\bibitem{Coulombel-Secchi2} J.-F. Coulombel and P. Secchi, \textit{Nonlinear compressible vortex sheets in two space dimensions}, Ann. Sci. \'Ec. Norm. Sup\'er. (4) {\bf 41} (2008), no.~1, 85--139; MR2423311.

\bibitem{Dafermos} C. Dafermos, \textit{Hyperbolic conservation laws in continuum physics}, Third edition, Grundlehren der Mathematischen Wissenschaften, Vol. 325, Springer-Verlag, Berlin, 2010.

\bibitem{Dafermos1} C. Dafermos, \textit{The second law of thermodynamics and stability}, Arch. Rational Mech. Anal. 70, 167–179 (1979). https://doi.org/10.1007/BF00250353


\bibitem{Diperna4}
R.~{Diperna }.
\newblock Decay of solutions of hyperbolic systems of conservation laws with a
  convex extension.
\newblock {\em Arch. Rational Mech. Anal.}, 64(1):1--46, 1977.

\bibitem{diperna2}
R.~{Diperna }.
\newblock Convergence of the viscosity method for isentropic gas dynamics.
\newblock {\em Comm. Math. Phys.}, 91(1):1--30, 1983.

\bibitem{Diperna3}
R.~{Diperna}.
\newblock Decay and asymptotic behavior of solutions to nonlinear hyperbolic
  systems of conservation laws.
\newblock {\em Indiana Univ. Math. J.}, 24(11):1047--1071, 1974.



\bibitem{DiPerna79} R. J. DiPerna, \textit{Uniqueness of solutions to hyperbolic conservation laws}, Indiana Univ. Math. J. 28 (1979), no. 1, 137–188.



\bibitem{GangXuYinHuicheng} Gang, X., Huicheng, Y, \textit{On Global Multidimensional Supersonic Flows with Vacuum States at Infinity}. Arch Rational Mech Anal 218, 1189–1238 (2015). https://doi.org/10.1007/s00205-015-0878-6.

\bibitem{Glimm1965} J. Glimm, \textit{Solutions in the large for nonlinear hyperbolic systems of equations}. Comm. Pure Appl. Math. 18 (1965), 697–715.

\bibitem{glimmlax}
J.~{Glimm} and P.~{Lax}.
\newblock Decay of solutions of systems of nonlinear hyperbolic conservation
  laws.
\newblock {\em Memoirs of the American Mathematical Society}, 101:xvii+112 pp,
  1970.
  


\bibitem{ginsberg2024stabilityirrotationalshockslandau} Daniel Ginsberg and Igor Rodnianski, \textit{The stability of irrotational shocks and the Landau law of decay}. https://arxiv.org/abs/2403.13568

\bibitem{Kruzkov1970} S. N. Kru\v{z}kov, \textit{First order quasilinear equations with several independent variables}
Mat. Sb. (N.S.)81(123)(1970), 228–255.

\bibitem{16} J. A. Fejer, J. W. Miles, On the stability of a plane vortex sheet with respect to three-dimensional disturbances, \textit{J. Fluid Mech.} 15 (1963) 335--336.

\bibitem{17} J. W. Miles, On the disturbed motion of a plane vortex sheet, \textit{J. Fluid Mech.} 4 (1958) 538--552.

\bibitem{KrupaSzékelyhidi} Krupa, S.G., Székelyhidi, L, \textit{Contact Discontinuities for 2-D Isentropic Euler are Unique in 1-D but Wildly Non-unique Otherwise}. Commun. Math. Phys. 406, 109 (2025). https://doi.org/10.1007/s00220-025-05278-6

\bibitem{Lax1957} P. D. Lax, \textit{Hyperbolic systems of conservation laws. II}. Comm. Pure Appl. Math. 10 (1957), 537–566.


\bibitem{PL}
P.~{Lax}.
\newblock Hyperbolic systems of conservation laws and the mathematical theory
  of shock waves.
\newblock In {\em Conference Board of the Mathematical Sciences Regional
  Conference Series in Applied Mathematics}, pages v+48 pp, 1973.


\bibitem{Li1991} D. Li, \textit{Rarefaction and shock waves for multidimensional hyperbolic conservation laws.} Comm. Partial Differential Equations 16 (1991), no. 2-3, 425–450.

\bibitem{Liu2021book} T.-P. Liu, \textit{Shock waves}. Graduate Studies in Mathematics, 215. American Mathematical Society, Providence, RI, 202.

\bibitem{liutp2}
T.~{Liu}.
\newblock Decay to $n$-waves of solutions of general systems of nonlinear
  hyperbolic conservation laws.
\newblock {\em Comm. Pure Appl. Math.}, 30(5):586--611, 1977.

\bibitem{L2}
T.~{Liu}.
\newblock Linear and nonlinear large-time behavior of solutions of general
  systems of hyperbolic conservation laws.
\newblock {\em Comm. Pure Appl. Math.}, 30(6):767--796, 1977.


\bibitem{Luo-YuRare1} T.-W. Luo and P. Yu, \textit{On the stability of multi-dimensional rarefaction waves I: the energy estimates}, Ann. of Math. (2) {\bf 202} (2025), no.~2, 631--752; MR4964221.

\bibitem{Luo-YuRare2} T.-W. Luo and P. Yu, \textit{On the stability of multi-dimensional rarefaction waves II: existence of solutions and applications to the Riemann problem}, Ann. of Math. (2) {\bf 202} (2025), no.~2, 753--855; MR4964222.


\bibitem{LukSpeck2D} J. Luk and J. Speck, \textit{Shock formation in solutions to the 2D compressible Euler equations in the presence of non-zero vorticity}, Invent. Math. 214 (2018), no. 1, 1–169.

\bibitem{MajdaShock2} A. Majda, \textit{The existence of multidimensional shock fronts}, Mem. Amer. Math. Soc. 43 (1983), no. 281, v+93. 

\bibitem{MajdaShock3} A. Majda, \textit{The stability of multidimensional shock fronts}, Mem. Amer. Math. Soc. 41 (1983), no. 275, iv+95 pp.



\bibitem{Metivier2001book}
G. M\'{e}tivier, \textit{Stability of multidimensional shocks, in Advances in the theory of shock waves}, Progr. Nonlinear Differential Equations Appl. 47, Birkhäuser, 2001, 25–103.

\bibitem{Rauch} J. Rauch, \textit{BV estimates fail for most quasilinear hyperbolic systems in dimensions greater than one}, Comm. Math. Phys. 106 (1986), no. 3, 481–484.

\bibitem{Riemann} B. Riemann, \textit{\"{U}ber die Fortpflanzung ebener Luftwellen von endlicher Schwingungsweite}, Abh. Ges. Wiss. Göttingen 8 (1860), 43–65.

\bibitem{Speck} J. Speck, \textit{A New Formulation of the 3D Compressible Euler Equations with Dynamic Entropy: Remarkable Null Structures and Regularity Properties},  Arch. Ration. Mech. Anal. 234 (2019), no. 3, 1223–1279. 

\bibitem{Smoller} J. Smoller, \textit{Shock Waves and Reaction-Diffusion Equations}, Second edition, Grundlehren der Mathematischen Wissenschaften, Vol. 258, Springer-Verlag, New York, 1994.

\bibitem{SpeckYu} Jared Speck and Sifan Yu. \textit{Characteristic initial value problem for the 3{D} compressible{E}uler equations.}, in preparation.

\bibitem{LiTT} T. T. Li and W. C. Yu, "Boundary Value Problems for Quasilinear Hyperbolic Systems," Duke University, Durham. 

\bibitem{WangYin} Z. Wang and H. Yin, \textit{Local structural stability of a multidimensional centered rarefaction wave for the three-dimensional steady supersonic Euler flow around a sharp corner}, SIAM J. Math. Anal. 42 (2010), no. 4, 1639–1687.

\bibitem{Wang} Q. Wang, \textit{On global dynamics of $3$-D irrotational compressible fluids}, https://arxiv.org/abs/2407.13649.



\bibitem{QuXiang} Qu, A, Xiang, W, \textit{Three-Dimensional Steady Supersonic Euler Flow Past a Concave Cornered Wedge with Lower Pressure at the Downstream}, Arch Rational Mech Anal 228, 431–476 (2018). https://doi.org/10.1007/s00205-017-1197-x

\bibitem{wang2025constructingcharacteristicinitialdata} Y.-X. Wang and S. Yu and P. Yu, \textit{Constructing characteristic initial data for three dimensional compressible Euler equations}, https://arxiv.org/abs/2508.15199.

\bibitem{ZhangRuotong} R.-T. Zhang, \textit{Local and Global Results on Three Dimensional Rarefaction Waves in Spherical Symmetry}, https://arxiv.org/abs/2512.00353.



\end{thebibliography}
\end{document}